\newtheorem{theorem}{Theorem}
\newtheorem{definition}[theorem]{Definition}
\newtheorem{corollary}[theorem]{Corollary}
\newtheorem{example}[theorem]{Example}
\newtheorem{lemma}[theorem]{Lemma}
\newtheorem{proposition}[theorem]{Proposition}
\newtheorem{remark}[theorem]{Remark}
\newtheorem{assumption}[theorem]{Assumption}
\numberwithin{equation}{section}
\numberwithin{theorem}{section}
\definecolor{darkgreen}{cmyk}{1,0,1,.2}
\definecolor{m}{rgb}{1,0.1,1}
\long\def\red#1{\textcolor {red}{#1}}
\long\def\blue#1{\textcolor {blue}{#1}}
\newcommand{\K}{\mathbb{K}}
\DeclareMathOperator{\supp}{supp}   
 \DeclareMathOperator{\tr}{tr}
\DeclareMathOperator{\Ind}{Ind}
\DeclareMathOperator{\const}{const}
\newcommand{\forget}[1]{}
\def  \nuint {\raise10pt\hbox{$\nu$}\kern-6pt\int}
\newcommand\Tr{\operatorname{Tr}}
\def\N{\mathcal N}
\def \L{\mathcal L}
\def \A{\mathcal A}
\def \P{\mathcal P}
\newcommand\G{\mathcal G}
\newcommand\Q{\mathcal Q}
\newcommand\R{\mathcal R}
\newcommand\V{\mathcal V}
\def \L {{\cal L}}
\def \Sp {{\cal S}}
\newcommand\B{\mathcal B}
\newcommand\Bi{\B^\infty}
\def \J{\mathcal J}
\def \Ch {{\rm Ch}}
\def\Id{{\rm Id}}
\newcommand\cyl{\operatorname{cyl}}
\newcommand\ha{\frac12}
\newcommand\D{\mathcal D}
\newcommand\Di{D\kern-6pt/}
\newcommand\cDi{{\mathcal D}\kern-6pt/}
\newcommand\spi{S\kern-6pt/}
\newcommand \cspi{\Sp\kern-6pt/}
\newcommand\CC{\mathbb C}
\def \cal {\mathcal}
\def \K {{\cal K}}
\newcommand\NN{\mathbb N}
\newcommand\RR{\mathbb R}
\newcommand\ZZ{\mathbb Z}
\newcommand\pa{\partial}
\newcommand\Ker{\operatorname{Ker}}
\def\K{{\mathcal K}}
\global\let\c@equation=\c@theorem}
\newcommand{\bint}{\sideset{^{\mathrm{b}\!\!\!}}{_{M}}\int}
\date{}
\definecolor{darkgreen}{cmyk}{1,0,1,.2}
\definecolor{m}{rgb}{1,0.1,1}
\long\def\red#1{\textcolor {red}{#1}}
\long\def\blue#1{\textcolor {blue}{#1}}
\title[Higher genera]{Higher genera for proper actions of Lie groups, Part 2:\\  the case  of manifolds with boundary.}
\author{Paolo Piazza}
\address{Dipartimento di Matematica, Sapienza Universit\`a di Roma}
\email{piazza@mat.uniroma1.it}
\author{Hessel B. Posthuma}
\address{University of Amsterdam }
\email{H.B.Posthuma@uva.nl}
\subjclass[2010]{Primary: 58J20. Secondary: 58J22, 58J42, 19K56.}
\keywords{Lie groups, proper actions, groupoids,  group cocycles, delocalized cocycles,
index classes, relative pairing, excision, Atiyah-Patodi-Singer higher index theory, higher eta invariants.}
\begin{document}

\begin{abstract}
Let $G$ be a finitely connected Lie group and let $K$ be a maximal compact subgroup. Let $M$ be a cocompact $G$-proper manifold 
with boundary, endowed with a $G$-invariant metric which is of product type near the boundary.
Under additional
assumptions on $G$, for example that it satisfies the Rapid Decay condition and is such that $G/K$ has nonpositive sectional curvature,
we define  higher Atiyah-Patodi-Singer $C^*$-indices associated to elements $[\varphi]\in H^*_{{\rm diff}} (G)$ and to
a generalized $G$-equivariant Dirac operator $D$ on $M$ with $L^2$-invertible boundary operator $D_\partial$. We then establish
a higher index formula for these $C^*$-indices and use it in order to introduce higher genera for $M$, thus generalizing to
manifolds with boundary the results that we have established in Part 1. Our results
apply in particular to a semisimple Lie group $G$. We use crucially  the pairing 
between suitable  relative cyclic cohomology groups and relative K-theory groups.
\end{abstract}

\maketitle

\tableofcontents

\section*{Introduction}
This is the second part of a work devoted to the study of higher $C^*$-indices on $G$-proper manifolds and to the related notion of higher genera.
Let $G$ be a finitely connected Lie group. Let $M$ be a cocompact $G$-proper manifold without boundary. Let $D$ be
$G$-equivariant Dirac operator on $M$, acting
on the sections of a $G$-equivariant vector bundle $E$.
Higher $C^*$-indices for $D$ are defined by pairing the $C^*$-index class associated to  $D$ with suitable cyclic cocycles. Our main concern 
has been and will be  with cyclic cocycles associated to group cocycles $\varphi\in Z^k_{{\rm diff}} (G)$.
In part 1
we established sufficient conditions on $G$ ensuring that the cyclic cocycle 
$\tau^M_\varphi$ associated to $\varphi$, initially
defined on the algebra of $G$-invariant smoothing kernels of  $G$-compact support, $\mathcal{A}^c_G (M,E)$, extend to 
a dense holomorphically closed subalgebra of the Roe $C^*$-algebra $C^*(M,E)^G$. 
Our sufficient conditions are satisfied, for example, by Lie groups $G$ satisfying the Rapid Decay condition
and such that $G/K$, with $K$ a maximal compact  subgroup, has nonpositive sectional curvature. Connected semisimple Lie groups do satisfy these
last two conditions and constitute one of the main example to which our results apply. 
More precisely, what we established in \cite{PP-akt} is that under the stated assumptions on $G$ there exists a subalgebra $\mathcal{A}^\infty_G (M,E)$ of $C^*(M,E)^G$
 which is dense and holomorphically closed
and with the property that each 
$\tau^M_\varphi$, $\varphi\in Z^k_{{\rm diff}} (G)$, extends from 
$\mathcal{A}^c_G (M,E)$ to  
$\mathcal{A}^\infty_G (M,E)$. It then follows that the homomorphism $$K_* (\mathcal{A}_G^c (M,E))\xrightarrow{\langle \tau^M_\varphi,\cdot\rangle} \CC$$ defined by $\tau^M_\varphi$
through the pairing  $$HC^* (\mathcal{A}^c_G (M,E))\times K_* (\mathcal{A}^c_G (M,E))\to\CC$$ extends to a homomorphism
$$K_* (\mathcal{A}^\infty_G (M,E))\equiv K_* (C^*(M,E)^G)\rightarrow\CC.$$
The 
$C^*$-higher index  associated to $\varphi\in Z^k_{{\rm diff}} (G)$, $\Ind_\varphi (D)$,  is defined as the value that this last homomorphism attains  on the $C^*$-index class $\Ind (D)\in K_* (C^* (M,E)^E)$.
 With $M$  even dimensional and $\varphi\in Z^{2p}_{{\rm diff}} (G)$, the index theorem
of Pflaum, Posthuma and Tang \cite{PPT} was then applied in order to give a geometric formula for this higher $C^*$-index:
$$\Ind_\varphi (D) = \int_M \chi {\rm AS}(M) \wedge \omega_\varphi$$
with $\chi$ a suitable cut-off function,  ${\rm AS}(M)$ a normalization of the  Atiyah-Singer integrand, $\omega_\varphi$ a $G$-invariant 
closed differential form naturally associated to $\varphi$. 
This $C^*$-index formula allowed, in turn, to define
higher genera on $M$.
Most prominent
are the higher $\widehat{A}$-genera,
$$  \widehat{A}(M,\varphi):= \int_M \chi \widehat{A}(M) \wedge \omega_\varphi$$
and the higher signatures
$$\sigma (M,\varphi):=  \int_M \chi L(M) \wedge \omega_\varphi\,.$$
Properties of the $C^*$-index class allow to prove that the higher $\widehat{A}$-genera are obstructions to the existence
of a $G$-invariant metric of positive scalar curvature  if $M$ admits a $G$-invariant spin structure
and that the numbers $\sigma (M,\varphi)$ are $G$-proper homotopy invariants.

\medskip
{\it The goal of the present article is to extend these results to $G$-proper manifolds with boundary.}

\medskip
Crucial to our analysis will be the proof of a higher $C^*$-Atiyah-Patodi-Singer index formula on cocompact $G$-proper manifolds with boundary.
We are now going to state this key result.
First, let us discuss briefly our geometric set-up.
We are given an even dimensional  cocompact $G$-proper manifold with boundary $M_0$ endowed with a  $G$-invariant riemannian metric $g$
which is of product  type near the boundary: $g=dt^2 + g_{\partial}$ in a neighbourhood of  $\partial M$. We consider $D$,  a $G$-equivariant Dirac-type operator, acting on the sections
of a $G$-equivariant complex vector bundle $E$ and with product structure near the boundary. We consider the
associated manifold with cylindrical ends $M$; we  extend all the structures from $M_0$ to $M$.  We shall always assume that $\vert \pi_0 (G) \vert <\infty$.

\begin{theorem}\label{intro:maintheorem}
Assume that
the boundary operator $D_\partial$ is $L^2$-invertible. Then
\begin{enumerate}\label{theo:intro-main}
\item There is a well-defined $C^*$-index class $\Ind (D)\in K_0 (C^* (M_0\subset M, E)^G)\simeq K_0 (C^* (M_0,E)^G)$.
\item If $G$ satisfies the RD condition then there exists a dense holomorphically closed
subalgebra $\mathcal{A}^\infty_G (M,E)\subset C^* (M_0\subset M, E)^G$ and a smooth representative of the index class
$\Ind_\infty (D)\in K_0 (\mathcal{A}^\infty_G (M,E))$.
\item if $\varphi\in Z^{2p}_{{\rm diff}}(G)$ is a group cocycle of polynomial growth then we can define a cyclic cocycle $\tau^M_\varphi$
on $\mathcal{A}^\infty_G (M,E)$ and thus a higher $C^*$-index $\Ind_{\varphi}(D):= (-1)^p \frac{2p !}{p!}\langle [\tau^M_\varphi], \Ind_\infty (D) \rangle$
\item under the above assumptions there is a well-defined higher eta invariant $\eta_\varphi (D_\partial)$ and the following higher Atiyah-Patodi-Singer index formula holds:
\begin{equation}\label{intro:mainformula}
\Ind_{\varphi}(D)= \int_M \chi {\rm AS}(M) \wedge \omega_\varphi - \frac{1}{2}\eta_\varphi (D_\partial)\equiv  \int_{M_0} \chi {\rm AS}(M_0) \wedge \omega_\varphi - \frac{1}{2}\eta_\varphi (D_\partial) \,.
\end{equation}
\end{enumerate}
\end{theorem}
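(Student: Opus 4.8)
The plan is to prove the four assertions of Theorem~\ref{intro:maintheorem} in the stated order, building each on the previous ones, and reducing everything to the machinery of relative cyclic cohomology paired with relative $K$-theory.

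The plan is to prove the four assertions in the stated order, each building on the previous one, and to reduce the computation underlying (4) to a pairing between relative cyclic cohomology and relative $K$-theory, in the spirit of Lesch--Moscovici--Pflaum and Moriyoshi--Piazza, adapted to the present Lie-group/proper-action context. For item (1): since $D_\partial$ is $L^2$-invertible, $D$ on the manifold with cylindrical ends $M$ is invertible at infinity, so using the product structure $g=dt^2+g_\partial$ and the spectral gap of $D_\partial$ one constructs a parametrix $Q$ for $D$ whose remainders lie in the Roe ideal (the $G$-equivariant, finite-propagation, locally compact operators supported near $M_0$); this yields a well-defined index class $\Ind(D)\in K_0(C^*(M_0\subset M,E)^G)$, and the identification with $K_0(C^*(M_0,E)^G)$ is the coarse equivalence $M_0\subset M$. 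For item (2): under RD one invokes a Connes--Moscovici/Lafforgue-type construction of a dense holomorphically closed subalgebra $\mathcal{A}^\infty_G(M,E)$ of the Roe algebra, controlled by the word length on $G$; the point is that once $D_\partial$ is invertible and the metric is of product type, the resolvent and heat kernel of $D$ obey Gaussian off-diagonal estimates, so the parametrix and the idempotent of (1) can be chosen with Schwartz-type decay and hence with entries in $\mathcal{A}^\infty_G(M,E)$. Holomorphic closure then gives $K_0(\mathcal{A}^\infty_G(M,E))\cong K_0(C^*(M_0\subset M,E)^G)$, defining $\Ind_\infty(D)$.

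For item (3), given $\varphi\in Z^{2p}_{\rm diff}(G)$ of polynomial growth, one recalls from Part 1 the cyclic cocycle $\tau^M_\varphi$ attached to $\varphi$ on the algebra of $G$-invariant smoothing kernels; the presence of the cylindrical end forces the relative (mapping-cone) picture. Concretely, $\Ind_\infty(D)$ is realised as a relative $K$-theory class for the indicial/restriction morphism $\mathcal{A}^\infty_G(M,E)\to\mathcal{A}^\infty_G(\cyl(\partial M),E)$, and the functional actually paired against it is a \emph{relative} cyclic cocycle $(\tau^M_\varphi,\,\sigma^\partial_\varphi)$: the first component is $\tau^M_\varphi$ computed with a Melrose-type $b$-regularization of the trace, and the second is the transgressed ``eta cochain'' built from the rescaled heat operator of $D_\partial$. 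Polynomial growth of $\varphi$, together with the $L^2$-gap of $D_\partial$, is exactly what makes the regularized integrals defining both components converge; the relative pairing then produces $\Ind_\varphi(D)=(-1)^p\frac{(2p)!}{p!}\langle[\tau^M_\varphi],\Ind_\infty(D)\rangle$.

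For item (4) I would argue by the heat-kernel transgression method. First one defines the higher eta invariant $\eta_\varphi(D_\partial)$ as the ($b$-)regularized integral over $t\in(0,\infty)$ of the $\tau_\varphi$-weighted supertrace of $\dot{\mathbb{D}}_\partial\, e^{-t\mathbb{D}_\partial^2}$, where $\mathbb{D}_\partial$ is the Bismut-type superconnection of $D_\partial$, with convergence at $t\to 0$ coming from the local index expansion on the $G$-proper boundary and at $t\to\infty$ from the spectral gap. Next, rescaling the parametrix of (1) into a McKean--Singer heat-kernel representative of $\Ind_\infty(D)$, one evaluates $\langle[\tau^M_\varphi],\Ind_\infty(D)\rangle$ by a $b$-trace computation on $M$: the $b$-trace defect formula splits it into an interior term and a boundary term. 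Getzler rescaling applied to the interior term reduces it — exactly as in Part 1, via the Pflaum--Posthuma--Tang higher index theorem for closed cocompact $G$-proper manifolds — to $\int_M\chi\,{\rm AS}(M)\wedge\omega_\varphi$, while the boundary term is by construction $-\tfrac12\eta_\varphi(D_\partial)$. The equality with $\int_{M_0}\chi\,{\rm AS}(M_0)\wedge\omega_\varphi$ then holds because near $\partial M$ all data are pulled back from $M_0$ under the product structure.

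The analytic heart — and the main obstacle — is the $b$-trace/transgression step of item (4): one must show that the defect between the naive parametrix representative of $\Ind_\infty(D)$ and its heat-kernel representative is precisely the higher eta term, with the group-theoretic weights coming from $\varphi$ kept under control throughout. This requires Gaussian off-diagonal estimates for the heat kernel of $D$ on $M$ that are uniform against the polynomial weight of $\varphi$, a justification that the (already delicate) regularized $b$-integral may be interchanged with the cyclic-cohomological pairing, and a verification that the resulting boundary defect is a closed relative cochain representing $\eta_\varphi(D_\partial)$, hence independent of the auxiliary choices. Establishing these convergence, interchange, and closedness statements, rather than the formal algebra of the relative pairing, is where the bulk of the work lies.
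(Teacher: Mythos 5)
Your overall architecture agrees with the paper's: a relative cyclic cocycle (regularized trace plus eta cochain) paired with a relative $K$-theory class, excision to pass between absolute and relative pairings, and Getzler rescaling of the small-time limit to produce the local term. Two points, however, deserve comment.

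First, there is a genuine gap in your treatment of item~(2). You propose to get the smooth index class from ``Gaussian off-diagonal estimates'' and ``Schwartz-type decay'' of the heat kernel on the manifold with cylindrical ends. The paper explicitly warns against exactly this shortcut: decay alone shows $\exp(-tD^2)$ lies in a Roe-type ideal, but what is needed for the subsequent extension of $\tau^r_\varphi$ and $\sigma_\varphi$ is that the heat operator, the resolvent, and hence the Connes--Moscovici projector, lie in the $b$-calculus-with-bounds algebra ${}^b\mathcal{A}^{\infty,\epsilon}_G(M)$ — i.e.\ are $b$-pseudodifferential, with conormal regularity at the front, left and right boundary faces of $M\times_b M$. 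This $b$-structure is what makes the indicial operator defined on them, what makes ${}^bS_\pm\in\mathcal{A}^\infty_G(M)=\Ker I$, and what makes the Lesch--Moscovici--Pflaum formula for ${}^b\Tr$ and the norm $\|\cdot\|_b$ applicable in Section~\ref{sect:aps}. To get it the paper uses the Cauchy integral $\exp(-tD^2)=\frac{1}{2\pi i}\int_\gamma e^{-t\lambda}(D^2-\lambda)^{-1}\,d\lambda$ together with a pseudodifferential calculus with parameters on the slice $S$ and the continuity of composition $\Psi^0_{G,c}\times\mathcal{A}^\infty_G\to\mathcal{A}^\infty_G$ (Lemma~\ref{lemma:composition}), precisely because Gaussian decay does not by itself deliver the $b$-conormal structure at the boundary faces. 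Your proposal as written would not establish Propositions~\ref{b-resolvant}, \ref{prop:b-heat}, \ref{CM-for-relative}.

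Second, in item~(4) you define $\eta_\varphi(D_\partial)$ via a Bismut-type superconnection $\mathbb{D}_\partial$ and its Chern--Simons transgression, and you invoke a ``$b$-trace defect formula'' to split the pairing into interior and boundary terms. The paper instead defines $\eta_\varphi(D_\partial)$ directly from the eta cocycle $\sigma_\varphi$ evaluated along the path of Connes--Moscovici projectors $p_t=V(tD_{\cyl})$, namely $\eta_\varphi(D_\partial)=2c_p\sum_{i=0}^{2p}\int_0^\infty\sigma_\varphi(p_t,\dots,[\dot p_t,p_t],\dots,p_t)\,dt$, and the interior/boundary split of the index number falls out algebraically from writing down the explicit formula for the relative $(K_*,HC^*)$-pairing after applying excision (Theorem~\ref{theo:excision}), not from splitting a single heat supertrace. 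Your superconnection formulation is a legitimate alternative formalism (in the spirit of Lott and Leichtnam--Piazza), but to splice it into this framework you would still have to identify your transgression with the relative-pairing boundary contribution, i.e.\ with $\sigma_\varphi$ applied to the Connes--Moscovici path; that identification is not carried out and is not cost-free. The paper avoids this by never leaving the idempotent picture, at the price of a more combinatorial eta cochain but with the benefit that convergence of the higher eta invariant comes for free from well-definedness of the relative pairing.

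In short: your item~(2) needs to be replaced by a $b$-calculus (or equivalent) argument establishing membership of the resolvent and heat operator in ${}^b\mathcal{A}^{\infty,\epsilon}_G(M)$, not merely decay estimates; and your item~(4) is a different but compatible route that would require an extra comparison step to align the superconnection eta form with the eta cocycle $\sigma_\varphi$ of Definition~\ref{sigma-cocycle}.
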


The above theorem fits into a long series of results generalizing the seminal work of Atiyah-Patodi and Singer
\cite{APS1}. 
We shall not attempt to reconstruct the history of this particular branch of index theory here but would like to
mention the recent preprints  \cite{HWW1,HWW2}, devoted precisely to $G$-proper manifolds with boundary;
in these papers  the index class $\Ind (D)\in K_0 (C^* (M_0 \subset M,E)^G)$ is also
defined and index formulae are proved for the pairing of it with certain 0-cocycles. We shall  comment further
on this point later in the paper.

\smallskip
Going back to the statement of theorem \ref{intro:maintheorem}, we have proved in part 1 that if $ G$ is a  Lie groups satisfying the Rapid Decay condition
and such that $G/K$, with $K$ a maximal compact  subgroup, has nonpositive sectional curvature,
then for {\it any} $\alpha\in H^*_{{\rm diff}} (G)$ there exists a representative $\varphi\in Z^*_{{\rm diff}} (G)$
which is of polynomial growth. For these Lie groups, for example {\it semisimple Lie groups}, all the conclusions of
the above theorem hold, provided of course that $D_\partial$ is $L^2$-invertible.

\smallskip
We use the above theorem in order to introduce and study higher $\widehat{A}$-genera and higher signatures 
on a $G$-proper manifold with boundary, with $G$ satisfying the assumptions
of theorem \ref{intro:maintheorem}.
For example, if $(M,g)$ admits a $G$-invariant spin structure
and is such that $g_{\partial}$ is of positive scalar curvature, then the numbers 
\begin{equation}\label{intro:higher -aroof}
\left\{\left( \int_{M_0} \chi \widehat{A}(M_0) \wedge \omega_\varphi -\frac{1}{2} \widehat{\eta}_\varphi (D_\partial)\right),\;\;[\varphi]\in H^{2*}_{{\rm diff}} (G)\right\},
\end{equation}
where $\widehat{\eta}_\varphi(D_\partial):=(2\pi i)^p\eta_\varphi(D_\partial),~[\varphi]\in H^{2p}_{{\rm diff}} (G)$,
are, by definition, the higher $\widehat{A}$-genera of $M$. They are obstructions to the existence of an isotopy 
from $g_{\partial}$ to a $G$-metric of positive scalar curvature $h_{\partial}$ which admits an extension 
to a $G$-metric $h$ on $M$ which is of product type and of positive scalar curvature on all of $M$.

\smallskip
For the proof of theorem \ref{theo:intro-main} we use
an extension of the $b$-calculus of Melrose and, most crucially,
the interplay between the pairing 
\begin{equation}\label{intro:pairing-absolute}
HC^* (\mathcal{A}^\infty_G (M,E))\times K_* (\mathcal{A}^\infty_G (M,E))\to \CC
\end{equation}
used in the definition of $\langle [\tau^M_\varphi], \Ind_\infty (D) \rangle$, and a {\it relative}
pairing 
\begin{equation}\label{intro:pairing-relative}
HC^* ({}^b \mathcal{A}^\infty_G (M,E), {}^b \mathcal{A}^\infty_{G,\RR^+} (\overline{N_+ \partial M},E))\times K_* ({}^b \mathcal{A}^\infty_G (M,E), 
{}^b \mathcal{A}^\infty_{G,\RR^+} (\overline{N_+ \partial M},E))\to\CC\end{equation}
for algebras of $b$-smoothing operators fitting into a short exact sequence of algebras
\begin{equation}\label{intro:short-exact-sequence}
0\rightarrow \mathcal{A}^\infty_G (M,E)\rightarrow {}^b \mathcal{A}^\infty_G (M,E)\rightarrow {}^b \mathcal{A}^\infty_{G,\RR^+} (\overline{N_+ \partial M},E)
\rightarrow 0\,.\end{equation}
Crucial in the proof of the index formula is the definition of a relative cyclic cocycle $(\tau_\varphi^{(M,r)},\sigma_\varphi)$
and a relative smooth index class $\Ind_\infty (D,D_\partial)$ such that
\begin{equation}\label{crucial-step}
\langle [\tau^M_\varphi], \Ind_\infty (D) \rangle = \langle [\tau_\varphi^{(M,r)},\sigma_\varphi], \Ind_\infty (D,D_\partial)\rangle 
\end{equation}
with the pairing in  \eqref{intro:pairing-relative} used
on the  right hand side. In \eqref{crucial-step} $\tau_\varphi^{(M,r)}$ is a cyclic {\it cochain} on ${}^b \mathcal{A}^\infty_G (M,E)$
defined through a regularization \`a la Melrose;
the higher eta invariant appearing in the statement of theorem \ref{theo:intro-main}
is defined in terms of the cocycle $\sigma_\varphi$, which is therefore called the {\it eta cocycle}. 
The proof of the fact that $(\tau_\varphi^{(M,r)},\sigma_\varphi)$ is indeed a relative cocycle employs (an extension of)
the $b$-trace
formula for a commutator, due to Melrose. Notice that the relative cocycle is initially defined on algebras of
$G$-compactly supported kernels and its extension to the larger algebras appearing in 
\eqref{intro:short-exact-sequence} does require a finer analysis  with respect to the closed case.

\smallskip
Having established the crucial step \eqref{crucial-step}, with the left hand side being  $\Ind_\varphi (D)$ up to 
the constant $(-1)^p \frac{2p !}{p!}$,
 the index formula   \eqref{intro:mainformula}  is proved by writing 
down explicitly the relative pairing and  applying a Geztler rescaling argument to the operator $uD$ as $u\downarrow 0^+$.
The above technique was initiated in \cite{mp} where a higher APS index formula for the Godbillon-Vey cocycle on a foliated bundle
with boundary
was proved; this technique  has been also used successfully in the higher APS index theorem on Galois coverings,
see \cite{GMPi}. Relative cyclic cohomology and relative K-theory played a key role 
also  in  work of Lesch-Moscovici-Pflaum \cite{LMP2}.

\medskip
\noindent
Notice that even though  the logical path leading to theorem \ref{theo:intro-main}
was rather clear from the outset, there are a number of technical issues that need to be resolved when we pass from Galois coverings to $G$-proper manifolds. 
Sorting out these issues
 is 
certainly one  contribution of this article; moreover, in the process  we needed
to clarify certain aspects of the theory in the case of {\it closed} $G$-proper manifolds as well. We believe that these 
aspects have
their intrinsic interest.

\medskip
\noindent
{\bf The article is organized as follows.} In Section \ref{sect:closed-case} we elaborate on the results proved 
in Part 1 in the closed case: in particular we give a heat-kernel proof of the index formula on $G$-proper
manifolds proved by the
second author with Pflaum and Tang and of its sharpening into a $C^*$-index formula, proved in Part 1 of this work. 
This requires, among other things, to
 prove that if $G$ satisfies the RD condition with respect to a length function $L$,
then the heat kernel and the Connes-Moscovici projector belong to the algebra 
of kernels $\mathcal{A}_G^\infty (M,E)$
associated to the dense holomorphically closed algebra $H^\infty_L (G)\subset C^*_r G$. In Section 
\ref{sect:more-on-heat} we give an alternative approach to this last result, using the Cauchy-integral 
representation of the heat operator.  We also establish results that will be crucial later on for
manifolds with boundary.  Section \ref{section:geometric} is devoted to some geometric
preliminaries on $G$-proper manifolds
with boundary and on Dirac-type operators on them. We also present  a few examples. In Section \ref{sect:b-c*-index}
we prove that under the $L^2$-invertibility assumption on the boundary operator, there is a well-defined
APS $C^*$-index class $\Ind_{C^*} (D)\in K_* (C^* (M_0\subset M,E)^G)$, where we recall that $M$ denotes the manifolds with cylindrical ends
associated to $M_0$ and $C^* (M_0\subset M,E)^G$ denotes the associated 
Roe algebra. This class can in fact be defined in a number of equivalent ways; we use $b$-calculus techniques,
given that they will be crucial in the proof of the higher index formula. In Section \ref{sect:smooth-index-class}, under the RD
assumption on $G$,
we prove the existence of a {\it smooth} APS index class in the K-theory of a dense holomorphically
closed subalgebra of  $C^* (M_0\subset M,E)^G$. We also prove that this class corresponds under excision
to a relative {\it smooth} index class.  In Section \ref{sect:relative-cocycles} we define the relative cyclic cocycle
associated to $\tau_\varphi^M$. Finally in Section \ref{sect:aps} we establish the higher APS $C^*$-index formula.
Section  \ref{sect:genera} is devoted to the definition of higher genera on $G$-proper manifolds with boundary and the study of some of their geometric properties.
We close the paper with an Appendix containing results known to the experts but for which we could
not find precise and citable references.

\medskip
\noindent
{\bf Acknowledgements.} Part of this work was done during visits of the first author to University of Amsterdam
and of the second author
to Sapienza Universit\`a di Roma. Financial support for these visits was provided by the grant {\it PRIN, Spazi di moduli e teoria di Lie (MIUR, Ministero Istruzione Universit\`a  Ricerca, Italy)} and by NWO TOP
grant no. 613.001.302..\\
It is a pleasure to thank  Pierre Albin, Sacha Gorokhovsky, Bernhard Hanke and Sylvie Paycha for useful discussions.

\section{Getzler rescaling and higher indices in the closed case}\label{sect:closed-case}
A fundamental step in the analysis of the  higher genera defined in Part 1, is the higher index formula established by the
second author with Pflaum and  Tang, see \cite{PPT}. The proof of the higher index formula  in \cite{PPT} 
is based  on the algebraic index theorem of Nest and Tsygan \cite{NT-algebraic}. The main goal of this  section is to
give a proof of this formula using  Getzler rescaling techniques and to upgrade it to a higher $C^*$-index formula
under suitable assumptions on $G$.

\subsection{Three algebras of kernels}\label{subsect:3algebras}
Let $M$  be a closed smooth manifold carrying a smooth proper action of a Lie group $G$ by $(g,x)\mapsto gx$, $g\in G$, $x\in M$. We assume that $G$ has  finitely many connected components and that the quotient is compact. We  let $E$ be an equivariant complex vector bundle over $M$. We choose an invariant complete Riemannian metric denoted by $h$, with associated distance function denoted by $d_{M}(x,y)$ for $x,y\in M$, and volume form $d{\rm vol}(x)$.

Assume that $G$ satisfies the Rapid Decay (RD) condition.
The aim of the current section is to introduce a chain of algebras
\begin{equation}
\label{coa}
\mathcal{A}^c_G(M,E)\subset\mathcal{A}^{\exp}_G(M,E)\subset\mathcal{A}_G^\infty(M,E)\subset C^* (M,E)^G,
\end{equation}
all subalgebras of the Roe $C^*$-algebra $C^* (M,E)^G$, --defined below--, which all can be thought of as algebras of integral kernels.

The first algebra $\mathcal{A}^c_G (M,E)$ on the left is the algebra of smooth kernels of $G$-compact support, i.e., 
\[
\mathcal{A}^c_G (M,E):=\{A\in C^{\infty}(M\times M,E\boxtimes E^*)^{G},~\pi(\supp(A))\subset (M\times M)\slash G~\mbox{compact}\},
\]
where $\pi:M\times M\to (M\times M)\slash G$ denotes the projection with respect to the diagonal action. It is  well known that
$\mathcal{A}^c_G (M,E)$ is a Fr\'echet algebra
under the convolution product $*$.
Each element $A\in \mathcal{A}_{G}^{c}(M,E)$ defines an equivariant linear operator 
$S_A :C^\infty_c (M,E)\to C^\infty_c (M,E)$, the integral operator associated to the kernel $A$. One proves that
 $S_{A_1}\circ S_{A_2}=S_{A_1 * A_2}$ and that $S_A$ extends to an equivariant bounded operator 
on $L^2 (M,E)$. The corresponding subalgebra  of $\mathcal{B}(L^2 (M,E))$ is denoted
by  $\mathcal{S}_G^c (M,E)$; by definition
\begin{equation}\label{sGc}
\mathcal{S}_G^c (M,E):=\{S_A, A\in  \mathcal{A}_{G}^{c}(M,E)\}.
\end{equation}
We shall often identify $\mathcal{S}_G^c (M,E)$ with $\mathcal{A}_{G}^{c}(M,E)$. Recall also that 
$\mathcal{S}_G^c (M,E)$ is a subalgebra of the Roe algebra $C^*$-algebra $C^* (M,E)^G$
defined as the closure of the bounded $G$-equivariant operators on $L^2 (M,E)$ that are locally compact and of finite propagation.

The second algebra in the chain \eqref{coa} is given by exponentially rapidly decreasing invariant kernels:
\[
\mathcal{A}^{\exp}_G(M,E):=\left\{A\in C^{\infty}(M\times M,E\boxtimes E^*)^{G},~\sup_{x,y\in M}\left| e^{qd_M(x,y)}\nabla^{m}_{x}\nabla^{n}_{y}A(x,y)\right| < C_q,~\mbox{for all}~ q,m,n\in\mathbb{N}\right\}.
\]
It is proved in Appendix \ref{red} that these kernels indeed form an algebra under convolution, in fact a Fr\'echet algebra with 
the obvious seminorms.

It is a well-known fact \cite{abels} that, because the $G$-action on $M$ is proper and $G$ has finitely many
connected components, there exists a global slice: this is a compact submanifold
$S\subset M$ on which the $G$-action restricts to an action of maximal compact subgroup $K\subset G$, such that 
the natural map
\[
G\times_K S\to M,\quad [g,x]\mapsto g\cdot x,
\]
is a diffeomorphism. This decomposition of $M$ induces an isomorphism
\begin{equation}
\label{algebra-slice}
\mathcal{A}_G^c (M,E)\cong \left(C^\infty_c(G)\hat{\otimes}\Psi^{-\infty}(S,E|_S)\right)^{K\times K},
\end{equation}
where $\Psi^{-\infty}(S)$ denotes the algebra of smoothing operators on $S$. Here $\hat{\otimes}$ denotes the completed tensor product of 
the Fr\'echet algebras on both sides. For this we identify $\Psi^{-\infty}(S)\cong C^\infty(S\times S)$ and we use the $C^\infty$-topologies of uniform convergence 
of derivatives, written as $||~||_\alpha$, where $\alpha$  is a multi-index labeling the derivatives. Remark that because $S$ is compact, $\Psi^{-\infty}(S)$ is  nuclear and 
therefore all topological tensor products $\hat{\otimes}$ coincide.

Similarly, for the exponentially rapidly decaying kernels we have the decomposition
\[
\mathcal{A}^{\exp}_G(M,E)\cong \left(\mathcal{A}^{\exp}(G)\hat{\otimes} \Psi^{-\infty}(S,E|_S)\right)^{K\times K},
\]
where, to define $\mathcal{A}^{\exp}(G)$, the group $G$ is viewed as a $G$ space with respect to left multiplication. Furthermore, by compactness of $K$ and $S$, the $G$-invariant metric on $M$ is equivalent to a metric 
induced by a left-invariant metric on $G$; we write $L(g)$ for the function measuring the distance from the unit element $e\in G$. Then we have
\[
\mathcal{A}^{\exp}(G):=\left\{f\in C^\infty(G),~\sup_{g\in G}e^{q L(g)}|Df(g)|<C ~\forall q,~D\in U(\mathfrak{g})\right\}
\]

Finally, we introduce
\begin{equation}\label{ainfty-with-functions} 
\mathcal{A}^\infty_G(M,E):=\left(H^\infty_L(G)\hat{\otimes}\Psi^{-\infty}(S,E|_S)\right)^{K\times K},
\end{equation}
where 
\[
H^\infty_L(G):=\left\{f\in L^2(G),~ g\mapsto (1+L(g))^k f(g) \in L^2(G)~\forall k\right\}
\]
with $L$ the length function on $G$. This is a Fr\'echet space with seminorms 
\begin{equation}\label{nu-k}\nu_k (f):= \left( \int_G |f (g)|^2 
 (1+L(g))^{2k} dg \right)^{\frac{1}{2}}\,.
 \end{equation}

Next, recall that in the projective tensor product \eqref{algebra-slice} we can  write elements as $A=\sum_i\lambda_if_i\hat{\otimes}a_i$
with $\sum_i|\lambda_i|<\infty$ and $\{f_i\}$ and $\{k_i\}$ null sequences in $C^\infty_c(G)$ and $\Psi^{-\infty}(S)$. By defining
\[
\Phi_A(g):=\sum_i\lambda_if_i(g)a_i\in\Psi^{-\infty}(S),\quad g\in G,
\]
we obtain a map from $G$ to $\Psi^{-\infty}(S)$ invariant under $K\times K$ action:
\[
\Phi_A(k_1gk_2^{-1})=\rho(k_1)\circ\Phi(g)\circ\rho(k_2^{-1}),\quad k_1,k_2\in K,~g\in G.
\]
Under this identification, the product is defined by combining the composition in $\Psi^{-\infty}(S)$ with convolution:
\begin{equation}
\label{conv-op}
(\Phi_1*\Phi_2)(g)=\int_G\left(\Phi_1(gh^{-1})\circ\Phi_2(h)\right)dh.
\end{equation}
For the following, recall that $||T||_\alpha$ denotes the semi-norm of a smoothing operator $T\in\Psi^{-\infty}(S)$.
\begin{proposition}\label{prop:alternative-description}
The map $A\to \Phi_A$ induces isomorphisms
\begin{align*}
\mathcal{A}^c_G(M,E)&\cong\left\{\Phi:G\to \Psi^{-\infty}(S),~\mbox{smooth, compactly supported and }
~K\times K~\mbox{invariant}\right\},\\
\mathcal{A}^{\exp}_G(M,E)&\cong\left\{\Phi:G\to \Psi^{-\infty}(S), \text{ smooth,} ~K\times K~\mbox{invariant} ~and ~ \sup_{g\in G}e^{q L(g)}||D\Phi(g)||_{\alpha}<C ~\forall q, \alpha,k,~D\in U(\mathfrak{g})\right\},\\
\mathcal{A}^\infty_G(M,E)&\cong\left\{\Phi:G\to \Psi^{-\infty}(S),~K\times K~\mbox{invariant}~and~g\mapsto (1+L(g))^\ell||\Phi(g)||_{\alpha}\in L^2(G)~\forall \alpha,k,\ell\right\}
\end{align*}
Consequently
$$\mathcal{A}^c_G(M,E)\subset\mathcal{A}^{\exp}_G(M,E)\subset\mathcal{A}_G^\infty(M,E)\,.$$
\end{proposition}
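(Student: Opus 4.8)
The plan is to prove all three isomorphisms simultaneously by reducing to the global slice $S\subset M$ and then identifying the relevant completed tensor products with spaces of $\Psi^{-\infty}(S,E|_S)$-valued functions on $G$. By \eqref{algebra-slice}, its stated exponential analogue, and the definition \eqref{ainfty-with-functions}, it suffices to show that for each of $X(G)\in\{C^\infty_c(G),\,\mathcal{A}^{\exp}(G),\,H^\infty_L(G)\}$ the space $X(G)\hat\otimes\Psi^{-\infty}(S,E|_S)$ is isomorphic to the corresponding space of $K\times K$-equivariant maps $\Phi\colon G\to\Psi^{-\infty}(S,E|_S)$ in the statement, compatibly with $\sum_i\lambda_i f_i\hat{\otimes}a_i\mapsto\big(g\mapsto\sum_i\lambda_i f_i(g)a_i\big)$; the fact that the product then reads as in \eqref{conv-op} has already been recorded. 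It helps to keep the concrete picture in mind: writing a point of $M\cong G\times_K S$ as $[g,s]$, $G$-invariance forces a kernel $A$ to be determined by $(g,s_1,s_2)\mapsto A([g,s_1],[e,s_2])$, which is exactly $\Phi_A(g)\in\Psi^{-\infty}(S)\cong C^\infty(S\times S)$; and since $K$ and $S$ are compact, the $G$-invariant metric satisfies $d_M([g,s_1],[e,s_2])\asymp L(g)$ uniformly in $s_1,s_2$, so that decay of $A$ in $d_M$ translates into the stated decay of $\Phi_A$ in $L(g)$, and $G$-compact support into compact support in the $g$-variable.

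The heart of the matter is the tensor-product identification. Since $S$ is compact, $\Psi^{-\infty}(S,E|_S)\cong C^\infty(S\times S,\dots)$ is a nuclear Fr\'echet space, isomorphic to the K\"othe space $s$ of rapidly decreasing sequences; in particular its projective and injective tensor topologies coincide, so $X(G)\hat{\otimes}\Psi^{-\infty}(S,E|_S)$ may be computed as the $\epsilon$-product, i.e.\ as $\Psi^{-\infty}(S,E|_S)$-valued elements of $X(G)$. For $X(G)=C^\infty_c(G)$ this is the classical identity $C^\infty_c(G)\hat{\otimes}\Psi^{-\infty}(S,E|_S)\cong C^\infty_c(G,\Psi^{-\infty}(S,E|_S))$. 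For $X(G)=\mathcal{A}^{\exp}(G)$ and $X(G)=H^\infty_L(G)$ I would fix a Schauder basis of $\Psi^{-\infty}(S,E|_S)$ coming from $\Psi^{-\infty}(S)\cong s$ and check that a rapidly decreasing $X(G)$-valued sequence of coefficients reassembles, one seminorm $\|\cdot\|_\alpha$ at a time, into precisely a $\Phi$ with $\sup_g e^{qL(g)}\|D\Phi(g)\|_\alpha<\infty$ for all $q,\alpha$ and $D\in U(\mathfrak g)$ (resp.\ with $g\mapsto(1+L(g))^\ell\|\Phi(g)\|_\alpha\in L^2(G)$ for all $\alpha,\ell$), and conversely; the equivariance $\Phi(k_1gk_2^{-1})=\rho(k_1)\circ\Phi(g)\circ\rho(k_2^{-1})$ is preserved verbatim. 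This is the step I expect to require the most care: one must verify that no differentiation in the $S$-variables is lost when passing between $A$ and $\Phi_A$, and that the interchange of $\int_G$ (or $\sup_G$) with the basis expansion is legitimate — which it is, thanks to the rapid decay of the coefficients together with nuclearity.

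Granted these three descriptions, the chain of inclusions is elementary. A compactly supported smooth $\Phi$ has all the seminorms $\sup_g e^{qL(g)}\|D\Phi(g)\|_\alpha$ finite by continuity and compactness of the support, which gives $\mathcal{A}^c_G(M,E)\subset\mathcal{A}^{\exp}_G(M,E)$. For $\mathcal{A}^{\exp}_G(M,E)\subset\mathcal{A}^\infty_G(M,E)$ I would use that $G$, having finitely many components and carrying a left-invariant metric, has at most exponential volume growth, $\vol\{g:L(g)\le R\}\le Ce^{cR}$; then for $\Phi\in\mathcal{A}^{\exp}_G(M,E)$ and any $\ell,\alpha$, choosing $q>c/2$ in the defining bound $\|\Phi(g)\|_\alpha\le C_q e^{-qL(g)}$ yields
\begin{equation*}
\int_G(1+L(g))^{2\ell}\,\|\Phi(g)\|_\alpha^2\,dg\ \le\ C_q^2\int_G(1+L(g))^{2\ell}e^{-2qL(g)}\,dg\ <\ \infty ,
\end{equation*}
so $\Phi\in\mathcal{A}^\infty_G(M,E)$; one sees at the same time that the $\nu_\ell$-type seminorms are continuous with respect to the exponential seminorms, so the inclusion is continuous. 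I would finally remark that these two inclusions could equally be read off directly from the original kernel descriptions via $M\cong G\times_K S$ and the comparison $d_M\asymp L$, without reference to the slice decomposition of the algebras.
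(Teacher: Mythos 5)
Your proposal is correct and shares the paper's overall skeleton — reduce via the slice $G\times_K S\cong M$ to identifying each $\mathcal{A}(G)\hat{\otimes}\Psi^{-\infty}(S,E|_S)$ with a space of $K\times K$-equivariant, $\Psi^{-\infty}(S,E|_S)$-valued functions on $G$ — but you take a genuinely different technical route for the second and third isomorphisms. The paper handles the $\mathcal{A}^{\exp}$-case by noting that, once compact support is dropped, the two families of seminorms $\sup_{x,y}|e^{qd_M(x,y)}\nabla^m_x\nabla^n_y A(x,y)|$ and $\sup_g e^{qL(g)}||D\Phi_A(g)||_{\alpha}$ are equivalent, and the $H^\infty_L$-case by the estimate $||\Phi_A||_{k,\ell}\le\sum_i||f_i||_\ell\,||a_i||_k$ together with an infimum over decompositions $A=\sum_if_i\otimes a_i$, which is recognised as the projective seminorm. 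You instead invoke nuclearity of $\Psi^{-\infty}(S,E|_S)$ (isomorphic to the Fr\'echet space of rapidly decreasing sequences) to replace the projective tensor product by the $\epsilon$-product and then match seminorms through a Schauder-basis expansion. Your route makes the two-sided nature of the identification more transparent, at the cost of the bookkeeping you yourself flag: one must check, one seminorm at a time, that the basis expansion commutes with $\sup_G$ (resp.\ $L^2(G)$) and with differentiation in the $S$-variables, using the rapid decay of the coefficient sequences. Your explicit derivation of the final inclusions is also a useful supplement: the paper lists $\mathcal{A}^{\exp}_G(M,E)\subset\mathcal{A}^\infty_G(M,E)$ as an immediate consequence of the three descriptions, but the substance is precisely the integrability of $(1+L)^{2\ell}e^{-2qL}$ over $G$, which rests on the at-most-exponential volume growth of Lie groups — the same input the paper uses in its appendix to show $\mathcal{A}^{\exp}_G(M,E)$ is closed under convolution.
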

\begin{proof}
First remark that the map $A\mapsto\Phi_A$ maps the algebraic tensor product $\mathcal{A}\otimes\Psi^{-\infty}(S)$, where $\mathcal{A}$ can be any
of the three choices $C^\infty_c(G)$, $\mathcal{A}^{\exp}(G)$ or $H^\infty_L(G)$, into the algebras on the right hand side of the proposition above.
It therefore suffices to show that the topologies on the algebras on the right hand side induce the topologies on the tensor products on the left hand side.
For the first isomorphism this follows from the well-known isomorphisms of Fr\'ech\`et spaces
\[
C^\infty_c(G\times S\times S)\cong C^\infty_c(G,C^\infty(S\times S))\cong C^\infty_c(G)\hat{\otimes}C^\infty(S\times S),
\]
and implementing $K\times K$ invariance. For the second algebra, we remark that the isomorphisms above still hold true when we drop the compact support, 
under which the families of seminorms 
\[
\sup_{x,y\in M}\left| e^{qd_M(x,y)}\nabla^{m}_{x}\nabla^{n}_{y}A(x,y)\right|\quad \mbox{and}\quad \sup_{g\in G}e^{qd_G(e,g)}||D\Phi_A(g)||_{\alpha,k}
\]
are equivalent. 

For the final isomorphism we observe that for $A=\sum_i f_i\otimes a_i\in H^\infty_L(G)\otimes\Psi^{-\infty}(S)$ we have
\begin{align*}
||\Phi_A||_{k,\ell}=||\sum_i f_i\otimes a_i||_{k,\ell} \leq \sum_i||f_i(1+L)^\ell||_{L^2(G)}||a_i||_k.
\end{align*}
Taking the infimum over all ways of writing $A=\sum_if_i\otimes a_i$, we conclude that 
$||\Phi_A||_{k,\ell}\leq \inf \sum_i ||f_i||_\ell||a_i||_k$. Because this infimum defines exactly the seminorms appearing in the projective tensor product, this shows
that the map $A\mapsto \Phi_A$ extends to the completion and gives exactly the third isomorphism. 
\end{proof}

\subsection{Index classes of $G$-equivariant Dirac operators}
Suppose now that $M$ is an even-dimensional closed manifold equipped with a proper, co-compact action of $G$. Let $D$ 
be an odd $\ZZ_2$-graded Dirac operator, equivariant with respect to the $G$-action.
Recall, first of all,  the classical Connes-Skandalis idempotent. Let $Q$ be a $G$-equivariant parametrix
of $G$-compact support with remainders $S_\pm$; consider the $2\times 2$ matrix
\begin{equation}\label{CS-projector}
P_{Q}:= \left(\begin{array}{cc} S_{+}^2 & S_{+}  (I+S_{+}) Q\\ S_{-} D^+ &
I-S_{-}^2 \end{array} \right).
\end{equation}
This produces a well-defined class 
\begin{equation}\label{CS-class}
\operatorname{Ind_c} (D):= [P_{Q}] - [e_1]\in K_0 (\mathcal{A}_G^c (M,E))\equiv 
K_0 (\mathcal{S}_G^c (M,E))\;\;\text{with}\;\;e_1:=\left( \begin{array}{cc} 0 & 0 \\ 0&1
\end{array} \right)
\end{equation}

\begin{definition}
The $C^*$-index associated to $D$ is the class $\Ind_{C^*(M,E)} (D)\in K_0 (C^* (M,E)^G)$ obtained by 
considering $ [P_{Q}] - [e_1]$ as a formal difference of idempotents with entries in $C^* (M,E)^G$,
under the continuous inclusion $\iota : \mathcal{A}_G^{c} (M,E)\equiv \mathcal{S}^c_G (M,E)\hookrightarrow C^* (M,E)^G)$.\\
We shall also denote this class more simply by $\Ind_{C^*} (D)$.
\end{definition}

\noindent
One can also give a definition of $\Ind_{C^*(M,E)} (D)\in K_0 (C^* (M,E)^G)$ using Coarse Index Theory, see
\cite{hr-book};
the compatibility of the two definitions is proved in \cite[Proposition 2.1]{PS-Stolz}.

\medskip
\noindent
There is another way of defining this class, due to Connes--Moscovici \cite{cm}, which uses the parametrix
\begin{equation}
 Q
:= \frac{I-\exp(-\frac{1}{2} D^- D^+)}{D^- D^+} D^+
\end{equation}
with
$I-Q D^+ = \exp(-\frac{1}{2} D^- D^+)$, $I-D^+ Q =  \exp(-\frac{1}{2} D^+ D^-)$. This particular choice of parametrix produces
 the idempotent
\begin{equation}
\label{cm-idempotent1}
V(D)=\left( \begin{array}{cc} e^{-D^- D^+} & e^{-\frac{1}{2}D^- D^+}
\left( \frac{I- e^{-D^- D^+}}{D^- D^+} \right) D^-\\
e^{-\frac{1}{2}D^+ D^-}D^+& I- e^{-D^+ D^-}
\end{array} \right)
\end{equation}
The following Proposition, well known to the experts, clarifies in which algebra
this idempotent lives. As we could not find a detailed proof, we supply one in the Appendix.
\begin{proposition}\label{prop:CM-rapid}
The idempotent $V(D)$  is an element in  $M_{2\times 2} (\mathcal{A}_G^{{\rm exp}} (M,E))$
(with the identity adjoined).
\end{proposition}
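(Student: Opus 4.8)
The plan is to exploit the slice decomposition $\mathcal{A}_G^{\exp}(M,E)\cong\bigl(\mathcal{A}^{\exp}(G)\hat\otimes\Psi^{-\infty}(S,E|_S)\bigr)^{K\times K}$ from Proposition~\ref{prop:alternative-description}, so that the claim reduces to showing that each of the four entries of the matrix $V(D)$, viewed as an operator-valued function on $G$ via $g\mapsto\Phi_{(\cdot)}(g)$, is smooth, $K\times K$-invariant, and satisfies the exponential decay estimate $\sup_{g\in G}e^{qL(g)}\|D'\Phi(g)\|_\alpha<C$ for all $q,\alpha$ and all $D'\in U(\mathfrak g)$ (modulo the adjoined identity accounting for the $I$-terms, i.e. working with $V(D)-e_1$). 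The building blocks of all four entries are the heat operators $e^{-tD^-D^+}$ and $e^{-tD^+D^-}$ at $t=1$ and $t=\tfrac12$, together with the operator $e^{-\frac12 D^\mp D^\pm}\bigl(\tfrac{I-e^{-D^\mp D^\pm}}{D^\mp D^\pm}\bigr)D^\mp$, which is a smoothing operator built from the same heat data; so everything comes down to (a) Gaussian off-diagonal decay of the heat kernel and its covariant derivatives, and (b) the compatibility of that decay with the equivalence between the $G$-invariant metric $d_M$ and a left-invariant metric $d_G(e,\cdot)\asymp L$ coming from the compact slice.

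The key steps, in order, would be: (1) Recall that $D$ is a $G$-equivariant Dirac-type operator on the cocompact $G$-proper manifold $M$, hence essentially self-adjoint, with bounded geometry uniformly in the $G$-direction (this is where cocompactness and $|\pi_0(G)|<\infty$ enter). (2) Apply the standard finite-propagation/Gaussian off-diagonal estimates for heat kernels of generalized Laplacians on complete manifolds of bounded geometry: for each $m,n$ there are constants $C_{m,n},c>0$ with $\bigl|\nabla^m_x\nabla^n_y\, e^{-tD^-D^+}(x,y)\bigr|\le C_{m,n}\,t^{-N}\exp\bigl(-c\,d_M(x,y)^2/t\bigr)$ locally uniformly, and the same for $D^+D^-$; specializing to $t\in\{\tfrac12,1\}$ kills the $t$-dependence and gives $\bigl|\nabla^m_x\nabla^n_y\,e^{-D^\mp D^\pm}(x,y)\bigr|\le C'_{m,n}\exp(-c'd_M(x,y)^2)$. (3) Handle the ``middle'' entry $e^{-\frac12 D^\mp D^\pm}\bigl(\tfrac{I-e^{-D^\mp D^\pm}}{D^\mp D^\pm}\bigr)D^\mp$: write $\bigl(\tfrac{I-e^{-s}}{s}\bigr)=\int_0^1 e^{-rs}\,dr$ so that this operator equals $\int_0^1 e^{-(\frac12+r)D^\mp D^\pm}\,dr\circ D^\mp$; then $D^\mp$ applied to a heat kernel at positive time is again a smooth kernel with Gaussian decay (one $t^{-1/2}$ worse, harmless since $t$ stays bounded below by $\tfrac12$), and integrating $r$ over $[0,1]$ preserves the Gaussian bound. (4) Transfer the bounds through the slice: since $d_M(x,gy)\asymp d_G(e,g)$ for $x,y$ in the (compact) slice $S$, a Gaussian bound $e^{-c'd_M(x,y)^2}$ in the $G$-variable gives in particular $e^{qL(g)}\|\Phi_{V}(g)\|_\alpha\le C_q$ for every $q$ — Gaussian decay in $L(g)$ is far stronger than any exponential decay $e^{-qL(g)}$. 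Covariant derivatives in $x,y$ on the slice produce the $\|\cdot\|_\alpha$-seminorms; derivatives $D'\in U(\mathfrak g)$ in the $G$-direction correspond to further covariant derivatives along the $M$-directions coming from the $G$-action, and are again controlled by step (2). (5) $K\times K$-invariance and smoothness are automatic from $G$-equivariance of $D$ (hence of its heat semigroup and of $D^\mp$) and the smoothness of the heat kernel away from $t=0$; the $I$-summands in the $(1,1)$ and $(2,2)$ entries are absorbed by the adjoined unit, so $V(D)-e_1\in M_{2\times2}(\mathcal{A}_G^{\exp}(M,E))$.

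The main obstacle I expect is step (2)–(4) done \emph{with uniformity in the $G$-direction}: one must be sure the Gaussian heat-kernel estimates hold with constants independent of the point, which requires the bounded-geometry input to be genuinely uniform over the noncompact manifold $M=G\times_K S$ — this is exactly what cocompactness of the action buys, but it needs to be invoked carefully (e.g. via the slice, controlling curvature and injectivity radius by those of $S$ together with the left-invariant geometry of $G$). A secondary, more bookkeeping-type obstacle is keeping track of how the three kinds of derivatives ($\nabla_x$, $\nabla_y$, and $D'\in U(\mathfrak g)$) translate into the seminorms $\sup_g e^{qL(g)}\|D'\Phi(g)\|_\alpha$ defining $\mathcal{A}^{\exp}(G)\hat\otimes\Psi^{-\infty}(S)$ after the slice isomorphism; this is routine but must be spelled out to conclude membership in the Fréchet algebra rather than merely pointwise bounds. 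Both points are presumably why the authors relegate the detailed argument to the Appendix.
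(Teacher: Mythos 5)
Your proposal is correct in outline, but it takes a genuinely different route from the paper's Appendix proof (Proposition \ref{prop:CM-rapid-appendix}). The paper does not unwind $V(D)$ into heat operators or pass through the slice decomposition at all: instead it isolates the single ``hardest'' entry $f(D)D^-$ with $f(z)=e^{-z^2/2}(1-e^{-z^2})/z^2$ and applies the Cheeger--Gromov--Taylor functional calculus (in the form of Taylor's classes $\mathcal{S}^m_W$ of even functions holomorphic on a strip $\Omega_W$ with symbol bounds). The key point there is the decomposition $f(D)=f(D)^\#+f(D)^b$, with $f(D)^\#$ supported in $\{d(x,y)<1\}$ and $f(D)^b$ satisfying the off-diagonal estimate $|\nabla_x^l\nabla_y^n k^b_{f(D)}(x,y)|\leq C_j(1+d(x,y))^{-j}e^{-Wd(x,y)}$ for all $j$; the analytic work then reduces to verifying the elementary fact that $f\in\mathcal{S}^{-\infty}_W$ for every $W>0$. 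That argument packages all four matrix entries (and the heat kernel itself, as the paper remarks) into one functional-calculus lemma, and gets exactly the exponential decay $e^{-qd(x,y)}$ needed for $\mathcal{A}_G^{\exp}$ without ever invoking the stronger Gaussian estimate.

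Your route, by contrast, exposes the heat-operator building blocks explicitly, uses the Laplace transform $(1-e^{-s})/s=\int_0^1 e^{-rs}\,dr$ to reduce the middle entry to a one-parameter family of heat operators composed with a first-order operator, and then appeals to Gaussian off-diagonal heat-kernel estimates on manifolds of bounded geometry, transferring them across the slice via $d_M(s_1,gs_2)\asymp L(g)$. This is a valid and somewhat more classical approach, and the Gaussian estimate is of course far stronger than the exponential decay required; the trade-off is exactly the one you flag yourself --- you must carry the uniformity (bounded geometry, bounded Weitzenb\"{o}ck term) explicitly through steps (2)--(4), and you must account for the $U(\mathfrak g)$-derivatives as additional covariant derivatives whose Killing-field coefficients may grow polynomially (absorbed by the Gaussian, but this needs saying). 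The paper's functional-calculus formulation sidesteps this bookkeeping by building the exponential decay into the off-diagonal estimate once and for all and letting $W\to\infty$. Both arguments are sound; yours is more elementary in its ingredients, the paper's is more economical in its execution.
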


\noindent
We can thus define
\begin{equation}\label{exp-index}
\Ind_{{\rm exp}} (D)=[V_{D}]-[e_1]\in K_0 (\mathcal{A}_G^{{\rm exp}} (M,E))
\end{equation}
Using the inclusion $\iota: \mathcal{A}_G^{{\rm exp}} (M,E)\hookrightarrow C^*(M,E)^G$ we 
can express the $C^*(M,E)^G$-index class as the image of $[V_{D}]-[e_1]$ in  $K_0 (C^* (M,E)^G)$.
If $G$ satisfies the RD condition then we know that  there exists a continuous inclusion $ \mathcal{A}_G^{{\rm exp}}
 (M,E)\hookrightarrow \mathcal{A}^\infty (M,E)$. In this case, as $\mathcal{A}^\infty (M,E)$ is holomorphically closed
 in $C^*(M,E)$, we have
 \begin{equation}\label{CM=index-bis}
\Ind_{C^*(M,E)^G} (D)\equiv  \Ind_{\mathcal{A}^\infty (M,E)} (D)= [V_{D}]-[e_1]\in K_0 (\mathcal{A}^\infty (M,E))=
K_0 (C^* (M,E)^G)
\end{equation}
The advantage of using $\Ind_{\mathcal{A}^\infty (M,E)} (D)$ is that we can use Getzler rescaling
in order to prove an index theorem for the higher $C^*$-indices defined Subsection \ref{subsect:c*-indeces-closed}.

\begin{remark}
If $G$ is semisimple,
$M$ admits a G-equivariant spin$_c$ structure and $D$ is the spin$_c$ Dirac operator
with values in a G-equivariant vector bundle, the property that $V(D)$ is an element in $M_{2\times 2} (\mathcal{A}_G^{\infty} (M))$ also follows from
Hochs-Song-Tang \cite{HSTang} (building on Hochs-Wang \cite{Hochs-Wang-HC}), where  it is proved that the  Connes-Moscovici projector is
a $2\times 2$-matrix with entries   in the dense holomorphically closed subalgebra
of $C^* (M,E)^G$
constructed through the slice theorem using the Harish-Chandra subalgebra $\mathcal{C}(G)$ (and we know that $\mathcal{C}(G)$
is contained in $H^\infty_L (G)$ when $G$ is semisimple).
\end{remark}
 In the next section we shall see yet another approach 
to these results.

\subsection{Cyclic cocycles and the Van Est map}\label{section:van-est}$\;$\\
Here we briefly recall the construction of cyclic cocycles from smooth group cohomology \cite{ppt1} and the identification of the van Est map
as the pull-back along the classifying map $M\to G\slash K$ \cite{PP-akt}. 
For any Lie group $G$, denote by $(C^\bullet_{{\rm diff},\lambda}(G),\delta)$ the cochain complex of smooth, homogeneous group cochains
\[
C^k_{{\rm diff},\lambda}(G):=\{c\in C^\infty(G^{\times(k+1)},\mathbb{C}),~c(gg_0,\ldots,gg_k)=c(g_0,\ldots,g_k),~c(g_0,\ldots,g_k)=(-1)^kc(g_k,g_0,\ldots,g_{k-1})\},
\]
equipped with the standard differential. Also associated to $G$ is the cyclic cochain complex $C^\bullet_\lambda(C^\infty_c(G))$ of the convolution algebra $C^\infty_c(G)$. In \cite{ppt1} an explicit morphism
$C^k_{{\rm diff},\lambda}(G)\to C^\bullet_\lambda(C^\infty_c(G))$ of cochain complexes is described.

Now let $M$ be a closed smooth manifold equipped with a proper, cocompact action of $G$. As above, we denote by $\mathcal{A}^c_G(M)$ the algebra of $G$-invariant smoothing
operators with compact $G$-support, with associated cyclic cochain complex $C^\bullet_\lambda(\mathcal{A}^c_G(M))$. In \cite{PPT,PP-akt} morphisms are constructed as in the following commutative diagram: 
\[
\xymatrix{\Omega^\bullet_{\rm inv}(G\slash K)\ar[d]&&\ar[ll]C^\bullet_{{\rm diff},\lambda}(G)\ar[dll]\ar[drr]\ar[rr]&&\ar[d]C^\bullet_{\lambda}(C^\infty_c(G))\\
\Omega^\bullet_{\rm inv}(M)&&&&C_\lambda^\bullet(\mathcal{A}^c_G(M))}
\]
In this diagram, the triangle on the left gives the standard van Est theory involving the invariant differential forms on $M$ and $G\slash K$, where $K$ is a maximal compact subgroup of $G$. 
Explicitly, the left diagonal map is given as follows. Fix a cut-off function $\chi$ for the 
action of $G$, that is, $\chi\in C^\infty_c (M)$ and $\int_G \chi (g^{-1} x) dg=1$ $\forall x\in M$. Associated to a cochain $\varphi\in C^k_{{\rm diff},\lambda}(G)$ is a smooth function $f^\varphi\in C^\infty(M^{\times(k+1)})$ given by
\begin{equation}
\label{ac}
f^\varphi(x_0,\ldots,x_k):=\int_{G^{\times(k+1)}}\chi(g_0^{-1}x_0)\cdots\chi(g_k^{-1}x_k)\varphi(g_0,\ldots,g_k)d\mu(g_0)\cdots d\mu(g_k).
\end{equation}
With this notation, the invariant differential form $\omega^\chi_\varphi\in\Omega^k_{\rm inv}(M)$ associated to $\varphi$ is defined as
\[
\omega^\chi_\varphi:=(d_1\cdots d_k f^\varphi)|_\Delta
\]
where $d_i$ means taking the differential in the i'th variable, and $\Delta:M\to M^{\times(k+1)}$ is the diagonal embedding.

The other diagonal map is given by writing, for convenience only, $f^\varphi=f^\varphi_0\otimes\ldots\otimes f^\varphi_k$ 
 and writing down 
the following cyclic cocycle on $\mathcal{A}^c_G(M)$:
\begin{equation}
\label{af}
\tau^M_\varphi (A_0,\ldots,A_k):={\rm Tr}_\chi (f^\varphi_0\cdot A_0\cdots f^\varphi_k\cdot A_k),
\end{equation}
where ${\rm Tr}_\chi:\mathcal{A}^c_G(M)\to\mathbb{C}$ is the trace given by
\begin{equation}
\label{trace}
{\rm Tr}_\chi (A):=\int_M\chi(x)A(x,x) dx.
\end{equation}
Let us now rewrite this cyclic cocycle in a more convenient form, using the global slice $G\times_KS\cong M$ which identifies elements
 $A\in \mathcal{A}^c_G(M)$ with $K\times K$ equivariant maps $\Phi_A:G\to\Psi^{-\infty}(S)$ by defining
 \[
 \Phi_A(g,s_1,s_2):=A(s_1,gs_2).
 \]
Using, as in \cite[Lemma 2.1]{PP-akt}, 
a family of cut-off functions $\{\chi_\epsilon\}_{\epsilon>0}$ which converges in the limit $\epsilon\downarrow 0$ to the characteristic 
function on $S$ in the distributional sense, the trace \eqref{trace} can be written as
\[
{\rm Tr}_\chi (A)=\lim_{\epsilon\downarrow 0}\int_M\chi_\epsilon(x)A(x,x) dx={\rm Tr}_S(\Phi_A(e)),
\]
where ${\rm Tr}_S$ denotes the trace on $\Psi^{-\infty}(S)$, given by integration of the kernel over the diagonal.
We can work out the cyclic cocycle $\tau_\varphi^M$ in \eqref{af} in similar fashion to obtain:
\begin{align*}
\tau^M_\varphi (A_0,\ldots,A_k)&=\int_{G^{\times k}}{\rm Tr}_S
\left(\Phi_{A_0}((g_1\cdots g_k)^{-1})\circ \Phi_{A_1}(g_1)\circ \ldots\circ \Phi_{A_k}(g_k)\right)
\varphi(e,g_1,g_1g_2,\ldots,g_1\cdots g_k)dg_1\cdots dg_k
\end{align*}
When $M=G$, i.e., $S=\{pt.\}$, this formula reduces to the usual cyclic cocycle 
\[
\tau^G_\varphi(a_0,\ldots,a_k):=\int_{G^{\times k}}a_0((g_1\cdots g_k)^{-1})a_1(g_1)\cdots a_k(g_k)\varphi(e,g_1,g_1g_2,\ldots,g_1\cdots g_k)dg_1\cdots dg_k.
\]
on the convolution algebra $C^\infty_c(G)$. This association $\varphi\mapsto\tau^G_\varphi$ describes the horizontal map in the upper right corner of the diagram.

\subsection{Getzler rescaling}\label{PPT-PP}
$\;$\\
We now return to the setting of a $G$-equivariant 
Dirac operator $D$ on an even dimensional manifold. Let us consider
the index class $\Ind_c (D)\in K_0 (\mathcal{A}_G^c(M))$ defined for example through the Connes-Skandalis
projector associated to a $G$-compact parametrix. Given a smooth group cocycle $\varphi$ of degree $2p$, we 
can pair the associated cyclic cocycle $\tau^M_\varphi\in C^{2p}(\mathcal{A}_G^c(M))$ with the index class
 $\Ind_c (D)$ and so define the higher index
\[
{\rm Ind}_{c,\varphi}(D):=(-1)^p \frac{2p !}{p!}\,\left<\tau_\varphi^M,{\rm Ind}_c (D)\right>.
\]
It is important to notice that this is {\em not} a $C^*$-higher index. In \cite{ppt1},  a higher index theorem is proved giving a topological formula for the outcome of this pairing using the algebraic index theorem in deformation quantization.  Here we shall give
a proof using heat kernels and Getzler rescaling which is better adapted for the case of manifolds with boundary that we are aiming for. We shall treat explicitly the spin case but the proof can be generalized to any Dirac
operator (associated to a unitary Clifford action and to a Clifford connection) in the usual fashion.

\begin{theorem}\label{theo:short-time}
Let $\varphi\in C^{2p}_{{\rm diff},\lambda}(G)$, and consider the Connes--Moscovici projector $[V(D)]-[e_1]\in K_0 (\mathcal{A}_G^{{\rm exp}} (M,E))$. Assume that the associated cyclic cocycle $\tau_\varphi^M$ 
extends from $\mathcal{A}_G^{{\rm c}} (M,E)$ to $\mathcal{A}_G^{{\rm exp}} (M,E)$.
Then the following identities hold true:
\[
\left<\tau^M_\varphi, [V(D)]-[e_1]\right>=\lim_{t\downarrow 0}\left<\tau^M_\varphi,[V(tD)]-[e_1]\right>=
\frac{p!}{2p !}\frac{(-1)^p}{(2\pi i)^p} \,\int_M \chi\hat{A}(M)\wedge\omega_\varphi.
\]
\end{theorem}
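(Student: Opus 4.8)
The plan is to prove the two equalities separately. The first equality, $\left<\tau^M_\varphi, [V(D)]-[e_1]\right>=\lim_{t\downarrow 0}\left<\tau^M_\varphi,[V(tD)]-[e_1]\right>$, is a homotopy invariance statement: the family $t\mapsto [V(tD)]-[e_1]$ is a continuous path of idempotents in $M_{2\times 2}(\mathcal{A}^{\exp}_G(M,E))$ (using Proposition \ref{prop:CM-rapid} applied to $tD$, together with the fact that $tD$ is again a $G$-equivariant Dirac operator for each $t>0$, and that the exponentials $e^{-t^2 D^\mp D^\pm}$ depend smoothly on $t$ in the relevant Fr\'echet seminorms — this last point requires the heat-kernel estimates established in Part 1 and recalled in Section \ref{sect:more-on-heat}). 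Since by hypothesis $\tau^M_\varphi$ extends to a continuous cyclic cocycle on $\mathcal{A}^{\exp}_G(M,E)$, the pairing $t\mapsto \left<\tau^M_\varphi,[V(tD)]-[e_1]\right>$ is constant on $(0,\infty)$, and in particular equals its limit as $t\downarrow 0$. One must be slightly careful that the path stays in the unitalized algebra and that the idempotent relation $V(tD)^2=V(tD)$ holds for all $t$ (it does, by the algebraic identities $I-QD^+=e^{-\frac12 D^-D^+}$ etc., with $D$ replaced by $tD$).

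The substance is the second equality, $\lim_{t\downarrow 0}\left<\tau^M_\varphi,[V(tD)]-[e_1]\right>=\frac{p!}{2p!}\frac{(-1)^p}{(2\pi i)^p}\int_M \chi\hat{A}(M)\wedge\omega_\varphi$. First I would write the pairing out explicitly. Using the form of $\tau^M_\varphi$ from \eqref{af} and the structure of $V(tD)$ in \eqref{cm-idempotent1}, the pairing $\left<\tau^M_\varphi,[V(tD)]-[e_1]\right>$ becomes a finite universal linear combination (the standard Chern-character-in-cyclic-homology formula, $\left<\tau,[e]-[e_1]\right>=\sum_m c_m\,\tau((e-e_1),e,\dots,e)$ with $2m=2p$) of terms of the shape $\Tr_\chi\big(f^\varphi_0\cdot B_0\cdots f^\varphi_{2p}\cdot B_{2p}\big)$ where each $B_j$ is one of the four matrix entries of $V(tD)$, i.e. built from $e^{-t^2 D^-D^+}$, $e^{-t^2 D^+D^-}$, the off-diagonal blocks, and $tD^\pm$. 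Each such entry has a smooth Schwartz kernel expressible via the heat kernel of $t^2 D^2$. Unwinding $\Tr_\chi$ as integration against $\chi$ of the kernel on the diagonal, and inserting the explicit heat-kernel expressions, the whole pairing is an integral over $M\times G^{\times 2p}$ (or, in slice coordinates, $S\times G^{\times 2p}$) of a product of heat kernels of $t^2D^2$ at nearby points, weighted by $\chi$, by the cut-off functions $f^\varphi_j$, and by $\varphi(e,g_1,\dots,g_1\cdots g_{2p})$.

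Now I would apply the Getzler rescaling argument. Localize near the diagonal: as $t\downarrow 0$ the heat kernels concentrate, so the group integration over $G^{\times 2p}$ localizes near a point and the product of $2p+1$ cut-off factors $f^\varphi_j$ collapses, producing — via the differentiation identity $\omega^\chi_\varphi=(d_1\cdots d_{2p}f^\varphi)|_\Delta$ built into the definition of $\omega_\varphi$ — exactly $2p$ differentiations of the heat-kernel factors in the appropriate variables, evaluated on the diagonal. This is the mechanism by which the combinatorial constant $\frac{p!}{2p!}$ and the form $\omega_\varphi$ emerge. On the remaining diagonal integral over $M$ one runs the usual Getzler rescaling of the Clifford variables in the spin case: rescale $x\mapsto \sqrt t\,x$ and the Clifford generators, so that $t^2D^2$ converges in the rescaled calculus to the harmonic-oscillator operator, Mehler's formula gives the limiting kernel, and taking the supertrace picks out the $\hat A$-form $\hat A(M)$ (times the appropriate power of $(2\pi i)$). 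Combined with the $2p$ extra differentiations assembling $\omega_\varphi$, the $t\downarrow 0$ limit is $\frac{p!}{2p!}\frac{(-1)^p}{(2\pi i)^p}\int_M \chi\,\hat A(M)\wedge\omega_\varphi$, as claimed. To turn the supertrace of the $2\times 2$ projector into the relevant index density one also uses that $\str$ kills the off-diagonal and identity contributions in the appropriate way; this is the standard McKean–Singer bookkeeping.

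The main obstacle I expect is not the rescaling itself — which is classical once everything is localized — but the bookkeeping needed to reduce the multi-variable integral over $M\times G^{\times 2p}$ to a single localized heat-kernel integral over $M$ while correctly tracking \emph{all} the constants and \emph{all} the differentiations of the cut-off functions, so that precisely the form $\omega_\varphi$ (and not some multiple or some related form) and precisely the constant $\tfrac{p!}{2p!}$ appear. In the closed case this is manageable but delicate; I would organize it by first proving a pointwise asymptotic expansion of the integrand as $t\downarrow 0$ uniformly on the (compact) quotient, with remainder estimates coming from the exponential off-diagonal decay of the heat kernel established earlier, then integrating the expansion. A secondary technical point is justifying the interchange of $\lim_{t\downarrow 0}$ with the $M\times G^{\times 2p}$ integration; here $G$-compactness of $\supp\chi$ and $\supp f^\varphi_j$ together with the heat-kernel Gaussian bounds confine everything to a fixed compact set and give dominated convergence.
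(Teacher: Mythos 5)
Your proposal follows the same strategy as the paper: reduce the pairing to a Getzler-rescaled supertrace of a smoothing operator built from the heat kernel and the Alexander–Spanier cochain $f^\varphi$, use homotopy invariance (via the extended cocycle) to replace $D$ by $tD$, and compute the $t\downarrow 0$ limit by Getzler's symbol calculus as in Connes–Moscovici, with the cut-off $\chi$ appearing only through the trace ${\rm Tr}_\chi$ and the $2p$ differentiations of the $f^\varphi_j$ assembling $\omega_\varphi$. The paper is terse here — it delegates the combinatorics and rescaling to \cite[\S 3]{cm} via the operator $\Pi(t)$ — but your outline is that same argument written out.
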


\begin{proof}
The proof of this theorem follows by adapting the original proof of the localized index formula in \cite{cm} to our setting. As in that proof, we use Getzler's symbol calculus for 
pseudodifferential operators acting on spinor bundles, now in the invariant setting. In particular, Getzler's fundamental trace formula \cite[Thm 3.7]{Getzler} for the trace of an invariant 
smoothing operator acting on the sections of the spinor bundle now becomes:
\[
{\rm Tr}^\chi_s(P)=\frac{1}{(2\pi)^{\dim M}}\int_{T^*M}\chi(x){\rm tr}_s(\sigma_{t^{-1}}(P))(x,\xi) dxd\xi,
\]
where $\sigma_{t^{-1}}(P)$ denotes the rescaled Getzler symbol and ${\rm tr}_s:{\rm Cliff}(T_xM) \to\mathbb{C}$ denotes the Berezin trace. The appearance of the cut-off function $\chi(x)$ is 
in accordance with the formula \eqref{trace} for the trace on the algebra $\mathcal{A}_G^c(M)$ of smoothing operators.

As in \cite[\S 3]{cm} we can write the pairing in terms of this trace as
\[
\lim_{t\downarrow 0}\left<\tau_\varphi,[V(tD)]-[e_1]\right>=\lim_{t\downarrow 0}{\rm Tr}^\chi_s(\Pi(t))=\frac{1}{(2\pi)^{\dim M}}\int_{T^*M}\chi(x){\rm tr}_s(\sigma_{t^{-1}}(\Pi(t)))(x,\xi) dxd\xi,
\]
with $\Pi(t)$ the invariant smoothing operator as in \cite{cm} for our specific Alexander--Spanier cochain $f^\varphi$ given in \eqref{ac}.  For each point $x\in M$, the evaluation of the quantity
\[
\int_{T^*_xM} {\rm tr}_s(\sigma_{t^{-1}}(\Pi(t)))(x,\xi)d\xi
\]
proceeds as in \cite[\S 3]{cm} to give 
\begin{align*}
\frac{1}{(2\pi)^{\dim M}}\int_{T^*M}\chi(x){\rm tr}_s(\sigma_{t^{-1}}(\Pi(t)))(x,\xi) dxd\xi&=\frac{(-1)^p}{(2\pi i)^p} \frac{p !}{2p!}\,\int_M \chi\hat{A}(M)f_0^\varphi df^\varphi_1\wedge df^\varphi_k\\&=\frac{(-1)^p}{(2\pi i)^p} \frac{p !}{2p!}\,\int_M \chi\hat{A}(M)\wedge \omega_\varphi.
\end{align*}
This completes the proof.
\end{proof}

\subsection{The $C^*$-index theorem}\label{subsect:c*-indeces-closed}
In this subsection we will elaborate on  the $C^*$-index theorem that we have established in Part 1.  Our presentation
will be crucial later in the paper, when we shall pass to manifolds with boundary.

\begin{theorem}\label{c*-index-th-part-1}
Assume  that
$G$ has finitely many connected components and satisfies the RD condition.
Let $\varphi\in Z^{2p}_{{\rm dif},\lambda}(G)$ be a cocycle of polynomial growth
and consider the $C^*$-index class $\Ind_{C^*(M,E)} (D)\in K_0 (C^* (M,E)^G)$.
Then the homomorphism  $\langle \tau^M_\varphi,\cdot \rangle: K_0 (\mathcal{A}^c_G (M,E))\to \CC$ defined by the
pairing $HC^* (\mathcal{A}^c_G (M,E))\times K_0 ((\mathcal{A}^c_G (M,E))\to \CC$
extends to a homomorphism $\langle \tau^M_\varphi,\cdot \rangle: K_0 (\mathcal{A}^\infty_G (M,E))= K_0 (C^* (M,E)^G)\to \CC$
and for the higher $C^*$-indices
\begin{equation}\label{def-higher-c*-index}
\Ind_\varphi (D):= (-1)^p \frac{2p !}{p!}\left<\tau^M_\varphi, \Ind_{C^*(M,E)} (D) \right>
\end{equation}
the following $C^*$-higher index formula holds:
\begin{equation}\label{index-formula}
\Ind_\varphi (D)=\frac{1}{(2\pi i)^p}\,\int_M \chi\hat{A}(M)\wedge
{\rm Ch}^\prime (E)\wedge\omega_\varphi.
\end{equation}
\end{theorem}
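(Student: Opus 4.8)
The plan is to leverage the two main results already at hand: the short-time limit formula of Theorem~\ref{theo:short-time} (expressed via the Connes--Moscovici projector on $\mathcal{A}_G^{\mathrm{exp}}(M,E)$) and the fact, recalled in \eqref{CM=index-bis}, that under the RD hypothesis the class $[V(D)]-[e_1]\in K_0(\mathcal{A}_G^{\mathrm{exp}}(M,E))$ maps to the $C^*$-index class $\Ind_{C^*(M,E)}(D)$ and in fact already represents it in $K_0(\mathcal{A}^\infty_G(M,E))=K_0(C^*(M,E)^G)$. First I would verify the extension statement: a group cocycle $\varphi$ of polynomial growth gives a cyclic cocycle $\tau^M_\varphi$ that extends continuously from $\mathcal{A}_G^c(M,E)$ to $\mathcal{A}_G^\infty(M,E)$ --- this is precisely the content established in Part~1 (\cite{PP-akt}) under the RD condition, using the alternative description of the algebras via Proposition~\ref{prop:alternative-description}, so that the pairing $HC^*\times K_0\to\CC$ descends to the holomorphically closed subalgebra and hence to $K_0(C^*(M,E)^G)$. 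In particular the pairing $\langle\tau^M_\varphi,\Ind_{C^*(M,E)}(D)\rangle$ is well-defined and equals $\langle\tau^M_\varphi,[V(D)]-[e_1]\rangle$ computed in $\mathcal{A}_G^{\mathrm{exp}}(M,E)$.

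Next I would apply Theorem~\ref{theo:short-time} verbatim in the untwisted case to get
\[
\left<\tau^M_\varphi,\Ind_{C^*(M,E)}(D)\right>=\left<\tau^M_\varphi,[V(D)]-[e_1]\right>=\frac{p!}{2p!}\frac{(-1)^p}{(2\pi i)^p}\int_M\chi\,\widehat{A}(M)\wedge\omega_\varphi,
\]
so that multiplying by $(-1)^p\frac{2p!}{p!}$ yields $\Ind_\varphi(D)=\frac{1}{(2\pi i)^p}\int_M\chi\,\widehat{A}(M)\wedge\omega_\varphi$. To obtain the twisted form with the Chern character factor $\mathrm{Ch}'(E)$, I would repeat the Getzler rescaling argument of Theorem~\ref{theo:short-time} for a generalized Dirac operator $D$ acting on $\spi\otimes W$ (or, in the non-spin case, on a general Clifford module with Clifford connection), where $W$ is the auxiliary twisting bundle carrying the curvature that produces $\mathrm{Ch}'(E)$. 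The only change in the local computation is that Getzler's rescaled symbol of $\Pi(t)$ acquires, in the limit $t\downarrow 0$, the extra factor $\exp(-F^W/2\pi i)$ from the curvature of the twisting connection acting by ordinary (non-Clifford) endomorphisms; the Berezin trace ${\rm tr}_s$ then produces $\widehat{A}(M)\wedge\mathrm{Ch}'(E)$ in place of $\widehat{A}(M)$, while the Alexander--Spanier cochain $f^\varphi$ contributes $\omega_\varphi$ exactly as before. This is entirely parallel to the passage from the $\widehat{A}$-genus to the twisted index in the classical local index theorem.

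The main obstacle --- and the step deserving the most care --- is the justification that the short-time limit may be taken \emph{inside} the pairing, i.e.\ that $\lim_{t\downarrow 0}\langle\tau^M_\varphi,[V(tD)]-[e_1]\rangle$ both exists and equals $\langle\tau^M_\varphi,[V(D)]-[e_1]\rangle$. Homotopy invariance of the pairing handles the equality $\langle\tau^M_\varphi,[V(tD)]-[e_1]\rangle=\langle\tau^M_\varphi,[V(D)]-[e_1]\rangle$ for each fixed $t>0$ once one knows $t\mapsto V(tD)$ is a continuous path of idempotents in $M_{2\times 2}(\mathcal{A}_G^{\mathrm{exp}}(M,E)^+)$, which in turn rests on Proposition~\ref{prop:CM-rapid} and heat-kernel estimates that are uniform for $t$ in compact subsets of $(0,\infty)$; the genuinely delicate point is controlling the entries of $V(tD)$, and of the cut-off-twisted kernels entering $\tau^M_\varphi$, uniformly as $t\downarrow 0$ so that the Getzler rescaling produces a finite limit. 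Here the equivariant version of Getzler's trace formula quoted in the proof of Theorem~\ref{theo:short-time}, together with the finite-propagation/$G$-compact-support bookkeeping built into the trace ${\rm Tr}_\chi$ on $\mathcal{A}_G^c(M,E)$, is exactly what makes the argument go through, and I would import it wholesale rather than redo the analysis.
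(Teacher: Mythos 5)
Your proposal follows essentially the same route as the paper: establish that $\tau^M_\varphi$ extends to the holomorphically closed algebra $\mathcal{A}^\infty_G(M,E)$ (so the pairing descends to $K_0(C^*(M,E)^G)$), identify the $C^*$-index class with the Connes--Moscovici class $[V(D)]-[e_1]$ via \eqref{CM=index-bis}, and then invoke the Getzler-rescaling limit of Theorem~\ref{theo:short-time}. The one place where you diverge in presentation is the extension step: you cite Part~1 wholesale, whereas the paper re-derives the extension inline -- introducing Lemma~\ref{key-lemma-closed} (continuity of $\Phi\mapsto\|\Phi(\cdot)\|_1$ into $H^\infty_L(G)$), the norm $|||\Phi|||_m^2=\int_G\|\Phi(g)\|_1^2(1+L(g))^{2m}\,dg$, and the inclusion $\mathcal{A}^\infty_G(M)\subset\overline{\mathcal{A}^c_G(M)}^{|||\cdot|||_m}$ -- citing Part~1 only for the Jolissaint/Connes--Moscovici-type estimate on $\tau^G_{|\varphi|}$. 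Logically the two are interchangeable here, but the paper deliberately opens the hood because this exact scaffolding (trace-norm continuity on the slice, weighted $L^2$-norms over $G$, density of the compactly supported algebra) is reused almost verbatim in Section~\ref{sect:aps} for the $b$-trace norm $\|\cdot\|_b$ and the regularized cochain $\tau^r_\varphi$; a pure citation would not set that up. Your remark on the twisted Chern factor $\mathrm{Ch}'(E)$ is apt -- the paper's Theorem~\ref{theo:short-time} explicitly treats only the spin case and defers the twisted generalization to a one-line comment, so your sketch of how the curvature of the auxiliary bundle enters the rescaled Berezin trace is a genuine (if standard) addition. Your final paragraph worries about taking the short-time limit inside the pairing, but this is already packaged into the statement of Theorem~\ref{theo:short-time} (the first equality there is exactly the $t$-independence of the pairing), so no further work is needed at the level of this theorem.
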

\begin{proof}
For notation convenience we expunge $E$ from the notation.
We shall freely use the alternative description of $\mathcal{A}^c_G(M)$
and $\mathcal{A}^\infty_G(M)$
given in Proposition \ref{prop:alternative-description}:
\begin{equation*}
\mathcal{A}^c_G(M)=\left\{\Phi:G\to \Psi^{-\infty}(S),~\mbox{smooth, compactly supported and }
~K\times K~\mbox{invariant}\right\}\,,
\end{equation*}
where smoothness is with respect to the seminorms $||~||_{\alpha}$ defining the Fr\'echet structure of $\Psi^{-\infty}(S)$, and
\begin{equation*}
\mathcal{A}^\infty_G(M):=\left\{\Phi:G\to \Psi^{-\infty}(S),~K\times K~\mbox{invariant} \text{ such that }~g\mapsto (1+L(g))^k
p_{\alpha,k}(\Phi(g))\in L^2(G)~\forall \alpha,k\right\}.
\end{equation*}

\begin{lemma}\label{key-lemma-closed}
The map
$\Phi\to ||\Phi(\cdot)||_1$ defines a continuous application 
$\mathcal{A}^\infty_G(M)\to H^\infty_L (G)$. 
\end{lemma}

\begin{proof}
We are considering the composition
$$G\xrightarrow{\Phi} \Psi^{-\infty}(S)\xrightarrow{||\cdot ||_1}\RR$$
and so it suffices to prove that
the map $\Psi^{-\infty}(S)\ni A\to ||A||_1\in\RR$ is continuous. This is well-known but we present the
argument nevertheless. 
Consider the operator $A_\kappa$ 
defined by a smoothing kernel  $\kappa$
and  write $A_\kappa$ as $(1+\Delta)^{-M} \circ ((1+\Delta)^{M} A_\kappa)$ for $M>\dim S/2$,
so that
$$||A_\kappa ||_1 \leq || (1+\Delta)^{-M}||_1 ||((1+\Delta)^{M} A_\kappa)||\leq  
|| (1+\Delta)^{-M}||_1 ||((1+\Delta)^{M} A_\kappa)||_2 $$
where the last norm is the Hilbert-Schmidt norm. The trace norm of 
$(1+\Delta)^{-M}$ is a constant whereas we can obviously bound the second factor  on the right hand side 
by  one of the
seminorms defining the  Fr\'echet structure of $\Psi^{-\infty}(S)$;
indeed, the kernel of $(1+\Delta)^{M} A_\kappa$ is $L(x,y):= (1+\Delta_x )\kappa (x,y)$
and we know that $||((1+\Delta)^{M} A_\kappa)||_2 =\int |L(x,y)|^2 dx dy$.
The Lemma is proved.\\
\end{proof}
We go back to the proof of the Theorem. Let $\Phi_0,\Phi_1,\dots,\Phi_k\in  \mathcal{A}^c_G(M)$. We want to estimate $|\tau_\varphi^M (\Phi_0,\dots,\Phi_k)|$, that is, the absolute value of 
$$\int_{G^{k}} \int_{S^{k+1}} \Phi_0 ((g_1 \cdots g_k)^{-1})(s_0,s_1)\Phi_{1} (g_{1})(s_{1},s_2)\cdots 
\Phi_k (g_k) (s_k,s_0)  \,\varphi(e,g_1,g_1 g_2,\dots,g_1 g_2 \dots g_{k}) ds_0 \cdots ds_k dg_1\cdots dg_k
$$
Bringing the absolute value under the sign of integral, using Lidski's theorem,
$\Tr K=\int_S K(s,s)ds$, and the well-known estimate 
$| \Tr (K) |\leq ||K ||_1$ we have
\begin{align*}
 |\tau_\varphi^M (\Phi_0,\dots,\Phi_k)| &\leq 
\int_{G^{k}} || \Phi_0 ((g_1 \cdots g_k)^{-1})\circ \Phi_1(g_1)(s_1,s_2)\cdots \circ  \Phi_k (g_k)||_1  \,|\varphi(e,g_1,g_1 g_2,\dots,g_1 g_2 \dots g_{k})|  dg_1\cdots dg_k\\
&\leq \int_{G^{k}} || \Phi_0 ((g_1 \cdots g_k)^{-1})||_1 ||\Phi_1(g_1)||_1 \cdots   ||\Phi_k ((g_1 \cdots g_k)^{-1})||_1 \,|\varphi(e,g_1,g_1 g_2,\dots,g_1 g_2 \dots g_{k})|  dg_1\cdots dg_k\\
&= \tau_{|\varphi|}^G ( || \Phi_0 (\cdot)||_1,  \cdots  ,||\Phi_{k} (\cdot)||_1)
\end{align*}
Now, 
under the present assumptions we have proved in Part 1 that 
$$\tau_{|\varphi|}^G ( || \Phi_0 (\cdot )||_1,  \cdots  ,||\Phi_{k} (\cdot)||_1)\leq C \nu_{p+\ell} ( || \Phi_0 (\cdot)||_1)
\cdots \nu_{p+\ell} ( || \Phi_{k} (\cdot)||_1$$
with $p$ given by the hypothesis of polynomial growth of $\varphi$ and $\ell$ given by the RD assumption.
 See Proposition 5.6
in Part 1. 
Given $\Phi\in  \mathcal{A}^c_G(M)$, let us  introduce the following norm:
\begin{equation}\label{norm-31}
||| \Phi |||^2_m := \int_G ||\Phi (g)||^2_{1} (1+L(g))^{2m} dg \,.
\end{equation}
Notice that
$\nu_{p+\ell} ( || \Phi (\cdot)||_1)= ||| \Phi |||_{p+\ell}$.
The expression $\int_G ||\Phi (g)||^2_{1} (1+L(g))^{2m} dg $ can also be written  for $\Phi\in  \mathcal{A}^\infty_G(M)$
and we know from the Lemma that
if $\Phi\in \mathcal{A}^\infty_G(M)$ 
then 
for each $m\in\NN$ we have that $|||\Phi |||_m <\infty$.
Consequently  
given $m\in\NN$ 
we have that 
\begin{equation}\label{inclusion-closed}
\mathcal{A}^\infty_G(M) \subset \overline{\mathcal{A}^c_G(M)}^{|||\cdot|||_m}\,.
\end{equation}
Since we have proved that 
$$ |\tau_\varphi^M (\Phi_0,\dots,\Phi_k)| \leq C ||| \Phi_0 |||_{p+\ell} \cdots  ||| \Phi_k |||_{p+\ell}$$
we conclude, finally, that $\tau_\varphi^M$ extends continuously to $\mathcal{A}^\infty_G(M)$.\\
The index formula follows immediately from \eqref{CM=index-bis} and  Theorem \ref{theo:short-time}.
\end{proof}

\section{More on heat kernels and holomorphically closed subalgebras}\label{sect:more-on-heat}

We have proved in Proposition \ref{prop:CM-rapid} that the Connes-Moscovici projector $V(D)$ 
is an element in $M_{2\times 2} (\mathcal{A}_G^{{\rm exp}} (M))$. If
$G$ satisfies the RD condition, then, from the inclusion $\mathcal{A}_G^{{\rm exp}} (M))\subset \mathcal{A}_G^{\infty} (M)$, we 
have deduced that
$V(D)$ is an element in $M_{2\times 2} (\mathcal{A}_G^{\infty} (M))$ and that it is a representative of the $C^*$-index class.

In this section we want to present a different approach to these results, based on a direct analysis 
of the right hand side of 
\begin{equation}\label{heat-operator}\exp (-t D^2)=\frac{1}{2\pi i}\int_\gamma e^{-t\lambda} (D^2 - \lambda)^{-1} d\lambda
\end{equation}
This analysis has the advantage of extending easily to manifolds with cylindrical ends which is why we present
it now in the closed case. Moreover,
some of the results needed in our discussion will be used crucially later in the paper
and are, in our opinion, of independent interest.

\medskip
\noindent
For some of our results, as in Part 1, we shall consider 
any  Fr\'echet algebra   $\mathcal{A}(G)$ with a  continuous inclusion
$ C^\infty_c (G)\subset \mathcal{A}(G)$ and with the following two properties:\\
(i) there exists 
 a continuous injective map
$\mathcal{A}(G) \hookrightarrow  C^*_r (G)$ which makes  $\mathcal{A}(G)$ a  subalgebra of $C^*_r (G)$;\\
(ii) $\mathcal{A}(G)$ is holomorphically closed
in $C^*_r (G)$\\
We have called such Fr\'echet algebras {\it admissible}.\\
At some point however, we shall assume that $G$ satisfies the RD condition and consider 
the particular admissble algebra $H^\infty_L (G)$.

\subsection{The inverse of a $L^2$-invertible Dirac operator.}\label{subsect:resolvant}$\;$

\noindent
Let $D$ be a $G$-equivariant symmetric Dirac-type operator on a closed proper $G$-manifold $M$. We denote by $E$ the corresponding Clifford module (but in order to lighten the notation, we shall often forget about $E$ in the notation).
 It is well known that $D$ is essentially self-adjoint.  We still denote
by $D$ its unique self-adjoint extension. We keep identifying an algebra of smooth kernels (under convolution)
with the corresponding algebra of integral operators (under composition).

%
%
%
\begin{proposition}\label{prop:inverse-0}
Let $\mathcal{A}(G)\subset C^*_r G$ be an admissible Fr\'echet algebra.
We shall additionally assume that 
 $\mathcal{A}(G)\subset C^\infty (G)$.
 Let $\mathcal{A}_G (M)$ be the corresponding
Fr\'echet algebra of smoothing operators.
Assume  that
$D$ is $L^2$-invertible. Then 
$$D^{-1}=Q+ Q\circ A$$
with $Q\in \Psi^{-1}_{G,c} (M)$ and $A\in \mathcal{A}_G (M)$, from which it follows that 
$D^{-1}=Q+ C$  with $Q\in \Psi^{-1}_{G,c} (M)$ and $C\in \mathcal{A}_G (M)$.
\end{proposition}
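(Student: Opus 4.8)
The plan is to construct the inverse of $D$ from an interior parametrix and then control the correction term by an elliptic regularity / Sobolev argument, using the $L^2$-invertibility hypothesis to get \emph{global} (not just local) decay of the correction kernel. First I would pick a $G$-compactly supported pseudodifferential parametrix $Q\in\Psi^{-1}_{G,c}(M)$ for $D$, so that $QD=\Id - R_1$ and $DQ=\Id - R_2$ with $R_1,R_2\in\Psi^{-\infty}_{G,c}(M)\subset\mathcal{A}^c_G(M)$; this is standard for $G$-equivariant elliptic operators on a cocompact proper $G$-manifold (one patches local parametrices with an invariant partition of unity built from the cut-off function $\chi$). Multiplying $QD=\Id-R_1$ on the right by $D^{-1}$ gives the candidate identity $D^{-1}=Q+R_1 D^{-1}$, so everything reduces to showing that $A:=R_1 D^{-1}$, equivalently $D^{-1}=Q+Q\circ A$ after re-grouping (writing $R_1 D^{-1}=R_1 Q + R_1 R_2 D^{-1}$ and iterating), lands in $\mathcal{A}_G(M)$.

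The key step is therefore: \emph{the operator $R D^{-1}$ (and $D^{-1}R$, $R D^{-1} R'$ for $R,R'\in\mathcal{A}^c_G(M)$) has a smooth kernel lying in $\mathcal{A}_G(M)$.} For smoothness one runs the usual elliptic-regularity bootstrap: $D^{-1}$ maps $H^s\to H^{s+1}$ (locally and $G$-equivariantly, by $L^2$-invertibility plus ellipticity), so composing with the smoothing operator $R$ on one side and the smooth kernel $R'$ on the other produces a kernel smooth in both variables. For membership in $\mathcal{A}_G(M)$ --- i.e.\ for the off-diagonal decay estimates --- I would use the identification via the global slice $G\times_K S\cong M$ (Proposition \ref{prop:alternative-description}): $D^{-1}$ has finite propagation only approximately, but $R$ is $G$-compactly supported, hence $RD^{-1}$ has a kernel whose $\Psi^{-\infty}(S)$-valued function $\Phi_{RD^{-1}}$ on $G$ is controlled by the convolution of the compactly supported $\Phi_R$ with the kernel of $D^{-1}$; since $D^{-1}\in C^*_r(G)\hat\otimes\mathcal{B}(\text{slice})$-type data lies in the Roe algebra, and $R$ compactly supported, the product lies in the $\mathcal{A}(G)$-controlled algebra precisely because $\mathcal{A}(G)$ is holomorphically closed in $C^*_r(G)$ and contains $C^\infty_c(G)$. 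Concretely: $D^{-1}$ is an $L^2$-bounded $G$-invariant operator of, say, "$L^1$-Schwartz" decay coming from the heat/Cauchy representation; convolving against the compact support of $R$ preserves the decay type defining $\mathcal{A}(G)$, and the additional hypothesis $\mathcal{A}(G)\subset C^\infty(G)$ upgrades this to the smooth estimates defining $\mathcal{A}_G(M)$.

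The main obstacle I expect is exactly this last point: controlling the \emph{global} behaviour of the kernel of $R D^{-1}$ on the non-compact manifold $M$ and showing the decay matches the admissible algebra $\mathcal{A}(G)$. On a closed manifold one would be done immediately, but here cocompactness of the $G$-action is the substitute, and one must genuinely use that $D^{-1}$ --- being the inverse of a $G$-invariant operator with a spectral gap --- has, via the resolvent/Cauchy integral \eqref{heat-operator} or via finite-propagation approximations, a kernel with at least exponential off-diagonal decay (Combes--Thomas / functional calculus with finite propagation speed for $D$), so that after convolving with the compactly supported $R$ one stays inside $\mathcal{A}^{\exp}_G(M)\subset\mathcal{A}_G(M)$. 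I would isolate this as the technical heart: (i) $L^2$-invertibility $\Rightarrow$ spectral gap $\Rightarrow$ exponential decay of the kernel of $D^{-1}$; (ii) exponential decay is stable under convolution with $G$-compactly supported smoothing kernels; (iii) elliptic regularity gives the derivative estimates, so $RD^{-1}\in\mathcal{A}^{\exp}_G(M)$. Then $D^{-1}=Q + Q\circ(R_2 + R_2 R_2 D^{-1} + \cdots)$, or more simply $D^{-1}=Q+Q\circ A$ with $A=DQD^{-1}-D^{-1}\cdot(\text{junk})$ rearranged so that $A$ is a finite sum of terms each of the form $R D^{-1}$ or $R$, all in $\mathcal{A}_G(M)$; absorbing $Q\circ A$'s discrepancy gives the final form $D^{-1}=Q+C$ with $C\in\mathcal{A}_G(M)$.
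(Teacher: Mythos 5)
Your overall identity $D^{-1}=Q+R_1D^{-1}$ is fine, but the way you propose to control the correction term $R_1D^{-1}$ does not work at the level of generality of the statement, and the paper takes a genuinely different route that avoids the issue entirely.

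The gap: you want to conclude $R_1D^{-1}\in\mathcal{A}_G(M)$ by showing $D^{-1}$ has exponential off-diagonal decay (Combes--Thomas / finite propagation) and then invoking $\mathcal{A}^{\exp}_G(M)\subset\mathcal{A}_G(M)$. But the statement is for an \emph{arbitrary} admissible Fr\'echet algebra $\mathcal{A}(G)$, and there is no hypothesis that $\mathcal{A}(G)$ contains all exponentially decaying functions, so the inclusion $\mathcal{A}^{\exp}_G(M)\subset\mathcal{A}_G(M)$ is not available. (When $G$ satisfies RD and $\mathcal{A}(G)=H^\infty_L(G)$ this inclusion does hold, but that is an extra, non-trivial fact — proved separately in the paper — and it is not assumed here.) The related difficulty is that $\mathcal{A}_G(M)$ is a subalgebra, not an ideal, of $C^*(M)^G$, so the product of $R_1\in\mathcal{A}_G(M)$ with $D^{-1}\in C^*(M)^G$ has no reason to land back in $\mathcal{A}_G(M)$; and your suggested Neumann iteration $R_1Q+R_1R_2D^{-1}+\cdots$ does not converge without a norm-smallness condition on the remainder, which a generic symbolic parametrix does not supply.

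The paper sidesteps all of this by \emph{building the parametrix from the functional calculus}. Following Piazza--Schick, it decomposes the bounded smooth function $f$ with $f(D)=D^{-1}$ as $f=h_\epsilon+w_\epsilon$, where $h_\epsilon$ is an order $-1$ symbol with compactly supported Fourier transform and $\|xw_\epsilon\|_\infty<\epsilon$. Taking $Q:=h_\epsilon(D)\in\Psi^{-1}_{G,c}(M)$ gives $DQ=\Id-\phi_\epsilon(D)$ where $\phi_\epsilon(D)=\Id-DQ$ is automatically a $G$-compactly supported smoothing operator (hence in $\mathcal{A}_G(M)$) \emph{and} has operator norm $<\epsilon$. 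The smallness makes $\Id-\phi_\epsilon(D)$ invertible; holomorphic closedness of $\mathcal{A}_G(M)$ in $C^*(M)^G$ (the Lauter--Monthubert--Nistor observation) then says $(\Id-\phi_\epsilon(D))^{-1}=\Id+A$ with $A\in\mathcal{A}_G(M)$, whence $D^{-1}=Q+Q\circ A$. No decay estimates on $D^{-1}$ are ever needed; the argument uses only the spectral gap to define the symbol-class cutoff $f$ and then the algebraic property of $\mathcal{A}(G)$ being a holomorphically closed subalgebra. This is the mechanism your proposal is missing.
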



\begin{proof}

 Let $f:\RR\to \RR$ be an odd smooth  bounded function equal to $1/x$ on the $L^2$-spectrum
of the invertible self-adjoint operator $D$. Clearly $D^{-1}=f(D)$, from which it follows that $D^{-1}\in C^* (N)^G$. We want to show
that, in fact, $D^{-1}$ has the particular structure given in the statement of the Proposition.
 First, following closely  \cite{PS-Stolz},
 we have the following 
 
 \begin{lemma}
 For each $\epsilon >0$ we can find a decomposition of smooth functions
 $$f=h_\epsilon + w_\epsilon$$
 with the following properties:\\
 (i) $h_\epsilon$ is an element of $S^{-1}(\RR)$, the symbols of order $-1$, and its Fourier 
 transform $\hat{h}_\epsilon$ is compactly supported;\\
 (ii) $w_\epsilon\in \mathcal{S}$ and  has the
 property that $|x w_\epsilon(x)|_{\infty}<\epsilon$.
 \end{lemma}
 For a detailed proof, see  Proposition   4.18 in \cite{PS-Stolz}.
 
Let now $Q:= h_\epsilon (D)$. Then, from  \cite[Theorem XII.1.3]{taylor}, and the properties of $h_\epsilon$, 
we know that $Q\in \Psi^{-1}_{G,c} (M)$
(see again Proposition   4.18 in \cite{PS-Stolz} for providing the  details that are necessary to
pass from the statement in  \cite{taylor} to the statement given here);
moreover 
\begin{equation}\label{para}
D \circ Q= D\circ (f(D)-w_\epsilon (D))= D\circ f(D)- D\circ w_\epsilon (D)=\Id - (xw_\epsilon)(D)\,.
\end{equation}
Let $\phi_\epsilon (x) : = x w_\epsilon (x)$.
From above we infer that  $Q$, which is of $G$-compact support, is a right parametrix with smoothing remainder $\phi_\epsilon (D)$
and with the norm of the reminder $\phi_\epsilon (D)$  less then $\epsilon$. 
Notice that from \eqref{para} we infer that $(xw_\epsilon)(D)\equiv \phi_\epsilon (D)$, a smoothing operator,
has  $G$-compact support, because 
$$\phi_\epsilon (D)=\Id - D\circ Q$$
and on the right we have a pseudodifferential operator of $G$-compact support. In particular  $\phi_\epsilon (D)\in A_G (M)$. Since $\phi_\epsilon (D)$
has small operator norm, we see  that $\Id - \phi_\epsilon(D)$ is $L^2$-invertible.
If we could show that 
$$(\Id - \phi_\epsilon(D))^{-1}=\Id + A\;\;\text{with}\;\; A\in A_G (M),$$ then
$$ D^{-1} = Q + Q\circ A$$
finishing the proof.
However, we know that $A_G (M)$ is holomophically closed in $C^* (N)^G$; thus 
if $U\in A_G (N)$ and $\Id + U$ is invertible  in $C^* (M)^G$, then $(\Id + U)^{-1}$
is an element of  type $\Id + A$ with $A\in A_G (M)$, see Lauter-Monthubert-Nistor
\cite[(2.2)]{LMN-olo}.
Since we have observed that $ \phi_\epsilon(D)\in A_G (M)$, we conclude  that 
$(\Id - (\phi_\epsilon)(D))^{-1}=\Id + A$ with $A\in A_G (M)$, as required. 
 \end{proof}

\begin{remark}
We have required in Proposition \eqref{prop:inverse-0} that $\mathcal{A}(G)\subset C^\infty (G)$
so as to have $\mathcal{A}_G (M)$ made of integral operators with  smooth kernel.
We can weaken the assumption 
 $\mathcal{A}(G)\subset C^\infty (G)$  and simply  require that the resulting algebra
 of integral operators 
  $\mathcal{A}_G (M)$ is a left and right module over $\Psi^{-1}_{G,c} (M)$.
\end{remark}

\begin{remark}
This particular structure of the inverse of an $L^2$-invertible Dirac operator will be used
in our analysis on manifolds with boundary.
\end{remark}

\subsection{The heat operator}\label{subsect:heat}$\;$\\
Assume that $G$ satisfies the RD condition.
%
We now consider the heat operator $\exp (-t D^2)$, which we write as
$$\exp (-t D^2)=\frac{1}{2\pi i}\int_\gamma e^{-t\lambda} (D^2 - \lambda)^{-1} d\lambda,$$
with $\gamma$ a suitable path in the complex plane missing the spectrum of $D^2$. 
We want to show that the bounded operator on the RHS is in fact an element in 
 $\mathcal{A}^\infty_G(M,E)$.
To this end we shall need the following technical 
\begin{lemma}\label{lemma:composition}
The composition $\Psi^0_{G,c}(M)\times \Psi^{-\infty}_{G,c}(M)\to  \Psi^{-\infty}_{G,c}(M)$
extends to a continuous map
\[
\Psi^0_{G,c}(M)\times \mathcal{A}^\infty_G(M)\to \mathcal{A}^\infty_G(M).
\]
\end{lemma}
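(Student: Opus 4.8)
The plan is to use the slice-algebra description from Proposition \ref{prop:alternative-description}, which identifies $\mathcal{A}^\infty_G(M,E)$ with the space of $K\times K$-invariant maps $\Phi:G\to\Psi^{-\infty}(S,E|_S)$ such that $g\mapsto(1+L(g))^\ell\|\Phi(g)\|_\alpha\in L^2(G)$ for all $\alpha,\ell$. The composition of a $G$-compactly supported order-zero pseudodifferential operator $P\in\Psi^0_{G,c}(M)$ with such an integral operator $A$ should again be an integral operator, and the point is to estimate its slice-kernel $\Phi_{PA}$ in the seminorms above. Because $P$ is $G$-invariant with $G$-compact support, under the global slice $G\times_K S\cong M$ it is described by a distribution $g\mapsto P(g)$ supported in a compact subset $C\subset G$ (in the appropriate sense), with values in the pseudodifferential operators of order $0$ on $S$, so that convolution gives $\Phi_{PA}(g)=\int_G P(gh^{-1})\circ\Phi_A(h)\,dh$, the integral over $h$ being effectively over $g\cdot C$.

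First I would make precise the statement that $P\in\Psi^0_{G,c}(M)$ corresponds, via the slice, to a compactly supported family (or distribution) of zeroth-order operators on $S$; this is standard but I would cite the structure of invariant pseudodifferential operators on $G\times_K S$ and note that $P$ acts on the $\Psi^{-\infty}(S)$-factor by composition with a genuine $\Psi^0(S)$-operator — which is bounded on $L^2(S)$ and, more importantly, maps $\Psi^{-\infty}(S)\to\Psi^{-\infty}(S)$ continuously in each seminorm $\|\cdot\|_\alpha$. The key elementary point feeding this is essentially Lemma \ref{key-lemma-closed}: the seminorm $\|QT\|_\alpha$ for $Q\in\Psi^0(S)$, $T\in\Psi^{-\infty}(S)$ is controlled by $\|T\|_{\alpha'}$ for a suitable shifted multi-index $\alpha'$, since one can absorb the finitely many derivatives in $\alpha$ by writing things in terms of powers of $(1+\Delta)$ and using that $Q$ commutes with such powers up to lower order operators that are still bounded.

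Next I would do the $L^2(G)$ estimate. Writing $\Phi_{PA}(g)=\int_{C}P(h)\circ\Phi_A(h^{-1}g)\,dh$ (after a change of variables), one has, for fixed multi-index $\alpha$ and power $\ell$,
\[
(1+L(g))^\ell\|\Phi_{PA}(g)\|_\alpha\le\int_C (1+L(g))^\ell\,\|P(h)\|_{\mathrm{op},0}\,\|\Phi_A(h^{-1}g)\|_{\alpha'}\,dh,
\]
and since $h$ ranges over the compact set $C$ one has $(1+L(g))\le c_C(1+L(h^{-1}g))$ by the triangle inequality for the length function. Hence the right side is bounded by a constant times $\int_C\|P(h)\|_{\mathrm{op},0}\,(1+L(h^{-1}g))^\ell\|\Phi_A(h^{-1}g)\|_{\alpha'}\,dh$; taking the $L^2(G)$-norm in $g$ and using Minkowski's integral inequality together with left-invariance of Haar measure gives a bound by $\big(\int_C\|P(h)\|_{\mathrm{op},0}\,dh\big)\cdot\nu_\ell(\|\Phi_A(\cdot)\|_{\alpha'})$, which is finite because $A\in\mathcal{A}^\infty_G(M,E)$. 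This simultaneously proves that $PA\in\mathcal{A}^\infty_G(M,E)$ and that the map is continuous, since each seminorm of $PA$ is dominated by one seminorm of $A$ times a seminorm of $P$. Finally I would note $K\times K$-invariance of $\Phi_{PA}$ is automatic from invariance of $P$ and $A$, and that the claim that $PA$ is again an \emph{integral} (not merely pseudodifferential) operator of $G$-compact support follows because the composition of a $G$-compactly supported operator with any $G$-equivariant operator again has $G$-compact support, while smoothness of the kernel is forced by $A$ being smoothing and $P$ being properly supported.

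The main obstacle I expect is the first step: rigorously identifying how $P\in\Psi^0_{G,c}(M)$ acts on the slice decomposition. An invariant pseudodifferential operator on $G\times_K S$ is not simply a family of operators on $S$ parametrized by $G$; it genuinely mixes the $G$- and $S$-directions, and one must make sense of the ``convolution kernel'' $g\mapsto P(g)$ as a compactly supported distribution valued in $\Psi^0(S)$ (or, more honestly, in a slightly larger class — properly supported operators of order $\le 0$ on $S\times$ a small $G$-chart). Getting the estimates above to go through requires knowing that this distributional kernel pairs continuously against the smooth $\Psi^{-\infty}(S)$-valued function $h\mapsto\Phi_A(h^{-1}g)$, for which one uses that $\Phi_A$ is smooth in $g$ (again Proposition \ref{prop:alternative-description}) and that finitely many $G$-derivatives of $\Phi_A$ are still in the relevant weighted $L^2$ spaces, so that the distributional pairing against $P(h)$ costs only finitely many more seminorms. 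Once this bookkeeping is set up, everything else is the routine triangle-inequality and Minkowski-inequality argument sketched above.
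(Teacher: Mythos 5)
Your plan follows the paper's slice-algebra setup, and your final diagnostic paragraph correctly identifies where the difficulty lies; but the patch you propose for that difficulty does not work, and the estimate you actually write down is not valid as stated.

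The problem with the displayed bound is that $h\mapsto P(h)$ is a \emph{distribution} in $h$, with a pseudodifferential singularity at $h=e$, so the quantity $\int_C (1+L(g))^\ell\,\|P(h)\|_{\mathrm{op},0}\,\|\Phi_A(h^{-1}g)\|_{\alpha'}\,dh$ is not a meaningful $L^1$-type bound. You recognize this and propose to repair it by a distributional pairing that "costs finitely many $G$-derivatives of $\Phi_A$", using that "$\Phi_A$ is smooth in $g$". That is precisely the step that fails: for $A\in\mathcal{A}^\infty_G(M)$ the slice description in Proposition~\ref{prop:alternative-description} does \emph{not} assert smoothness in $g$. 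Compare the three lines of that proposition: $\mathcal{A}^c_G$ and $\mathcal{A}^{\rm exp}_G$ are described by $K\times K$-invariant maps $\Phi:G\to\Psi^{-\infty}(S)$ that are \emph{smooth}, whereas $\mathcal{A}^\infty_G$ is described only by the weighted $L^2$ conditions $g\mapsto(1+L(g))^\ell\|\Phi(g)\|_\alpha\in L^2(G)$, because $H^\infty_L(G)$ is an $L^2$-type weighted space, not $C^\infty(G)$. You cannot absorb the conormal singularity of $P$ by differentiating $\Phi_A$, since the $G$-derivatives of $\Phi_A$ need not exist.

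The paper's argument handles this in a structurally different way. It first reduces to the $G$-factor: under $\Psi^0_{G,c}(M)\cong(\Psi^0_{G,c}(G)\hat{\otimes}\Psi^0_c(S))^{K\times K}$, the fact that $\Psi^{-\infty}_c(S)$ is an ideal in $\Psi^0_c(S)$ means the $S$-component is harmless, so what remains is $\Psi^0_{G,c}(G)\times H^\infty_L(G)\to H^\infty_L(G)$. It then splits $A={\rm Op}(a)+K$ with $K\in C^\infty_c(G)$; the $K$ piece is trivial, and for ${\rm Op}(a)$ the key input is not Minkowski's inequality but the \emph{$L^2(G)$-boundedness of the weighted operator $(1+L)^{|k|}{\rm Op}(a)$}, which is again an invariant order-zero pseudodifferential operator on $G$ (the weight is harmless because the kernel is compactly supported near $e$ by the cutoff $\chi$). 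Combined with Peetre's inequality this gives $\nu_k({\rm Op}(a)f)\le\|(1+L)^{|k|}{\rm Op}(a)\|_{B(L^2(G))}\,\nu_k(f)$ without ever differentiating $f$. So the missing ingredient in your argument is exactly this pseudodifferential $L^2$-boundedness on $G$; the triangle-inequality and Minkowski's inequality alone cannot control the conormal singularity of $P$ when acting on the merely-$L^2$ (in $g$) functions that make up $\mathcal{A}^\infty_G(M)$.
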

\begin{proof}
Let $S\subset M$ be a global slice and consider the definition 
\[
\mathcal{A}^\infty_G(M):=\left(H^\infty_L(G)\hat{\otimes}\Psi^{-\infty}_c(S)\right)^{K\times K}
\]
of the algebra $\mathcal{A}^\infty_G(M)$. The diffeomorphism $G\times_KS\cong M$ likewise
induces an isomorphism
\[
\Psi^0_{G,c}(M)\cong\left(\Psi^0_{G,c}(G)\hat{\otimes}\Psi^0_c(S)\right)^{K \times K}.
\]
Here $G$ is regarded as a $G$-space using the action by left translations, so that $\Psi^0_{G,c}(G)$ is the algebra of invariant Pseudo-differential operators on $G$ first described in \cite{cm-homogeneous}.
Enlarging $\mathcal{A}_{G,c}(M)$ to $\mathcal{A}^\infty_G(M)$ only affects the first component over $G$, and it is a basic fact that $\Psi^{-\infty}_c(S)\subset \Psi^0_c(S)$ is an ideal, so we only have to show
that 
\[
\Psi^0_{G,c}(G)\times H^\infty_L(G)\to H^\infty_L(G)
\]
is well-defined and continuous. Recall that a general element in $A\in \Psi^0_{G,c}(G)$ can be written as
\[
A={\rm Op}(a)+K,
\]
with $K\in\Psi^{-\infty}_{G,c}(G)\cong C^\infty_c(G)$ and ${\rm Op}(a)$ is the operator corresponding to a symbol $a\in S^0(\mathfrak{g}^*)$ of order zero, i.e., a smooth function on the dual $\mathfrak{g}^*$ of the Lie algebra $\mathfrak{g}$
 satisfying the symbol estimates $|D^\alpha_\xi a(\xi)|\leq C_\alpha(1+|\xi|)^{-|\alpha|}$. Concretely, ${\rm Op}(a)$ is the operator given by
\[
\left({\rm Op}(a)(f)\right)(g):=\int_G\int_\mathfrak{g^*}\chi(gh^{-1})e^{i\left<\xi,\exp^{-1}(gh^{-1})\right>}a(\xi)d\xi dh,\quad\mbox{for}~ f\in C^\infty_c(G),
\]
where $\chi\in C^\infty_c(g)$ is a cut-off function around the unit $e\in G$.
To verify that composition with $A$ lands in $H^\infty_L(G)$, we first consider the smoothing part $K\in\Psi^{-\infty}_{G,c}(G)$ for which the statement follows automatically
from the fact that $C^\infty_c(G)\subset H^\infty_L(G)$. For the first operator ${\rm Op}(a)$ we use Peetre's inequality
\[
(1+L(gh))^k\leq(1+L(g))^{|k|}(1+L(h))^k,
\]
to get the estimate
\begin{align*}
||{\rm Op}(a)\star f||_{2,k}&\leq ||((1+L)^{|k|}{\rm Op}(a))\star((1+L)^k f)||_2\\
&\leq ||(1+L)^{|k|}{\rm Op}(a)||_{B(L^2(G)}||f||_{2,k}.
\end{align*}
Here the last inequality follows from the fact that $(1+L)^kf\in L^2(G)$, and that the operator $(1+L)^{|k|}{\rm Op}(a)$ with integral kernel
\[
(1+L(g))^{|k|}\chi(g)\int_\mathfrak{g^*}e^{i\left<\xi,\exp^{-1}(gh^{-1})\right>}a(\xi)d\xi,
\]
is just an invariant pseudodifferential operator of order $0$, which act by bounded operators on $L^2(G)$.  
\end{proof}

\begin{remark}
Exactly the same proof establishes the fact that $\mathcal{A}^\infty_G(M)$
is a right and left (continuous) module over $\Psi^{-1}_{G,c} (M)$. This means, in particular, that
Proposition \eqref{prop:inverse-0} does apply to $\mathcal{A}^\infty_G(M)$.
\end{remark}
\bigskip
Let us go back to  
$$
\exp (-t D^2)=\frac{1}{2\pi i}\int_\gamma e^{-t\lambda} (D^2 - \lambda)^{-1} d\lambda$$

\begin{proposition} Assume that $G$ satisfies the RD condition. Then the heat operator $
\exp (-t D^2)$ is an element in $\mathcal{A}^\infty_G (M)$.
\end{proposition}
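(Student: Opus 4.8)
The plan is to combine the resolvent identity with the structural results already established in this section. First I would use the fact, proved in Proposition \ref{prop:inverse-0} (as extended to $\mathcal{A}^\infty_G(M)$ in the remark following Lemma \ref{lemma:composition}), that for $\lambda\notin\spec(D^2)$ one can write $(D^2-\lambda)^{-1}=Q_\lambda + C_\lambda$ with $Q_\lambda\in\Psi^{-2}_{G,c}(M)$ and $C_\lambda\in\mathcal{A}^\infty_G(M)$; indeed $D^2-\lambda$ is an $L^2$-invertible elliptic $G$-operator of $G$-compact support, so the argument of Proposition \ref{prop:inverse-0} applies verbatim with $\Psi^{-1}$ replaced by $\Psi^{-2}$. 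Substituting into
\[
\exp(-tD^2)=\frac{1}{2\pi i}\int_\gamma e^{-t\lambda}(D^2-\lambda)^{-1}\,d\lambda
\]
splits the heat operator as a ``pseudodifferential part'' $\frac{1}{2\pi i}\int_\gamma e^{-t\lambda}Q_\lambda\,d\lambda$ and a ``smoothing part'' $\frac{1}{2\pi i}\int_\gamma e^{-t\lambda}C_\lambda\,d\lambda$.

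Next I would deal with each piece. For the pseudodifferential part, the Cauchy integral of $Q_\lambda$ over $\gamma$ is, by the standard parametric symbol calculus for the heat operator (as in Seeley/Greiner, carried out here in the $G$-invariant $G$-compactly supported setting via the slice decomposition), again a $G$-invariant operator of $G$-compact support whose symbol decays rapidly in $\xi$; in fact $\exp(-tD^2)-\frac{1}{2\pi i}\int_\gamma e^{-t\lambda}Q_\lambda\,d\lambda$ is already known to be smoothing, so it suffices to show the pseudodifferential part itself lies in $\mathcal{A}^\infty_G(M)$. The cleanest route is to absorb this term: since $\exp(-tD^2)$ has a smooth kernel and is $G$-invariant with $G$-compact support modulo rapid decay, one shows directly that $\Phi_{\exp(-tD^2)}:G\to\Psi^{-\infty}(S)$ satisfies the $H^\infty_L(G)$-valued $L^2$-decay condition of Proposition \ref{prop:alternative-description}. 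For the genuinely smoothing part $\frac{1}{2\pi i}\int_\gamma e^{-t\lambda}C_\lambda\,d\lambda$, one needs that $\lambda\mapsto C_\lambda$ is a continuous (indeed holomorphic) $\mathcal{A}^\infty_G(M)$-valued map along $\gamma$ with $\|C_\lambda\|$ growing at most polynomially in $|\lambda|$, so that $e^{-t\lambda}$ forces absolute convergence of the Bochner integral in the Fr\'echet space $\mathcal{A}^\infty_G(M)$; since $\mathcal{A}^\infty_G(M)$ is complete, the integral then defines an element of $\mathcal{A}^\infty_G(M)$.

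Concretely, to get the polynomial-in-$\lambda$ bound on the seminorms $\nu_k$ of $C_\lambda$, I would track the dependence on $\lambda$ through the proof of Proposition \ref{prop:inverse-0}: $C_\lambda=Q_\lambda\circ A_\lambda$ where $A_\lambda\in\mathcal{A}^\infty_G(M)$ comes from inverting $\Id-\phi_{\epsilon,\lambda}(D^2-\lambda)$. One chooses the resolvent contour $\gamma$ so that $\dist(\lambda,\spec(D^2))\gtrsim 1+|\lambda|^{\delta}$ for some $\delta>0$ along $\gamma$ (possible since $\spec(D^2)\subset[0,\infty)$), and then standard resolvent estimates give operator-norm bounds polynomial in $\lambda$; combined with elliptic regularity in the $\mathcal{A}^\infty_G(M)$-seminorms (using Lemma \ref{lemma:composition} and the module property over $\Psi^{-1}_{G,c}(M)$), one upgrades these to polynomial bounds on each $\nu_k(C_\lambda)$.

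The main obstacle I expect is precisely this last point: making the $\lambda$-dependence of the decomposition $(D^2-\lambda)^{-1}=Q_\lambda+C_\lambda$ uniform and polynomially controlled in the Fr\'echet topology of $\mathcal{A}^\infty_G(M)$ rather than just in operator norm. Proposition \ref{prop:inverse-0} as stated is only a qualitative statement for a fixed operator; its proof uses a functional-calculus splitting $f=h_\epsilon+w_\epsilon$ and the holomorphic closedness of $\mathcal{A}^\infty_G(M)$, neither of which comes with explicit estimates. So the real work is to revisit that proof and show that all constants can be taken to depend at most polynomially on $\lambda$ as $\lambda$ ranges over $\gamma$ — equivalently, that $\lambda\mapsto(D^2-\lambda)^{-1}$ is a holomorphic map into $\mathcal{A}^\infty_G(M)$ with tempered growth. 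Once that is in hand, convergence of the Cauchy integral in $\mathcal{A}^\infty_G(M)$ and hence the conclusion are routine, and the same estimates will be reusable in the manifold-with-boundary setting as the authors indicate.
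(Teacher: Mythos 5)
Your overall strategy — Cauchy integral, decompose the resolvent as a pseudodifferential piece plus a smoothing piece, and integrate each piece separately — is the same as the paper's. But there are two places where your plan diverges from what actually works.

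\medskip

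\noindent\textbf{Circularity in the ``pseudodifferential part.''} When you write that $\exp(-tD^2)-\frac{1}{2\pi i}\int_\gamma e^{-t\lambda}Q_\lambda\,d\lambda$ is ``already known to be smoothing'' and then propose to ``absorb this term'' by showing directly that $\Phi_{\exp(-tD^2)}:G\to\Psi^{-\infty}(S)$ satisfies the $L^2$-decay condition, you are assuming precisely what the proposition asks you to prove: that the heat operator belongs to $\mathcal{A}^\infty_G(M)$. The smoothness of the Schwartz kernel is automatic; the content of the statement is the decay over $G$, and nothing in your sketch supplies it for this term. The paper avoids this circularity by constructing the parametrix \emph{in the parameter-dependent calculus} $\Psi^{-2}_{G,c}(M,\Lambda)$ à la Shubin. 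The resulting family $B_\lambda$ has, by construction, uniform $G$-compact support and symbolic decay in $\lambda$, so $\frac{1}{2\pi i}\int_\gamma e^{-t\lambda}B_\lambda\,d\lambda$ lands in $\Psi^{-\infty}_{G,c}(M)$ of $G$-compact support — which is unproblematically contained in $\mathcal{A}^\infty_G(M)$ — with no circularity and no separate argument needed.

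\medskip

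\noindent\textbf{The control of the correction term.} You correctly identify that the real difficulty is making the $\lambda$-dependence of the $\mathcal{A}^\infty_G(M)$-valued correction uniform along $\gamma$, and you propose to obtain polynomial bounds on the Fr\'echet seminorms by tracking constants through the functional-calculus proof of Proposition \ref{prop:inverse-0} (the splitting $f=h_\epsilon+w_\epsilon$). The paper takes a different and cleaner route that makes this unnecessary. Because $B_\lambda$ is a \emph{parametric} parametrix, the remainder $R_\lambda:=\operatorname{Id}-(D^2-\lambda)B_\lambda$ is already rapidly decreasing in $\lambda$ as a $\Psi^{-\infty}_{G,c}(M)$-valued family (this is a feature of the Shubin calculus, not something to be extracted from the functional-calculus splitting). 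In particular $\operatorname{Id}+R_\lambda\to\operatorname{Id}$ in the Fr\'echet topology as $|\lambda|\to\infty$. The key observation — and the one your plan lacks — is then that $\mathcal{A}^\infty_G(M)$ is a $\Psi^*$-subalgebra of the Roe algebra in the sense of Lauter--Monthubert--Nistor, so the map $u\mapsto u^{-1}$ is \emph{continuous} on its unitalization. Hence $A_\lambda$, defined by $(\operatorname{Id}+R_\lambda)^{-1}=\operatorname{Id}+A_\lambda$, converges to $0$ in the Fr\'echet topology of $\mathcal{A}^\infty_G(M)$ and is a fortiori bounded, with no explicit estimates needed. Combined with Lemma \ref{lemma:composition} (continuity of $\Psi^0_{G,c}(M)\times\mathcal{A}^\infty_G(M)\to\mathcal{A}^\infty_G(M)$), this gives that $B_\lambda\circ A_\lambda$ is continuous and bounded as an $\mathcal{A}^\infty_G(M)$-valued function of $\lambda$, and the Bochner integral converges in $\mathcal{A}^\infty_G(M)$ because $e^{-t\lambda}$ is integrable on $\gamma$.

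\medskip

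In short: your diagnosis of the obstacle is accurate, but the remedy you propose (revisiting the functional-calculus proof to extract polynomial Fr\'echet bounds) is harder than necessary and, as you yourself note, not obviously feasible. The paper sidesteps it with two qualitative tools — the parameter-dependent pseudodifferential calculus, which supplies uniformity in $\lambda$ for free, and the $\Psi^*$-subalgebra continuity of inversion, which replaces explicit estimates with a compactness/continuity argument. You would also need to fix the circularity in the symbolic piece before your argument could close.
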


\begin{proof}
We need to control the resolvent  $(D^2 - \lambda)^{-1}$. For finite values of $\lambda$
we know that  
$$(D^2 - \lambda)^{-1}= B_\lambda + B_\lambda \circ A_\lambda$$
with $B_\lambda \in \Psi^{-2}_{G,c} (M)$ and $A_\lambda\in \mathcal{A}^\infty_G (M)$. This follows
from an obvious modification of Proposition \ref{prop:inverse-0}.
We are now  interested in the behaviour of $(D^2 - \lambda)^{-1}$
 {\it for large} values of  $| \lambda|$.
Following Shubin's monograph \cite{shubin-book}
one can easily develop a pseudodifferential calculus with parameters $\Psi^*_{G,c}(M,\Lambda)$,
with  $\Lambda \subset\CC$
an allowable  conic set  as in \cite{shubin-book}. Notice that our $\gamma$ in \eqref{heat-operator} can be chosen to be
an  allowable conic set, as in \cite{shubin-book}.
We consider $(D^2 - \lambda)$, with $\lambda\in \Lambda$
and regard it  as a differential operator with parameter. 
Then we know that there exists a parametrix (with parameter) $\{B_\lambda\}\in \Psi^{-2}_{G,c}(M,\Lambda)$ such that 
$$(D^2 - \lambda) \circ B_\lambda=\Id+R_\lambda\;,\quad B_\lambda  \circ (D^2 - \lambda)=\Id+S_\lambda$$
with $R_\lambda, S_\lambda\in \Psi^{-\infty}_{G,c}(N,\Lambda)$. Moreover, 
$B_\lambda$, $R_\lambda$ and $S_\lambda$ have uniform $G$-compact support in $\lambda$.
Notice that proceeding as in \cite{shubin-book, shubin92}
we can give estimates on the norm of these operators in $\Psi^{-2}_{G,c}(M,\Lambda)$ and 
$\Psi^{-\infty}_{G,c}(M,\Lambda)$.
For $\lambda\in \Lambda$ large, $\lambda$ not in the  $L^2$-spectrum, we have $$(D^2 - \lambda)^{-1}= B_\lambda 
(\Id+R_\lambda)^{-1}$$
with $(\Id+R_\lambda)^{-1}=\Id+A_\lambda$ and with $A_\lambda\in \mathcal{A}_G (M)$, since each $R_\lambda\in 
A_G^c (M)\subset \mathcal{A}^\infty_G (M)$ and 
$\mathcal{A}^\infty_G (M)$ is holomorphically closed. Thus 
$$(D^2 - \lambda)^{-1}= B_\lambda + B_\lambda \circ A_\lambda$$
 with $B_\lambda\in \Psi^{-2}_{G,c}(M,\Lambda)$ and uniform compact $G$-support
 and $A_\lambda\in \mathcal{A}^\infty_G (M)$.
 Thus, with $\Lambda=\gamma$ we have:
 $$
\frac{1}{2\pi i}\int_\gamma e^{-t\lambda} (D^2 - \lambda)^{-1} d\lambda\,=\,\frac{1}{2\pi i}\int_\gamma e^{-t\lambda} B_\lambda  d\lambda + \frac{1}{2\pi i}\int_\gamma e^{-t\lambda}  B_\lambda \circ A_\lambda  d\lambda$$
$$
$$
Proceeding again as in \cite{shubin-book} we know that the 
 first summand on the right hand side   is an element in $\Psi^{-\infty}_{G,c} (M)$. Since   we have a continuous
inclusion of the Fr\'echet algebra $\Psi^{-\infty}_{G,c}(M)\equiv A_G^c (M)$ into the Fr\'echet
algebra $\mathcal{A}^\infty_{G}( M)$ we conclude that 
$$
\frac{1}{2\pi i}\int_\gamma e^{-t\lambda} B_\lambda  d\lambda\;\;\in\;\;
\mathcal{A}^\infty_{G}( M).$$ It remains to analyze the second summand. To this end we 
shall now give further properties for the operator $A_\lambda\in \mathcal{A}^\infty_{G}( M)$ 
such that  $(\Id+R_\lambda)^{-1}=\Id+A_\lambda$.\\
We know that the inclusion of the Fr\'echet algebra $\mathcal{A}^\infty_G ( M)$ into $C^* ( M)^G$, the Roe algebra of the $G$-space $ M$,
is continuous. Moreover, it is clear that $\mathcal{A}^\infty_G (M)$ is symmetric, i.e. stable 
under taking adjoints. Thus  $\mathcal{A}^\infty_G (M)$ is a $\Psi^*$-subalgebra, see \cite[Definition 2.3]{LMN-olo}.
This implies, in particular, that the passage to the inverse is continuous, see  \cite[Proposition 2.4 (b)]{LMN-olo}.
Now, the $\lambda$-family $\Id+R_\lambda$ is a family in the unitalization of $\mathcal{A}^\infty_G (M)$
converging to the identity in the Fr\'echet-topology of $\mathcal{A}^\infty_G (M)$ as $\lambda\to \infty$
(in fact it even converges in the 
Fr\'echet topology of $\Psi^{-\infty}_{G,c}(M)$). By the stated continuity it follows that 
$(\Id+R_\lambda)^{-1}$ converges to the inverse of the identity, i.e. the identity,  in the Fr\'echet topology of 
$\mathcal{A}_G ( M)$. Since 
$(\Id+R_\lambda)^{-1}=\Id+A_\lambda$ with $A_\lambda\in \mathcal{A}^\infty_G ( M)$, we conclude that
$A_\lambda$ converges to $0$ in the Fr\'echet topology of $\mathcal{A}^\infty_G ( M)$ and it is, in particular,
bounded in $\lambda$. Recall that we need to analyse the term
$$ B_\lambda \circ A_\lambda$$
with $\{B_\lambda\}\in \Psi^{-2}_{G,c} (\partial M,\Lambda)$, $\Lambda=\gamma$,
and $A_\lambda$ as above. The family of operators 
$B_\lambda$ is  certainly converging to $0$ as a function of $\lambda$, as $|\lambda|\to \infty$,
with values in the Fr\'echet algebra $\Psi^0_{G,c}( M)$. (Notice that for a {\it uniform} behaviour 
at infinity in the pseudodifferential topology we need to trade some regularity
 with some $\lambda$-decay; see [Gilkey, 2nd edition, Lemma 1.7.1] and [Melrose, Chapter 7, from p. 284 to p. 286].)
   Moreover we know that 
$A_\lambda$ is converging to zero in $A^\infty_G ( M)$ as $|\lambda|\to \infty$.
We are effectively considering the composition of two maps:
the first one is the map 
$\gamma\ni\lambda\to \Psi^0_{G,c}( M)\times \mathcal{A}^\infty_G (M)$ assigning to $\lambda$ the
pair $(B_\lambda,A_\lambda)$; the second map is the composition
$ \Psi^0_{G,c}(M)\times \mathcal{A}^\infty_G (M)\xrightarrow{\circ} \mathcal{A}^\infty_G ( M)$.
The first map is certainly continuous;
the second map is continuous 
by Lemma \ref{lemma:composition}. We conclude from all of the above  that the following Proposition, that
we single out for later use, holds:
\begin{proposition}\label{bounded}
For the inverse $(D^2-\lambda)^{-1}$ we have  
$$(D^2-\lambda)^{-1} =B_\lambda + C_\lambda$$
 with $B_\lambda\in \Psi^{-2}_{G,c}(M)$
of uniform $G$-compact support 
and $C_\lambda
\in  \mathcal{A}^\infty_G (M)$ such that 
$$\gamma\ni\lambda\to C_\lambda\in  \mathcal{A}^\infty_G (M) $$
is continuous and bounded in the Fr\'echet topology of $\mathcal{A}^\infty_G (M) $. 
\end{proposition}

\noindent
Recall now that $C_\lambda=B_\lambda \circ A_\lambda$. The Proposition then immediately  implies  that
$$\frac{1}{2\pi i}\int_\gamma e^{-t\lambda}  B_\lambda \circ A_\lambda d\lambda\;\;\in\;\;\mathcal{A}^\infty_G (M)
$$
as required.
\end{proof}
The same proof applies to all the terms in the Connes-Moscovici projector $V(D)$. Consequently we have
given a new proof of  the following

\begin{proposition}\label{prop:CM-infty}
Assume that $G$ satisfies the RD condition.
Then the  idempotent $V(D)$  is an element in  $M_{2\times 2} (\mathcal{A}_G^{\infty} (M))$
(with the identity adjoined).
\end{proposition}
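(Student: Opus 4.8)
The plan is to repeat, entry by entry, the contour-integral argument just used for $\exp(-tD^2)$, the only genuinely new ingredient being the two first-order differential operators $D^{\pm}$ occurring in the off-diagonal entries of $V(D)$. First I would write $D$ in block form, $D=\begin{pmatrix}0&D^-\\ D^+&0\end{pmatrix}$, so that $D^2=(D^-D^+)\oplus(D^+D^-)$ with $\operatorname{spec}(D^2)\subset[0,\infty)$, and let $\gamma$ be the contour around $[0,\infty)$ used in the heat-operator proof. The diagonal entries $e^{-D^-D^+}$ and $I-e^{-D^+D^-}$ are, up to adjoining the identity in the second case, heat operators of the Laplace-type operators $D^-D^+$ and $D^+D^-$, hence lie in $\mathcal{A}^\infty_G(M)$ by the Proposition establishing that $\exp(-tD^2)\in\mathcal{A}^\infty_G(M)$. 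For the off-diagonal entries I would use the intertwining identities $p(D^+D^-)D^+=D^+p(D^-D^+)$ and $p(D^-D^+)D^-=D^-p(D^+D^-)$, valid for power series and hence for the holomorphic functional calculus, to rewrite them as $D^+h_0(D^-D^+)$ and $D^-h_1(D^+D^-)$, where $h_0(x)=e^{-x/2}$ and $h_1(x)=e^{-x/2}(1-e^{-x})/x$ are entire functions decaying rapidly along the unbounded parts of $\gamma$. Writing $h_i$ of the relevant operator as a Cauchy integral over $\gamma$ then presents each off-diagonal entry of $V(D)$ as $\frac{1}{2\pi i}\int_\gamma h_i(\lambda)\,D^{\pm}(D^2-\lambda)^{-1}\,d\lambda$, with $D^{\pm}$ placed to the left of the resolvent by the intertwining relations.

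The crucial step is then to control $D^{\pm}(D^2-\lambda)^{-1}$. Since $D^{\pm}$ is a first-order differential operator without $G$-compact support and $\mathcal{A}^\infty_G(M)$ carries no smoothness in the $G$-direction, I cannot simply apply $D^{\pm}$ to an element of $\mathcal{A}^\infty_G(M)$; instead I would use the factorization obtained in the heat-operator analysis (Proposition \ref{bounded} and its proof), namely $(D^2-\lambda)^{-1}=B_\lambda+C_\lambda$ with $C_\lambda=B_\lambda\circ A_\lambda$, where $B_\lambda\in\Psi^{-2}_{G,c}(M,\Lambda)$ has uniform $G$-compact support and $A_\lambda\in\mathcal{A}^\infty_G(M)$ is continuous and bounded in $\lambda$. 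Then $D^{\pm}(D^2-\lambda)^{-1}=D^{\pm}B_\lambda+(D^{\pm}B_\lambda)\circ A_\lambda$, and $D^{\pm}B_\lambda$, being the composition of a differential operator with a $G$-compactly supported parameter-dependent operator of order $-2$, lies in $\Psi^{-1}_{G,c}(M,\Lambda)$ with uniform $G$-compact support; by Lemma \ref{lemma:composition} and the remark that $\mathcal{A}^\infty_G(M)$ is a two-sided module over $\Psi^{-1}_{G,c}(M)$, the term $(D^{\pm}B_\lambda)\circ A_\lambda$ then lies in $\mathcal{A}^\infty_G(M)$, continuously and with Fr\'echet seminorms bounded in $\lambda$.

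It remains to integrate, exactly as in the proof for $\exp(-tD^2)$: $\frac{1}{2\pi i}\int_\gamma h_i(\lambda)\,D^{\pm}B_\lambda\,d\lambda$ converges in $\Psi^{-\infty}_{G,c}(M)\subset\mathcal{A}^\infty_G(M)$ by the Shubin-type parameter estimates for $B_\lambda$ together with the rapid decay of $h_i$ along $\gamma$, while $\frac{1}{2\pi i}\int_\gamma h_i(\lambda)\,(D^{\pm}B_\lambda)\circ A_\lambda\,d\lambda$ converges in $\mathcal{A}^\infty_G(M)$ because its integrand is a continuous, $\lambda$-bounded $\mathcal{A}^\infty_G(M)$-valued function and $\int_\gamma|h_i(\lambda)||d\lambda|<\infty$. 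Hence both off-diagonal entries of $V(D)$ lie in $\mathcal{A}^\infty_G(M)$, and so $V(D)\in M_{2\times2}(\mathcal{A}^\infty_G(M))$ with the identity adjoined. I expect the main obstacle to be exactly the point just flagged: making sense of the differential operators $D^{\pm}$ acting on the algebra $\mathcal{A}^\infty_G(M)$ — the resolution being not a (nonexistent) module property over differential operators, but rather pushing $D^{\pm}$ onto the pseudodifferential parametrix piece $B_\lambda$ via $C_\lambda=B_\lambda A_\lambda$ before invoking the module structure over $\Psi^{-1}_{G,c}(M)$.
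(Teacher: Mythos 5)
Your proof is correct and follows the same contour-integral strategy as the paper's, but it fills in the one step the paper leaves almost entirely to the reader: the paper's own proof of Proposition~\ref{prop:CM-infty} is the single sentence ``the same proof applies to all the terms'', and its more detailed treatment of the $b$-calculus analogue (Proposition~\ref{CM-for-relative}) keeps the first-order factor $D^-$ on the right of $f(D)$ and merely asserts that ``it suffices to show $f(D)\in{}^b\mathcal{A}^\infty$''. You correctly flag where the genuine work is: $D^\pm$ is a first-order operator, and the module structure the paper does establish --- Lemma~\ref{lemma:composition} for $\Psi^0_{G,c}$ and the subsequent remark for $\Psi^{-1}_{G,c}$ --- does not cover $\Psi^1_{G,c}$, so ``it suffices to show $f(D)\in\mathcal{A}^\infty$'' is not self-evident. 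Your resolution, namely rewriting the off-diagonal entries with $D^\pm$ on the \emph{left} via $p(D^\mp D^\pm)D^\pm=D^\pm p(D^\pm D^\mp)$ and then absorbing $D^\pm$ into the $G$-compactly supported parametrix piece $B_\lambda$ from Proposition~\ref{bounded}, so that $D^\pm B_\lambda\in\Psi^{-1}_{G,c}(M,\Lambda)$ and only composition with operators of nonpositive order ever touches $\mathcal{A}^\infty_G(M)$, is the clean way to do it and makes precise what ``the same proof applies'' must mean. (I would soften ``nonexistent module property'' to ``unestablished'': such a property may well hold, but it is neither needed with your route nor proved in the paper.) The integral estimates at the end are fine: $h_0,h_1$ are entire and decay exponentially along the rays of $\gamma$, $D^\pm B_\lambda$ is a parameter-dependent family of order $-1$ with uniform $G$-compact support so the first piece integrates to an element of $\Psi^{-\infty}_{G,c}(M)\subset\mathcal{A}^\infty_G(M)$ by the Shubin argument, and the second piece is an integral of a continuous, $\lambda$-bounded $\mathcal{A}^\infty_G(M)$-valued function against the integrable weight $h_i$.
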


\section{Geometric preliminaries on manifolds with boundary}\label{section:geometric}

\subsection{Proper $G$-Manifolds with boundary}$\;$\\
Let $M_0$ be a  manifold with boundary, $G$ a Lie group acting properly
and cocompactly on $M_0$. 
There exists a collar neighbourhood $U$ of the boundary $\pa M_0$, $U\cong [0,2]\times \partial M_0$,
 which is $G$-invariant and such that the action of $G$ on $U$ is of product type, i.e. trivial in the normal
 direction. Notice that $\pa M_0$ inherits a proper $G$-manifold structure.
We endow $M_0$ with a $G$-invariant metric $g_0$ which is of product type near the boundary.
We let
$(M_0,g_0)$ be the resulting riemannian  manifold with boundary;  in the collar
neighborhood $U\cong [0,2]\times \partial M_0$ the metric  $g_0$  can be written, through the above isomorphism,
 as $dt^2 + g_{\partial}$,
with $g_{\partial}$ a $G$-invariant riemannian metric on  $\partial M_0$. 
We consider the associated manifold with cylindrical ends
$M:= M_0\cup_{\partial M_0} \left(   (-\infty,0] \times \partial M_0 \right)$,
endowed with the extended metric $g$ and the extended $G$-action.
The coordinate along the cylinder will be denoted by $t$. We will also consider the $b$-version of $(M,g)$, obtained by
performing the change of variable $\log x=t$.
This is a $b$-riemannian
manifold with product $b$-metric $$\frac{dx^2}{x^2}+ g_{\partial}$$
 near the boundary. We shall freely pass from the $b$-picture to the
cylindrical-end picture, without employing two different notations.
(Our arguments will actually apply to the more general case of {\it exact}
$b$-metrics, or, equivalently, manifolds with asymptotic cylindrical ends; we shall not insist on this point.).

\medskip
\noindent
We refer the reader to Melrose' book for more on $b$-geometry, see \cite{Melrose}

\begin{example}
\label{basic-example-1}
With the slice theorem at hand, its is easy to construct examples: start with an inclusion $K\subset G$ of Lie groups with $K$ compact, and 
let $S$ be a compact $K$-manifold with boundary $\partial S$. Then $M:= G\times_KS$ is a manifold with boundary $\partial M=G\times_K\partial S$, equipped with a proper, cocompact action of $G$. Next, choose $K$-
invariant inner product on the Lie algebra $\mathfrak{g}$ of $G$, so that we have an orthogonal decomposition $\mathfrak{g}=\mathfrak{k}\oplus\mathfrak{p}$ where $\mathfrak{k}$ is the Lie algebra of $K$ and 
$\mathfrak{p}$ its orthogonal complement. This metric defines a connection on the principal $K$-bundle $G\to G\slash K$ and defines an isomorphism
\begin{equation}
\label{tb-induced}
TM\cong G\times_K(\underline{\mathfrak{p}}\oplus TS).
\end{equation}
(Here, $\underline{\mathfrak{p}}\to S$ denotes the trivial $K$-equivariant vector bundle with fiber $\mathfrak{p}$.) With this isomorphism we see that a choice of $K$-invariant metric $g_S$ on $S$ of the desired type 
(i.e., $b$-riemannian of product-type near the boundary, or more generally exact) defines a $G$-invariant of the same type on $M$. 

As an explicit example, consider $G=SL(2,\mathbb{R})$ and $K=SO(2)$ acting on the unit disk $\mathbb{D}^2$ in the complex plane by rotations around the origin. The resulting manifold 
$M:=SL(2,\mathbb{R})\times_{SO(2)}\mathbb{D}^2$ is a $4$-dimensional fiber bundle over hyperbolic $2$-space $SL(2,\mathbb{R})\slash SO(2)\cong\mathbb{H}^2$ with fiber $\mathbb{D}^2$. 
The boundary $\partial M$ of this manifold is isomorphic to $SL(2,\mathbb{R})$.

\end{example}

\subsection{Dirac operators}\label{subsect:b-dirac}$\;$\\
We assume the existence of  a $G$-equivariant bundle of Clifford modules $E_0$ on $M_0$, endowed with a hermitian
metric $h$, product-type near the boundary, for which the Clifford action is
unitary, and  equipped with a Clifford connection also of product type near the boundary. 
Associated to these structures there is a generalized $G$-invariant 
Dirac operator $D$ on $M_0$ with product structure near the
boundary acting on the sections of  $E_0$. We denote by $D_{\pa}$ the operator induced on the boundary.
We employ the same symbol, $D$, for the associated $b$-Dirac operator on $M$, acting on the extended
Clifford module $E$.
We also have $D_{\cyl}$ on $\RR\times \pa M_0\equiv \cyl(\pa M_0)$.
We shall make the following fundamental assumption
\begin{assumption}\label{assumption:invertibility}
There exists  $\alpha>0$ such that
\begin{equation}\label{invertibility}
{\rm spec}_{L^2} (D_{\partial})\cap [-\alpha,\alpha]=\emptyset
\end{equation}
\end{assumption}
It should be noticed that because of the self-adjointness of $D_{\partial}$,
assumption \eqref{invertibility} implies the $L^2$-invertibility of $D_{\cyl}$.
This is based on the following elementary argument:
we conjugate the operator  $D_{\cyl}$,
$$D_{\cyl}=\left(\begin{array}{cc} 0&-\frac{\pa}{\pa t} + D_{\partial}\\
\frac{\pa}{\pa t}  + D_{\partial}&0 \end{array} \right),
$$
 by Fourier transform
in $t$, $\mathcal{F}_{t\to\lambda}$, obtaining \begin{equation}\label{cyl-ind}
\left(\begin{array}{cc} 0&-i\lambda + D_{\partial}\\
i\lambda + D_{\partial} &0 \end{array} \right).
\end{equation}
In the $b$-calculus-picture \eqref{cyl-ind}  is the indicial family $I(D_{\cyl},\lambda)$
of $D_{\cyl}$ and it is obtained through Mellin transform of the corresponding cylindrical
$b$ operator.
The self-adjointness of $ D_{\partial}$ implies that
\eqref{cyl-ind}, i.e. $I(D_{\cyl},\lambda)$, is $L^2$-invertible
for each $\lambda\in\RR\setminus\{0\}$; the invertibility of  $D_{\partial}$ then implies that
\eqref{cyl-ind} is $L^2$-invertible for each $\lambda\in\RR$. Conjugating back the inverse of \eqref{cyl-ind}
one obtains an operator which provides an $L^2$-inverse of $D_{\cyl}$.
\begin{example}
As an example where this condition is satisfied we can consider a $G$-proper manifold with boundary
with a $G$-invariant riemannian metric and a $G$-invariant spin structure with the property that the metric
on the boundary is of positive scalar curvature. We would then consider the spin-Dirac operator $D$;
because of the psc assumption on the boundary we do have that $D_\partial$ is $L^2$-invertible.
These manifolds arise as in \cite{guo-mathai-wang-psc} from the slice theorem and  a $K$-manifold with boundary $S$
endowed with a $K$-invariant metric which is of psc on $\partial S$. See example 
\ref{example-spin} below for more on this.\\
$K$-manifolds with boundary 
with a $K$-invariant metric of psc on the boundary arise, for example,  as follows.
Consider a compact $K$-manifold without boundary $N$ endowed with a $K$-invariant metric of positive scalar curvature.
For the existence of such manifolds see for example \cite{lawson-yau-psc},  \cite{hanke-symmetry}, \cite{wiemeler-TAMS}. We can 
now perform on this manifold $K$-equivariant surgeries and produce  along the process a $K$-manifold with boundary $W$.
Under suitable conditions (for example, equivariant surgeries only of codimension
at least equal to 3) this manifold with boundary $W$ will have a $K$-invariant metric of positive scalar curvature. 
We can now take the connected sum of $W$  with a closed $K$-manifold not admitting a K-invariant metric of psc. See \cite{hanke-symmetry}, \cite{wiemeler-TAMS}. The result will be a manifold $S$ with a K-invariant metric which is of psc
(only) on  $\partial S$.
\end{example}

\begin{example}\label{example-spin}
Continuing Example \ref{basic-example-1}, assume that $K\subset G$ is such that $G\slash K$ carries a $G$-invariant spin structure. This
is equivalent to giving a Spin-lifting
\[
\xymatrix{&&{\rm Spin}(\mathfrak{p})\ar[d]\\K\ar[rr]^{\rm Ad}\ar[rru] &&SO(\mathfrak{p})}
\]
of the adjoint action of $K$ on $\mathfrak{p}$. Together with a $K$-invariant spin structure on $S$, this defines a unique $G$-invariant spin structure on M as in \cite{HM}
whose space of spinors is given by
\[
\mathscr{S}(M)=C^\infty(G,\mathbb{S}_\mathfrak{p}\otimes\mathscr{S}(S))^K,
\]
where $\mathbb{S}_\mathfrak{p}$ is the spinor module for ${\rm Cliff}(\mathfrak{p})$ and $\mathscr{S}(S)$ denotes the space of spinors on $S$.
The Dirac operator is given by $D=D_{G\slash K}\otimes 1+1\otimes D_S$, where $D_S$ is the Dirac operator on $S$ and 
\[
D_{G\slash K}:=\sum_{i=1}^{\dim(\mathfrak{p})} X_i\otimes c(X_i),
\]
with $X_i,~i=1,\ldots,\dim(\mathfrak{p})$ is an orthonormal basis of $\mathfrak{p}$. The Dirac operator induced on the boundary has a similar shape, namely $D_\partial=D_{G\slash K}\otimes 1+1\otimes D_{\partial S}$.
We have 
$$D^2_\partial= D^2_{G\slash K}\otimes 1 + 1\otimes D^2_{\partial S}$$
so that this operator, and therefore $D_\partial$,
 is $L^2$invertible as long as one of the two summands in the above formula is strictly positive.

\noindent
When there is no full gap in the $L^2$-spectrum of $D_{\partial}$ near $0$, we can twist this construction with a (flat) $K$-equivariant vector bundle on $S$ to ensure $L^2$-invertibility of this boundary Dirac operator.

\medskip
\noindent
In the explicit example $M:=SL(2,\RR)\times_{SO(2)}\mathbb{D}^2$, we remark that the Dirac operator on the boundary $\partial\mathbb{D}^2=S^1$ is simply given by $D_{\partial\mathbb{D}^2}=i\partial\slash\partial \varphi$, 
where $\varphi$ is the angular coordinate on $S^1$. Identifying this circle with the maximal compact subgroup $SO(2)\subset SL(2,\RR)$ with generator $X_0\in\mathfrak{sl}(2,\RR)$, 
this results in the following boundary operator on the boundary $\partial M\cong SL(2,\RR)$:
\begin{equation}
\label{Dirac-SL(2)}
D_{SL(2,\RR)}:=\sum_{i=0}^2X_i\otimes c(X_i)=i\left(\begin{matrix} X_0&X_1+iX_2\\X_1-iX_2&-X_0\end{matrix}\right),
\end{equation}
acting as an unbounded operator on $L^2(G,\CC^2)$. The formula for this Dirac operator is exactly the same as the operator on the universal cover of $SL(2,\mathbb{R})$ studied in \cite{BNPW}. We can determine
its spectrum in the same way as done in {\em loc. cit.}, taking into account only the irreducible representations that enter the Plancherel decomposition
\[
L^2(G)\cong\int^{\oplus}_{\hat{G}} \mathcal{H}_\pi\otimes\mathcal{H}_\pi^*d\mu(\pi).
\]
It is well-known that the Plancherel measure for $SL(2,\mathbb{R})$ is supported at two families of irreducibles: the discrete series representations $\mathcal{D}^\pm_n,n=2,3,\ldots$ and the principal series
$\mathcal{P}^{\pm,i\nu},\nu\in\mathbb{R}$. For each element of these families, we can use the description in \cite{BNPW} of the spectrum of the Dirac operator \eqref{Dirac-SL(2)} at those representations: for 
the discrete series $\mathcal{D}^\pm_n$ it is given by
\[
\{-\frac{n}{2}\}\cup\{-\frac{1}{2}\pm\frac{1}{2}\sqrt{1+n(n-2)+2(n+2k)(n+2k-2)},k=1,2,\ldots\}.
\]
For the principal series $\mathcal{P}^{\pm}$ it is given by
\[
\{-\frac{1}{2}\pm\sqrt{\nu^2+\frac{1}{2}+2k(k-1)},k\in\ZZ\},\quad\mbox{and}\quad \{-\frac{1}{2}\pm\sqrt{\nu^2+\frac{1}{2}+2(k^2-\frac{1}{4})},k\in\ZZ\}.
\]
This shows that the boundary Dirac operator $D_\partial$ is indeed $L^2$-invertible. (In \cite{BNPW} it is in fact essential that the Dirac operator on the universal cover of $SL(2,\RR)$ {\em does not} have a spectral gap at
zero, but the irreducible representations that are responsible for this, do not descend to $SL(2,\RR)$.)  
\end{example}



\section{$C^*$ Atiyah-Patodi-Singer index classes for proper actions}\label{sect:b-c*-index}

\subsection{$C^*$-algebras and $C^*$-Hilbert modules}$\;$\\
Let $M_0$ be a $G$-proper manifold with boundary, with compact quotient, and let
$M$ be the associated manifold with cylindrical ends, or, equivalently, the associated $b$-manifold.
In the $b$-picture we consider $\mathcal{E}_b$, the $C^*_r G$-Hilbert module obtained
 by (double) completion of  $\dot{C}^\infty_c (M,E)$ endowed with the $C^\infty_c (G)$-valued
inner product 
$$(e,e^\prime)_{C^*_c (G)} (x):= (e,x\cdot e^\prime)_{L^2_b (M,E)}, \quad e, e^\prime
\in \dot{C}^\infty_c (M,E), \;x\in G\,.$$
Here the dot means vanishing of infinite order at the boundary; notice that on the right hand side we employ
$L^2_b (M,E)$.
One can prove that   $\mathbb{K}(\mathcal{E}_b)$ is isomorphic to the 
relative Roe algebra $C^* (M_0 \subset M,E)^G$, see \cite{PS-Stolz} for definitions and proofs,
 so that we have the following canonical
isomorphisms
\begin{equation}\label{iso-in-K}
K_* (\mathbb{K}(\mathcal{E}_b))\cong  K_* (C^* (M_0\subset M,E)^G)\cong K_* (C^* (M_0,E_0)^G)\cong K_* (C_r^*(G))
\end{equation}
with the second isomorphism also explained in \cite{PS-Stolz} and the third one well known due to the
co-compactness of the action (see for example \cite{Hochs-Wang-HC} in this context).\\

\subsection{ $b$-Pseudodifferential Operators}$\;$

\noindent
We shall be interested in algebras of $b$-pseudodifferential operators and we invest a few lines
to recall the context in which they are defined, 
beginning with the well known case of a smooth compact riemannian manifold with boundary $(S_0,g_0)$
with associated $b$-manifold $(S,g)$. We also consider a hermitian complex vector bundle $E$ over $S$.
Recall that the small calculus of $b$-pseudodifferential 
operators on $S$, $ {}^b \Psi^{*}(S,E)$, is defined in terms
of Schwartz  kernels on the $b$-stretched product
$S \times_b  S$. We refer the reader to the basic reference  \cite{Melrose}. Quick introductions
are given in the Appendix of \cite{MPI} and also in the surveys
 \cite{Mazzeo-Piazza-II}, \cite{grieser-survey}, \cite{loya-survey}. 
It turns out that the algebra $ {}^b \Psi^{*}(S,E)$ is too small for many purposes, as it does not contain,
 for example, the inverse of an invertible $b$-differential operator.
 For this reason one needs to extend
 the small calculus and define the so-called $b$-calculus {\it with}  $\epsilon-${\it bounds}.
%
 In order to define the calculus with bounds we first recall some  basic facts. Consider quite generally
 a manifold with corners $X$ and the algebra of $b$-differential operators on it, ${\rm Diff}^*_b (X)$; this is the algebra
 of differential operators generated by $\mathcal{V}_b (X)$, the smooth vector fields on $X$ that are tangent to 
 the boundary of $X$. Let $\rho$ be a total boundary defining function for $\partial X$. \\
 For $\delta>0$ define \begin{equation}\label{conormal}\mathcal{A}^\delta (X):= \{u\in \rho^\delta L^\infty (X)\,|\, {\rm Diff}^*_b (X) (u)\subset 
 \rho^\delta L^\infty (X)\}\end{equation}
These spaces of conormal functions can also be defined using $b$-Sobolev spaces \cite[Ch. 4]{Melrose} from which it follows that they are in fact smooth in the interior. If $H$ is a boundary hypersurface of $X$ we can define
$\mathcal{A}^{\delta}_H (X)$ by requiring smoothness in the direction normal to $H$; this is done by doubling 
$X$ across $H$, obtaining $X_H$, and then defining $\mathcal{A}^{\delta}_H (X):= (\mathcal{A}^{\delta} (X_H))|_{X}$.
Similar definitions can be given for sections of a bundle $E$ over $X$, giving
$\mathcal{A}^\delta (X,E)$ and $\mathcal{A}^\delta_H (X,E)$.
 Notice that we have a natural Fr\'echet-space structure  on $\mathcal{A}^\delta (X,E)$ and on $\mathcal{A}^\delta_H (X,E)$.
 The calculus with $\epsilon$-bounds is defined as
  $$ {}^b \Psi^{*,\epsilon}(S,E) := {}^b \Psi^{*}(S,E) + {}^b \Psi^{-\infty,\epsilon}(S,E) + \Psi^{-\infty,\epsilon}(S,E)$$
The first summand on the right hand side are the $b$-operators in the small calculus of order $*$; then we have
$\Psi^{-\infty,\epsilon}(S,E)$ defined as $\mathcal{A}^\epsilon (S\times S, E\boxtimes E^*)$ and
$ {}^b \Psi^{-\infty,\epsilon}(S,E) $ defined as  $\mathcal{A}^\epsilon_{{\rm bf}} (S\times_b S, \beta^*(E\boxtimes E^*))$ with $\beta: S\times_b S\to S\times S$ the blow-down map \footnote{In the sequel, because of Assumption \ref{invertibility}, we shall have a certain 
flexibility on the choice of $\epsilon$, which is why we do not use the spaces  $\mathcal{A}^{\epsilon-} (S\times S, E\boxtimes E^*)$ and
$\mathcal{A}^{\epsilon-}_{{\rm bf}} (S\times_b S, \beta^*(E\boxtimes E^*))$ considered in \cite{Melrose}.}.
Composition formulae 
for the elements in the calculus with bounds are as in \cite[Theorem 4]{MPI}. In particular, 
${}^b \Psi^{-\infty,\epsilon}(S,E) + \Psi^{-\infty,\epsilon}(S,E)$ is a subalgebra
of the algebra of bounded operators on $L^2$ and $\Psi^{-\infty,\epsilon}(S,E)$
is an ideal in this algebra.

\noindent
We now consider a manifold with boundary $M_0$ endowed with a proper 
cocompact action of $G$. For simplicity we assume that the boundary is connected;
the general case is only slightly more complicated.
We assume that on $M_0$ we have a  $G$-invariant 
metric $g_0$ which is of product type near the boundary. 
We denote by $M$ the associated $b$-manifold; this is again a cocompact $G$-manifold 
with boundary but  now  endowed with a product $b$-metric $g$.\\
We sometimes denote the boundary of $M_0$ by $N$; for notational convenience we sometimes set $\pa M=N$.

\medskip
We can define the $b$-stretched product $M\times_b M$ which inherits in a natural way an action of
$G\times G$ and a diagonal action of $G$, see \cite{LPMEMOIRS} \cite{LPETALE}. Proceeding as in these references, we can define
the algebra of $G$-equivariant $b$-pseudodifferential operators on $M$ with $G$-compact support, denoted ${}^b \Psi^*_{G,c}(M)$.
This is a $\ZZ$-graded algebra.  

Let us fix $\epsilon >0$. Then, as in the compact case, we can extend  the algebra ${}^b \Psi^*_{G,c}(M)$ and consider
 the $G$-equivariant $b$-calculus of $G$-compact support 
with $\epsilon$-bounds, denoted  ${}^b \Psi^{*,\epsilon}_{G,c}(M)$. Thus, by definition,
$${}^b \Psi^{*,\epsilon}_{G,c}(M)= {}^b \Psi^{*}_{G,c}(M) + {}^b \Psi^{-\infty,\epsilon}_{G,c}(M) + \Psi^{-\infty,\epsilon}_{G,c}(M)$$
Composition formulae 
for the elements in the calculus with bounds are again as in \cite[Theorem 4]{MPI}. In particular, as before,
${}^b \Psi^{-\infty,\epsilon}_{G,c}(M)+
\Psi^{-\infty,\epsilon}_{G,c}(M)$ is an algebra and $
\Psi^{-\infty,\epsilon}_{G,c}(M)$ is an ideal in this algebra.
At this point, inspired also by the notation adopted in the closed case, we  introduce the notation \footnote{the reader will not confuse this notation with the one for 
conormal function adopted in \eqref{conormal}}
$${}^b \mathcal{A}^{c,\epsilon}_{G}(M):={}^b \Psi^{-\infty,\epsilon}_{G,c}(M)+
\Psi^{-\infty,\epsilon}_{G,c}(M)\,,\quad \mathcal{R}^{c,\epsilon}_{G}(M):=
\Psi^{-\infty,\epsilon}_{G,c}(M)$$
where $\mathcal{R}$ stands for {\it residual}.
We shall eventually simplify this notation and forget about the $\epsilon$.

\medskip
Assume now that $| \pi_0 (G)|<\infty$; we can apply to $M$ the slice theorem and obtain a diffeomorphism
$$ G\times_K S \xrightarrow{\alpha} M\,.$$
with $S$ a  compact manifold with boundary.
We observe that, as in the closed case, the correspondence $\widetilde{k}\rightarrow T_{\widetilde{k}}$
explained  in Part 1 and in Section \ref{subsect:3algebras} gives an identification 
\begin{equation}\label{identify-b} {}^b \Psi^{-\infty}_{G,c}(M)=\{T_{\widetilde{k}},\; \widetilde{k}\in(C^\infty_c (G)\hat{\otimes}\, {}^b\Psi^{-\infty}(S))^{K\times K} \}\,.
\end{equation}
Similarly
\begin{equation}\label{identify-b-bis}
{}^b \Psi^{-\infty,\epsilon}_{G,c}(M)=
\{T_{\widetilde{k}},\; \widetilde{k}\in(C^\infty_c (G)\hat{\otimes}\, {}^b\Psi^{-\infty,\epsilon}(S))^{K\times K}\}
\end{equation}
\begin{equation}\label{identify-b-ter}
\Psi^{-\infty,\epsilon}_{G,c}(M)=
\{T_{\widetilde{k}},\; \widetilde{k}\in(C^\infty_c (G)\hat{\otimes}\, \Psi^{-\infty,\epsilon}(S))^{K\times K}\}
\end{equation}
One can prove, as in the closed case treated in Subsection \ref{subsect:3algebras}, that
${}^b \Psi^{-\infty,\epsilon}_{G,c}(M)\equiv (C^\infty_c (G)\hat{\otimes}\, {}^b\Psi^{-\infty,\epsilon}(S))^{K\times K} $ can be identified
 with
 \begin{equation}\label{b-c-with-functions}
\left\{\Phi:G\to {}^b \Psi^{-\infty,\epsilon}(S),~\mbox{smooth, compactly supported and }
~K\times K~\mbox{invariant}\right\}
\end{equation}
 A similar description can be given for
$\Psi^{-\infty,\epsilon}_{G,c}(M)$.
Notice that we are using throughout  the natural Fr\'echet structures of ${}^b \Psi^{-\infty,\epsilon}(S)$
and  $\Psi^{-\infty,\epsilon}(S)$.


\smallskip
\noindent
Restriction to the front face of 
$M\times_b M$ defines  the indicial operator $I\,:\, {}^b \Psi^{-\infty}_{G,c} (M)\to {}^b \Psi^{-\infty}_{G,c,\RR^+} (\overline{N^+ \pa M})$ and, more generally, 
 \begin{equation}\label{indicial}I\,:\, {}^b \Psi^{-\infty,\epsilon}_{G,c} (M)\to {}^b \Psi^{-\infty,\epsilon}_{G,c,\RR^+} (\overline{N^+ \pa M})
 \end{equation}
 with $\overline{N^+ \pa M}$ denoting the compactified inward-pointing normal bundle to the boundary and the subscript
$\RR^+$ denoting equivariance with respect to the natural $\RR^+$-action.  See \cite[Section 4.15]{Melrose},
where $\overline{N^+ \pa M}$ is denoted $\widetilde{M}$.
Recall that there is a diffeomorphism:  $\overline{N^+ \pa M}\simeq \pa M\times [-1,1]$ and that
$\overline{N^+ \pa M}$  is endowed with a $b$-metric, so that metrically we can think of it as $\RR\times \pa M$.
Under the identification \eqref{identify-b} the indicial operator is simply induced by the indicial operator
 on the slice $S$, i.e. on the second factor of $(C^\infty_c (G)\hat{\otimes}\, {}^b\Psi^{-\infty}(S))^{K\times K}$: thus 
 if $\alpha\otimes A$ is a decomposable element in $ (C^\infty_c (G)\hat{\otimes}\, {}^b\Psi^{-\infty}(S))^{K\times K}$, then 
 $$I(\alpha\otimes A)= \alpha\otimes I(A)\in (C^\infty_c (G)\hat{\otimes}\, {}^b\Psi^{-\infty}_{\RR^+}(\overline{N^+ \partial S}))^{K\times K}$$
 and similarly for \eqref{indicial}.
By fixing a cut-off function $\chi$ on the collar neighborhood of the
boundary, equal to 1 on the boundary, we can define a section $s$ to the
indicial homomorphism $I$: $s:   {}^b \Psi^{-\infty,\epsilon}_{c,G,\RR^+} (\overline{N^+ \pa M})
\to  {}^b \Psi^{-\infty,\epsilon}_{G,c} (M)$. $s$ associates to a $\RR^+$-invariant operator
$G$ on  $\overline{N^+ \pa M}$ an operator on the $b$-manifold  $M$; the latter is obtained by pre-multiplying
and post-multiplying by the cut-off function $\chi$.\\
The Mellin transform in the $s$-variable for  an $\RR^+$-invariant kernel $\kappa (s,y,y')\in 
 {}^b \Psi^{-\infty,\epsilon}_{G,c,\RR^+} (\overline{N^+ \pa M})$ defines an isomorphism $\mathcal{M}$
between ${}^b \Psi^{-\infty,\epsilon}_{G,c,\RR^+} (\overline{N^+ \pa M})$ and holomorphic families 
of operators 
$$\{\RR\times i (-\epsilon,\epsilon)\ni\lambda \to \Psi^{-\infty}_{G,c} (\partial M)\}$$
rapidly decreasing as $|{\rm Re}\lambda |\to \infty$  as functions with values in the Fr\'echet algebras $\Psi^{-\infty}_{c,G} (\partial M)$. We denote by $I(R,\lambda)$ the Mellin transform of $R\in  
{}^b \Psi^{-\infty,\epsilon}_{c,G,\RR^+} (\overline{N^+ \pa M})$; if $P\in  {}^b \Psi^{-\infty,\epsilon}_{G,c} (M)$
then its indicial family is by definition the indicial family associated to its indicial operator $I(P)$.
The inverse $\mathcal{M}^{-1}$ is obtained by associating to a holomorphic 
family $\{S(\lambda), |{\rm Im} \lambda|<\epsilon \}$, rapidly decreasing in ${\rm Re}\lambda$,
the $\RR^+$-invariant Schwartz kernel $\kappa_S$ that in projective coordinates is given by 
\begin{equation}\label{inverse-mellin}
\kappa_G (s,y,y^\prime)=\int_{{\rm Im}\lambda=r} s^{i\lambda} G(\lambda)(y,y^\prime)d\lambda
\end{equation}
with $r\in (-\epsilon,\epsilon)$.
Notice once again that under the identifications \eqref{identify-b}, 
\eqref{identify-b-bis} the indicial family
and its inverse can be defined by reasoning solely on the slice $S$. 

\medskip
\noindent
For more on all of this we refer to \cite[Chapter 5]{Melrose}. 


\subsection{The $C^*$-index class}\label{subsect:C*-index} $\;$\\
Let $M_0$ and $M$ as above. We assume $M_0$ to be even dimensional. Let $D$ be a $\ZZ_2$-graded odd Dirac operator with boundary operator
satisfying Assumption \ref{assumption:invertibility}.
We shall now prove the existence of a $C^*$-index class associated to $D^+$.
One begins
by finding a symbolic parametrix $Q_\sigma$ to $D^+$, with remainders $R^\pm_\sigma$:
\begin{equation}\label{para1}
Q_\sigma D^+=\Id - R^+_\sigma\,,\quad D^+ Q_\sigma=\Id - R_\sigma^-\,.
\end{equation}
We can choose
$Q_\sigma\in {}^b \Psi^{-1}_{G,c}(M)$, i.e. of $G$-compact support
and then get
 $R^\pm_\sigma\in {}^b \Psi^{-\infty}_{G,c}(M)$.

Consider now $I(D^+)\equiv D^+_{{\rm cyl}}\equiv \partial/\partial t + D_{\partial}$; by assumption we know that
this operator is $L^2$-invertible on the $b$-cylinder ${\rm cyl}(\partial M)\equiv \overline{N_+ \pa M}$.
Consider the 
smooth 
 kernel $K(Q^\prime)$ on $M\times_G M$ which is zero outside a neighbourhood $U_{{\rm bf}}$ of the front face 
 and such that in $U_{{\rm bf}}$ 
 with (projective) coordinates $(x,s,y,y')$ is equal to 

$$K(Q^\prime)(s,x,y,y^\prime):= \chi (x) \int_{-\infty}^\infty s^{i\lambda} K((D_\pa +i\lambda)^{-1}  \circ I( R_\sigma^-,\lambda))(y,y^\prime) d\lambda\,.$$
Because of the presence of $(D_\pa +i\lambda)^{-1} $ this is not compactly supported. Still,
by proceeding {\it exactly} as in \cite[Lemma 4 and 5]{LPETALE} we can establish the following fundamental result:

\begin{theorem}\label{b-index-class-C*}
The kernel $K(Q^\prime)$ defines  a bounded operator on the $C^*_r G$-module $\mathcal{E}_b$:
$Q^\prime\in \mathbb{B}(\mathcal{E}_b)$.  If $Q:= Q_\sigma-Q^\prime$, then $Q$ provides an inverse of $D^+$ modulo elements in $\mathbb{K}(\mathcal{E}_b)$. Thus, for a Dirac operator $D$ satisfying the assumption \ref{assumption:invertibility} there is a well defined index class $\Ind_{C^*} (D)\in K_0 ( \mathbb{K}(\mathcal{E}_b))\equiv K_0 (C^* (M_0\subset M,E)^G)$. 
\end{theorem}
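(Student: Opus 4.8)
The plan is to construct an explicit parametrix for $D^+$ as a bounded, $G$-equivariant operator on the Hilbert module $\mathcal{E}_b$ whose remainders lie in $\mathbb{K}(\mathcal{E}_b)$, and then invoke the identification $\mathbb{K}(\mathcal{E}_b)\cong C^*(M_0\subset M,E)^G$ together with the standard construction of the index class of an invertible-modulo-compacts operator (e.g.\ via the Connes--Skandalis idempotent as recalled in \eqref{CS-projector}--\eqref{CS-class}, now with entries in $\mathbb{B}(\mathcal{E}_b)$ and $\mathbb{K}(\mathcal{E}_b)$). The starting point is the symbolic parametrix $Q_\sigma\in {}^b\Psi^{-1}_{G,c}(M)$ with remainders $R^\pm_\sigma\in {}^b\Psi^{-\infty}_{G,c}(M)$ from \eqref{para1}. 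The operator $Q_\sigma$ is not a genuine parametrix in the $b$-setting because its remainders, although of $G$-compact support, are only $b$-smoothing and hence \emph{not} compact as operators on $\mathcal{E}_b$: they have a non-trivial indicial operator. The correction term $Q'$ with kernel $K(Q')$ is precisely designed to kill this indicial part. Concretely, on the front face the composition $Q_\sigma D^+ = \Id - R^+_\sigma$ has indicial family $\Id - I(R^+_\sigma,\lambda)$, and using the $L^2$-invertibility of $I(D^+) = \partial/\partial t + D_\partial$ (guaranteed by Assumption \ref{assumption:invertibility} via the argument around \eqref{cyl-ind}) one can solve away this indicial contribution: the Mellin-transform-side formula for $K(Q')$ involving $(D_\partial + i\lambda)^{-1}\circ I(R^-_\sigma,\lambda)$ is exactly the inverse Mellin transform \eqref{inverse-mellin} of the operator that, composed with $D^+$, reproduces the indicial remainder. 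Then $Q := Q_\sigma - Q'$ satisfies $Q D^+ = \Id - R^+$ and $D^+ Q = \Id - R^-$ with $R^\pm$ having vanishing indicial operator, hence genuinely compact on $\mathcal{E}_b$, i.e.\ $R^\pm \in \mathbb{K}(\mathcal{E}_b)$.

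The key technical step, and the main obstacle, is to show that $Q'\in\mathbb{B}(\mathcal{E}_b)$, i.e.\ that the non-compactly-supported kernel $K(Q')$ does define a bounded adjointable operator on the $C^*_r G$-module $\mathcal{E}_b$. The point is that $(D_\partial + i\lambda)^{-1}$ decays only like $|\lambda|^{-1}$ as $|\mathrm{Re}\,\lambda|\to\infty$, while $I(R^-_\sigma,\lambda)$ is rapidly decreasing in $\mathrm{Re}\,\lambda$; combined with the fact that $D_\partial$ is $L^2$-invertible (so $(D_\partial+i\lambda)^{-1}$ is uniformly bounded for $\lambda$ real, and the contour of integration in \eqref{inverse-mellin} can be taken on $\mathrm{Im}\,\lambda = 0$, or slightly off it, within the strip $|\mathrm{Im}\,\lambda|<\epsilon$ allowed by the gap $\alpha$), one obtains that the integral converges and produces a kernel with controlled decay along the cylindrical end. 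I would follow the argument of Leichtnam--Piazza, \cite[Lemmas 4 and 5]{LPETALE}, essentially verbatim: there the analogous statement is proved for Galois coverings, and the only substantive change here is the replacement of the group von Neumann algebra / $C^*$-module over $C^*_r\Gamma$ by the module $\mathcal{E}_b$ over $C^*_r G$ arising from the slice decomposition $M\cong G\times_K S$. Using the identification \eqref{identify-b} of ${}^b\Psi^{-\infty}_{G,c}(M)$ with $K\times K$-invariant smooth compactly supported maps $G\to {}^b\Psi^{-\infty}(S)$, and the fact that the indicial operator acts only on the slice factor, the boundedness estimate reduces to the compact-slice case plus uniform control in the $G$-variable, which is handled exactly as in the closed-manifold discussion of Section \ref{subsect:3algebras}.

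Granting $Q'\in\mathbb{B}(\mathcal{E}_b)$, the remainder $Q D^+ - \Id = -R^+$ and $D^+ Q - \Id = -R^-$ are $b$-smoothing operators with vanishing indicial operator, and such operators are exactly the ones that extend to compact operators on $\mathcal{E}_b$ — this is the defining property of $\mathbb{K}(\mathcal{E}_b)\cong C^*(M_0\subset M,E)^G$ and is where the relative Roe algebra enters. Hence $D^+$ is invertible modulo $\mathbb{K}(\mathcal{E}_b)$, which by the usual recipe yields a well-defined class $\Ind_{C^*}(D)\in K_0(\mathbb{K}(\mathcal{E}_b))\cong K_0(C^*(M_0\subset M,E)^G)$; independence of the choices of $Q_\sigma$ and of the cut-off $\chi$ entering $K(Q')$ follows from the standard homotopy argument (the space of such parametrices is convex, or at least connected, modulo compacts). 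Via the canonical isomorphisms \eqref{iso-in-K} this is the same as an element of $K_0(C^*(M_0,E_0)^G)\cong K_0(C^*_r G)$, giving assertion (1) of Theorem~\ref{intro:maintheorem}.
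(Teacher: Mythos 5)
Your proposal is correct and follows essentially the same approach as the paper, which gives no independent argument for this theorem beyond the citation to \cite[Lemmas 4 and 5]{LPETALE} that you also invoke for the boundedness of $Q'$. You correctly identify all the key ingredients: the indicial correction of the symbolic parametrix, the role of Assumption \ref{assumption:invertibility} in inverting $I(D^+)$, the reduction to the slice via the $G\times_K S$ decomposition, and the passage from compact-modulo-remainder to a class in $K_0(\mathbb{K}(\mathcal{E}_b))$.
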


We have given a $b$-calculus approach to this index class because it is the one that will prove to be useful 
in establishing  the existence of a {\it smooth} index class, see below, and for proving a {\it higher} APS index formula.
However, there are other approaches. Indeed, in the recent
preprint  \cite{HWW2}  a coarse approach to this index class is explained. It should not be difficult
to show that the two index classes, the one defined here and the one defined in  \cite{HWW2}  are in fact the same.
(This should proceed as in the case of Galois coverings, where the analogous result is established in
\cite[Proposition 2.4]{PS-Stolz}.)\\
As in the case of Galois coverings, and always under the invertibility assumption \ref{assumption:invertibility},
it should also be possible to establish the existence of  a APS $C^*$-index class through
a boundary value problem defined through the spectral projection $\chi_{(0,\infty)} (D_\partial)$
and show that it is equal to the index class as defined above. See \cite{LP03} for the case of Galois coverings.


\section{Smooth Atiyah-Patodi-Singer index classes for proper actions}\label{sect:smooth-index-class}

Let us assume that $G$ satisfies the RD condition with respect to a length function $L$
and let us consider $H^\infty_L (G)$, a subalgebra of $ C^*_r (G)$.
In this section we prove the existence of a {\it smooth} APS index class.  More precisely, this means the following:
 we construct 
a dense holomorphically closed subalgebra $\mathcal{A}^{\infty,\epsilon}_G (M,E) $ of $C^*(M_0\subset M,E)^G$ 
and prove that the index class appearing in the statement of Theorem \ref{b-index-class-C*} is in fact an element
in $K_0 (\mathcal{A}^{\infty,\epsilon}_G (M,E))$.

\subsection{Enlarged calculus with bounds}$\;$\\
We are  now going to enlarge the calculus with bounds by adding suitable integral operators.
In this subsection we shall not assume that $G$ satisfies the RD condition.
In this particular subsection we will work with an arbitrary admissible Fr\'echet subalgebra $\mathcal{A} (G)\subset C^*_r G$.\\
We consider the Fr\'echet algebras 
\begin{equation} 
(\mathcal{A}(G)\hat{\otimes} \,{}^b \Psi^{-\infty,\epsilon}(S))^{K\times K}
\,,\quad
 (\mathcal{A}(G)\hat{\otimes}  \,\Psi^{-\infty,\epsilon}(S))^{K\times K}
 \end{equation}
 and we set
 \begin{equation}\label{definitionof-b-alg-extended}
{}^b \mathcal{A}^\epsilon _G (M):= 
(\mathcal{A}(G)\hat{\otimes} \,{}^b \Psi^{-\infty,\epsilon}(S))^{K\times K}
+(\mathcal{A}(G)\hat{\otimes}  \,\Psi^{-\infty,\epsilon}(S))^{K\times K}
 \end{equation}
 and
 \begin{equation}\label{definitionof-residual-alg-extended}
 \mathcal{R}^\epsilon _G (M):= 
(\mathcal{A}(G)\hat{\otimes}  \,\Psi^{-\infty,\epsilon}(S))^{K\times K}
 \end{equation}

\noindent
Notice that the  corresponding operators
are automatically bounded on $L^2_b (M)$.

\begin{definition}
We define ${}^b \mathcal{S}^\epsilon _G (M)$ as the subalgebra of the bounded operators on  $L^2_b (M)$
given by
$${}^b \mathcal{S}^\epsilon _G (M) :=\{T_{\widetilde{k}}, \widetilde{k}\in {}^b \mathcal{
A}^\epsilon _G (M)
\}\,.$$
We define 
 $\mathcal{S}^\epsilon _G (M)$ as the subalgebra of the bounded operators on  $L^2_b (M)$
given by
$$ \mathcal{S}^\epsilon _G (M) :=\{T_{\widetilde{k}}, \widetilde{k}\in \mathcal{R}^\epsilon _G (M)\}\,.$$
$ \mathcal{S}^\epsilon _G (M) $ is an ideal in ${}^b \mathcal{S}^\epsilon _G (M)$.
\end{definition}

\medskip\noindent
As in previous sections we shall often treat a space of kernels and the corresponding  space of operators as the same object,
thus identifying ${}^b \mathcal{S}^\epsilon _G (M) $ with ${}^b \mathcal{A}^\epsilon _G (M)$
and $\mathcal{S}^\epsilon _G (M) $ with $\mathcal{R}^\epsilon _G (M)$.

\medskip
\noindent
Notice that from equations  \eqref{identify-b}, \eqref{identify-b-bis}  and \eqref{identify-b-ter} we have that 
$${}^b \Psi^{-\infty,\epsilon}_{G,c}(M)+
\Psi^{-\infty,\epsilon}_{G,c}(M)=: {}^b \mathcal{A}^{c,\epsilon} _G (M)\subset  {}^b \mathcal{A}^\epsilon _G (M)\,,\qquad
\Psi^{-\infty,\epsilon}_{G,c}(M)=:  \mathcal{R}^{c,\epsilon} _G (M)\subset  \mathcal{R}^\epsilon _G (M)
$$
We can similarly define
\begin{equation} 
{}^b \mathcal{A}^{\epsilon}_{G,\RR^+} (\overline{N_+ \partial M}):= 
(\mathcal{A}(G)\hat{\otimes} \,{}^b \Psi^{-\infty,\epsilon}_{\RR^+}(\overline{N_+ \partial S}))^{K\times K}
\end{equation}
and observe that
the indicial operator on the slice $S$ induces a surjective  algebra homomorphism
$${}^b \mathcal{A}^\epsilon _G (M)\xrightarrow{I} {}^b \mathcal{A}^\epsilon _{G,\RR^+} (\overline{N^+ \pa M})$$
where $I$ is equal to the indicial operator on $(\mathcal{A}(G)\hat{\otimes} \,{}^b \Psi^{-\infty,\epsilon}(S))^{K\times K}$
and it is defined as zero on\\
$(\mathcal{A}(G)\hat{\otimes}  \,\Psi^{-\infty,\epsilon}(S))^{K\times K}$
(recall that 
${}^b \mathcal{A}^\epsilon _G (M):= 
(\mathcal{A}(G)\hat{\otimes} \,{}^b \Psi^{-\infty,\epsilon}(S))^{K\times K}
+(\mathcal{A}(G)\hat{\otimes}  \,\Psi^{-\infty,\epsilon}(S))^{K\times K}$)

\medskip
\noindent
Define
\begin{equation}\label{true-ideal}
 \mathcal{A}^\epsilon _G (M):=\Ker \left( I: {}^b \mathcal{A}^\epsilon _G (M)\rightarrow {}^b \mathcal{A}^\epsilon _{G,\RR^+} (\overline{N^+ \pa M}) \right)\,.
 \end{equation}

\medskip
The short exact sequence of algebras
\begin{equation}\label{short-exact-b}
0\to  \mathcal{A}^\epsilon _G (M)\to {}^b \mathcal{A}^\epsilon _G (M)\xrightarrow{I} {}^b \mathcal{A}^\epsilon  _{G,\RR^+}  (\overline{N^+ \pa M})\to 0
\end{equation}
will play a crucial role in this paper.
The Mellin transform on  $\RR^+$-invariant operators on the compactified normal bundle to the boundary of the slice,  
$\overline{N^+\partial S}$, induces an isomorphism 
$\mathcal{M}$ 
between $ {}^b \mathcal{A}^\epsilon  _{G,\RR^+}  (\overline{N_+ \pa M})$ and holomorphic families 
$$\{\RR\times i (-\epsilon,\epsilon)\ni\lambda \to \mathcal{A}_G (\partial M)\}$$
with values in the Fr\'echet algebra $\mathcal{A}_G (\partial M)$, rapidly decreasing in ${\rm Re}\lambda$.
The inverse 
$\mathcal{M}^{-1}$ is again given by 
associating to a family $\{S(\lambda)\}$ the $\RR^+$-invariant Schwartz kernel $\kappa_S$ that in projective coordinates is given by 
$$\kappa_S (s,y,y^\prime)=\int_{{\rm Im}\lambda=r} s^{i\lambda} S(\lambda)(y,y^\prime)d\lambda$$
with $r\in (-\epsilon,\epsilon)$.
Notice that we can  enlarge the whole calculus (i.e. we can consider 
operators of any order)
  by considering
\begin{equation}\label{full-calculus-extended}
{}^b \Psi^{*,\epsilon}_{G,\mathcal{A}}(M): =   {}^b \Psi^{*}_{G,c}(M) +   {}^b \mathcal{A}^\epsilon_G (M)\,.
\end{equation}


\subsection{Rapid Decay condition and a compact notation}\label{subsect:notation}
We assume from now on 
that $G$ satisfies the RD condition with respect to a length function $L$ 
and we choose as an admissible Fr\'echet subalgebra $H^\infty_L (G)\subset
C^*_r(G)$. 
We denote by 
$$\mathcal{A}^{\infty,\epsilon} _G (M)\,,\quad  {}^b \mathcal{A}^{\infty,\epsilon} _G (M)\,,\quad
{}^b \mathcal{A}^{\infty\epsilon}  _{G,\RR^+}  (\overline{N^+ \pa M})
$$
the three algebras introduced in the previous subsection corresponding to 
the particular choice $\mathcal{A}(G)=H^\infty_L (G)$.
We have a short exact sequence of algebras 
$$0\to \mathcal{A}^{\infty,\epsilon} _G (M)\rightarrow {}^b \mathcal{A}^{\infty,\epsilon} _G (M)\xrightarrow{I}
{}^b \mathcal{A}^{\infty,\epsilon}  _{G,\RR^+}  (\overline{N^+ \pa M})\to 0\,.
$$
We shall often omit $\epsilon$ in the algebras appearing in this
short exact sequence  and rewrite it 
as \begin{equation}\label{short-exact-b-bis}
0\to  \mathcal{A}^\infty_G (M)\to {}^b \mathcal{A}^\infty_G (M)\xrightarrow{I} {}^b \mathcal{A}^\infty_{G,\RR^+}  (\overline{N^+ \pa M})\to 0
\end{equation}
Furthermore, we shall also sometimes  drastically simplify the notation and set:
\begin{equation}\label{short-exact-b-newnotation}
\mathcal{A}^\infty:=  \mathcal{A}^\infty_G (M)\,,\quad  {}^b \mathcal{A}^\infty:=  {}^b \mathcal{A}^\infty_G (M)\,,\quad 
 {}^b \mathcal{A}_{\RR^+}^\infty :=  {}^b \mathcal{A}^\infty_{G,\RR^+}  (\overline{N^+ \pa M})
\end{equation}
so that \eqref{short-exact-b-bis} can be rewritten as 
\begin{equation}\label{short-exact-b-tris}
0\to  \mathcal{A}^\infty \to  {}^b \mathcal{A}^\infty \xrightarrow{I}  {}^b \mathcal{A}^\infty_{\RR^+} \to 0
\end{equation}

\subsection{Improved parametrix construction and smooth APS index classes}$\;$\\
Let $M_0$ and $M$ as above. We assume $M_0$ to be even dimensional. Let $D$ be a $\ZZ_2$-graded odd 
$G$-equivariant Dirac operator with boundary operator $D_\partial$
satisfying Assumption \ref{assumption:invertibility}.
Let $\epsilon>0$ be strictly smaller than $\alpha/2$, with $[-\alpha,\alpha]$ the width of the spectral gap for the boundary operator, see  \ref{assumption:invertibility}.



\begin{proposition}\label{true-para}
The element  $ I(D^+)^{-1} I( R^-_\sigma)$
is  an element in $ {}^b\mathcal{A}^\infty_{\RR^+}\equiv {}^b \mathcal{A}^{\infty,\epsilon}_{G,\RR^+} (\overline{N^+ \pa M})$.\\ Consequently,
$s (( I(D^+)^{-1} I( R^-_\sigma))$ is an element  in $ {}^b\mathcal{A}^\infty\equiv {}^b \mathcal{A}^{\infty,\epsilon}_G (M)$.
\end{proposition}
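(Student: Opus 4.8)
The plan is to analyze $I(D^+)^{-1} I(R_\sigma^-)$ by passing to the Mellin transform and reducing everything to the slice $S$. Recall that $R_\sigma^- \in {}^b\Psi^{-\infty}_{G,c}(M)$, so under the slice identification \eqref{identify-b} it corresponds to a smooth compactly supported $K\times K$-invariant map $G \to {}^b\Psi^{-\infty}(S)$; its indicial operator $I(R_\sigma^-)$ corresponds to the fiberwise indicial operator on the slice, and its indicial family $I(R_\sigma^-,\lambda)$ is a holomorphic family on a strip $\RR \times i(-\epsilon,\epsilon)$, rapidly decreasing in $\Re\lambda$, with values in $\Psi^{-\infty}_{G,c}(\partial M) \cong (C^\infty_c(G)\hat\otimes \Psi^{-\infty}(\partial S))^{K\times K}$. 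On the other side, $I(D^+) = D^+_{\rm cyl} = \partial/\partial t + D_\partial$, whose indicial family is $i\lambda + D_\partial$. By Assumption \ref{assumption:invertibility}, $[-\alpha,\alpha]$ is disjoint from $\mathrm{spec}_{L^2}(D_\partial)$, and since $\epsilon < \alpha/2$, the family $i\lambda + D_\partial$ is invertible for all $\lambda \in \RR \times i(-\epsilon,\epsilon)$, with $(i\lambda + D_\partial)^{-1}$ holomorphic in $\lambda$ on that strip.

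The first key step is to show that $\lambda \mapsto (i\lambda + D_\partial)^{-1} \circ I(R_\sigma^-,\lambda)$ is a holomorphic family on the strip, rapidly decreasing in $\Re\lambda$, with values in $\mathcal{A}^\infty_G(\partial M)$ — i.e., in the image of $\mathcal{M}$ as described in Section \ref{sect:smooth-index-class}. Holomorphy is immediate from holomorphy of both factors. For the rapid decay: $I(R_\sigma^-,\lambda)$ is Schwartz in $\Re\lambda$ with values in $\Psi^{-\infty}_{G,c}(\partial M)$, while $(i\lambda + D_\partial)^{-1}$ is uniformly bounded on the strip (its norm is $O(1/|\lambda|)$ as $|\Re\lambda|\to\infty$, staying bounded near $\lambda = 0$ by the spectral gap). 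The crucial point is that $(i\lambda + D_\partial)^{-1}$ should be made to act on $\Psi^{-\infty}_{G,c}(\partial M)$-valued kernels so as to land in the smooth algebra. Here I would invoke Proposition \ref{prop:inverse-0} applied to $\partial M$ with the admissible algebra $H^\infty_L(G)$: writing $D_\partial^{-1} = Q_\partial + C_\partial$ with $Q_\partial \in \Psi^{-1}_{G,c}(\partial M)$ and $C_\partial \in \mathcal{A}^\infty_G(\partial M)$, and more precisely $(i\lambda + D_\partial)^{-1} = B_\lambda(\mathrm{Id} + A_\lambda)$ as in Proposition \ref{bounded} (applied on $\partial M$, which is closed), with $B_\lambda \in \Psi^{-1}_{G,c}(\partial M)$ of uniform $G$-compact support and $A_\lambda \in \mathcal{A}^\infty_G(\partial M)$ bounded in $\lambda$. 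Composing: $(i\lambda + D_\partial)^{-1} I(R_\sigma^-,\lambda) = B_\lambda(\mathrm{Id} + A_\lambda) I(R_\sigma^-,\lambda)$, and since $I(R_\sigma^-,\lambda) \in \Psi^{-\infty}_{G,c}(\partial M) \subset \mathcal{A}^\infty_G(\partial M)$, the Remark after Lemma \ref{lemma:composition} (that $\mathcal{A}^\infty_G$ is a module over $\Psi^0_{G,c}$) shows each composition lands in $\mathcal{A}^\infty_G(\partial M)$ continuously, uniformly in $\lambda$ up to the Schwartz decay supplied by $I(R_\sigma^-,\lambda)$.

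The second step is then purely formal: by definition of the Mellin isomorphism $\mathcal{M}$ for ${}^b\mathcal{A}^\infty_{G,\RR^+}(\overline{N^+\partial M})$ in Section \ref{sect:smooth-index-class}, a holomorphic family on $\RR \times i(-\epsilon,\epsilon)$, rapidly decreasing in $\Re\lambda$, with values in $\mathcal{A}^\infty_G(\partial M)$, is exactly $\mathcal{M}$ of an element of ${}^b\mathcal{A}^\infty_{G,\RR^+}(\overline{N^+\partial M})$. Since the indicial family of $I(D^+)^{-1}I(R_\sigma^-)$ is precisely $(i\lambda+D_\partial)^{-1}I(R_\sigma^-,\lambda)$, we conclude $I(D^+)^{-1}I(R_\sigma^-) \in {}^b\mathcal{A}^\infty_{\RR^+}$. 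The final assertion — that $s(I(D^+)^{-1}I(R_\sigma^-)) \in {}^b\mathcal{A}^\infty$ — follows because the section $s$ is given by pre- and post-multiplication by a cut-off function $\chi$ supported in the collar, which on the slice is just multiplication by a $C^\infty_c$ function and hence preserves the algebra $(H^\infty_L(G)\hat\otimes\, {}^b\Psi^{-\infty,\epsilon}(S))^{K\times K}$; one should check it maps $\RR^+$-invariant kernels on $\overline{N^+\partial M}$ to elements of ${}^b\mathcal{A}^{\infty,\epsilon}_G(M)$, which is immediate from the explicit description.

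The main obstacle I anticipate is the second step's decay/regularity bookkeeping: one must verify that the composition $(i\lambda+D_\partial)^{-1} I(R_\sigma^-,\lambda)$ is not merely holomorphic and bounded but rapidly decreasing in $\Re\lambda$ \emph{as a curve in the Fréchet algebra} $\mathcal{A}^\infty_G(\partial M)$ (all seminorms, all derivatives in $\lambda$), and that the parametrix decomposition of Proposition \ref{bounded} can be run with genuine uniformity in the parameter $\lambda$ over the full strip including a neighborhood of $\lambda = 0$ — the latter being where one genuinely uses the spectral gap hypothesis $\epsilon < \alpha/2$ rather than just $L^2$-invertibility of $D_\partial$. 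This is where Proposition \ref{prop:inverse-0} and the $\Psi^*$-algebra continuity of inversion (as recorded in the proof of the Proposition before \ref{bounded}) do the real work.
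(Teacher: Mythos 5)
Your proposal follows essentially the same route as the paper's own proof: reduce via the Mellin picture to the indicial family $(D_\partial+i\lambda)^{-1}\circ I(R_\sigma^-,\lambda)$, use Proposition \ref{bounded} together with (the first-order analogue of) Proposition \ref{prop:inverse-0} to decompose the resolvent into a $G$-compactly supported pseudodifferential piece of uniform support and a bounded $\mathcal{A}^\infty_G(\partial M)$-valued piece, then apply the module/composition continuity from Lemma \ref{lemma:composition} to get rapid decay in the Fréchet topology, and finish with the inverse Mellin transform and the trivial action of $s$. The one detail the paper spells out that you gesture at but do not write down is how the estimates are pushed from $\lambda\in\RR$ to the full strip $\abs{\operatorname{Im} z}<\epsilon$: the paper substitutes $\lambda\mapsto\lambda\pm i\delta$ and observes that on each horizontal line one is back to the real case but for the shifted (still invertible, by the gap) operator $D_\partial\mp\delta$; this is the mechanism behind the ``genuine uniformity on the strip'' you correctly flag as the crux.
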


\begin{proof} 
By our discussion on the  inverse Mellin
transform  we are lead to consider the $\lambda$-family of operators 
$$ (D_\pa +i\lambda)^{-1}  \circ I( R_\sigma^-,\lambda)$$
and the $\RR^+$-invariant kernel on $\overline{N^+ \pa M}$
whose values at $(s,y,y^\prime)$ is equal to $$\int_{-\infty}^\infty s^{i\lambda} (D_\pa +i\lambda)^{-1}  \circ I( R_\sigma^-,\lambda)(y,y^\prime) d\lambda\,.$$
We need to prove that this kernel is in ${}^b \mathcal{A}^{\infty,\epsilon}_{G,\RR^+} (\overline{N^+ \pa M})$
or, equivalently, that the family
$$\RR\times i (-\epsilon,\epsilon)\ni\lambda \to (D_\pa +i\lambda)^{-1}  \circ I( R_\sigma^-,\lambda)$$ 
is holomorphic and rapidly decreasing as $|{\rm Re} \lambda |\to \infty$ as a function with values in the Fr\'echet algebra
$\mathcal{A}^\infty_G (\partial M)$.
To this end, we first observe that for $\lambda$
ranging in a finite rectangle  we can simply make use of the main result
of Subsections \ref{subsect:resolvant} and \ref{subsect:heat}.
We thus only need to investigate  the large $\lambda$ behaviour of the family of operators
$(D_\pa +i\lambda)^{-1}  \circ I( R_\sigma^-,\lambda)$. We first consider the case $\lambda\in \RR$.\\
We know that $I( R_\sigma^-,\lambda)$ is rapidly decreasing in $\lambda$, with values in 
$\Psi^{-\infty}_{G,c}(\partial M)$; in fact the family $I( R_\sigma^-,\lambda)$, $\lambda\in \RR$, is a smoothing operator in the
pseudodifferential calculus with parameters $\Psi^{-\infty}_{G,c} (\partial M,\Lambda)$, $\Lambda=\{z\in\CC\,:\, {\rm Im} z =0\}$. Notice that $\Lambda$ is an allowable conic set in the complex plane.
Thanks to Proposition \ref{bounded} and the analogue of Proposition \ref{prop:inverse-0}
for first order operators, we can write  $(D_\pa +i\lambda)^{-1}=B_\lambda+  C_\lambda$
with $\{B_\lambda\}\in  \Psi^{-1}_{G,c}(\partial M,\Lambda)$ and of uniform $G$-compact support, and $C_\lambda\in \mathcal{A}^\infty_G (\partial M)$ and {\it bounded} as a function of $\lambda\in\RR$ with values in $\mathcal{A}^\infty_G (\partial M)$. (It is here that crucial use is made of the results established in Section \ref{sect:more-on-heat}.)\\
Let us analyze 
$$(D_\pa +i\lambda)^{-1} I( R_\sigma^-,\lambda)= B_\lambda \circ I( R_\sigma^-,\lambda)+C_\lambda \circ I( R_\sigma^-,\lambda)$$
The first summand  is the composition of an element in $\Psi^{-1}_{G,c} (\partial M,\Lambda)$
and an element in  $\Psi^{-\infty}_{G,c} (\partial M,\Lambda)$ and so in particular it is  a rapidly decreasing functions with values in the Fr\'echet algebra 
$\Psi^{-\infty}_{G,c}(\partial M)$; since, as already remarked,  we have a continuous
inclusion of the Fr\'echet algebra $\Psi^{-\infty}_{G,c}(\partial M)$ into the Fr\'echet
algebra $\mathcal{A}^\infty_{G}(\partial M)$ we conclude that the first summand is a function 
of $\lambda$ which is rapidly decreasing as a function with values in the Fr\'echet algebra
$\mathcal{A}^\infty_{G}(\partial M)$.
The second summand is certainly the composition of a bounded function of $\lambda$ with values in $\mathcal{A}_{G}(\partial M)$
and of a rapidly decreasing function with values in $\mathcal{A}_{G} (\partial M)$ and so, it is also a
rapidly decreasing function with values in $\mathcal{A}_{G} (\partial M)$ as required.\\
We need to extend these results to each horizontal  lines ${\rm Im} z=\pm\delta$, $|\delta|<\epsilon$; however 
for $z\in \{{\rm Im} z=\pm \delta\}$ the indicial family is equal to  
$(D_{\partial M}\mp \delta)+i\lambda$ with $\lambda$ real and we are back to the previous analysis on
$\Lambda=\RR$
but for the operator $(D_{\partial M}\mp \delta)$.\\ 
Summarising, we have proved that 
the family $(D_\pa +i\lambda)^{-1}  \circ I( R_\sigma^-,\lambda)$ is rapidly decreasing as a 
function $\RR\times i (-\epsilon,\epsilon) \ni\lambda \to A^\infty_G (\partial M)$.
We can thus  apply the inverse Mellin transform and finally 
obtain that $ I(D^+)^{-1} I( \R^-_\sigma)\in {}^b \mathcal{A}^{\infty,\epsilon}_{G,\RR^+} (\overline{N^+ \pa M})$ which is what we
wanted to prove. \end{proof}

\medskip
\noindent
{\bf From now on,  unless confusion should arise, we shall omit the $\epsilon$ from the notation.}

\medskip
\medskip
\noindent
The (true) parametrix
of $D^+$ is defined as  $Q^b=Q_\sigma- Q^\prime$
with $Q^\prime$ equal to
$s (( I(D^+)^{-1} I( \R^-_\sigma))$ (recall that $s$ is simply defined as pre-multiplication and post-moltiplication 
by a cut-off function equal to $1$ on the boundary).
Then, with this definition, one can check as in \cite{Melrose}  that $D^+ Q^b=I-{}^b S_-$ and
 $Q^b D^+=I-{}^b S_+$
with ${}^b S_\pm$   in the ideal  $\mathcal{A}^\infty_G (M)$. 

\medskip

If we consider, as in the closed case,  the $2\times 2$ Connes-Skandalis matrix
\begin{equation}\label{CS-projector-bis}
P^b_{Q}:= \left(\begin{array}{cc} {}^b S_{+}^2 & {}^b S_{+}  (I+{}^b S_{+}) Q^b \\ {}^b S_{-} D^+ &
I-{}^b S_{-}^2 \end{array} \right).
\end{equation}
we obtain  a class 
\begin{equation}\label{CS-class-bis}
[P^b_{Q}] - [e_1]\in K_0 (\mathcal{A}^\infty_G (M))\;\;\;\text{with}\;\;\;e_1:=\left( \begin{array}{cc} 0 & 0 \\ 0&1
\end{array} \right)
\end{equation}
Using the fact that $H^\infty_L (G)$ is holomorphically closed in $C^*_r G$ and that the residual
operators on the manifold with boundary $S$, $\Psi^{-\infty} (S,E|_{E})$, are holomorphically closed 
in the compact operators of $L^2_b (S,E|_S)$ one can prove, similarly to the closed
case, that $\mathcal{A}^\infty_G (M)$ is dense and holomorphically closed in $C^* (M_0\subset M,E)^G$.
We conclude that we have the following natural isomorphism:
\begin{equation}\label{iso-smooth}
K_0 (\mathcal{A}^\infty_G (M))\simeq K_0 (C^* (M_0\subset M,E)^G) 
\end{equation}
so that, summarizing,
$$K_0 (\mathcal{A}^\infty_G (M))\simeq K_0 (C^* (M_0\subset M,E)^G) \simeq K_0 (C^* (M_0,E_0)^G)\simeq K_0 (C^*_r (G)).
$$
%

\noindent
The class
\begin{equation}\label{def-b-class}
\operatorname{Ind}_\infty (D):= [P^b_Q] - [e_1]\in K_0 (\mathcal{A}^\infty_G (M))\equiv  K_0 (C^* (M_0\subset M,E)^G)
\end{equation}
is a {\it smooth} representative of the $C^*$-index class constructed in Subsection \ref{subsect:C*-index}.
\begin{remark}
Notice that in contrast with the closed case, in the 
 $b$-case we do not have an index class in the K-theory of an algebra of $G$-compactly supported operators;
 the algebra   $\mathcal{A}^\infty_G (M)$ is in a certain sense the smallest algebra in which we can find 
the index class.
\end{remark}
%

\subsection{The smooth APS index class defined by the Connes-Moscovici projector}$\;$\\
In this section we keep assuming that $G$ satisfies
the RD condition. We wish to  analyze the Connes-Moscovici projector.  Recall that this is
the Connes-Skandalis projector but for the parametrix
\begin{equation}
 Q
:= \frac{I-\exp(-\frac{1}{2} D^- D^+)}{D^- D^+} D^+
\end{equation}
with
$I-Q D^+ = \exp(-\frac{1}{2} D^- D^+)$, $I-D^+ Q =  \exp(-\frac{1}{2} D^+ D^-)$.\\
This particular choice of parametrix produces
 the
idempotent
\begin{equation}
\label{cm-idempotent}
V (D)=\left( \begin{array}{cc} e^{-D^- D^+} & e^{-\frac{1}{2}D^- D^+}
\left( \frac{I- e^{-D^- D^+}}{D^- D^+} \right) D^-\\
e^{-\frac{1}{2}D^+ D^-}D^+& I- e^{-D^+ D^-}
\end{array} \right)
\end{equation}
We have two goals  in this subsection:

\begin{itemize}
\item we want to show that on a $b$-manifold the entries of this matrix belongs to the unitalization
of ${}^b \mathcal{A}^\infty_G (M)$;
\item we want to show that if we improve the parametrix $Q$ to $Q^b:= Q-Q^\prime$, with
$$Q^\prime:= s (I(D^+)^{-1} I( \exp(-\frac{1}{2} (D^- D^+)))\equiv 
s ((D^+_{{\rm cyl}})^{-1} \exp (-\frac{1}{2} D^-_{{\rm cyl}}D^+_{{\rm cyl}}))
$$ then the resulting Connes-Moscovici
projector $V^b(D)$ is an element in the unitalization of $\mathcal{A}^\infty_G (M)$
\end{itemize}

\medskip
\noindent
{\bf Important remark:} a manifold with cylindrical ends is certainly a complete manifolds and so
the heat kernel will have the known decaying properties at infinity. However, our goal in this section is to prove that
not only the heat kernel has the right decaying properties at infinity, but that it is also a $b$-pseudodifferential
operator. This will be crucial in the proof. This is precisely the reason why we provide a detailed
analysis of the Cauchy integral below.

\medskip
\noindent
{\bf Notation:} notice that we have thus denoted by  $V^b(D)$ the Connes-Moscovici projector associated to 
the improved parametrix $Q^b= Q-Q^\prime$. 

\medskip
\noindent
This brings us first of all to the analysis of the heat kernel $\exp (-t D^2)$ on a $b$-manifold.
In a second stage we shall also want to analyze the operator 
\begin{equation}\label{right-upper-corner}
e^{-\frac{1}{2}D^- D^+}
\left( \frac{I- e^{-D^- D^+}}{D^- D^+} \right) D^-
\end{equation}

   
   \noindent
   We shall 
analyse both $\exp (-t D^2)$ and \eqref{right-upper-corner} via a Cauchy integral and a detailed analysis of the
resolvant, as we did in the closed case.

\medskip
\noindent
{\bf Analysis of the heat operator.}\\
Let us begin with $\exp (-t D^2)$ and let us write it as 
$$\exp (-t D^2) =\int_\gamma e^{-t\mu} (D^2-\mu)^{-1} d\mu$$
with $\gamma$ a suitable path in $\CC\setminus [0,\infty)$.
 We shall follow the analysis of the complex powers of an elliptic $b$-operator given in \cite{piazza-jfa} and the study
of the large time bahaviour of the heat kernel in \cite{Melrose}. These are based
on the analysis of $(D^2-\mu)$ as a parameter-dependent $b$-differential operator. The analysis  is  similar
to the one for the term $D_{\partial M} + i \mu$ given in the proof of Proposition \ref{true-para}
once we use the $b$-calculus instead of the usual calculus on closed manifolds such as $\partial M$. 

\begin{proposition}\label{b-resolvant}
There exists a $\delta > 0$ such that for each $\mu\in\gamma$ we have 
$$(D^2-\mu)^{-1}\in  {}^b \Psi^{-2}_{G,c} (M)+ {}^b \mathcal{A}^{\infty,\delta}_{G} ( M)$$
Here we have gone back to the precise notation, including $\delta$, for the extended calculus with bounds.
\end{proposition}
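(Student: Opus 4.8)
The plan is to treat $D^2-\mu$, for $\mu$ ranging over $\gamma$, as a parameter–dependent elliptic $b$–differential operator and to build a parametrix for it inside the enlarged calculus with bounds, running the argument of the proof of Proposition \ref{true-para} with the $b$–calculus on $M$ in place of the ordinary calculus on the closed manifold $\partial M$. Following \cite{piazza-jfa} and \cite[Ch.~7]{Melrose}, one first develops the $G$–equivariant parameter–dependent $b$–calculus ${}^b \Psi^{*}_{G,c}(M,\Lambda)$ for an allowable conic set $\Lambda\subset\CC\setminus[0,\infty)$ containing $\gamma$, and produces an interior (symbolic) parametrix $Q_0\in {}^b \Psi^{-2}_{G,c}(M,\Lambda)$ with $(D^2-\mu)\,Q_0=\Id-R_0$ and $R_0\in {}^b \Psi^{-\infty}_{G,c}(M,\Lambda)$; this $Q_0$ is the source of the summand ${}^b \Psi^{-2}_{G,c}(M)$ in the statement. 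The remaining boundary correction is governed by the indicial family of $D^2-\mu$, which on the cylinder reduces to $I(D^2-\mu,\lambda)=\lambda^2+D_\partial^2-\mu$.

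First I would pin down the $\delta$. Since $D_\partial$ is $L^2$–invertible with ${\rm spec}_{L^2}(D_\partial)\cap[-\alpha,\alpha]=\emptyset$ one has $D_\partial^2\geq\alpha^2$, so that for $\lambda\in\RR$ and $\mu\in\CC\setminus[0,\infty)$ the operator $\lambda^2+D_\partial^2-\mu$ is $L^2$–invertible; using this gap together with a suitable choice of the contour $\gamma$, a direct spectral estimate (splitting into the regimes $|{\rm Re}\,\lambda|$ small and large) shows that there is a single $\delta>0$ for which $\lambda^2+D_\partial^2-\mu$ is invertible for all $\mu\in\gamma$ and all $\lambda$ in the strip $|{\rm Im}\,\lambda|<\delta$. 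To invert this family I would appeal to the closed–manifold analysis of Section \ref{sect:more-on-heat} applied to $\partial M$: by the evident second–order analogue of Proposition \ref{prop:inverse-0} together with Proposition \ref{bounded}, one obtains for real $\lambda$ a decomposition $(\lambda^2+D_\partial^2-\mu)^{-1}=B_\lambda+C_\lambda$ in which $\{B_\lambda\}$ is a parameter–dependent family in $\Psi^{-2}_{G,c}(\partial M)$ of uniform $G$–compact support and $C_\lambda\in\mathcal{A}^\infty_G(\partial M)$ is bounded (and, with the parameter–dependent estimates, suitably decaying) in $\lambda$. The extension to the horizontal lines ${\rm Im}\,\lambda=\pm\delta'$, $\delta'<\delta$, is exactly as in Proposition \ref{true-para}: on such a line $\lambda^2+D_\partial^2-\mu$ equals $D_\partial^2+({\rm Re}\,\lambda)^2-\mu'$ for the shifted admissible parameter $\mu'=\mu+(\delta')^2\mp 2i\delta'\,{\rm Re}\,\lambda$, which is the previous situation.

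Next I would assemble the correction. Composing with $I(R_0,\lambda)$ — rapidly decreasing in ${\rm Re}\,\lambda$ with values in $\Psi^{-\infty}_{G,c}(\partial M)$ — and using the continuous inclusion $\Psi^{-\infty}_{G,c}(\partial M)\hookrightarrow\mathcal{A}^\infty_G(\partial M)$, one finds that $(\lambda^2+D_\partial^2-\mu)^{-1}\circ I(R_0,\lambda)$ is holomorphic in $|{\rm Im}\,\lambda|<\delta$ and rapidly decreasing in ${\rm Re}\,\lambda$ with values in $\mathcal{A}^\infty_G(\partial M)$; by the Mellin–transform description this is precisely the indicial family of an element of ${}^b \mathcal{A}^{\infty,\delta}_{G,\RR^+}(\overline{N^+ \pa M})$. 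Applying the section $s$ of the indicial map in \eqref{short-exact-b-bis} produces $Q'\in {}^b \mathcal{A}^{\infty,\delta}_{G}(M)$, so that the improved parametrix $Q^b:=Q_0-Q'$ satisfies $(D^2-\mu)\,Q^b=\Id-R$ and $Q^b\,(D^2-\mu)=\Id-R'$ with $R,R'$ lying in the ideal $\mathcal{A}^{\infty,\delta}_G(M)$, their indicial families vanishing by construction. Finally, $D^2\geq 0$ and $\gamma\subset\CC\setminus[0,\infty)$ force $D^2-\mu$ to be genuinely invertible on $L^2_b(M)$ for every $\mu\in\gamma$; since $\mathcal{A}^{\infty,\delta}_G(M)$ is holomorphically closed in $C^*(M_0\subset M,E)^G$, the usual argument upgrades the parametrix to the exact inverse, $(D^2-\mu)^{-1}=Q^b+Q^b\circ A$ with $A\in\mathcal{A}^{\infty,\delta}_G(M)$, and Lemma \ref{lemma:composition} shows that $Q^b\circ A$ stays in ${}^b \Psi^{-2}_{G,c}(M)+{}^b \mathcal{A}^{\infty,\delta}_{G}(M)$, which is the stated decomposition.

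The main obstacle is uniformity in the two parameters at once: one must merge the $\mu$–dependent $b$–calculus estimates of \cite{shubin-book,Melrose} — trading a controlled amount of regularity for decay, as in \cite[Ch.~7]{Melrose} — with the Mellin–strip holomorphy in $\lambda$ and the RD–weighted $L^2$–bounds of Section \ref{sect:more-on-heat}, so that $\delta$ and all the relevant seminorm estimates are genuinely uniform for $\mu\in\gamma$. A secondary, more bookkeeping, point is to verify that the indicial correction really lands in $\Ker(I)=\mathcal{A}^{\infty,\delta}_G(M)$, which is what legitimises the concluding holomorphic–closedness step.
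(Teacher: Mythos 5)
Your proposal follows essentially the same route as the paper: a symbolic parametrix in the $G$-equivariant parameter-dependent $b$-calculus, an indicial correction built from $(\lambda^2+D_\partial^2-\mu)^{-1}$ via the strip analysis of Proposition~\ref{true-para} and Section~\ref{sect:more-on-heat}, and then the upgrade of the parametrix to the true inverse using holomorphic closedness of $\mathcal{A}^{\infty,\delta}_G(M)$ together with Lemma~\ref{lemma:composition}. The only cosmetic difference is in how you pin down $\delta$: you argue directly from the spectral gap $D_\partial^2\geq\alpha^2$ and a choice of contour, while the paper phrases it via the $b$-spectrum ${\rm spec}_b(D^2-\mu)$ of the family and its avoidance of a strip $\{|{\rm Im}\,z|<\delta\}$ uniformly in $\mu\in\gamma$; these are two descriptions of the same underlying estimate.
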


\begin{proof}
First, proceeding as in Shubin's classical monograph \cite{shubin-book}, see \cite{piazza-jfa}
for the details, we find a symbolic parametrix in the small $b$-calculus
with parameters, $B^\sigma_\mu$. We denote by  $R^\sigma_\mu$ and $S^\sigma_\mu$ the remainders; thus with obvious notation
$B^\sigma_\mu\in {}^b \Psi^{-2}_{G,c} (M,\Lambda)$, with $\Lambda$ a suitable sector in the complex plane,
and $R^\sigma_\mu, S^\sigma_\mu \in {}^b \Psi^{-\infty}_{G,c} (M,\Lambda)$. In particular $R^\sigma_\mu, S^\sigma_\mu $ are rapidly decreasing in $\mu$
as ${\rm Re}\mu\to \pm \infty$
as functions  with values in ${}^b \Psi^{-\infty}_{G,c} (M)$.
We then consider $I(D^2-\mu)^{-1} I(R^\sigma_\mu)$. Notice that $I(D^2-\mu)^{-1}$ does exists; we can
argue as at the end of Subsection \ref{subsect:b-dirac} and see easily that the indicial
family $I(D^2-\mu,\lambda)= D^2_{\partial} + \lambda^2 -\mu$ is indeed invertible
for $\lambda$ real and $\mu  \in\gamma$.  
Put it differently
$${\rm spec}_b (D^2-\mu)\cap \{{\rm Im} z =0 \}=\emptyset$$
for each $\mu\in\gamma$. Thus
$$(\cap_{\mu} {\rm spec}_b (D^2-\mu))\cap \{{\rm Im} z =0 \}=\emptyset$$
and so there is a $\delta >0$ such that 
$$(\cap_{\mu} {\rm spec}_b (D^2-\mu))\cap \{|{\rm Im} z |<\delta\}=\emptyset$$
For each fixed $\mu\in\gamma$ we know from Proposition \ref{true-para}
that $I(D^2-\mu)^{-1} I(R^\sigma_\mu)\in {}^b \mathcal{A}^{\infty,\delta}_{G,\RR^+} (\overline{N^+ \pa M})$. Thus if $B_\mu= B^\sigma_\mu- s (I(D^2-\mu)^{-1} I(R^\sigma_\mu))$ then
$$(D^2-\mu)\circ B_\mu= \Id + R_\mu$$
with $B_\mu\in{}^b \Psi^{-2}_{G,c} (M)+ {}^b \mathcal{A}^{\infty,\delta}_{G} ( M)$ and  $R_\mu \in \mathcal{A}^{\infty,\delta}_G (M)$ and rapidly descreasing in ${\rm Re} \mu$. Thus $(D^2-\mu)^{-1}$ is equal to $B_\mu (\Id + R_\mu)^{-1}$
and since $\mathcal{A}^{\infty,\delta}_G (M)$ is holomorphically closed we see that 
 $$(D^2-\mu)^{-1}= B_\mu + B_\mu  \circ L_\mu$$
 with $L_\mu\in \mathcal{A}^{\infty,\delta}_G (M)$ and with $B_\mu\in  {}^b \Psi^{-2}_{G,c} (M)+ {}^b \mathcal{A}^{\infty,\delta}_{G} ( M)$
 so that, finally,
  $$(D^2-\mu)^{-1}\in  {}^b \Psi^{-2}_{G,c} (M)+ {}^b \mathcal{A}^{\infty,\delta}_{G} ( M)$$
  which is what we wanted to show.
\end{proof}


We now consider 
$$\int_\gamma e^{-t\mu} (D^2-\mu)^{-1} d\mu$$
and our goal is to show that it belongs to ${}^b \mathcal{A}^{\infty,\delta}_{G} ( M)$. 
From now on we have fixed $\delta$ as in Proposition \ref{b-resolvant}.

\begin{proposition}\label{prop:b-heat}
The heat operator $\exp (-t D^2)$ is defined by a kernel  in ${}^b \mathcal{A}^{\infty}\equiv {}^b \mathcal{A}^{\infty,\delta,}_{G} ( M)$.
\end{proposition}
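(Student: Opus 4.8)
The plan is to mirror the heat-operator analysis carried out in the closed case in Section~\ref{sect:more-on-heat}, replacing the ordinary pseudodifferential calculus and the resolvent analysis there (Propositions~\ref{prop:inverse-0} and~\ref{bounded}) by their $b$-counterpart, which is precisely Proposition~\ref{b-resolvant}. Writing
\[
\exp(-tD^2)=\frac{1}{2\pi i}\int_\gamma e^{-t\mu}(D^2-\mu)^{-1}\,d\mu
\]
with $\gamma$ a suitable path in $\CC\setminus[0,\infty)$, I would insert the decomposition produced in the proof of Proposition~\ref{b-resolvant},
\[
(D^2-\mu)^{-1}=B^\sigma_\mu-s\bigl(I(D^2-\mu)^{-1}\,I(R^\sigma_\mu)\bigr)+B_\mu\circ L_\mu,
\]
where $B^\sigma_\mu\in{}^b\Psi^{-2}_{G,c}(M,\Lambda)$ is the symbolic parametrix with parameter, $B_\mu=B^\sigma_\mu-s(I(D^2-\mu)^{-1}I(R^\sigma_\mu))\in{}^b\Psi^{-2}_{G,c}(M)+{}^b\mathcal{A}^{\infty,\delta}_G(M)$, and $L_\mu\in\mathcal{A}^{\infty,\delta}_G(M)$ is rapidly decreasing in $\Re\mu$; the fixed $\delta>0$ is the one furnished by Proposition~\ref{b-resolvant}, using that $({\rm spec}_b(D^2-\mu))\cap\{|\Im z|<\delta\}=\emptyset$ for all $\mu\in\gamma$. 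Thus $\exp(-tD^2)$ is a sum of three Cauchy integrals, and it suffices to see that each lands in ${}^b\mathcal{A}^\infty\equiv{}^b\mathcal{A}^{\infty,\delta}_G(M)$.

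For the first integral, $\frac{1}{2\pi i}\int_\gamma e^{-t\mu}B^\sigma_\mu\,d\mu$, the standard heat-parametrix argument (as in \cite{shubin-book}, and in the $b$-setting as in \cite{piazza-jfa} and \cite{Melrose}) shows that integrating the symbolic parametrix with parameter against the Cauchy kernel produces an element of the small $b$-calculus ${}^b\Psi^{-\infty}_{G,c}(M)$, which includes continuously into ${}^b\mathcal{A}^{\infty,\delta}_G(M)$. For the second integral, the analysis in the proof of Proposition~\ref{true-para}, applied here to $D^2-\mu$ in place of $D_\partial+i\lambda$, gives that $\mu\mapsto I(D^2-\mu)^{-1}\circ I(R^\sigma_\mu)$ is a continuous family, rapidly decreasing in $\Re\mu$, with values in the Fr\'echet algebra ${}^b\mathcal{A}^{\infty,\delta}_{G,\RR^+}(\overline{N^+\pa M})$; since $s$ is continuous and linear, $\mu\mapsto s(I(D^2-\mu)^{-1}\circ I(R^\sigma_\mu))$ has the same properties with values in ${}^b\mathcal{A}^{\infty,\delta}_G(M)$, and the integral converges there as a Fr\'echet-space-valued integral against the bounded weight $e^{-t\mu}$.

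The third integral, $\frac{1}{2\pi i}\int_\gamma e^{-t\mu}B_\mu\circ L_\mu\,d\mu$, is handled exactly as in the closed case. One needs the $b$-analogue of Lemma~\ref{lemma:composition}, i.e. that composition extends to a continuous map ${}^b\Psi^{0}_{G,c}(M)\times\mathcal{A}^{\infty,\delta}_G(M)\to\mathcal{A}^{\infty,\delta}_G(M)$ --- equivalently that $\mathcal{A}^{\infty,\delta}_G(M)$ is a left module over the extended calculus with bounds --- which follows from the composition formulae of \cite[Theorem 4]{MPI} together with the slice decomposition and the argument of Lemma~\ref{lemma:composition}. Combining this with the uniform boundedness of $\mu\mapsto B_\mu$ in the relevant Fr\'echet topology and the rapid decay of $\mu\mapsto L_\mu$ in $\Re\mu$ (the $b$-version of Proposition~\ref{bounded}), one gets that $\mu\mapsto B_\mu\circ L_\mu$ is continuous and rapidly decreasing with values in $\mathcal{A}^{\infty,\delta}_G(M)\subset{}^b\mathcal{A}^{\infty,\delta}_G(M)$, so the integral converges there. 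Adding the three contributions yields $\exp(-tD^2)\in{}^b\mathcal{A}^{\infty,\delta}_G(M)$.

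The main obstacle is precisely the uniform-in-$\mu$ control: Proposition~\ref{b-resolvant} is a pointwise statement in $\mu$, and to integrate over $\gamma$ one must upgrade it to a statement about the whole family, establishing that $\mu\mapsto B_\mu$ and $\mu\mapsto L_\mu$ are continuous and satisfy the needed growth/decay bounds in the Fr\'echet topologies of the extended $b$-calculus with bounds. This requires running the $b$-pseudodifferential calculus with parameters (\`a la Shubin, \cite{piazza-jfa}) with estimates that are uniform as $\Re\mu\to\pm\infty$, and then transporting them through the slice isomorphism and the Mellin transform on $\overline{N^+\pa S}$ --- in effect, reproving in the $b$-setting the package of Propositions~\ref{prop:inverse-0}, \ref{bounded} and \ref{true-para} simultaneously in the parameter $\mu$.
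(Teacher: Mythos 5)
Your proposal is correct and follows essentially the same route as the paper's proof: the same splitting of the resolvent into the symbolic piece $B^\sigma_\mu$, the indicial correction $s(I(D^2-\mu)^{-1}I(R^\sigma_\mu))$, and the holomorphically-closed remainder $B_\mu\circ L_\mu$; the same treatment of the first piece via \cite{piazza-jfa}, of the second via the Mellin-transform analysis of Proposition~\ref{true-para}, and of the third via the $b$-analogue of Lemma~\ref{lemma:composition}. Your closing remark about needing to upgrade Proposition~\ref{b-resolvant} to uniform-in-$\mu$ bounds is exactly the point the paper addresses (somewhat tersely) by invoking the $b$-analogue of Proposition~\ref{bounded} and the continuity of the inverse in a $\Psi^*$-subalgebra, so you have correctly identified where the real work lies.
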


\begin{proof}
We can write 
$$\int_\gamma e^{-t\mu} (D^2-\mu)^{-1} d\mu= \int_\gamma e^{-t\mu} B_\mu   d\mu\;+\; \int_\gamma e^{-t\mu} B_\mu  \circ L_\mu  d\mu$$
We need to make sense of the individual summands on the right hand side and show that they both belong to ${}^b \mathcal{A}^{\infty,\delta,}_{G} ( M).$
Let us analyze the first summand on the right hand side. This can be written as
\begin{equation}\label{int-of-Blambda} \int_\gamma e^{-t\mu}  B^\sigma_\mu d\mu - \int_\gamma e^{-t\mu}  s (I(D^2-\mu)^{-1} I(R^\sigma_\mu))d\mu
\end{equation}
Proceeding as in \cite[Prop. 3.36]{piazza-jfa} it is easy to show that the purely symbolic term
$$\int_\gamma e^{-t\mu}  B^\sigma_\mu d\mu $$
is an element in ${}^b \Psi^{-\infty}_{G,c}(M)\subset  {}^b \mathcal{A}^{\infty,\delta,}_{G} ( M)$.
It remains to show that the second summand in \eqref{int-of-Blambda}, that is
\begin{equation}\label{second-summand}
- \int_\gamma e^{-t\mu}  s (I(D^2-\mu)^{-1} I(R^\sigma_\mu))d\mu\,,
\end{equation}
is well defined and an element in  ${}^b \mathcal{A}^{\infty,\delta,}_{G} ( M)$. 
To this end we need to analyse the term $s(I(D^2-\mu)^{-1} I(R^\sigma_\mu)).$ We need 
to show that this is at least bounded as a function of $\mu$ with values 
${}^b \mathcal{A}^{\infty,\delta}_G (M)$.
As usual, the  large $\mu$
behaviour is what we are concerned with. 
Now
$$I(D^2-\mu)^{-1} I(R^\sigma_\mu)=\int_\RR s^{i\lambda}
(D^2_\partial + \lambda^2 -\mu)^{-1} I (R^\sigma_\mu,\lambda)d\lambda$$
with $I (R^\sigma_\mu,\lambda)$  rapidly decreasing jointly in $\lambda$  and in  ${\rm Re} \mu$. 
This integral is analysed as  in the proof of Proposition \ref{true-para}
(see also
\cite[Lemma 7.35]{Melrose} for this).
Thus we can certainly write
$$(D^2_\partial + \lambda^2 -\mu)^{-1}=\Sigma(\mu,\lambda)+ \Sigma(\mu,\lambda)\circ \Omega (\mu,\lambda)$$
with $\Sigma$ bounded as a function with values in $\Psi^0_{G,c}(\partial M)$ and 
$\Omega (\mu,\lambda)$ bounded as a function with values in $\mathcal{A}^\infty_G (\partial M)$.
Proceeding as in the proof of Proposition \ref{true-para}, using implicitly Lemma \ref{lemma:composition}, we conclude 
 that 
$$(D^2_\partial  -\mu)+ \lambda^2)^{-1}  I (R^\sigma_\mu,\lambda)$$
defines  a function 
$$(\RR\times i (-\delta,\delta))\times \gamma \to \mathcal{A}^\infty_G (\partial M)$$
which is rapidly decreasing  as $| {\rm Re}\lambda|$ and $|{\rm Re} \mu|$ go to infinity. 
Consequently,
$I(D^2-\mu)^{-1} I(R^\sigma_\mu)$ is rapidly decreasing as a ${}^b \mathcal{A}^{\delta,\infty}_{G} ( \overline{N^+ \partial M})$-valued map and so
\eqref{second-summand} is an element in ${}^b \mathcal{A}^{\infty,\delta,}_{G} ( M)$.
We finally come to the integral
$$ \int_\gamma e^{-t\mu} B_\mu  \circ L_\mu  d\mu$$
that we rewrite as
$$ \int_\gamma e^{-t\mu}B^\sigma_\mu\circ L_\mu d\mu-  \int_\gamma e^{-t\mu} 
s (I(D^2-\mu)^{-1} I(R^\sigma_\mu))\circ L_\mu d\mu\,.$$
The first term is analysed as in the closed case. Here the analogue of Lemma  \ref{lemma:composition}
but for the composition of  0-order G-compactly supported b-operators with elements  in in $\mathcal{A}^{\infty,\delta}_G (M)$
is used: the proof is exactly the same since the slice-decomposition allows us to use the result
of Lemma  \ref{lemma:composition} on the $G$-component whereas the usual $b$-calculus is used in the $S$ component.
Thus, the first summand  is the integral of a rapidly decreasing function with values 
in $\mathcal{A}^{\infty,\delta}_G (M)\subset {}^b \mathcal{A}^{\infty,\delta,}_{G} ( M)$ and so it is certainly an element in  ${}^b \mathcal{A}^{\infty,\delta,}_{G} ( M)$ ;
the integrand in the second summand is the composition of a rapidly decreasing function with values in 
${}^b \mathcal{A}^{\infty,\delta,}_{G} ( M)$ with a rapidly decreasing function with values in $ \mathcal{A}^{\infty,\delta}_{G} ( M)$
and so, composition being continuous in a Fr\'echet algebra, it is certainly a rapidly decreasing function
with values in ${}^b \mathcal{A}^{\infty,\delta,}_{G} ( M)$ and so the integral is well defined as an element
in ${}^b \mathcal{A}^{\infty,\delta,}_{G} ( M)$.
\end{proof}

\medskip
\noindent
{\bf Analysis of the Connes-Moscovici projector(s).}\\
We finally discuss the Connes-Moscovici matrix
\begin{equation}
\label{cm-idempotent-bis}
V(D)=\left( \begin{array}{cc} e^{-D^- D^+} & e^{-\frac{1}{2}D^- D^+}
\left( \frac{I- e^{-D^- D^+}}{D^- D^+} \right) D^-\\
e^{-\frac{1}{2}D^+ D^-}D^+& I- e^{-D^+ D^-}
\end{array} \right)
\end{equation}
\begin{proposition}\label{CM-for-relative}
The matrix $V(D)$  has its entries in (the unitalization of)
 ${}^b \mathcal{A}^{\infty}\equiv {}^b \mathcal{A}^{\infty,\delta}_G (M)$.
\end{proposition}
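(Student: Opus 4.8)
The plan is to treat each of the four entries of $V(D)$ in \eqref{cm-idempotent-bis} exactly as the heat operator was treated in Proposition~\ref{prop:b-heat}: write it as a Cauchy integral against the resolvent of $D^2$ and then invoke the structural decomposition of Proposition~\ref{b-resolvant}. The two diagonal entries $e^{-D^-D^+}$ and $I-e^{-D^+D^-}$ require no further work: Proposition~\ref{prop:b-heat} already puts $\exp(-tD^2)$, hence in particular $e^{-D^-D^+}$ and $e^{-D^+D^-}$, in ${}^b\mathcal{A}^{\infty}\equiv{}^b\mathcal{A}^{\infty,\delta}_G(M)$, and the stray $I$ in the $(2,2)$ slot is absorbed upon passing to the unitalization. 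So the real content lies in the two off-diagonal entries, one of which is the operator \eqref{right-upper-corner}.

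For those I would first rewrite them as functions of $D$ only, using the intertwining relation $D^\pm f(D^\mp D^\pm)=f(D^\pm D^\mp)D^\pm$ valid for any bounded Borel function $f$: the $(2,1)$ entry equals $D^+\,h(D^-D^+)$ with $h(\lambda)=e^{-\lambda/2}$, and the $(1,2)$ entry equals $D^-\,g(D^+D^-)$ with $g(\lambda)=e^{-\lambda/2}(1-e^{-\lambda})/\lambda$ (the apparent pole of $g$ at the origin is removable). Both $g$ and $h$ extend to functions holomorphic on a sector of $\CC$ containing $[0,\infty)$ and decreasing rapidly, together with all their derivatives, as $\operatorname{Re}\lambda\to+\infty$. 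Hence, with $\gamma$ the contour of \eqref{heat-operator}, one may write $g(D^+D^-)=\tfrac{1}{2\pi i}\int_\gamma g(\mu)(D^+D^--\mu)^{-1}\,d\mu$ and likewise for $h$, the integrals being absolutely convergent in operator norm.

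The next step is to apply Proposition~\ref{b-resolvant}. Since $D^2$ is the orthogonal direct sum of $D^-D^+$ and $D^+D^-$ in the $\ZZ_2$-grading and the indicial analysis is the same, one gets $(D^\mp D^\pm-\mu)^{-1}=B_\mu+B_\mu\circ L_\mu$ with $B_\mu\in{}^b\Psi^{-2}_{G,c}(M)+{}^b\mathcal{A}^{\infty,\delta}_G(M)$, of uniform $G$-compact support in its symbolic part, and $L_\mu\in\mathcal{A}^{\infty,\delta}_G(M)$, both rapidly decreasing in $\operatorname{Re}\mu$. It then remains to compose on the left with the first-order $b$-differential operator $D^\pm$: this carries the small-calculus part of $B_\mu$ into ${}^b\Psi^{-1}_{G,c}(M)$ while leaving the ${}^b\mathcal{A}^{\infty,\delta}_G(M)$-part of $B_\mu$, as well as $B_\mu\circ L_\mu$, inside ${}^b\mathcal{A}^{\infty,\delta}_G(M)$ (the latter because $\mathcal{A}^{\infty,\delta}_G(M)=\Ker I$ is a submodule, the indicial homomorphism being multiplicative). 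This is the $b$-analogue of Lemma~\ref{lemma:composition}, proved by the slice decomposition $G\times_KS\cong M$, which reduces the $G$-factor to a composition of invariant operators and the $S$-factor to the standard fact that the small $b$-calculus and the calculus with bounds on $S$ are modules over $b$-differential operators. All of this preserves the rapid decay in $\operatorname{Re}\mu$, so, arguing exactly as in the proof of Proposition~\ref{prop:b-heat} --- isolating the purely symbolic contribution, which lands in ${}^b\Psi^{-\infty}_{G,c}(M)$, from the remainder --- the Cauchy integrals representing $D^\pm g(D^\mp D^\pm)$ and $D^\pm h(D^\mp D^\pm)$ land in ${}^b\mathcal{A}^{\infty,\delta}_G(M)$, which completes the argument.

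I expect the genuinely delicate point to be this last composition step: one must check that the extended $b$-calculus with $\epsilon$-bounds and $G$-compact support is a module under composition (on either side) with $b$-differential operators, with the conormal $\delta$-bounds and the rapid decay in $\operatorname{Re}\mu$ all kept uniform. This is the $b$-incarnation of the ``trade regularity for decay'' mechanism already used throughout Section~\ref{sect:more-on-heat}; although it follows from the slice argument behind Lemma~\ref{lemma:composition} together with the standard composition properties of Melrose's calculus, it is where the work genuinely goes beyond the closed-case Proposition~\ref{prop:CM-infty}.
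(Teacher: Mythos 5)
Your proposal follows the paper's own argument almost verbatim: the paper likewise writes the upper-right entry as $f(D)D^-$ with $f(z)=e^{-z^2/2}(1-e^{-z^2})/z^2$, observes $f(D)=g(D^2)$ with $g(z)=e^{-z/2}(1-e^{-z})/z$, expands $g(D^2)$ as the Cauchy integral $\tfrac{1}{2\pi i}\int_\gamma g(\mu)(D^2-\mu)^{-1}\,d\mu$, and then declares that the analysis of Proposition~\ref{prop:b-heat} and the resolvent decomposition carry over. Your added remark about the module property of ${}^b\mathcal{A}^{\infty,\delta}_G(M)$ under composition with $b$-differential operators is a reasonable flagging of a step that the paper also leaves implicit (it writes ``It suffices to show that $f(D)\in{}^b\mathcal{A}^\infty$,'' silently using precisely that module property to absorb the factor $D^-$), so this is a sound reconstruction rather than a different route.
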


\begin{proof}
The only  entry  in  \eqref{cm-idempotent-bis} that we have not discussed
is the one in the right upper corner. This entry has the form $f(D)D^-$, where
$f(z):=e^{-z^2/2}((1-e^{-z^2})/z^2)$. Notice that $f$ is holomorphic. It suffices to show that $f(D)\in {}^b \mathcal{A}^{\infty}$.
Let $g(z)= e^{-z/2}((1-e^{-z})/z)$; then 
$$f(D)=g(D^2)=\frac{1}{2\pi i}\int_\gamma g(\mu) (D^2-\mu)^{-1} d\mu$$
where the integral certainly makes sense as a bounded operator on $L^2_b$.
To improve this result and show that the integral is actually an element in ${}^b \mathcal{A}^{\infty}$
we can repeat the arguments that have been given for the 
heat operator, using our knowledge of the properties of the resolvant $(D^2-\mu)^{-1}$. We omit the details since they would be a repetition of those already given for $\exp(-t D^2)$.
\end{proof}

Next we consider the $b$-Connes-Moscovici projector produced by the improved parametrix $Q^b:=Q- Q^\prime$ with
\begin{equation}
Q^\prime := - \chi ((D_{\cyl}^+)^{-1}  \exp(-\frac{1}{2} D^+_{\cyl} D^-_{\cyl}) \chi \,.
\end{equation}
and $\chi$ a cut-off function equal to $1$ on the boundary of our $b$-manifold.
This gives us
\begin{align}
Q^\prime D^+ &= -\chi  \exp(-\frac{1}{2} D^-_{\cyl} D^+_{\cyl}) \chi  + \chi (D^+_{\cyl})^{-1}   \exp(-\frac{1}{2} D^+_{\cyl} D^-_{\cyl})
\mathop{cl}(d\chi)\\  D^+ Q^\prime &= -\chi  \exp(-\frac{1}{2} D^+_{\cyl} D^-_{\cyl})^{-1} )\chi - \mathop{cl}(d\chi)
(D^+_{\cyl})^{-1}  \exp(-\frac{1}{2} D^+_{\cyl} D^-_{\cyl})\chi\,.
\end{align}
This means that the remainder of the improved parametrix 
${}^b S_+:= I-Q^b D^+=I- (Q-Q^\prime)\, D^+=\exp(-\frac{1}{2} D^+ D^- ) + Q^\prime D^+ $
is
explicitly given by
\begin{equation}\label{s+w}
{}^b S_+ = \exp(-\frac{1}{2} D^+ D^- )
     -\chi \exp(-\frac{1}{2} D^+_{\cyl} D^-_{\cyl})\chi
   + \chi (D^+_{\cyl})^{-1}   \exp(-\frac{1}{2} D^+_{\cyl} D^-_{\cyl})
\mathop{cl}(d\chi).
   \end{equation}
    A similar expression can be found for ${}^b S_-$.
Substituting ${}^b S_\pm$ and $Q^b$ at the place of
$S_\pm$ and $Q$ into the expression of the Connes-Skandalis projection
we obtain $V^b (D)$.

\begin{proposition}
The matrix  $V^b (D)$ has its entries in (the unitalization of)
  $\mathcal{A}^{\infty} \equiv \mathcal{A}^{\infty,\delta}_G (M)$.
\end{proposition}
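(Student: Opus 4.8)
The plan is to reduce the statement to two facts: (a) the remainders ${}^b S_\pm := I-Q^bD^+$ and $I-D^+Q^b$ of the improved parametrix lie in the ideal $\mathcal{A}^\infty_G(M)=\Ker\bigl(I\colon {}^b\mathcal{A}^\infty_G(M)\to {}^b\mathcal{A}^\infty_{G,\RR^+}(\overline{N^+\pa M})\bigr)$, and (b) $Q^b$ itself lies in the extended calculus ${}^b\Psi^{-1,\infty}_{G,\mathcal{A}}(M)={}^b\Psi^{-1}_{G,c}(M)+{}^b\mathcal{A}^\infty_G(M)$. Granted (a) and (b), each entry of $V^b(D)-e_1$ is built from ${}^b S_\pm$ by multiplications that keep one inside $\mathcal{A}^\infty_G(M)$, because $\mathcal{A}^\infty_G(M)$ is a two-sided ideal in the extended calculus (this follows from the composition formulae \`a la \cite[Theorem 4]{MPI} and from $I$ being multiplicative on the full calculus).

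First I would establish (b). Write $Q^b=Q_\sigma-Q'$ with $Q_\sigma\in {}^b\Psi^{-1}_{G,c}(M)$ and $Q'=-\chi\,(D^+_{\cyl})^{-1}\exp(-\tfrac12 D^+_{\cyl}D^-_{\cyl})\,\chi$. Under the (inverse) Mellin transform, $(D^+_{\cyl})^{-1}\exp(-\tfrac12 D^+_{\cyl}D^-_{\cyl})$ corresponds to the family $\lambda\mapsto (D_\pa+i\lambda)^{-1}\exp(-\tfrac12(D_\pa^2+\lambda^2))$; using the $L^2$-invertibility of $D_\pa$ from Assumption \ref{assumption:invertibility} together with Propositions \ref{prop:inverse-0}, \ref{bounded} and Lemma \ref{lemma:composition}, exactly as in the proof of Proposition \ref{true-para}, this family is holomorphic for $|\Im\lambda|<\delta$ and rapidly decreasing as $|\Re\lambda|\to\infty$ with values in $\mathcal{A}^\infty_G(\pa M)$. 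Hence $(D^+_{\cyl})^{-1}\exp(-\tfrac12 D^+_{\cyl}D^-_{\cyl})\in {}^b\mathcal{A}^\infty_{G,\RR^+}(\overline{N^+\pa M})$, so $Q'=s\bigl(I(D^+)^{-1}I(\exp(-\tfrac12 D^-D^+))\bigr)\in {}^b\mathcal{A}^\infty_G(M)$ and therefore $Q^b\in {}^b\Psi^{-1,\infty}_{G,\mathcal{A}}(M)$.

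Next, for (a) I would use the explicit formula \eqref{s+w}, namely
\[
{}^b S_+ = \exp(-\tfrac12 D^+D^-) - \chi\exp(-\tfrac12 D^+_{\cyl}D^-_{\cyl})\chi + \chi\,(D^+_{\cyl})^{-1}\exp(-\tfrac12 D^+_{\cyl}D^-_{\cyl})\,\mathop{cl}(d\chi),
\]
and the analogous expression for ${}^b S_-$. Membership in ${}^b\mathcal{A}^\infty_G(M)$ holds termwise: the first summand by Proposition \ref{prop:b-heat}, the second by the same heat-kernel analysis applied to the cylindrical model (composed with the section $s$), and the third by (b) together with the fact that $\mathop{cl}(d\chi)$ is a smooth compactly supported bundle endomorphism. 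The crucial point is $I({}^b S_+)=0$: the indicial operator of $\exp(-\tfrac12 D^+D^-)$ is $\exp(-\tfrac12 D^+_{\cyl}D^-_{\cyl})$ and $\chi\equiv 1$ on $\pa M$, so the indicial operators of the first two summands cancel, while the third has vanishing indicial operator because $\mathop{cl}(d\chi)$ vanishes identically on the boundary face. Hence ${}^b S_\pm\in\Ker I=\mathcal{A}^\infty_G(M)$.

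Finally I would assemble the entries of the Connes--Skandalis matrix \eqref{CS-projector-bis}: the $(1,1)$-entry is $({}^b S_+)^2$ and the $(2,2)$-entry of $V^b(D)-e_1$ is $-({}^b S_-)^2$, both in $\mathcal{A}^\infty_G(M)$ since it is an algebra; the $(2,1)$-entry ${}^b S_-D^+=D^+\,{}^b S_+$ lies in $\mathcal{A}^\infty_G(M)$ because $D^+\in {}^b\Psi^1_{G,c}(M)$ (a $b$-differential operator on a $G$-cocompact manifold has $G$-compact support) and $\mathcal{A}^\infty_G(M)$ is an ideal; and the $(1,2)$-entry ${}^b S_+(I+{}^b S_+)Q^b$ lies in $\mathcal{A}^\infty_G(M)$ since ${}^b S_+(I+{}^b S_+)\in\mathcal{A}^\infty_G(M)$ and $Q^b\in {}^b\Psi^{-1,\infty}_{G,\mathcal{A}}(M)$. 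This gives $V^b(D)-e_1\in M_{2\times 2}(\mathcal{A}^\infty_G(M))$, which is the claim. The main obstacle is step (a): one must check both that the boundary-model corrections in \eqref{s+w} genuinely belong to the extended $b$-calculus with bounds (this is the Mellin/heat-kernel analysis of step (b), relying on the invertibility assumption) and that the indicial operators cancel \emph{exactly} — the latter is precisely what the improved parametrix $Q^b$ was engineered to achieve.
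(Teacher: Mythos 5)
Your argument is correct and runs on essentially the same rails as the paper's proof: ${}^b S_\pm\in\mathcal{A}^\infty_G(M)$ is established by combining membership in ${}^b\mathcal{A}^\infty_G(M)$ with vanishing of the indicial operator, and the $(1,2)$-entry is treated separately. Your packaging of the $(1,2)$-argument (place $Q^b$ in the extended calculus ${}^b\Psi^{-1}_{G,c}(M)+{}^b\mathcal{A}^\infty_G(M)$ and invoke ideality of $\mathcal{A}^\infty_G(M)$) is a clean reorganization of the paper's splitting $Q^b=Q-Q'$, and both rest on the same composition facts.

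Two points to tighten. In step~(b) you write $Q^b=Q_\sigma-Q'$ with $Q_\sigma\in{}^b\Psi^{-1}_{G,c}(M)$; but for $V^b(D)$ the relevant decomposition is $Q^b=Q-Q'$ with $Q=\frac{I-\exp(-\frac12 D^-D^+)}{D^-D^+}D^+$ the Connes--Moscovici parametrix, which is \emph{not} of $G$-compact support---it lies in ${}^b\Psi^{-1}_{G,c}(M)+{}^b\mathcal{A}^\infty_G(M)$ only after the heat-kernel analysis of Propositions~\ref{b-resolvant}--\ref{CM-for-relative}. Your conclusion $Q^b\in{}^b\Psi^{-1,\infty}_{G,\mathcal{A}}(M)$ is still right, but the decomposition you assert is not the one that proves it. Second, for the $(2,1)$-entry you implicitly promote Lemma~\ref{lemma:composition} (and the remark after it, which handles order $-1$) to right-composition by the order-$1$ operator $D^+$; the Peetre-inequality estimate in that lemma relies on ${\rm Op}(a)$ being $L^2$-bounded, which fails for positive order, so closure of $\mathcal{A}^\infty_G(M)$ under composition with $D^+$ requires a separate (short) argument, e.g.\ via the slice decomposition $D=D_{G/K}\otimes 1+1\otimes D_S$. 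The paper's proof is equally terse here, so this is not a defect of your approach in particular, but it is the one spot where ``ideality in the extended calculus'' is slightly more than what is established on the page.
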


\begin{proof}
${}^b S_\pm$ is an element in $ \mathcal{A}^\infty$, since it is in ${}^b \mathcal{A}^{\infty}\equiv {}^b \mathcal{A}^{\infty,\delta}_G (M)$
and has vanishing indicial operator. This takes care of all the entries but the one in the right upper corner,
namely ${}^b S_+ (1+{}^b S_+ )Q^b $. This can be rewritten as 
 $${}^b S_+ (1+{}^b S_+ )Q -  {}^b S_+ (1+{}^b S_+ )Q^\prime $$
 From the expression of ${}^b S_+$ and what has been already proved above
 we know that ${}^b S_+ (1+{}^b S_+ )Q\in {}^b \mathcal{A}^\infty$. It remains to see that
 ${}^b S_+ (1+{}^b S_+ )Q^\prime $ is in ${}^b \mathcal{A}^\infty$. However, this is clear
 because ${}^b S_+ (1+{}^b S_+ )$ is in $ \mathcal{A}^\infty\subset {}^b \mathcal{A}^\infty$
 and $Q^\prime\equiv - \chi ((D_{\cyl}^+)^{-1}  \exp(-\frac{1}{2} D^+_{\cyl} D^-_{\cyl}) \chi$
 is also clearly in ${}^b \mathcal{A}^\infty$ given that $(D_{\cyl}^+)^{-1}  \exp(-\frac{1}{2} D^+_{\cyl} D^-_{\cyl})$
 is obtained by inverse Mellin trasform  of the family
 $$(D_{\partial}+i\lambda)^{-1} \exp (-\frac{1}{2} (\lambda^2 + D^2_{\partial}))$$
which is holomorphic and rapidly decreasing as $|{\rm Re} \lambda|\to\infty$ in the region $\{z\in\CC\;|\;
|{\rm Im} z|<\delta\}$. Summing up: ${}^b S_+ (1+{}^b S_+ )Q^b $ is 
an element in ${}^b \mathcal{A}^\infty$ and has vanishing indicial operator; it is therefore 
 in $ \mathcal{A}^\infty$ as stated.
\end{proof}

\begin{corollary}
If $M_0$ is an even-dimensional  G-proper manifold with boundary,
with associated manifold with cylindrical ends $M$, and if 
 $D$ is a $\ZZ_2$-graded odd Dirac operator on $M_0$ with boundary operator
satisfying Assumption \ref{assumption:invertibility}, then  
there is a $\delta>0$ and a well defined Connes-Moscovici class
\begin{equation}\label{index-b-class-CM}
 [V^b (D)]-[e_1] \in K_0 (\mathcal{A}^{\infty,\delta}_G (M))\;\;\;\text{with}\;\;\;e_1:=\left( \begin{array}{cc} 0 & 0 \\ 0&1
\end{array} \right)
\end{equation}
Moreover,
\begin{equation}\label{index-b-class}
[V^b (D)]-[e_1]=\operatorname{Ind}_\infty (D) \;\;\text{in}\;\; K_0 (\mathcal{A}^{\infty,\delta}_G (M))=K_0 (C^*(M_0\subset M)^G)\end{equation}
with $\operatorname{Ind}_\infty (D)$, defined in \eqref{def-b-class}, a smooth representative 
of the $C^*$-index class $\Ind_{C^*}(D)$.
\end{corollary}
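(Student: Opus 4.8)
The plan is to reduce everything to the standard principle, already used in the closed case, that the Connes--Skandalis matrix built from any parametrix of $D^+$ whose two remainders lie in a fixed two-sided ideal $\mathcal I$ is an idempotent over the unitalization of $\mathcal I$, and that the resulting class in $K_0(\mathcal I)$ does not depend on the chosen parametrix. First I would record that $V^b(D)$ is exactly the Connes--Skandalis idempotent attached to the improved Connes--Moscovici parametrix $Q^b=Q-Q^\prime$: this is what the explicit formula \eqref{s+w} for ${}^bS_+$, and its analogue for ${}^bS_-$, express, combined with the algebraic identity making that matrix an idempotent (as in the closed case). The proposition just proved then gives that the entries of $V^b(D)$ lie in the unitalization of $\mathcal A^\infty\equiv\mathcal A^{\infty,\delta}_G(M)$; since ${}^bS_\pm$ have vanishing indicial operator they lie in the kernel ideal, and by Lemma~\ref{lemma:composition} and its $b$-variants $\mathcal A^\infty_G(M)$ is indeed a two-sided ideal in the extended $b$-calculus. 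Hence $[V^b(D)]-[e_1]$ is a well-defined class in $K_0(\mathcal A^{\infty,\delta}_G(M))$, which is \eqref{index-b-class-CM}.

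For the comparison \eqref{index-b-class} I would first fix a single $\delta>0$ valid for both constructions, taking it below the $\epsilon$ of Proposition~\ref{true-para} and the $\delta$ of Proposition~\ref{b-resolvant}; this is harmless thanks to the flexibility in the decay exponent remarked after Assumption~\ref{assumption:invertibility}. By \eqref{def-b-class}, $\operatorname{Ind}_\infty(D)=[P^b_Q]-[e_1]$ is the Connes--Skandalis class of the \emph{symbolic} improved parametrix $Q_\sigma-s(I(D^+)^{-1}I(R_\sigma^-))$, whose remainders were shown above to lie in $\mathcal A^\infty_G(M)$ as well. I would then join the two parametrices by the affine path $R_t:=(1-t)\bigl(Q_\sigma-s(I(D^+)^{-1}I(R_\sigma^-))\bigr)+t\,(Q-Q^\prime)$, $t\in[0,1]$; each $R_t$ is again a parametrix of $D^+$, and its remainders $I-R_tD^+$, $I-D^+R_t$ are convex combinations of elements of $\mathcal A^\infty_G(M)$, hence still in it. Therefore $\{P^b_{R_t}-e_1\}_{t\in[0,1]}$ is a path of idempotents over the unitalization of $\mathcal A^\infty_G(M)$, continuous in the Fr\'echet topology since its entries depend polynomially on $t$, on the remainders and on $R_t$, all of which vary affinely. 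This gives $[P^b_{R_0}]-[e_1]=[P^b_{R_1}]-[e_1]$, i.e.\ $[V^b(D)]-[e_1]=\operatorname{Ind}_\infty(D)$.

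The point demanding care, and the one I expect to be the only real obstacle, is that every entry of $P^b_{R_t}-e_1$ genuinely lands in the ideal $\mathcal A^\infty_G(M)$: the two diagonal entries are products of remainders, but the right-upper corner has the shape ${}^bS_+(I+{}^bS_+)R_t$ with $R_t$ only of order $-1$, so one needs that $\mathcal A^\infty_G(M)$ is a right module over ${}^b\Psi^{-1}_{G,c}(M)+{}^b\mathcal A^\infty_G(M)$ --- which in turn follows from the composition results of Section~\ref{sect:more-on-heat} (the module statements accompanying Lemma~\ref{lemma:composition}) together with the multiplicativity of the indicial homomorphism. In other words, this is where the analytic work of Section~\ref{sect:more-on-heat} and of Proposition~\ref{true-para} is actually being invoked; the homotopy-of-parametrices argument itself is purely formal.

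Finally, the identification $K_0(\mathcal A^{\infty,\delta}_G(M))=K_0(C^*(M_0\subset M,E)^G)$ is \eqref{iso-smooth}, coming from the density and holomorphic closedness of $\mathcal A^\infty_G(M)$ in the relative Roe algebra; and under it $\operatorname{Ind}_\infty(D)$ is carried to $\operatorname{Ind}_{C^*}(D)$, because by Theorem~\ref{b-index-class-C*} the $C^*$-index class is represented by the image of this same Connes--Skandalis idempotent under the inclusion of $\mathcal A^\infty_G(M)$ into $C^*(M_0\subset M,E)^G$. No new idea beyond the bookkeeping described above should be needed.
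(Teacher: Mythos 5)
Your proposal is essentially correct and captures what the paper treats as implicit: the Corollary has no written proof because the two preceding propositions establish that each of the Connes--Skandalis matrices $P^b_Q$ and $V^b(D)$ has entries in (the unitalization of) $\mathcal A^{\infty,\delta}_G(M)$, and the equality of their $K_0$-classes is the standard independence-of-parametrix fact, which the affine homotopy makes explicit.

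Two small imprecisions worth flagging. First, in the last paragraph you say $R_t$ is ``only of order $-1$'' and deduce that membership in $\mathcal A^\infty_G(M)$ of the upper-right corner follows once $\mathcal A^\infty_G(M)$ is a right module over ${}^b\Psi^{-1}_{G,c}(M)+{}^b\mathcal A^\infty_G(M)$. That space does not literally contain $R_1=Q-Q'$: the Connes--Moscovici parametrix $Q=\dfrac{I-e^{-\frac12 D^-D^+}}{D^-D^+}D^+$ is not a classical $b$-pseudodifferential operator since the relevant function of $D$ is not a symbol. What is true is that $Q$ decomposes (by the method of Proposition~\ref{prop:inverse-0} and Proposition~\ref{b-resolvant}) into a piece in ${}^b\Psi^{-1}_{G,c}(M)$ plus a piece in ${}^b\mathcal A^{\infty,\delta}_G(M)$, i.e.\ it lives in the \emph{enlarged} calculus ${}^b\Psi^{-1,\epsilon}_{G,\mathcal A}(M)$ of \eqref{full-calculus-extended}; since $\mathcal A^\infty_G(M)$ is a two-sided ideal in ${}^b\mathcal A^{\infty,\delta}_G(M)$ and a right module over ${}^b\Psi^{*}_{G,c}(M)$ (by the $b$-variant of Lemma~\ref{lemma:composition}), your conclusion still holds, just with this slightly larger receiving algebra. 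Second, for the same reason you should also check the lower-left entry ${}^bS_{-,t}D^+$ of $P^b_{R_t}-e_1$; this requires $\mathcal A^\infty_G(M)$ to be a right module over ${}^b\Psi^{1}_{G,c}(M)$, which again follows from the residual calculus being stable under right composition with $b$-differential operators. Neither point affects the structure of the argument, and the route you take is the one the paper intends.
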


\medskip
\noindent
{\bf From now on we fix a $\delta$ as in the previous propositions but omit it from the notation.}

\medskip

\subsection{The relative index class and excision}$\;$\\
Let $0\to J\to A\xrightarrow{\pi} B\to 0$ a short exact sequence of Fr\'echet
  algebras. We recall that $K_0 (J):=
K_0 (J^+,J)\cong \Ker (K_0 (J^+)\to \ZZ)$
and that $K_0 (A^+,B^+)= K_0 (A,B)$ with $( )^+$ denoting unitalization.
See \cite{Bla,hr-book,LMP2}.
Recall that a relative $K_0$-element
for $ A\xrightarrow{\pi} B$ with unital algebras $ A, B$
is represented by a  triple $(P,Q,p_t)$ with $P$ and $Q$ idempotents in $M_{n\times n} (A)$
and $p_t\in M_{n\times n} (B)$ a
path of idempotents connecting $\pi (P)$ to $\pi (Q)$.
The excision  isomorphism
\begin{equation}\label{excision-general}
\alpha_{{\rm ex}}: K_0 (J)\longrightarrow K_0 (A,B)
\end{equation}
is given by
$\alpha_{{\rm ex}}([(P,Q)])=[(P,Q,{\bf c})]$
with ${\bf c}$ denoting the constant path.

Consider the Connes-Moscovici projections $V (D)$ and $V(D_{\cyl})$
associated to $D$ and $D_{\cyl}$.
With $e_1:=\begin{pmatrix} 0&0\\0&1 \end{pmatrix}$ consider the  triple
\begin{equation}\label{pre-wassermann-triple}
(V(D), e_1, q_t)
\,, \;\;t\in [1,+\infty]\,,\;\;\text{ with }
q_t:= \begin{cases} V(t D_{\cyl})
\;\;\quad\text{if}
\;\;\;t\in [1,+\infty)\\
e_1 \;\;\;\;\;\;\;\;\;\;\;\;\;\,\text{ if }
\;\;t=\infty
 \end{cases}
\end{equation}

\begin{proposition}\label{prop:relative-indeces}
Under the invertibility assumption \eqref{invertibility}, the Connes-Moscovici idempotents $V(D)$
and  $V(D_{\cyl})$ define  through formula
  \eqref{pre-wassermann-triple},
a relative class in $K_0 ({}^b \mathcal{A}^\infty_G (M),{}^b \mathcal{A}^\infty _{G,\RR^+}  (\overline{N_+ \pa M}))$,
 the relative group 
associated to  the short exact sequence
\begin{equation*}
0\to  \mathcal{A}^\infty_G (M)\to {}^b \mathcal{A}^\infty_G (M)\xrightarrow{I} {}^b \mathcal{A}^\infty_{G,\RR^+}  (\overline{N_+ \pa M})\to 0
\end{equation*}
With a small abuse of notation we  denote this class by $[V_{\D}, e_1, V_{(t\D_{\cyl})}]$.
\end{proposition}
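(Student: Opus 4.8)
The plan is to check directly the three conditions defining a relative $K_0$-cycle for the indicial homomorphism $I\colon\bigl({}^b\mathcal{A}^\infty_G(M)\bigr)^+\to\bigl({}^b\mathcal{A}^\infty_{G,\RR^+}(\overline{N_+\pa M})\bigr)^+$ of \eqref{short-exact-b-tris}: (a) $V(D)$ and $e_1$ are idempotents in $M_{2\times 2}$ over the unitalization of ${}^b\mathcal{A}^\infty_G(M)$; (b) $\{q_t\}_{t\in[1,+\infty]}$ is a continuous path of idempotents in $M_{2\times 2}$ over the unitalization of ${}^b\mathcal{A}^\infty_{G,\RR^+}(\overline{N_+\pa M})$; (c) the endpoints of this path are $q_1=I(V(D))$ and $q_\infty=I(e_1)$. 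Condition (a) for $V(D)$ is precisely Proposition \ref{CM-for-relative}, and $e_1$ is a constant idempotent there.

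For condition (c) I would first identify $I(V(D))$ with $V(D_{\cyl})$. The map $I$ is a continuous algebra homomorphism, $I(D^\pm)=D^\pm_{\cyl}$ by construction, and, as in the proof of Proposition \ref{b-resolvant} (where $I(D^2-\mu,\lambda)=D^2_\partial+\lambda^2-\mu$), one has $I\bigl((D^2-\mu)^{-1}\bigr)=(D^2_{\cyl}-\mu)^{-1}$ for $\mu\in\gamma$. Since every entry of $V(D)$ is produced from $D^\pm$ via the Cauchy-integral functional calculus of Propositions \ref{b-resolvant}--\ref{prop:b-heat} and \ref{CM-for-relative} — that is, as $\frac{1}{2\pi i}\int_\gamma\phi(\mu)(D^2-\mu)^{-1}\,d\mu$, possibly post-composed with $D^\pm$ — applying $I$ and commuting it past the contour integral (by continuity) carries each entry of $V(D)$ to the corresponding entry of $V(D_{\cyl})$. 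Hence $q_1=V(D_{\cyl})=I(V(D))$; and since the indicial operator of the zero, resp.\ identity, operator is again zero, resp.\ identity, $I(e_1)=e_1=q_\infty$.

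Next I would establish condition (b). For fixed $t\in[1,+\infty)$, $V(tD_{\cyl})^2=V(tD_{\cyl})$ is the purely algebraic Connes--Moscovici identity (replacing $D$ by $tD$ turns $D^-D^+$ into $t^2D^-D^+$ and leaves $\Id-Q_tD^+=e^{-\frac12 t^2D^-D^+}$, $\Id-D^+Q_t=e^{-\frac12 t^2D^+D^-}$ in force), valid in any unital algebra containing the entries. That the entries lie in $\bigl({}^b\mathcal{A}^\infty_{G,\RR^+}(\overline{N_+\pa M})\bigr)^+$ I would see through the Mellin isomorphism $\mathcal{M}$ on $\overline{N_+\pa M}$: the $(1,1)$-entry $e^{-t^2D^-_{\cyl}D^+_{\cyl}}$ corresponds to $\lambda\mapsto e^{-t^2(\lambda^2+D^2_\partial)}$, which is entire in $\lambda$, rapidly decreasing in $\Re\lambda$ on every strip $|\Im\lambda|<\delta$ thanks to the Gaussian factor $e^{-t^2\lambda^2}$, and takes values in $\mathcal{A}^\infty_G(\pa M)$ by the closed-case heat-operator result of Subsection \ref{subsect:heat} (applied on $\pa M$); the remaining entries, built from $D^\pm_{\cyl}$ and holomorphic functions of $D^2_{\cyl}$ that vanish at $\infty$, are handled the same way, using $D^2_\partial\ge\alpha^2$ and Lemma \ref{lemma:composition}. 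Continuity of $t\mapsto V(tD_{\cyl})$ on $[1,+\infty)$ in the Fr\'echet topology is then the standard smooth dependence of such Cauchy integrals on the parameter $t$.

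The step I expect to be the main obstacle is continuity of $\{q_t\}$ at $t=+\infty$, i.e.\ proving $V(tD_{\cyl})\to e_1$ in the Fr\'echet topology of $M_{2\times 2}\bigl(({}^b\mathcal{A}^\infty_{G,\RR^+}(\overline{N_+\pa M}))^+\bigr)$ and not merely in operator norm; this is the point at which the spectral-gap Assumption \ref{assumption:invertibility} must really be exploited. Because $D^2_\partial\ge\alpha^2$ and hence $D^2_{\cyl}\ge\alpha^2$, in the Mellin picture each entry of $V(tD_{\cyl})-e_1$ is a holomorphic family of the shape $e^{-ct^2(\lambda^2+D^2_\partial)}$ (with $c\in\{\tfrac12,1\}$) times a family growing at most polynomially in $t$; writing $e^{-ct^2(\lambda^2+D^2_\partial)}=e^{-\frac c2 t^2(\lambda^2+D^2_\partial)}\cdot e^{-\frac c2 t^2(\lambda^2+D^2_\partial)}$, estimating one factor in operator norm by $e^{-\frac c2 t^2\alpha^2}$ and bounding the other in the Fr\'echet topology of $\mathcal{A}^\infty_G(\pa M)$ uniformly for $t\ge 1$ (the uniform bounds being those obtained in Propositions \ref{b-resolvant}--\ref{prop:b-heat} via Lemma \ref{lemma:composition}), one gets that every Fr\'echet seminorm of $V(tD_{\cyl})-e_1$ decays like $e^{-\frac c2 t^2\alpha^2}$ as $t\to+\infty$. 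This delivers the required continuity at $t=+\infty$, completes the verification that $(V(D),e_1,q_t)$ is a relative cycle, and thereby defines the class $[V_{\D},e_1,V_{(t\D_{\cyl})}]\in K_0\bigl({}^b\mathcal{A}^\infty_G(M),{}^b\mathcal{A}^\infty_{G,\RR^+}(\overline{N_+\pa M})\bigr)$.
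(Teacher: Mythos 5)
Your proposal is correct and follows the same strategy as the paper's (very brief) proof: invoke Proposition \ref{CM-for-relative} for the entries of $V(D)$, observe that $I$ intertwines the functional calculus so that $I(V(D))=V(D_{\cyl})$, and rely on the spectral gap $D^2_\partial\ge\alpha^2$ together with heat-kernel estimates to get a well-defined continuous path $q_t$ of idempotents degenerating to $e_1$. The paper compresses everything after the citation of Proposition \ref{CM-for-relative} into the one-line appeal to "the invertibility of $D_{\cyl}$ and well-known properties of the heat kernel"; your write-up supplies exactly those details, correctly identifying the only genuinely delicate point, namely that the convergence $V(tD_{\cyl})\to e_1$ must hold in the Fr\'echet topology of ${}^b\mathcal{A}^\infty_{G,\RR^+}(\overline{N_+\pa M})$ rather than merely in operator norm, and handling it via the semigroup factorization $e^{-ct^2(\lambda^2+D^2_\partial)}=e^{-\frac{c}{2}t^2(\lambda^2+D^2_\partial)}\cdot e^{-\frac{c}{2}t^2(\lambda^2+D^2_\partial)}$ with one factor estimated in operator norm by $e^{-\frac{c}{2}t^2\alpha^2}$.
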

\begin{proof}
The fact that the two idempotents have entries in the right algebras follows immediately
from Proposition \ref{CM-for-relative}.
The triple $(V(D), e_1, V(t\D_{\cyl}))$ defines a relative class because of the invertibility of $D_{\cyl}$
and well-known properties of the heat kernel.
 \end{proof}

\begin{definition}\label{def:rel-index-smooth}
We define the relative (smooth) index class as
$$\Ind_\infty (D,D_\partial):= [V_{\D}, e_1, V_{(t\D_{\cyl})}]\;\;\in\;\;K_0 ({}^b \mathcal{A}^\infty_G (M),{}^b \mathcal{A}^\infty _{G,\RR^+}  (\overline{N_+ \pa M}))\,.$$
\end{definition}

The following result will play a crucial role:

\begin{theorem}\label{theo:excision}
Let $$\alpha_{{\rm ex}}: K_0 ( \mathcal{A}^\infty_G (M))\to K_0 ({}^b \mathcal{A}^\infty_G (M),{}^b \mathcal{A}^\infty _{G,\RR^+}  (\overline{N_+ \pa M}))$$ be the
excision isomorphism for the short exact sequence $0\to  \mathcal{A}^\infty_G (M)\to {}^b \mathcal{A}^\infty_G (M)\xrightarrow{I} {}^b \mathcal{A}^\infty_{G,\RR^+}  (\overline{N_+ \pa M})\to 0$.\\
 Then
\begin{equation}\label{excision-for-cs}
\alpha_{{\rm ex}}(\operatorname{Ind}_\infty (D))=
\operatorname{Ind}_\infty (D,D_{\pa})\;\;\text{in}\;\; K_0 ({}^b \mathcal{A}^\infty_G (M),{}^b \mathcal{A}^\infty_{G,\RR^+}  (\overline{N_+ \pa M}))
\end{equation}
\end{theorem}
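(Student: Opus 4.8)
The plan is to reduce the identity \eqref{excision-for-cs} to a statement purely about the construction of the two index classes, by tracking how the Connes--Skandalis projector $P^b_Q$ associated to the improved parametrix $Q^b = Q_\sigma - Q'$ behaves under the indicial map $I$. First I would recall the general description of the excision isomorphism: for a triple $(P,Q,p_t)$ representing a relative class, $\alpha_{\mathrm{ex}}$ sends an idempotent $P'$ in the kernel ideal $\mathcal{A}^\infty_G(M)$ (with $[P']-[e_1]\in K_0(\mathcal{A}^\infty_G(M))$) to $[(P', e_1, \mathbf{c})]$ with $\mathbf{c}$ the constant path at $e_1$. So the left-hand side of \eqref{excision-for-cs} is $[(P^b_Q, e_1, \mathbf{c})]$, where I use that $P^b_Q$ has vanishing indicial operator because the remainders ${}^b S_\pm$ lie in the ideal $\mathcal{A}^\infty_G(M)$ (this is exactly the content of the discussion following Proposition \ref{true-para}, and is the same computation used to produce the class \eqref{CS-class-bis}).

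\textbf{Key steps.} Step one: show that the Connes--Moscovici idempotent $V^b(D)$ built from the improved parametrix $Q^b$ represents the \emph{same} class in $K_0(\mathcal{A}^\infty_G(M))$ as the Connes--Skandalis idempotent $P^b_Q$; this is the standard fact that different parametrices give homotopic Connes--Skandalis/Connes--Moscovici projectors, carried out inside the holomorphically closed algebra $\mathcal{A}^\infty_G(M)$, and is already implicitly asserted in \eqref{index-b-class}. So $\mathrm{Ind}_\infty(D) = [V^b(D)] - [e_1]$ and $\alpha_{\mathrm{ex}}(\mathrm{Ind}_\infty(D)) = [(V^b(D), e_1, \mathbf{c})]$. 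Step two: compare the two triples $(V^b(D), e_1, \mathbf{c})$ and $(V(D), e_1, q_t)$ from \eqref{pre-wassermann-triple} in the relative group $K_0({}^b\mathcal{A}^\infty_G(M), {}^b\mathcal{A}^\infty_{G,\mathbb{R}^+}(\overline{N_+\partial M}))$. The natural path to exhibit is the one interpolating $V(D)$ to $V^b(D)$ through the family of parametrices $Q - \theta Q'$, $\theta\in[0,1]$: at $\theta=0$ one has $V(D)$ whose indicial operator is $V(D_{\mathrm{cyl}})$, and at $\theta=1$ one has $V^b(D)$ with vanishing indicial operator. Applying $I$ to this homotopy of idempotents in ${}^b\mathcal{A}^\infty_G(M)$ produces a path of idempotents in ${}^b\mathcal{A}^\infty_{G,\mathbb{R}^+}(\overline{N_+\partial M})$ from $V(D_{\mathrm{cyl}})$ to $e_1$. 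Step three: concatenate this indicial path with the heat-rescaling path $t\mapsto V(tD_{\mathrm{cyl}})$, $t\in[1,\infty]$; because $D_{\mathrm{cyl}}$ is $L^2$-invertible, $V(tD_{\mathrm{cyl}})\to e_1$ as $t\to\infty$, so the concatenated path connects $e_1$ (the indicial image of $V^b(D)$) to $e_1$ and is null-homotopic rel endpoints in the space of idempotents over ${}^b\mathcal{A}^\infty_{G,\mathbb{R}^+}(\overline{N_+\partial M})$. Invariance of the relative $K$-class under homotopy of the connecting path then gives $[(V^b(D), e_1,\mathbf{c})] = [(V(D), e_1, q_t)] = \mathrm{Ind}_\infty(D, D_\partial)$, which is \eqref{excision-for-cs}.

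\textbf{Main obstacle.} The delicate point is Step two: one must check that the interpolating family $Q - \theta Q'$ genuinely yields a norm-continuous (indeed Fr\'echet-continuous) path of idempotents with entries in ${}^b\mathcal{A}^\infty_G(M)$, using that $Q'$ is already controlled in that algebra (Proposition \ref{true-para} and the discussion after it) and that the remainders stay in the appropriate ideals along the homotopy — together with the fact that applying $I$ commutes with forming the Connes--Skandalis/Connes--Moscovici expression, so that $I$ of the $\theta$-family is literally the Connes--Moscovici projector of $D_{\mathrm{cyl}}$ with a correspondingly interpolated parametrix. One also needs that the two concatenation pieces match continuously at $t=1$, i.e.\ that the $\theta=0$ endpoint $V(D_{\mathrm{cyl}})$ agrees with $V(1\cdot D_{\mathrm{cyl}})$, which is automatic. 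The rest is the bookkeeping of relative $K$-theory (homotopy invariance of triples, the description of $\alpha_{\mathrm{ex}}$ on honest idempotents in the ideal), for which I would cite \cite{LMP2} and standard references, exactly as in the Galois-covering case treated in \cite{GMPi, mp}.
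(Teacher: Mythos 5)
Your overall strategy — interpolate the parametrix $Q_\theta := Q - \theta Q'$, read off the indicial projectors, and compare with the heat-rescaling path — is exactly the approach the paper delegates to \cite{mp} and \cite{GMPi}, so you are on the right track. The genuine gap is in Step three: you assert, without argument, that the concatenation of the indicial path $\theta\mapsto I(P^\theta)$ with (the reverse of) the heat path $t\mapsto V(tD_{\cyl})$ is null-homotopic rel endpoints. This is not automatic; in a general Fr\'echet algebra the space of idempotents can have nontrivial $\pi_1$, so a loop based at $e_1$ need not be contractible for free. What actually proves the null-homotopy — and at the same time organizes the whole argument — is an explicit two-parameter family of idempotents on the cylinder, which you should write down. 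Concretely, for $(\theta,t)\in[0,1]\times[1,\infty]$ let $W(\theta,t)$ be the Connes--Skandalis projector on the cylinder built from the interpolated parametrix $(1-\theta)\,Q_{\cyl}(t)+\theta\,(tD^+_{\cyl})^{-1}$, where $Q_{\cyl}(t)$ denotes the Connes--Moscovici parametrix of $tD_{\cyl}$. Since $D_{\cyl}$ is $L^2$-invertible, $(tD^+_{\cyl})^{-1}$ lies in ${}^b\mathcal{A}^\infty_{G,\RR^+}(\overline{N_+\pa M})$, and the remainder of the interpolated parametrix is $(1-\theta)\exp(-\tfrac{t^2}{2}D^-_{\cyl}D^+_{\cyl})$, which tends to $0$ as $t\to\infty$; one then checks $W(0,t)=V(tD_{\cyl})$, $W(1,t)\equiv e_1$, $W(\theta,\infty)\equiv e_1$, and $W(\theta,1)=I(P^\theta)$, the last because the Connes--Skandalis construction commutes with $I$ and $I(Q-\theta Q')=(1-\theta)Q_{\cyl}(1)+\theta(D^+_{\cyl})^{-1}$.

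A second, related, imprecision is in your closing sentence: ``invariance of the relative $K$-class under homotopy of the connecting path'' applies when the endpoint idempotents are held \emph{fixed}, whereas here the first idempotent is moving from $V(D)$ to $V^b(D)$ along $P^\theta$. The clean way to finish, once you have $W$, is to bypass the concatenation-of-paths argument and produce a homotopy of \emph{triples}: $\theta\longmapsto\bigl(P^\theta,\ e_1,\ t\mapsto W(\theta,t)\bigr)$, $\theta\in[0,1]$, which is admissible precisely because of the boundary conditions $I(P^\theta)=W(\theta,1)$ and $I(e_1)=e_1=W(\theta,\infty)$ listed above. At $\theta=0$ the triple is $\bigl(V(D),\,e_1,\,V(tD_{\cyl})\bigr)$ and at $\theta=1$ it is $\bigl(V^b(D),\,e_1,\,{\bf c}\bigr)$; homotopy invariance of relative $K_0$ then gives \eqref{excision-for-cs} directly. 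The Fr\'echet-continuity issues you flag — continuity of $\theta\mapsto P^\theta$ in ${}^b\mathcal{A}^\infty_G(M)$ and of $(\theta,t)\mapsto W(\theta,t)$ in ${}^b\mathcal{A}^\infty_{G,\RR^+}(\overline{N_+\pa M})$ — are handled, as you correctly anticipate, by the module structure of Lemma \ref{lemma:composition} together with the estimates in Propositions \ref{true-para} and \ref{prop:b-heat}.
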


\begin{proof}
We need to show that
$$\alpha_{{\rm ex}}([V^b (D)]-[e_1])=[V(D), e_1, V(t\D_{\cyl})]\,.$$
As explained in \cite{GMPi}, we can adapt the proof of the analogous equality
 given in \cite{mp} for the graph projection.
As in \cite{GMPi} we omit the elementary details that allow to pass from the graph projection to the Connes-Moscovici
projection.
 \end{proof}

\section{Relative cyclic cocycles}\label{sect:relative-cocycles}
In this section we shall generalize the construction in \S \ref{section:van-est} of cyclic cocycles on algebras of $G$-invariant
smoothing operators on closed manifolds to obtain {\em relative} cocycles associated to smooth group cocycles. 

First, recall that the relative cyclic complex associated to a short exact sequence $0\to J\to A\xrightarrow{\pi}B\to 0$ of algebras
is given by 
\[
CC^k(A,B):=CC^k(A)\oplus CC^{k+1}(B),
\]
equipped with the differential
\[
\left(\begin{matrix} b+B&-\pi^*\\ 0&-(b+B)\end{matrix}\right),
\]
where $b,B$ are the usual Hochschild and cyclic differential and $\pi^*$ denotes the pull-back of functionals through the surjective morphism 
$\pi:A\to B$.
In our case, the relevant extension is given, first of all, by 
\begin{equation*}
0\to  \mathcal{A}^c_G (M)\to {}^b \mathcal{A}_{G}^c (M)\xrightarrow{I} {}^b \mathcal{A}_{G,\RR^+}^c  (\overline{N^+ \pa M})\to 0\,,\quad   \mathcal{A}^c_G (M):= \Ker I,
\end{equation*}
where this is now, for simplicity, the short exact sequence for the small $b$-calculus. 


With the global slice $S\subset M$, we can view $A\in {}^b \mathcal{A}^c_G (M)$ as a map $\Phi_A:G\to {}^b\Psi^{-\infty}(S)$ 
by setting $\Phi_A(g,s_1,s_2):=A(s_1,gs_2)$. Likewise, an element $B\in {}^b \mathcal{A}^c_{G,\RR^+}  (\overline{N^+ \pa M})$
gives rise, by  Mellin transform, to a map $\Phi_B:G\times \mathbb{R}\to {}\Psi^{-\infty}(\partial S)$, which is 
compactly supported on $G$, and rapidly decreasing on $\RR$. In the following, we shall denote by $\Phi_A\mapsto \hat{\Phi}_A$  the morphism $I:{}^b \mathcal{A}_{G}^c (M)\rightarrow {}^b \mathcal{A}_{G,\RR^+}^c  (\overline{N^+ \pa M})$ followed by the Mellin transform.

Before we construct cyclic cocycles, we shall construct the correct analogue of the $b$-trace of Melrose in this setting where we have a proper group action. In our geometric setting, a choice of cut-off function $\chi$
for the action of $G$ on $M_{0}$ 
restricts to give a cut-off function $\chi_{\partial M_{0}}:=\chi |_{\partial M_{0}}$ for 
the $G$-action on $\partial M_{0}$. We shall also write briefly $\chi_{\partial}$.
We consider as usual the associated $b$-manifold $M$, endowed with a product-type $b$-metric $g$, so that,
metrically, $M$ is a manifold with cylindrical ends, and we shall, by a small abuse of notation, write $\chi$ for the extension of $\chi$ on $M_0$ which is constant
in the cylindrical coordinate.

Using the $b$-integral of Melrose, see \cite{Melrose}, we now define
\begin{definition}
For $A\in {}^b \mathcal{A}_{G}^c(M)$ its $~^{b}\,G$-trace is defined as
\[
{}^b {\rm Tr}_{\chi}(A):=\bint
 K_A(x,x)\chi(x)d{\rm vol}(x).
\]
\end{definition}
Remark that the cut-off function on $M_0$
has compact support, so the $b$-regularized integral is indeed well defined.
The argument in \cite[Prop. 2.3]{PPT} shows that ${}^b {\rm Tr}_{\chi}$ is independent of the choice of 
cut-off function $\chi$. Next, using the trick with the family $\{\chi_{\epsilon}\}_{\epsilon>0}$ of cut-off functions converging to 
the characteristic function on $S$, we can rewrite
\begin{equation}
\label{b-trace}
{}^b {\rm Tr}_{\chi}(A)={}^b{\rm Tr}_{S}\left(\Phi_A(e)\right).
\end{equation}
As in the usual $b$-calculus, ${}^b {\rm Tr}_{\chi}$ is not a trace, but we have a precise formula for its defect on commutators:
\begin{lemma}
\label{btrace}
For $A_{1},A_{2}\in  {}^b \mathcal{A}_{G}^c (M)$, we have
\[
{}^b {\rm Tr}_{\chi}\left([\Phi_{A_{1}},\Phi_{A_{2}}]\right)=\frac{i}{2\pi}\int_{\mathbb{R}}\int_G{\rm Tr}_{\partial S}\left(\frac{\partial I(\Phi_{A_1},h^{-1},\lambda)}{\partial \lambda}\circ
I(\Phi_{A_2},h,\lambda)\right)dh d\lambda.
\]
\end{lemma}
\begin{proof}
With the expression \eqref{btrace} for the trace, this follows straightforwardly from the usual trace-defect for the $b$-trace
on the compact manifold $S$:
\begin{align*}
{}^b {\rm Tr}_{\chi}\left([\Phi_{A_{1}},\Phi_{A_{2}}]\right)=&
\int_G{}^b{\rm Tr}_S\left(\Phi_{A_1}(h^{-1})\circ \Phi_{A_2}(h)-\Phi_{A_2}(h^{-1})\circ \Phi_{A_1}(h)\right)dh\\
&=\int_G{}^b{\rm Tr}_S\left(\left[\Phi_{A_1}(h^{-1}),\Phi_{A_2}(h)\right]\right)dh\\
&=\frac{i}{2\pi}\int_{\mathbb{R}}\int_G{\rm Tr}_{\partial S}\left(\frac{\partial I(\Phi_{A_1},h^{-1},\lambda)}{\partial \lambda}\circ
I(\Phi_{A_2},h,\lambda)\right)dh d\lambda.
\end{align*}
Here we have changed integration variables $h\mapsto h^{-1}$ to go to the second line, and used the fact that our group $G$ is 
unimodular.
\end{proof}
\begin{definition}
Let $M$ be a proper $G$-manifold with boundary, and $c\in Z^k_{\rm diff}(G)$ be a smooth group cocycle.  For $A_0,\ldots, A_k\in 
{}^b \mathcal{A}_{G}^c (M)$, define
\[
\tau^r_\varphi (A_0,\ldots,A_k):=\int_{G^{\times k}}{}^b{\rm Tr}_S
\left(\Phi_{A_0}((g_1\cdots g_k)^{-1})\circ \Phi_{A_1}(g_1)\circ \ldots\circ \Phi_{A_k}(g_k)\right)
\varphi (e,g_1,g_1g_2,\ldots,g_1\cdots g_k)dg_1\cdots dg_k
\]
\end{definition}

\noindent
where $r$ stands for {\it regularized}. (The subscript $b$ is already used for the differential in cyclic cohomology.)
Now, let $Y$ be a closed manifold equipped with a proper, cocompact action of $G$. We denote by ${\rm cyl}(Y)=Y\times\RR$
the cylinder over $Y$, equipped with the action of $G\times\RR$. We can compactify  ${\rm cyl}(Y)$ to 
a $b$-cylinder as in \cite{Melrose} (where it is denoted $\widetilde{Y}$) and turn the $\RR$ action into a $\RR^+$-action.
Viceversa, we can look at $\RR^+$-invariant kernels on the $b$-cylinder $\widetilde{Y}$
as translation invariant kernels on ${\rm cyl}(Y)=Y\times\RR$.

\medskip
\noindent
{\bf With a small abuse of notation, and with the above remarks in mind,
  we denote by ${}^b \mathcal{A}_{G,\RR}^c  (\cyl (Y))$ the $\RR^+$-invariant
$b$-calculus with bounds on the $b$-cylinder associated to $Y$.}

\medskip
\noindent
Using the Mellin  transforms we obtain an
isomorphism 
\begin{equation}\label{identification}
{}^b \mathcal{A}_{G,\RR}^c  (\cyl (Y))\cong \mathscr{S}(\RR,\mathcal{A}^c_{G}(Y)),\quad A\mapsto \hat{A}.
\end{equation}
\begin{definition}\label{sigma-cocycle}
Let $Y$ be a closed manifold equipped with a proper, cocompact action of $G$, and let $\varphi\in C^k_{\rm diff}(G)$ be a smooth group 
cocchain.  The {\em eta cochain} on ${}^b \mathcal{A}_{G,\RR}^c  ({\rm cyl}(Y))$ associated to $c$ is defined as
\begin{align*}
\sigma_\varphi (B_0,\ldots,B_{k+1})
&:=\frac{(-1)^{k+1}}{2\pi}\int_{G^{k+1}}\int_\RR{\rm Tr}\left(\hat{B}_0((g_1\cdots g_{k+1})^{-1},\lambda)\circ
\hat{B}_1(g_1,\lambda)\circ\cdots \circ\hat{B}_k(g_k,\lambda)\circ\frac{\partial\hat{B}_{k+1}(g_{k+1},\lambda)}{\partial \lambda}\right)
\\&\hspace{7cm}\varphi (e,g_1,g_1g_2,\ldots,g_1\cdots g_{k})
d\lambda dg_1\cdots dg_{k+1}\\
\end{align*}
\end{definition}
\begin{proposition}
Let $M$ be a proper $G$-manifold with boundary, and $c\in C^k_{{\rm diff},\lambda}(G)$ a cyclic group cochain.
We have the identities:
\[
\left(\begin{matrix} (b+B)&-I^*\\0&-(b+B)\end{matrix}\right)\left(\begin{matrix}\tau^r_\varphi\\ \sigma_\varphi\end{matrix}\right)=
\left(\begin{matrix}\tau^r_{\delta\varphi}\\ \sigma_{\delta\varphi}\end{matrix}\right)
\]
\end{proposition}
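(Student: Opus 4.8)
The plan is to unwind the displayed matrix identity into its two components and to verify each by a direct computation. The top component reads $(b+B)\tau^r_\varphi - I^*\sigma_\varphi = \tau^r_{\delta\varphi}$, an identity of cyclic cochains on ${}^b \mathcal{A}_{G}^c (M)$, while the bottom component reads $(b+B)\sigma_\varphi = -\sigma_{\delta\varphi}$, an identity of cyclic cochains on the cylinder algebra ${}^b \mathcal{A}_{G,\RR}^c ({\rm cyl}(\partial M))$.

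I would dispose of the bottom component first, since it lives entirely on the cylinder. Under the Mellin identification \eqref{identification} the defining ingredients of $\sigma_\varphi$ are: the pointwise composition product of $\Psi^{-\infty}(\partial S)$-valued functions of $\lambda\in\RR$, convolved over $G$; the derivation $\partial_\lambda$ of this product; the trace ${\rm Tr}$ on $\Psi^{-\infty}(\partial S)$; the identity $\int_\RR {\rm Tr}\big(\partial_\lambda(\cdot)\big)\,d\lambda = 0$, valid because the kernels are Schwartz in $\lambda$; and the group cochain $\varphi$. Granting these, the evaluation of $(b+B)\sigma_\varphi$ reproduces, slot by slot, the closed-case verification that $\varphi\mapsto\tau^{\,\bullet}_\varphi$ is a morphism of complexes (cf.\ Subsection~\ref{section:van-est} and \cite{PPT,ppt1}): the contributions that do not disturb the $\partial_\lambda$-factor recombine, after the substitution $g\mapsto g^{-1}$ (legitimate since $G$ is unimodular) and the trace property, into the terms of $\sigma_{\delta\varphi}$, while the contributions that slide $\partial_\lambda$ from one factor to an adjacent one cancel in pairs because $\partial_\lambda$ is a derivation and the $\lambda$-integral of a total $\lambda$-derivative vanishes. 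Keeping track of the cyclic ($B$-)correction and of the overall sign then yields exactly $-\sigma_{\delta\varphi}$.

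The top component is the substantive part, and it is here that the $b$-calculus intervenes through the failure of the trace property for the $b$-trace ${}^b{\rm Tr}_S$. I would compute $(b+B)\tau^r_\varphi$ by the same manipulations that prove the chain-map identity $b\tau^M_\varphi = \tau^M_{\delta\varphi}$ in the closed case. Every step survives verbatim with one exception: in order to recombine the outermost factors inside ${}^b{\rm Tr}_S$ one must move an operator cyclically past the trace, and there the identity ${}^b{\rm Tr}_S(PQ) = {}^b{\rm Tr}_S(QP)$ fails, producing instead the defect
$$\tfrac{i}{2\pi}\int_\RR\int_G {\rm Tr}_{\partial S}\big(\partial_\lambda I(\Phi_P,h^{-1},\lambda)\circ I(\Phi_Q,h,\lambda)\big)\,dh\,d\lambda$$
of Lemma~\ref{btrace}. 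Summing these defect terms over the relevant terms of $b$ and $B$, and using that the indicial map $I$ is multiplicative with $I(\Phi_{A_j},g_j,\lambda) = \hat\Phi_{A_j}(g_j,\lambda)$, one sees the collection reassemble --- after relabelling the group integration variables and the substitution $h\mapsto h^{-1}$ --- into precisely $\sigma_\varphi$ evaluated on $I(A_0),\dots,I(A_{k+1})$, that is, into $I^*\sigma_\varphi$: the $\partial_\lambda$ lands on the last argument, and the extra $\int_G dh$ of the defect formula supplies the extra group variable $g_{k+1}$ that appears in $\sigma_\varphi$ but not in $\varphi$. The remaining, defect-free part of the computation is word for word the closed case and delivers $\tau^r_{\delta\varphi}$.

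The main obstacle is exactly this last bookkeeping: one must confirm that the defect terms, which the Hochschild and cyclic differentials of a single cochain generate in a combinatorially spread-out way, assemble into one clean cochain with the correct $\partial_\lambda$-slot, the correct overall sign $(-1)^{k+1}$, and the correct constant $\tfrac{1}{2\pi}$ (the factor $i$ of Lemma~\ref{btrace} being absorbed into the Mellin normalization used to define $\hat\Phi$). The analytic points --- well-definedness of the $b$-regularized integral, which holds because the cut-off $\chi$ has compact support on $M_0$; the use of Fubini; and the Schwartz decay in $\lambda$ needed to integrate total $\lambda$-derivatives to zero --- are routine and already available from \cite{Melrose,PPT} and the earlier sections.
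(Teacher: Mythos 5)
Your handling of the top identity $(b+B)\tau^r_\varphi - I^*\sigma_\varphi = \tau^r_{\delta\varphi}$ coincides with the paper's: expand $b\tau^r_\varphi$, note that the closed-case cancellations survive except for the single cyclic move that needs ${}^b{\rm Tr}_S(PQ) = {}^b{\rm Tr}_S(QP)$, and invoke the defect formula of Lemma~\ref{btrace} to identify the resulting commutator term as $I^*\sigma_\varphi$, together with $B\tau^r_\varphi=0$ exactly as in the closed case. You diverge on the bottom identity $(b+B)\sigma_\varphi = -\sigma_{\delta\varphi}$: you propose a second, independent direct computation on the cylinder, tracking where $\partial_\lambda$ falls under Leibniz and discarding total $\lambda$-derivatives. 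The paper instead gets the bottom identity essentially for free from the top: applying $b+B$ to the already-proved relation, using $(b+B)^2=0$, $\delta^2=0$, and the fact that $I^*$ intertwines the cyclic differentials (because $I$ is an algebra map), one arrives at $I^*\bigl((b+B)\sigma_\varphi+\sigma_{\delta\varphi}\bigr)=0$; since $I$ is surjective (the cut-off section $s$ splits it) the pullback $I^*$ is injective on cochains, which forces $(b+B)\sigma_\varphi=-\sigma_{\delta\varphi}$. The paper's route is shorter and avoids a second sign-chase; yours is self-contained and verifies the cylinder identity from first principles, at the cost of more bookkeeping. Both are sound. One small flag: you conjecture that the factor $i$ in Lemma~\ref{btrace} is absorbed into the Mellin normalization of $\hat\Phi$; the $\frac{1}{2\pi}$ in Definition~\ref{sigma-cocycle} against the $\frac{i}{2\pi}$ in Lemma~\ref{btrace} does call for exactly this reconciliation, so if you write out the details you should pin it down rather than leave it as a parenthetical.
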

\begin{proof}
This is a straightforward computation:
\begin{align*}
b\tau^r_\varphi(A_0,\ldots,A_{k+1})&=\int_{G^{\times k}}{}^b{\rm Tr}_S
\left(\Phi_{A_0}((g_1\cdots g_k)^{-1}h^{-1})\circ \Phi_{A_1}(h)\circ \Phi_{A_2}(g_1)\circ \ldots\circ \Phi_{A_{k+1}}(g_k)\right)\\
&\hspace{7cm}\varphi(e,g_1,g_1g_2,\ldots,g_1\cdots g_k)dg_1\cdots dg_kdh\\
&+
\sum_{i=1}^k(-1)^i\int_{G^{\times {k+1}}}{\rm Tr}_S^b\left((\Phi_{A_0}((g_1\cdots g_k)^{-1})\circ\ldots\circ\Phi_i(g_ih^{-1})\Phi_{i+1}(h)\circ\ldots\circ\Phi_{k+1}(g_k)\right)\\
&\hspace{7cm}\varphi(e,g_1,g_1g_2,\ldots,g_1\cdots g_k)dg_1\cdots dg_kdh\\
&+\int_{G^{\times k}}{}^b{\rm Tr}_S
\left(\Phi_{A_{k+1}}((g_1\cdots g_k)^{-1}h^{-1})\Phi_{A_0}(h)\circ \Phi_{A_2}(g_1)\circ \ldots\circ \Phi_{A_{k+1}}(g_k)\right)\\
&\hspace{7cm}\varphi(e,g_1,g_1g_2,\ldots,g_1\cdots g_k)dg_1\cdots dg_kdh\\
&=\tau^r_{\delta \varphi}(A_0,\ldots,A_{k+1})\\&
+(-1)^{k+1}\int_G^{\times(k+1)}{}^b{\rm Tr}_S\left([\Phi_{A_{k+1}}(g_{k+1}),\Phi_{A_0}((g_1\cdots g_{k+1})^{-1})\circ\Phi_{A_1}(g_1)\circ\ldots\circ\Phi_{A_k}(g_k)]\right)\\&\hspace{7cm}\varphi(e,g_1,\ldots,g_1\cdots g_k)dg_1\cdots dg_{k+1}\\
&=\tau^r_{\delta \varphi}(A_0,\ldots,A_{k+1})\\&+\frac{i(-1)^{k+1}}{2\pi}\int_{G^{\times(k+1)}}\int_\RR{\rm Tr}_S
(I(\Phi_{A_0},(g_1\cdots g_{k+1})^{-1},\lambda)\circ I(\Phi_{A_1},g_1,\lambda)\circ I(\Phi_{A_k},g_k,\lambda)\\&\hspace{3cm} \frac{\partial I(\Phi_{A_{k+1}},g_{k+1},\lambda)}{\partial \lambda}\varphi(e,g_1,\ldots,g_1\cdots g_k)dg_1\cdots dg_{k+1}\\
&= \tau^r_{\delta \varphi}(A_0,\ldots,A_{k+1})+I^*\sigma_\varphi (A_0,\ldots,A_{k+1})
\end{align*}
The fact that $B\tau^r_\varphi=0$ follows just as in the closed manifold case. Finally, injectivity of $I^*$ shows the last identity$(b+B)\sigma_\varphi=0$. 
\end{proof}
\begin{corollary} 
If $\varphi\in Z^{k}_{{\rm diff}} (G)$ is a smooth group {\bf cocycle} then $(\tau^{r}_\varphi,\sigma_\varphi)$
is a relative cyclic cocycle for $ {}^b \mathcal{A}_{G}^c (M)\xrightarrow{I} {}^b \mathcal{A}_{G,\RR^+}^c  (\overline{N^+ \pa M})$.
\end{corollary}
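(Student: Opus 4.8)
The plan is to read this off directly from the Proposition just established, so that the argument reduces to a one-line deduction together with a small bookkeeping check. First I would observe that the assignments $\varphi\mapsto\tau^r_\varphi$ and $\varphi\mapsto\sigma_\varphi$ are linear in $\varphi$ and therefore send the zero cochain to the zero functional. Since by hypothesis $\varphi$ is a cocycle, $\delta\varphi=0$, and hence the right-hand side of the identity
\[
\begin{pmatrix} b+B & -I^*\\ 0 & -(b+B)\end{pmatrix}\begin{pmatrix}\tau^r_\varphi\\ \sigma_\varphi\end{pmatrix}=\begin{pmatrix}\tau^r_{\delta\varphi}\\ \sigma_{\delta\varphi}\end{pmatrix}
\]
of the Proposition is identically zero. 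Equating the two components gives $(b+B)\tau^r_\varphi=I^*\sigma_\varphi$ and $(b+B)\sigma_\varphi=0$, which is precisely the statement that $(\tau^r_\varphi,\sigma_\varphi)$ lies in the kernel of the relative cyclic differential associated to the short exact sequence $0\to\mathcal{A}^c_G(M)\to{}^b\mathcal{A}^c_G(M)\xrightarrow{I}{}^b\mathcal{A}^c_{G,\RR^+}(\overline{N^+\pa M})\to 0$; that is, it is a relative cyclic cocycle.

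Before invoking the Proposition I would record one preliminary point, namely that $\tau^r_\varphi$ and $\sigma_\varphi$ genuinely represent cyclic cochains, so that the pair defines an element of $CC^k({}^b\mathcal{A}^c_G(M))\oplus CC^{k+1}({}^b\mathcal{A}^c_{G,\RR^+}(\overline{N^+\pa M}))$ in the first place. This is where the cyclicity of $\varphi$ as an element of $C^k_{{\rm diff},\lambda}(G)$ enters: the homogeneity relation $\varphi(g_0,\ldots,g_k)=(-1)^k\varphi(g_k,g_0,\ldots,g_{k-1})$ translates, via the same change of integration variables on $G^{\times k}$ as in the closed case in \S\ref{section:van-est}, into the cyclic symmetry of both functionals — equivalently, this is the vanishing $B\tau^r_\varphi=0$ (and its analogue for $\sigma_\varphi$) already verified inside the proof of the Proposition. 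The defining integrals converge without difficulty on the $G$-compactly supported algebras, since the kernels $\Phi_{A_i}$ are compactly supported on $G$ and their indicial families are Schwartz in the Mellin variable.

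I do not expect any real obstacle here: all the content is carried by the Proposition, and the remaining verification is the routine cyclicity check above. The genuinely delicate work — extending $(\tau^r_\varphi,\sigma_\varphi)$ from these $G$-compactly supported algebras to the larger holomorphically closed algebras ${}^b\mathcal{A}^\infty_G(M)$ and ${}^b\mathcal{A}^\infty_{G,\RR^+}(\overline{N^+\pa M})$ of \eqref{short-exact-b-tris}, which is what is needed in order to pair the relative cocycle with the smooth relative index class of Definition \ref{def:rel-index-smooth} — is a separate matter that this Corollary does not address and that will be taken up afterwards.
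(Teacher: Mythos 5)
Your argument is correct and is precisely the (essentially tautological) deduction the paper intends: linearity of $\varphi\mapsto(\tau^r_\varphi,\sigma_\varphi)$ together with $\delta\varphi=0$ makes the right-hand side of the Proposition vanish, which is exactly the relative cocycle condition. Your preliminary remark on well-definedness is fine, though note that $B\tau^r_\varphi=0$ is a consequence of, not literally equivalent to, the cyclic symmetry of the cochain.
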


Together with this relative cyclic cocycle we shall also consider the cyclic cocycle $\tau_\varphi$
on the residual algebra
$ \mathcal{A}^c_G (M)$ appearing in the short exact sequence 
$
0\to  \mathcal{A}^c_G (M)\to {}^b \mathcal{A}_{G}^c (M)
\xrightarrow{I} {}^b \mathcal{A}_{G,\RR^+}^c  (\overline{N^+ \pa M})\to 0\,.$
This is exactly as in the closed case, given that elements in $ \mathcal{A}^c_G (M)$ are really kernels 
on $M\times M$ vanishing on $\partial (M\times M)$ (indeed, by definition, their Scwhartz kernel vanishes on the front face
and also vanishes on the left and right boundary).

\noindent
Thus we can give  the following 
\begin{definition}\label{def-unregularized}
Given $\varphi\in Z^{k}_{{\rm diff}} (G)$ we can proceed as in the closed case, see Subsection \ref{section:van-est},
and define a cyclic cocycle $\tau_{\varphi}$ on the algebra $ \mathcal{A}^c_G (M)$.
\end{definition}

\medskip
\noindent
We have worked in the small calculus, but exactly the same steps establish all of the above 
results for the algebras appearing in the calculus with $\delta$-bounds and with $G$-compact support.

\begin{example}
In the spirit of Example \ref{basic-example-1}, let us consider $G=SL(2,\RR)$ acting on itself by left translation. As explained in Part I of this paper, besides the trivial group cocycle, there is an interesting 
degree $2$ cocycle given by the area of a hyperbolic triangle in $\mathbb{H}^2$:
\[
\varphi(g_0,g_1,g_2):={\rm Area}(\Delta(g_0\cdot i,g_1\cdot i,g_2\cdot i)),
\]
where $g_i\in SL(2,\RR)$ acts as usual by M\"obius transformations. The corresponding eta 3-cocycle on $\mathscr{S}(\RR,C^\infty_c(G))$ is given by
\begin{align*}
\sigma_\varphi(B_0,B_1,B_2,B_3):=-\frac{1}{2\pi}\int_{G^3}\int_\RR\hat{B_0}((g_1g_2g_3)^{-1},\lambda)*\hat{B}_1(g_1,\lambda)&*\hat{B}_2(g_2,\lambda)*\frac{\partial \hat{B}(g_3,\lambda)}{\partial\lambda}\\&{\rm Area}(\Delta(i,g_1\cdot i,g_1g_2\cdot i))d\lambda dg_1dg_2dg_3.
\end{align*}
\end{example}

 \section{The higher Atiyah-Patodi-Singer index theorem on G-proper manifolds}\label{sect:aps}
 
 \subsection{Extension of cocycles}
 In this subsection we shall prove that under the same assumptions as in Theorem 
\ref{c*-index-th-part-1}, namely that 
$G$ has finitely many connected components and satisfies the RD condition and that 
$\varphi\in C^{2k}_{{\rm dif},\lambda}(G)$ is a cocycle of polynomial growth, the relative cyclic cocycle
$(\tau^r_\varphi,\sigma_\varphi)$ and the cyclic cocycle $\tau_\varphi$
extend from $$({}^b \mathcal{A}_{G}^{c,\delta} (M,E),{}^b \mathcal{A}^{c,\delta} _{G,\RR^+}  (\overline{N^+ \pa M},E))\;\;\;\text{ and  }
\;\;\;\mathcal{A}_G^{c,\delta}  (M,E)$$ to $$({}^b \mathcal{A}_{G}^{\infty,\delta} (M,E),{}^b \mathcal{A}^{\infty,\delta}_{G,\RR^+}  (\overline{N^+ \pa M},E)) \;\;\;\text{ and  }
\;\;\; \mathcal{A}_G^{\infty,\delta} (M,E)$$ respectively.\\

\medskip
\noindent
{\bf As in previous sections, we expunge the vector bundle $E$ 
from the notation and we omit the $\delta$.}

\medskip
\noindent
As in the closed case, we shall make {\it crucial} use of the fact, proved in Part 1, that under these assumptions
the cyclic cocycle $\tau_\varphi^G$ extends from $C^\infty_c (G)$ to $H^\infty_L(G)$. As we shall see, the proof proceeds
similarly to the closed case, but with some technical complications having to do with the fact that we are considering
Melrose' regularized trace in the definition of $\tau^r_\varphi$.\\

Building on  \cite{GMPi} we shall make use of  a  fundamental observation due to
Lesch-Moscovici-Pflaum, see \cite{LMP2}.  Before stating it, we introduce the relevant notation. Let 
$S$ be a $b$-manifold, built out of a compact manifold with boundary $S_0$. (Eventually
$S$ will be our slice.)
Let $\phi\in C^\infty (S)$ be a function equal to $t$ on the cylindrical end $(-\infty, 0]\times \pa S_0\subset S$. Let $\mathcal{V}$ be a vector field   equal to
 $\pa/\pa t$ on the cylindrical end.
  In particular   $ \mathcal{V} (\phi)=1$ on the cylindrical end.  Let $\chi:= 1-\mathcal{V} (\phi) \in C^\infty_c (S_0\setminus \pa S_0)$.

 \begin{proposition} (Lesch-Moscovici-Pflaum)
 If $P\in  {}^b\Psi^{-\infty,\epsilon} (S) + \Psi^{-\infty,\epsilon} (S) $ then
 \begin{equation}\label{LMP}
  {}^b\Tr (P)= -\Tr (\phi [\mathcal{V},P]) + \Tr (\chi P)\,.
  \end{equation}
 \end{proposition}
 
 \noindent
 Consequently, the $b$-Trace of $P$ is the difference of the traces of two
 trace-class operators
 naturally associated to $P$.  

 \begin{definition}\label{def:triple}
 If $P\in {}^b\Psi^{-\infty,\epsilon} (S) + \Psi^{-\infty,\epsilon} (S) $
 then we define a norm 
 \begin{equation}\label{triple}
 || P ||_b^2 := \| \chi P \|^2_1 + \| \phi [\mathcal{V},P] \|^2_1 + \| [\mathcal{V},P] \|^2_1
 + \| [\phi ,P] \|^2 + \| P \|^2
 \end{equation}
 with the last two norms denoting the $L^2$-operator norm. 
 \end{definition}

\noindent
The following Proposition is established in \cite{GMPi}:
\begin{proposition}\label{fromGMP}
 If $P_j\in \Psi^{-\infty,\epsilon}_b (S) + \Psi^{-\infty,\epsilon} (S)$, $j\in \{0,1,\dots,k\}$, then there exists $C>0$ such that
 \begin{equation}\label{inequality-lemma1}
 |   {}^b\Tr  (P_0 P_1 \cdots P_k)| \leq  C || P_0 ||_b \cdots || P_k ||_b
 \end{equation}
\end{proposition}

\noindent
In the sequel, we shall need the following analogues of Lemma \ref{key-lemma-closed}:

\begin{lemma}\label{key-lemma-residual}
The map
$\Phi\to ||\Phi(\cdot)||_1$ defines a continuous application 
$\mathcal{A}_G^\infty (M)\to H^\infty_L (G)$. 
\end{lemma}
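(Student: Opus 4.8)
The plan is to imitate the proof of Lemma~\ref{key-lemma-closed} verbatim on the group side, the only genuinely new point being the continuity of the trace norm on the algebra of residual operators on the (compact, $b$-)slice $S$. Recall that an element $\Phi\in\mathcal{A}^\infty_G(M)=\Ker I$ has a Schwartz kernel vanishing on every boundary face of the $b$-stretched product, hence is an honest $\Psi^{-\infty,\epsilon}$-kernel on the slice; under the slice identification it is a $K\times K$-invariant map $\Phi\colon G\to\Psi^{-\infty,\epsilon}(S)$ lying in the projective tensor product $(H^\infty_L(G)\hat{\otimes}\Psi^{-\infty,\epsilon}(S))^{K\times K}$. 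Writing $\Phi=\sum_i\lambda_i f_i\hat{\otimes} a_i$ with $\sum_i|\lambda_i|<\infty$, $\{f_i\}$ a null sequence in $H^\infty_L(G)$ and $\{a_i\}$ a null sequence in $\Psi^{-\infty,\epsilon}(S)$, we have $\Phi(g)=\sum_i\lambda_i f_i(g)a_i$, so $\|\Phi(g)\|_1\le\sum_i|\lambda_i|\,|f_i(g)|\,\|a_i\|_1$; moreover $g\mapsto\|\Phi(g)\|_1$ is $K\times K$-invariant since the slice representation acts by unitaries on $L^2_b(S)$, and $\nu_k$ depends only on the absolute value of a function, whence $\nu_k(\|\Phi(\cdot)\|_1)\le\sum_i|\lambda_i|\,\nu_k(f_i)\,\|a_i\|_1$. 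Consequently, exactly as in Lemma~\ref{key-lemma-closed}, it suffices to show that
\[
\Psi^{-\infty,\epsilon}(S)\ni A\longmapsto\|A\|_1\in\RR
\]
is continuous; granting this, a fixed continuous seminorm $p$ of $\Psi^{-\infty,\epsilon}(S)$ and a constant $C$ with $\|a\|_1\le C\,p(a)$ give $\nu_k(\|\Phi(\cdot)\|_1)\le C\sum_i|\lambda_i|\,\nu_k(f_i)\,p(a_i)$, and taking the infimum over all admissible decompositions of $\Phi$ bounds $\nu_k(\|\Phi(\cdot)\|_1)$ by a projective seminorm of $\Phi$, which is the asserted continuity.

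It remains to establish the trace-norm estimate on the slice, and here, in contrast with the closed case of Lemma~\ref{key-lemma-closed}, one cannot simply invoke that $(1+\Delta_b)^{-N}$ is trace class, since on the compact $b$-manifold $S$ with a product $b$-metric the operator $\Delta_b$ (equivalently, the Laplacian of the associated cylindrical-end metric) has continuous spectrum and $(1+\Delta_b)^{-N}$ is not even Hilbert--Schmidt on $L^2_b(S)$; this is precisely where the extra decay $\epsilon>0$ enters. Let $x$ be a total boundary defining function for $S$. An element $A\in\Psi^{-\infty,\epsilon}(S)=\mathcal{A}^\epsilon(S\times S)$ has a kernel $\kappa$ on $S^\circ\times S^\circ$ with $\kappa$ and all its $b$-derivatives of size $O((xx')^\epsilon)$, the corresponding conormal seminorms defining the Fr\'echet topology. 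Fix $N$ sufficiently large; I would then write
\[
A=\bigl(x^{\epsilon/2}(1+\Delta_b)^{-N}\bigr)\circ\bigl((1+\Delta_b)^{N}x^{-\epsilon/2}A\bigr),
\]
where $x^{s}$ denotes multiplication by the function $x^{s}$ (smooth and positive in $S^\circ$). Because $1+\Delta_b\ge 1$, the kernel $G_N$ of $(1+\Delta_b)^{-N}$ is bounded near the diagonal and decays exponentially along the cylindrical end; hence $x^{\epsilon/2}(1+\Delta_b)^{-N}$ has a kernel which decays at both boundary faces of $S\times_b S$ (the left one because of the weight, the right one because of $G_N$ itself), so it is Hilbert--Schmidt on $L^2_b(S)$ with an $A$-independent Hilbert--Schmidt norm. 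For the second factor, $x^{-\epsilon/2}A$ has kernel $x^{-\epsilon/2}\kappa=O(x^{\epsilon/2}(x')^\epsilon)$, still conormal with strictly positive decay in each variable; since $\Delta_b$ is a $b$-differential operator it maps conormal functions to conormal functions with the same orders of vanishing, so the kernel of $(1+\Delta_b)^{N}x^{-\epsilon/2}A$ is again conormal of size $O(x^{\epsilon/2}(x')^\epsilon)$, dominated by finitely many conormal seminorms of $\kappa$ of order at most $2N$. As $\int_0^1 x^{\epsilon-1}\,dx<\infty$, this kernel lies in $L^2(S\times_b S,\,d\mu_b\otimes d\mu_b)$, i.e. the second factor is Hilbert--Schmidt on $L^2_b(S)$ with norm $\le C\,p_{2N}(A)$. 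From $\|XY\|_1\le\|X\|_2\,\|Y\|_2$ we obtain $\|A\|_1\le C'\,p_{2N}(A)$, as required.

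The main obstacle is precisely the control of the factor $x^{\epsilon/2}(1+\Delta_b)^{-N}$: one must check that the single weight $x^{\epsilon/2}$ on the left, combined with the exponential decay of the resolvent kernel along the cylindrical end supplied by the spectral gap $1+\Delta_b\ge 1$, is enough to overcome the divergence caused by the continuous spectrum. If preferred, one can bypass the explicit factorization by quoting the standard fact that a residual (order $-\infty$) $b$-operator whose kernel vanishes to positive order at every boundary face is trace class on $L^2_b(S)$, with trace norm continuous in the natural seminorms; the argument above is one way of proving it. The remaining reduction from the slice to $\mathcal{A}^\infty_G(M)$ is, word for word, the argument of Lemma~\ref{key-lemma-closed}, and presents no additional difficulty.
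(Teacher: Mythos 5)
Your proposal is correct and takes essentially the same route as the paper's proof: you reduce to a trace-norm estimate on the slice exactly as in Lemma~\ref{key-lemma-closed}, and then factor a residual $b$-kernel through a power of the resolvent with a boundary weight $\rho^{\epsilon/2}$ absorbed by the conormal decay. The paper writes the factorization as $A=((1+\Delta)^{-M}\rho^{\epsilon/2})(\rho^{-\epsilon/2}(1+\Delta)^{M}A)$ and bounds via $\|X\|_1\|Y\|_2$ rather than $\|X\|_2\|Y\|_2$, but this, like your placement of the weight on the other side of the resolvent, is a cosmetic variant; one small inaccuracy is that $\Ker I$ is $\rho_{{\rm ff}}{}^b\Psi^{-\infty,\epsilon}(S)+\Psi^{-\infty,\epsilon}(S)$ on the slice, not literally $\Psi^{-\infty,\epsilon}(S)$, though your argument applies verbatim to both summands.
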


\begin{lemma}\label{key-lemma-b}
The map
$\Phi\to ||\Phi(\cdot)||_b$ defines a continuous application 
${}^b \mathcal{A}_{G}^\infty (M)\to H^\infty_L (G)$. 
\end{lemma}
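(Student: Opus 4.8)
The plan is to reduce Lemma \ref{key-lemma-b} to Lemma \ref{key-lemma-residual} (the residual case) together with the $b$-calculus structure of the operators in ${}^b\mathcal A^\infty_G(M)$, exactly mirroring how Lemma \ref{key-lemma-closed} was proved in the closed case. First I would unwind the definitions: an element $\Phi\in{}^b\mathcal A^\infty_G(M)$ is, via the slice theorem, a $K\times K$-invariant map $G\to {}^b\Psi^{-\infty,\delta}(S)+\Psi^{-\infty,\delta}(S)$ lying in $(H^\infty_L(G)\hat\otimes({}^b\Psi^{-\infty,\delta}(S)+\Psi^{-\infty,\delta}(S)))^{K\times K}$, and the norm $\|\cdot\|_b$ on the target was introduced in Definition \ref{def:triple} as a sum of five terms built from $\chi P$, $\phi[\mathcal V,P]$, $[\mathcal V,P]$, $[\phi,P]$ and $P$ itself. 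So what must be shown is that the scalar-valued map $P\mapsto\|P\|_b$ is a continuous seminorm on ${}^b\Psi^{-\infty,\delta}(S)+\Psi^{-\infty,\delta}(S)$ with respect to its natural Fr\'echet structure; once that is known, composing with $\Phi$ and invoking the same completed-tensor-product argument as in the proof of Proposition \ref{prop:alternative-description} (taking the infimum over representations $\Phi=\sum_i f_i\otimes a_i$ and using $\|f_i(1+L)^\ell\|_{L^2}\|a_i\|_b$) gives that $g\mapsto\|\Phi(g)\|_b$ lies in $H^\infty_L(G)$ with continuous dependence on $\Phi$.

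The core analytic point, then, is the continuity of each of the five constituent seminorms. For $\|P\|$ and $\|[\phi,P]\|$ (operator norms) this is immediate, since both $P\mapsto P$ and $P\mapsto[\phi,P]=\phi P-P\phi$ are continuous from ${}^b\Psi^{-\infty,\delta}(S)+\Psi^{-\infty,\delta}(S)$ to $\mathcal B(L^2_b(S))$ — the calculus with bounds maps continuously to bounded operators, and multiplication by the bounded function $\phi$... wait, $\phi$ is unbounded on the cylindrical end, so here one uses instead that $[\phi,\mathcal V]$-type commutator estimates in the $b$-calculus control $[\phi,P]$; this is a standard feature of the conormal spaces $\mathcal A^\delta$, exploited already in the Lesch--Moscovici--Pflaum formula \eqref{LMP}. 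For the three trace-norm terms $\|\chi P\|_1$, $\|\phi[\mathcal V,P]\|_1$, $\|[\mathcal V,P]\|_1$, the operators $\chi P$, $\phi[\mathcal V,P]$ and $[\mathcal V,P]$ are each trace-class (this is precisely the content of the Lesch--Moscovici--Pflaum observation, since $\chi$ has compact support in the interior and the commutator with $\mathcal V$ kills the leading behaviour at the front face), and one bounds their trace norms by Fr\'echet seminorms of $P$ using the same $(1+\Delta)^{-M}$ trick as in Lemma \ref{key-lemma-closed}: write the relevant kernel as $(1+\Delta)^{-M}\circ((1+\Delta)^M(\cdot))$, bound $\|(1+\Delta)^{-M}\|_1$ by a constant and the second factor by a Hilbert--Schmidt norm, which in turn is dominated by a $b$-Sobolev / conormal seminorm of $P$. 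The derivatives $\mathcal V$ applied to $P$ only raise the order within ${\rm Diff}^*_b$, and conormality of the kernels (smoothness in the interior, controlled decay at the boundary faces) guarantees these seminorms are finite and continuous in $P$.

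I expect the main obstacle to be the precise bookkeeping at the front face: one must check that $\chi P$ really is trace-class and that $[\mathcal V,P]$ (and $\phi[\mathcal V,P]$) decay fast enough to be trace-class, using that $\mathcal V=\partial/\partial t$ on the cylinder and that the indicial-family decay encoded in the $\delta$-bound (here $\delta<\alpha/2$) provides exponential decay of the relevant kernels off the front face. This is exactly the verification that underlies Proposition \ref{fromGMP}, and indeed the cleanest route is to observe that $\|\cdot\|_b$ is built so that Proposition \ref{fromGMP} already guarantees $\,^b\mathrm{Tr}$ of products is controlled by products of $\|\cdot\|_b$; what remains is the "one-variable" statement that $\|\cdot\|_b$ itself is a continuous seminorm, which is genuinely the same kind of elementary estimate as in the closed case, just applied termwise. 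So the proof reduces to: (i) continuity of the five seminorms on ${}^b\Psi^{-\infty,\delta}(S)+\Psi^{-\infty,\delta}(S)$, via the $(1+\Delta)^{-M}$-factorization for the trace-norm terms and direct boundedness for the operator-norm terms; (ii) the tensor-product/infimum argument transporting this to $H^\infty_L(G)$, verbatim from Lemma \ref{key-lemma-closed} and Proposition \ref{prop:alternative-description}. I would write it up by stating that the argument is identical to that of Lemma \ref{key-lemma-residual} except that $\|\cdot\|_1$ is replaced throughout by $\|\cdot\|_b$, and then spelling out the single new point, namely continuity of $P\mapsto\|P\|_b$, which I would dispatch exactly as the continuity of $A\mapsto\|A\|_1$ was dispatched in the proof of Lemma \ref{key-lemma-closed}.
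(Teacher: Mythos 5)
Your overall plan coincides with the paper's: reduce to the continuity of the scalar map $P\mapsto \|P\|_b$ on the slice algebra ${}^b\Psi^{-\infty,\epsilon}(S)+\Psi^{-\infty,\epsilon}(S)$, then transport to $H^\infty_L(G)$ by the completed-tensor-product/infimum argument used for Lemma \ref{key-lemma-closed}, and handle the three trace-norm terms exactly as in Lemma \ref{key-lemma-residual} via the $(1+\Delta)^{-M}$-factorization. That much is faithful to the paper.

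The weak spot is the two operator-norm terms $\|P\|$ and $\|[\phi,P]\|$, which you first declare \emph{immediate} and then half-correct on realizing $\phi$ is unbounded. They are not immediate in the relevant sense: the Fr\'echet seminorms on ${}^b\Psi^{-\infty,\epsilon}(S)$ are conormal seminorms of the kernel on the $b$-stretched product $S\times_b S$, and the kernel of a $b$-smoothing operator does \emph{not} decay at the front face (it is not $L^2$ on $S\times S$), so $\|P\|_{\mathcal{B}(L^2_b)}$ cannot be read off from a Hilbert--Schmidt bound, and continuity of $P\mapsto\|P\|$ genuinely requires a decomposition. The mechanism the paper uses (following \cite{GMPi}) and which your write-up omits is the order-one Taylor expansion of the kernel at the front face: write $P=s(I(P))+R$ with $R\in\Psi^{-\infty,\epsilon}(S)$ residual; estimate the operator norm of the indicial piece via the $\RR^+$-invariant kernel with an exponential weight $e^{\epsilon|t|}$ on the cylinder, and treat $R$ by the Hilbert--Schmidt argument. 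For $\|[\phi,P]\|$ the essential additional input is that $\phi=\log\rho$ in the compact picture (equivalently $\phi=t$ on the cylinder), so the commutator kernel $(\phi(x)-\phi(y))K_P(x,y)$ picks up only a $\log s$ factor in projective coordinates, which is absorbed by the $\rho^\epsilon$ decay at the left and right faces. Your gesture at "commutator estimates in the conormal spaces" is pointing in the right direction, but without the explicit front-face splitting and the identification $\phi=\log\rho$, the estimate for $\|P\|$ in particular is unjustified.
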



\begin{proof}
We begin with Lemma \ref{key-lemma-residual}.
It suffices to show that 
$$\rho_{{\rm ff}}{}^b \Psi^{-\infty,\epsilon} (S)+ \Psi^{-\infty,\epsilon} (S)\xrightarrow{||\cdot||_1} \RR$$
is continuous with respect to the seminorms defining the Fr\'echet topology
of the space on the left hand side. 
  This is proved very much as in the closed case. Write $A_\kappa$ for the operator associated to $\kappa
 \in \rho_{{\rm ff}}{}^b \Psi^{-\infty,\epsilon} (S)+ \Psi^{-\infty,\epsilon} (S)$. Then 
 $$A_\kappa= ((1+\Delta)^{-M} \rho^{\epsilon/2}) (\rho^{-\epsilon/2}(1+\Delta)^{M} A_\kappa)$$
 with $\rho$ a defining function for the boundary.
 The first factor on the right hand side is trace class whereas the second factor is still vanishing on all the boundary hypersurfaces,
 so that, similarly to the closed case,
 $$||A_\kappa||_1 \leq \| (1+\Delta)^{-M} \rho^{\epsilon/2})\|_1 \| (\rho^{-\epsilon/2}(1+\Delta)^{M} A_\kappa)\|_2
\leq C \| (\rho^{-\epsilon/2}(1+\Delta)^{M} A_\kappa)\|_2 $$
The last term is given by the $L^2$ norm of the kernel of the operator over $S\times S$ which is easily bounded
by one of the seminorms of $\rho_{{\rm ff}}{}^b \Psi^{-\infty,\epsilon} (S)+ \Psi^{-\infty,\epsilon} (S)$.

\bigskip
Next we discuss the proof of Lemma \ref{key-lemma-b}.
It suffices to show that
$${}^b \Psi^{-\infty,\epsilon} (S)\xrightarrow{||\cdot||_b} \RR$$
is continuous with respect to the seminorms defining the Fr\'echet topology
of ${}^b \Psi^{-\infty,\epsilon} (S)$, where we recall that 
$|| P ||_b^2 := \| \chi P \|^2_1 + \| \phi [\mathcal{V},P] \|^2_1 + \| [\mathcal{V},P] \|^2_1
 + \| [\phi ,P] \|^2 + \| P \|^2$\\
This follows from the analysis given in \cite{GMPi}, see  {\it End of the proof of Proposition 6.1}, at page 292. We only give a
sketch of the main ideas. 
 For the continuity of the summands $\|P\|^2$ and $ \| [\phi ,P] \|^2$ we use the trick  
explained at page 295 of  \cite{GMPi}, writing $P$ in terms of its indicial operator
and a rest,  which is of course residual, i.e. an element in 
$\Psi^{-\infty,\epsilon}(S)$. Put it differently we take the Taylor series of order 1 at the front face for the kernel of $P$. 
These two terms in the Taylor series are treated separetly. For the first term we recall that 
in \cite{GMPi} we have used freely the observation that 
the bounds appearing in the Fr\'echet seminorms for an element in the $\RR^+$-invariant
calculus with $\epsilon$-bounds on $\overline{N_+ \partial S}$ translate into 
weighted exponential bounds of a translation invariant kernel when we pass 
from $\overline{N_+ \partial S}$ to $\cyl (S)$. For the indicial operator one can estimate its operator $L^2$-norm
in terms  of its kernel and an exponential weight (see the end of Page 295, taking $g=1$ there). The last but summand appearing in the definition of $\|\,\,\|_b$
is treated as in page 296 in  \cite{GMPi}, using the fact that $\phi=t$ along the cylindrical end of the manifold 
with cylindrical ends associated to $S$ or, equivalently, $\phi$ is $\log\rho$, $\rho$ always a boundary
defining function,   
in the compact picture for $S$. The continuity with respect to the trace class norms for the first 3 summands
appearing in the definition of $\|\,\,\|_b$  is treated very much 
as in the proof of  Lemma \ref{key-lemma-residual}. We omit the details. 
\end{proof}

\noindent
Using Lemma \ref{key-lemma-residual} and proceeding exactly as in the closed case we can prove the following:
\begin{proposition}\label{tau-extends}
Let $G$ have a finite number of connected components. Assume property RD for $G$ and let
$\varphi\in Z^{k}_{{\rm diff},\lambda} (G)$ be a cocycle of polynomial growth. Then the
 cyclic cocycle $\tau_\varphi$ extends from $ \mathcal{A}^c_G(M)$  to $ \mathcal{A}^\infty_G(M)$.
 \end{proposition}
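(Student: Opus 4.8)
The strategy mirrors the closed-case argument in Theorem \ref{c*-index-th-part-1}, replacing the trace estimate of Lemma \ref{key-lemma-closed} by its $b$-free analogue Lemma \ref{key-lemma-residual} and using the fact (proved in Part 1) that $\tau^G_{|\varphi|}$ extends continuously from $C^\infty_c(G)$ to $H^\infty_L(G)$ whenever $\varphi$ has polynomial growth and $G$ has property RD. The point is that $\mathcal{A}^c_G(M)$ consists of honest smoothing kernels on $M\times M$ vanishing to infinite order at $\partial(M\times M)$ — there is no front face to worry about — so the usual trace $\mathrm{Tr}_S$ appears in $\tau_\varphi$, not the $b$-trace, and the estimates go through verbatim.

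First I would, as in the closed case, take $\Phi_0,\dots,\Phi_k\in\mathcal{A}^c_G(M)$, write $\tau_\varphi(\Phi_0,\dots,\Phi_k)$ as the iterated integral over $G^{k}$ of $\mathrm{Tr}_S$ of the composition $\Phi_0((g_1\cdots g_k)^{-1})\circ\Phi_1(g_1)\circ\cdots\circ\Phi_k(g_k)$ against $\varphi(e,g_1,\dots,g_1\cdots g_k)$, then bring the absolute value under the integral and use $|\mathrm{Tr}_S(K)|\le\|K\|_1$ together with submultiplicativity $\|A\circ B\|_1\le\|A\|_1\|B\|_1$ to bound $|\tau_\varphi(\Phi_0,\dots,\Phi_k)|$ by $\tau^G_{|\varphi|}(\|\Phi_0(\cdot)\|_1,\dots,\|\Phi_k(\cdot)\|_1)$. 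Then I would invoke Proposition 5.6 of Part 1: under RD and polynomial growth there are $C>0$ and $\ell\in\NN$ with $\tau^G_{|\varphi|}(\|\Phi_0(\cdot)\|_1,\dots,\|\Phi_k(\cdot)\|_1)\le C\,\nu_{p+\ell}(\|\Phi_0(\cdot)\|_1)\cdots\nu_{p+\ell}(\|\Phi_k(\cdot)\|_1)$, where $\nu_m$ are the seminorms \eqref{nu-k} of $H^\infty_L(G)$. By Lemma \ref{key-lemma-residual} the assignment $\Phi\mapsto\|\Phi(\cdot)\|_1$ is a continuous map $\mathcal{A}^\infty_G(M)\to H^\infty_L(G)$, so $\nu_{p+\ell}(\|\Phi(\cdot)\|_1)$ is a continuous seminorm on $\mathcal{A}^\infty_G(M)$; hence $|\tau_\varphi(\Phi_0,\dots,\Phi_k)|$ is bounded by a product of continuous seminorms of the $\Phi_i$ on $\mathcal{A}^\infty_G(M)$.

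Finally, since $\mathcal{A}^c_G(M)$ is dense in $\mathcal{A}^\infty_G(M)$ (indeed dense in the completion with respect to each of the norms just used, exactly as in \eqref{inclusion-closed}), the multilinear functional $\tau_\varphi$, being continuous for the $\mathcal{A}^\infty_G(M)$-topology on the dense subalgebra $\mathcal{A}^c_G(M)$, extends uniquely by continuity to $\mathcal{A}^\infty_G(M)$, and the extension is again a cyclic cocycle because $b$ and $B$ are continuous and the cocycle identity passes to the closure. The main obstacle here is essentially bookkeeping rather than a genuine difficulty: one must make sure that all the constants $\ell$ (from RD) and $p$ (from polynomial growth) are independent of the particular kernels, which is exactly what Proposition 5.6 of Part 1 and Lemma \ref{key-lemma-residual} provide; the more delicate $b$-trace version of this argument is deferred to the companion statement for $(\tau^r_\varphi,\sigma_\varphi)$, where Proposition \ref{fromGMP} and Lemma \ref{key-lemma-b} replace the elementary trace estimate used here.
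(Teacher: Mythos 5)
Your proposal reproduces the paper's argument essentially verbatim: the paper proves Proposition \ref{tau-extends} by citing Lemma \ref{key-lemma-residual} (continuity of $\Phi\mapsto\|\Phi(\cdot)\|_1$ from $\mathcal{A}^\infty_G(M)$ to $H^\infty_L(G)$) and then "proceeding exactly as in the closed case," i.e.\ the trace-norm estimate, the reduction to $\tau^G_{|\varphi|}$, Proposition 5.6 of Part 1, the norms $|||\cdot|||_m$, and density --- which is precisely the chain you spell out. Your added remark on the continuity of $b$ and $B$ and the passage of the cocycle identity to the closure is a harmless and correct elaboration of what the paper leaves implicit.
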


\noindent
We now  consider  the extension problem for the regularized cochain $\tau^r_\varphi$.
Given $\Phi\in  {}^b\mathcal{A}^{c}_G(M)$, where we do not write the $\epsilon $ in the notation,
let us  introduce the following norm:
\begin{equation}\label{norm-3}
||| \Phi |||^2_m := \int_G ||\Phi (g)||^2_{b} (1+L(g))^{2m} dg \,.
\end{equation}
The expression $\int_G ||\Phi (g)||^2_{b} (1+L(g))^{2m} dg $ can also be written  for $\Phi\in  {}^b \mathcal{A}^\infty_G (M)$
and we know from the above  Lemma that
if $\Phi\in {}^b \mathcal{A}^\infty_G(M)$ 
then 
for each $m\in\NN$ we have that $|||\Phi |||_m <\infty$.
Consequently  
given $m\in\NN$ 
we have that 
\begin{equation}\label{inclusion-b}
{}^b \mathcal{A}^\infty_G(M) \subset \overline{{}^b \mathcal{A}^c_G(M)}^{|||\cdot|||_m}\,.
\end{equation}
 Let now $\Phi_0,\Phi_1,\dots,\Phi_k\in {}^b  \mathcal{A}^c_G(M)$. We want to estimate $|\tau^r_\varphi (\Phi_0,\dots,\Phi_k)|$. Proceeding as in the closed case, using crucially Proposition \ref{fromGMP} ,
we get 
\begin{equation*}
 |\tau_\varphi^r  (\Phi_0,\dots,\Phi_k)| \leq  \tau_{|\varphi |}^G ( || \Phi_0 (\cdot)||_b,  \cdots  ,||\Phi_{k} (\cdot)||_b)
\end{equation*}
Using Proposition 5.6 in Part 1 we have 

$$\tau_{|\varphi |}^G ( || \Phi_0 (\cdot )||_b,  \cdots  ,||\Phi_{k} (\cdot)||_b)\leq C \nu_{p+\ell} ( || \Phi_0 (\cdot)||_b)
\cdots \nu_{p+\ell} ( || \Phi_{k} (\cdot)||_b)$$
for suitable $p$ and $\ell$.
Now,
$\nu_{p+\ell} ( || \Phi (\cdot)||_b)= ||| \Phi |||_{p+\ell}$.
Consequently  $$ |\tau_\varphi^r (\Phi_0,\dots,\Phi_k)| \leq C ||| \Phi_0 |||_{p+\ell} \cdots  ||| \Phi_k |||_{p+\ell}$$
and since 
$${}^b \mathcal{A}^\infty_G(M) \subset \overline{{}^b \mathcal{A}^c_G(M)}^{|||\cdot|||_{p+\ell}}$$
we conclude, finally, that $\tau_\varphi^r$ extends continuously from $ {}^b\mathcal{A}^{c}_G(M)$ to ${}^b \mathcal{A}^\infty_G(M)$.\\

\noindent
We collect this result in the following 

\begin{proposition}\label{tau-r-extends}
Let $G$ have a finite number of connected components. Assume property RD for $G$ and let
$\varphi\in C^{k}_{{\rm diff},\lambda} (G)$ be a cocycle of polynomial growth. Then the regularized
 cochain $\tau^r_\varphi$ extends to ${}^b \mathcal{A}^\infty_G(M)$.
 \end{proposition}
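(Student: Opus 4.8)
The plan is to mimic, with the appropriate modifications, the argument already used to establish Proposition \ref{tau-extends} and — in the closed case — Theorem \ref{c*-index-th-part-1}, the only new ingredient being Melrose's regularization hidden inside $\tau^r_\varphi$. The key chain of inequalities is already displayed in the discussion preceding the statement, so the task reduces to justifying each link. First I would recall the slice description: an element $\Phi\in {}^b\mathcal{A}^c_G(M)$ is a smooth, compactly supported, $K\times K$-invariant map $G\to{}^b\Psi^{-\infty,\epsilon}(S)$, and by \eqref{b-trace} the regularized trace is computed fibrewise through ${}^b\mathrm{Tr}_S$. Writing out $\tau^r_\varphi(\Phi_0,\dots,\Phi_k)$ in this description gives an integral over $G^{\times k}$ of ${}^b\mathrm{Tr}_S$ of a $(k{+}1)$-fold convolution composition, weighted by $\varphi$. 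Bringing the absolute value inside and applying Proposition \ref{fromGMP} to the operators $P_j:=\Phi_j(g_j)\in {}^b\Psi^{-\infty,\epsilon}(S)$ yields
\[
|\tau^r_\varphi(\Phi_0,\dots,\Phi_k)|\le C\int_{G^{\times k}}\|\Phi_0((g_1\cdots g_k)^{-1})\|_b\|\Phi_1(g_1)\|_b\cdots\|\Phi_k(g_k)\|_b\,|\varphi(e,g_1,\dots,g_1\cdots g_k)|\,dg_1\cdots dg_k
\]
which is exactly $\tau^G_{|\varphi|}(\|\Phi_0(\cdot)\|_b,\dots,\|\Phi_k(\cdot)\|_b)$.

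Next I would invoke Proposition 5.6 of Part 1 (via the RD hypothesis and the polynomial-growth assumption on $\varphi$) to bound this by $C\,\nu_{p+\ell}(\|\Phi_0(\cdot)\|_b)\cdots\nu_{p+\ell}(\|\Phi_k(\cdot)\|_b)$, and then observe that $\nu_{p+\ell}(\|\Phi(\cdot)\|_b)=|||\Phi|||_{p+\ell}$ with $|||\cdot|||_m$ as in \eqref{norm-3}. At this point Lemma \ref{key-lemma-b} is what makes the whole thing work: it guarantees that $g\mapsto\|\Phi(g)\|_b$ lies in $H^\infty_L(G)$ whenever $\Phi\in{}^b\mathcal{A}^\infty_G(M)$, hence $|||\Phi|||_m<\infty$ for all $m$ on the larger algebra. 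Together with the density statement \eqref{inclusion-b}, namely ${}^b\mathcal{A}^\infty_G(M)\subset\overline{{}^b\mathcal{A}^c_G(M)}^{|||\cdot|||_{p+\ell}}$, the estimate $|\tau^r_\varphi(\Phi_0,\dots,\Phi_k)|\le C\,|||\Phi_0|||_{p+\ell}\cdots|||\Phi_k|||_{p+\ell}$ extends the multilinear functional $\tau^r_\varphi$ by continuity from ${}^b\mathcal{A}^c_G(M)$ to ${}^b\mathcal{A}^\infty_G(M)$. This is the content of the Proposition; the companion extension of $\tau_\varphi$ is Proposition \ref{tau-extends}, proved in the same way with $\|\cdot\|_b$ replaced by $\|\cdot\|_1$ and Lemma \ref{key-lemma-residual} in place of Lemma \ref{key-lemma-b}.

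The main obstacle, and the one place where genuine care is needed, is Proposition \ref{fromGMP}: the estimate $|{}^b\mathrm{Tr}(P_0\cdots P_k)|\le C\|P_0\|_b\cdots\|P_k\|_b$ is what replaces the elementary bound $|\mathrm{Tr}(K)|\le\|K\|_1$ from the closed case, and it is precisely this inequality — established in \cite{GMPi} using the Lesch–Moscovici–Pflaum identity \eqref{LMP} that expresses the $b$-trace as a difference of honest traces of trace-class operators — that licenses the passage to the group cocycle $\tau^G_{|\varphi|}$. One should check that the factorization of the $(k{+}1)$-fold composition is compatible with the mixed trace-class/operator norms entering $\|\cdot\|_b$, i.e. that the commutators with $\phi$ and $\mathcal{V}$ distribute across the product in the way the definition \eqref{triple} demands; this is routine multilinear bookkeeping once Proposition \ref{fromGMP} is in hand, and since we are allowed to cite \cite{GMPi} for the hard inequality itself, no new analytic input is required here. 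Since all of the above is in place, I would simply assemble these pieces in the order indicated and conclude.
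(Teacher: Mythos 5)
Your argument is correct and coincides step for step with the paper's own proof: both bound $|\tau^r_\varphi(\Phi_0,\dots,\Phi_k)|$ by $C\,\tau^G_{|\varphi|}(\|\Phi_0(\cdot)\|_b,\dots,\|\Phi_k(\cdot)\|_b)$ via Proposition~\ref{fromGMP}, then by $C\,|||\Phi_0|||_{p+\ell}\cdots|||\Phi_k|||_{p+\ell}$ via Proposition~5.6 of Part~1, and conclude by density through Lemma~\ref{key-lemma-b} and the inclusion~\eqref{inclusion-b}. One small note: once Proposition~\ref{fromGMP} is applied fibrewise with $P_j=\Phi_j(g_j)$ there is no further commutator bookkeeping to do --- that work is entirely subsumed in the cited inequality --- so the caveat at the end of your proposal is harmless over-caution rather than an actual gap.
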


\noindent
Finally, let now $Y$ be any $G$-proper manifold {\it without} boundary.
We shall deal with the extension of the eta cocycle $\sigma_\varphi$, 
from  ${}^b \mathcal{A}_{G,\RR}^c  ({\rm cyl}(Y))$ to  ${}^b \mathcal{A}_{G,\RR}^\infty  ({\rm cyl}(Y))$.

\noindent
Let us recall the definition of $\sigma_\varphi$ on ${}^b \mathcal{A}_{G,\RR}^c  ({\rm cyl}(Y))$:
\begin{align*}
\sigma_\varphi(B_0,\ldots,B_{k+1})
&:=\frac{(-1)^{k+1}}{2\pi}\int_{G^{k+1}}\int_\RR{\rm Tr}\left(\hat{B}_0((g_1\cdots g_{k+1})^{-1},\lambda)\circ
\hat{B}_1(g_1,\lambda)\circ\cdots \circ\hat{B}_k(g_k,\lambda)\circ\frac{\partial\hat{B}_{k+1}(g_{k+1},\lambda)}{\partial \lambda}\right)
\\&\hspace{7cm} \varphi(e,g_1,g_1g_2,\ldots,g_1\cdots g_{k})
d\lambda dg_1\cdots dg_{k+1}\\
\end{align*}
We thus have:

\begin{align*}
| \sigma_\varphi (B_0,\ldots,B_{k+1})|
&\leq \frac{1}{2\pi}\int_{G^{k+1}}\int_\RR \| \left(\hat{B}_0((g_1\cdots g_{k+1})^{-1},\lambda)\circ
\hat{B}_1(g_1,\lambda)\circ\cdots \circ\hat{B}_k(g_k,\lambda)\circ\frac{\partial\hat{B}_{k+1}(g_{k+1},\lambda)}{\partial \lambda}\right)\|_1
\\&\hspace{7cm} \varphi (e,g_1,g_1g_2,\ldots,g_1\cdots g_{k})
d\lambda dg_1\cdots dg_{k+1}\\
\end{align*}
Let 
$$f_0 (h,\lambda):= \|\hat{B}_0(h,\lambda)\|_1\, \;\;
f_j (h,\lambda):= \|\hat{B}_j(h,\lambda)\|_1 (1+L(h))^\ell\, \;\;j\in \{1,\dots,k\}\,,\;\;
f_{k+1}(h,\lambda):= \| \frac{\partial\hat{B}_{k+1}(h,\lambda)}{\partial \lambda}\|_1$$
Interchanging the two integrals, which is possible given the $G$-compact support and the rapid decay in $\lambda$,
and using the polynomial bounds on the group cocycle $\varphi$, we can employ  the Jolissant-Connes-Moscovici fundamental
 observation
and, as in the proof of  Proposition 5.6 in Part 1, find constant $C$, $D$ and $E$ and a positive integer
$p$ such that 
\begin{align*}
| \sigma_\varphi(B_0,\ldots,B_{k+1}) |\leq &  \frac{C}{2\pi}\int_\RR  
|(f_0 (\cdot,\lambda) * f_1 (\cdot,\lambda) * \cdots f_k (\cdot,\lambda) * f_{k+1} (\cdot,\lambda))(e) |d\lambda \\
\leq & \frac{C}{2\pi}\int_\RR  \| (f_0 (\cdot,\lambda) \|_{C^*_r G}  \| (f_1 (\cdot,\lambda) \|_{C^*_r G}\cdots
 \| (f_k (\cdot,\lambda) \|_{C^*_r G}  \| (f_{k+1} (\cdot,\lambda) \|_{C^*_r G}\\
\leq &  \frac{D}{2\pi}\int_\RR  \nu_p ( f_0 (\cdot,\lambda)) \nu_p ( f_1 (\cdot,\lambda))\cdots \nu_p ( f_k (\cdot,\lambda))
\nu_p ( f_{k+1} (\cdot,\lambda))d\lambda\\
\leq &  \frac{E}{2\pi}\int_\RR  \nu_{p+\ell} ( \|\hat{B}_0(\cdot,\lambda)\|_1) \nu_{p+\ell} ( \|\hat{B}_1(\cdot,\lambda)\|_1)
\cdots \nu_{p+\ell} ( \|\hat{B}_k(\cdot,\lambda)\|_1)
\nu_{p+\ell} (\| \partial_\lambda \hat{B}_{k+1}(\cdot,\lambda)\|_1)d\lambda
\end{align*} 
We now  use the generalized H\"older inequality 
and get
\begin{align*}
& \int_\RR  \nu_{p+\ell} ( \|\hat{B}_0(\cdot,\lambda)\|_1) \nu_{p+\ell} ( \|\hat{B}_1(\cdot,\lambda)\|_1)
\cdots \nu_{p+\ell} ( \|\hat{B}_k(\cdot,\lambda)\|_1)
\nu_{p+\ell} (\| \partial_\lambda \hat{B}_{k+1}(\cdot,\lambda)\|_1)d\lambda \leq  \\
& (\int_\RR (\nu_{p+\ell} ( \|\hat{B}_0(\cdot,\lambda)\|_1))^{k+2}d\lambda )^{\frac{1}{k+2}} \cdots 
(\int_\RR (\nu_{p+\ell} ( \|\hat{B}_{k+1}(\cdot,\lambda)\|_1))^{k+2} d\lambda )^{\frac{1}{k+2}}
\end{align*}
Now, given $B\in {}^b \mathcal{A}_{G,\RR}^\infty  ({\rm cyl}(Y))$, $m\in \NN^+$ and $j\in \NN^+$ we can prove
as in the previous cases, see \eqref{inclusion-closed}, \eqref{inclusion-b},
that
$$||| B |||_{j,m} : = \left( \int_\RR \left( \int_G \| \hat{B} (g,\lambda)\|_1 (1+ L (g))^{2m} dg \right)^j d\lambda \right)^{\frac{1}{j}}<+\infty$$
This means that
$${}^b \mathcal{A}_{G,\RR}^\infty  ({\rm cyl}(Y))\subset\overline{ {}^b \mathcal{A}_{G,\RR}^c  ({\rm cyl}(Y))}^{|||\cdot|||_{j,m}}$$
As we have proved that there is a constant $F$ and positive integers $\ell$ and $p$ such that
$$| \sigma_\varphi (B_0,\ldots,B_{k+1})|\leq F ||| B_0 |||_{k+2,p+\ell} \cdots ||| B_{k+2} |||_{k+2,p+\ell}$$
we finally conclude that $\sigma_\varphi $ extends 
to ${}^b \mathcal{A}_{G,\RR}^\infty  ({\rm cyl}(Y))$.

\noindent
Summarizing:

\begin{proposition}\label{sigma-extends}
Let $G$ have a finite number of connected components. Assume property RD for $G$ and let
$\varphi\in C^{k}_{{\rm diff},\lambda} (G)$ be a cocycle of polynomial growth. Let $Y$ be a cocompact
$G$-proper manifold without boundary.
Then the eta 
 cocycle $\sigma_\varphi$ extends continuosly
 from ${}^b \mathcal{A}_{G,\RR}^c  ({\rm cyl}(Y))$
to ${}^b \mathcal{A}_{G,\RR}^\infty  ({\rm cyl}(Y))$.
 \end{proposition}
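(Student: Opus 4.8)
The plan is to mimic, almost verbatim, the extension argument already carried out for $\tau^r_\varphi$ in the proof of Proposition \ref{tau-r-extends}, replacing the regularized $b$-trace norm $\|\cdot\|_b$ by the ordinary trace-class norm $\|\cdot\|_1$ and carrying one extra integration variable $\lambda\in\RR$ along. First I would fix the relevant seminorms on ${}^b \mathcal{A}_{G,\RR}^\infty({\rm cyl}(Y))$: via the Mellin isomorphism \eqref{identification} an element $B$ corresponds to $\hat B\in\mathscr{S}(\RR,\mathcal{A}^\infty_G(Y))$, and for $j,m\in\NN^+$ I set
\[
||| B |||_{j,m}:=\left(\int_\RR\left(\int_G\|\hat B(g,\lambda)\|_1(1+L(g))^{2m}dg\right)^{j}d\lambda\right)^{1/j}\,.
\]
Using Lemma \ref{key-lemma-closed} (continuity of $A\mapsto\|A\|_1$ on $\Psi^{-\infty}(Y)$, equivalently its global-slice form) together with the Schwartz decay in $\lambda$, one checks that $|||B|||_{j,m}<\infty$ for every $B\in {}^b \mathcal{A}_{G,\RR}^\infty({\rm cyl}(Y))$, and hence that ${}^b \mathcal{A}_{G,\RR}^\infty({\rm cyl}(Y))$ is contained in the $|||\cdot|||_{j,m}$-completion of ${}^b \mathcal{A}_{G,\RR}^c({\rm cyl}(Y))$.

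Next I would estimate $|\sigma_\varphi(B_0,\dots,B_{k+1})|$ for compactly supported $B_i$. Bringing the absolute value inside the integrals and using $|\mathrm{Tr}(T)|\le\|T\|_1$ together with submultiplicativity of $\|\cdot\|_1$ under composition, one bounds the integrand pointwise in $\lambda$ by a convolution product of the functions $f_j(\cdot,\lambda)$ defined exactly as in the excerpt ($f_0=\|\hat B_0(\cdot,\lambda)\|_1$, $f_j=\|\hat B_j(\cdot,\lambda)\|_1(1+L)^\ell$ for $1\le j\le k$, $f_{k+1}=\|\partial_\lambda\hat B_{k+1}(\cdot,\lambda)\|_1$), evaluated at $e$. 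Then, for each fixed $\lambda$, the polynomial-growth hypothesis on $\varphi$ and the Connes–Moscovici–Jolissaint estimate from Proposition 5.6 of Part 1 give, with constants independent of $\lambda$,
\[
|(f_0*\cdots*f_{k+1})(e)|\le C\,\nu_{p+\ell}(\|\hat B_0(\cdot,\lambda)\|_1)\cdots\nu_{p+\ell}(\|\hat B_k(\cdot,\lambda)\|_1)\,\nu_{p+\ell}(\|\partial_\lambda\hat B_{k+1}(\cdot,\lambda)\|_1)\,.
\]
Integrating in $\lambda$ and applying the generalized Hölder inequality with exponent $k+2$ converts the integral of the product into a product of the quantities $|||B_i|||_{k+2,p+\ell}$ (treating $\partial_\lambda\hat B_{k+1}$ as an element of the same Schwartz space, which it is).

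The conclusion is then automatic: we obtain a constant $F$ and integers $\ell,p$ with $|\sigma_\varphi(B_0,\dots,B_{k+1})|\le F\,|||B_0|||_{k+2,p+\ell}\cdots|||B_{k+1}|||_{k+2,p+\ell}$, and since ${}^b \mathcal{A}_{G,\RR}^\infty({\rm cyl}(Y))$ lies in the $|||\cdot|||_{k+2,p+\ell}$-closure of ${}^b \mathcal{A}_{G,\RR}^c({\rm cyl}(Y))$, the cochain $\sigma_\varphi$ extends continuously, proving Proposition \ref{sigma-extends}. The one genuinely new point compared with the closed-manifold case is the handling of the extra $\RR$-variable: one must make sure the constants coming from Proposition 5.6 of Part 1 do not depend on $\lambda$, that $\partial_\lambda$ applied to the last factor stays within the Schwartz class used to define the seminorms, and that Fubini is legitimate — all of which follow from $G$-compact support and Schwartz decay in $\lambda$, so I expect the main (mild) obstacle to be bookkeeping rather than substance.
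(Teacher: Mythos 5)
Your proposal is correct and follows the paper's proof essentially line by line: the same pointwise bound $|\mathrm{Tr}|\le\|\cdot\|_1$, the same auxiliary functions $f_j$ with the $(1+L)^\ell$ weight carried only by $f_1,\dots,f_k$, the same appeal to Proposition~5.6 of Part~1 with constants uniform in $\lambda$, the same generalized H\"older inequality with exponent $k+2$, and the same $|||\cdot|||_{j,m}$-completion argument. Your closing remark correctly identifies the only genuinely new feature relative to the $\tau^r_\varphi$ case — the extra $\lambda$-integration and the resulting two-index seminorms — which is precisely what the paper handles via Fubini plus H\"older.
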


\noindent
Going back to the case of manifolds with boundary, we notice  that, by continuity, the extended pair $(\tau^r_\varphi,\sigma_\varphi)$ is
still a relative cocycle for ${}^b \mathcal{A}_{G}^\infty  (M)\xrightarrow{I}
 {}^b \mathcal{A}_{G,\RR}^\infty  (\overline{N_+ \pa M})$.

\subsection{The higher APS index formula}\label{PP-aps}
Having proved the extension property for $\tau_\varphi$ and $(\tau^r_\varphi,\sigma_\varphi)$ we can finally tackle the
higher APS index formula for the $C^*$-indices associated to $D$. We recall our geometric data.
We have a $G$-proper manifold with boundary $M_0$ with associated manifold with cylindrical
ends $M$. All our structures are of product type near the boundary.
We assume $M_0$ to be {\em even dimensional}. We have  a $\ZZ_2$-graded odd Dirac operator 
$D$ with boundary operator $D_\partial$ 
satisfying Assumption \ref{assumption:invertibility}:
$$\text{ there exists a } \;\;\alpha>0\;\; \text{ such that }\;\;
{\rm spec}_{L^2} (D_{\partial})\cap [-\alpha,\alpha]=\emptyset\,.$$
We shall assume that $D$ is defined by a bundle of unitary Clifford modules $E$ endowed with 
a hermitian Clifford connection $\nabla^E$.
 We assume 
that $G$ has finitely many connected components and that satisfies the RD condition. We  choose as an admissible Fr\'echet subalgebra $H^\infty_L (G)\subset
C^*_r G$. We fix a $\delta<\alpha$ so as in our treatment of the Connes-Moscovici projector $V(D)$ and its improved version
$V^b (D)$.

\medskip
\noindent
{\bf We shall omit $\delta$ from the notation.}

\medskip
\noindent
We have proved that the  improved Connes-Moscovici projector $V^b (D)$ defines a smooth $C^*$-index class 
$\Ind_\infty (D)\equiv [ V^b (D)]-[e_1]\in K_0 (\mathcal{A}^\infty_G (M))$
and that the triple $(V(D),e_1,V(tD_{{\rm cyl}}))$ defines a smooth $C^*$-relative index class
$\Ind_\infty (D,D_\partial)\in K_0 ( {}^b \mathcal{A}_{G}^\infty  (M), {}^b \mathcal{A}_{G,\RR}^\infty  (\overline{N_+ \pa M}))$.
These two classes corresponds under the excision isomorphism.

\noindent
We now fix a group cocycle $\varphi$ for $G$ of polynomial growth.
Thanks to result of the previous subsection we can consider
 $\tau_\varphi$, the associated extended cyclic cocycle for the algebra $\mathcal{A}^\infty_G (M)$. 
 
 \begin{definition}\label{indeces-b}
 The higher APS $C^*$-index associated to $\varphi$ and $D$ satisfying the 
 previous assumptions are given by the
 numbers
 $$\Ind_{{\rm APS},\varphi} (D):= (-1)^p \frac{2p !}{p!}\,\left<\Ind_\infty (D),[\tau_\varphi]\right>\,,\quad [\varphi]\in H^{2p}_{{\rm diff}} (G)\,.$$
\end{definition}
The main goal of this subsection is to provide a APS index formula for these higher indices. At this point 
of the paper our
arguments follow rather closely those given  in \cite{mp}, \cite{GMPi}; for this reason  we shall be rather brief.

First we define the higher eta invariant that will enter into the higher APS index formula.

\begin{definition}\label{def:higher-eta}
Let $G$ be as above.
Let 
 $\varphi\in Z^k_{{\rm diff}} (G)$, $k=2p$ be  a  group cocycle of polynomial growth.
 We
 define the higher eta invariant associated to $\varphi$ and the boundary operator
 $D_{\partial}$ as
 \begin{equation}
 \eta_{\varphi} (D_{\partial}):=
2 c_p\,
\left[ \sum_{i=0}^{2p}   \int_0^{\infty}\sigma_\varphi (p_t,\dots,[\dot{p}_t, p_t],\dots,p_t)dt
\right]
 \end{equation}
 with $p_t=V (t D_{{\rm cyl}})$   and $c_p=(-1)^p \frac{2p !}{p!}$. (We shall see momentarily that this integral is indeed convergent.)
  \end{definition}
  

We are now in the position to state the main result of this Section:

\begin{theorem}\label{c*-aps}
Let $G$, $M_0$, $M$, $D$ and $\varphi\in Z^k_{{\rm diff}} (G)$, $k=2p$ as above.
Then the higher eta invariant $ \eta_{\varphi} (D_{\partial})$ is well defined and the following 
higher $C^*$-index formula holds:
\begin{equation}\label{higher-formula}
\Ind_{{\rm APS},\varphi} (D)= \,\int_{M_0} \chi {\rm AS} (M_0)\wedge\omega_\varphi -\frac{1}{2} \eta_{\varphi} (D_\partial)\,.
\end{equation}
where we have normalized the Atiyah-Singer
integrand by multiplying it by $1/(2\pi i)^p$ in degree $\dim M-p$.
\end{theorem}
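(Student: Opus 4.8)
The strategy is to combine the excision identity of Theorem~\ref{theo:excision} with the explicit Getzler rescaling computation already carried out in the closed case (Theorem~\ref{theo:short-time}), now performed on the $b$-manifold and tracking the boundary contribution. First I would use Definition~\ref{indeces-b} together with \eqref{index-b-class} to write
\[
\Ind_{{\rm APS},\varphi}(D) = (-1)^p\tfrac{2p!}{p!}\,\langle [\tau_\varphi], \Ind_\infty(D)\rangle,
\]
and then invoke Theorem~\ref{theo:excision}, which says $\alpha_{{\rm ex}}(\Ind_\infty(D)) = \Ind_\infty(D,D_\partial) = [V_D, e_1, V_{(tD_{\cyl})}]$, to pass to the relative pairing
\[
\langle [\tau_\varphi],\Ind_\infty(D)\rangle = \langle [(\tau^r_\varphi,\sigma_\varphi)], \Ind_\infty(D,D_\partial)\rangle .
\]
This last equality is exactly the ``crucial step'' \eqref{crucial-step}: it holds because $\tau_\varphi = I^\ast(\text{nothing}) $—more precisely, because $(\tau^r_\varphi,\sigma_\varphi)$ is a relative cocycle lifting $\tau_\varphi$ on the ideal $\mathcal{A}^\infty_G(M)$ in the sense of the excision-compatible pairing of relative cyclic cohomology with relative $K$-theory (Proposition~\ref{prop:relative-indeces}, Corollary after it, and the extension results Propositions~\ref{tau-extends}, \ref{tau-r-extends}, \ref{sigma-extends}).

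Next I would write out the relative pairing explicitly in terms of the Connes--Moscovici idempotents. Using the standard formula for the pairing of a relative cyclic cocycle $(\tau^r_\varphi,\sigma_\varphi)$ with a relative $K$-class represented by a triple $(P,Q,q_t)$, one gets
\[
\langle [(\tau^r_\varphi,\sigma_\varphi)], [V_D, e_1, V_{(tD_{\cyl})}]\rangle
= \big\langle \tau^r_\varphi, V(D)\big\rangle_{\mathrm{CM}} - \big\langle \tau^r_\varphi, e_1\big\rangle_{\mathrm{CM}}
- \sum_{i=0}^{2p}\int_1^\infty \sigma_\varphi\big(q_t,\dots,[\dot q_t,q_t],\dots,q_t\big)\,dt ,
\]
with $q_t = V(tD_{\cyl})$, where $\langle\,\cdot\,,\,\cdot\,\rangle_{\mathrm{CM}}$ denotes the usual cyclic pairing with an idempotent $\mathrm{tr}(P(dP)^{2p})$-style expression. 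The convergence of the transgression integral as $t\to\infty$ follows from the $L^2$-invertibility of $D_\partial$ (hence of $D_{\cyl}$) together with the decay of the heat kernel entries of $V(tD_{\cyl})$, exactly as in \cite{mp,GMPi}; this simultaneously shows that $\eta_\varphi(D_\partial)$ of Definition~\ref{def:higher-eta} is well defined, after absorbing the reparametrization $t\in[1,\infty)$ vs.\ $t\in(0,\infty)$ and the constant $2c_p$.

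Then I would run the Getzler rescaling argument on the $b$-term $\langle \tau^r_\varphi, V(tD)\rangle_{\mathrm{CM}}$ as $t\downarrow 0^+$. By homotopy invariance of the pairing this quantity is independent of $t$, so it equals its limit as $t\downarrow 0$. On the $b$-manifold $M$, Melrose's $b$-trace replaces the ordinary trace, so the short-time analysis produces an \emph{interior} term which by the same local computation as in Theorem~\ref{theo:short-time} (and \cite{cm}) yields
\[
\lim_{t\downarrow 0}\big\langle \tau^r_\varphi, V(tD)\big\rangle_{\mathrm{CM}}
= \frac{p!}{2p!}\frac{(-1)^p}{(2\pi i)^p}\int_{M} \chi\,\widehat{A}(M)\wedge \mathrm{Ch}'(E)\wedge \omega_\varphi
= \frac{p!}{2p!}\int_{M_0}\chi\,{\rm AS}(M_0)\wedge\omega_\varphi ,
\]
using the product structure near the boundary to identify the $b$-integral over $M$ with the ordinary integral over $M_0$ and the normalization of ${\rm AS}$ stated in the theorem. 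Multiplying through by $(-1)^p\frac{2p!}{p!}$ converts the prefactor to $1$, and the transgression sum, multiplied by the same constant and with the sign from the relative-pairing formula, is precisely $-\tfrac12\eta_\varphi(D_\partial)$. Collecting the three pieces gives \eqref{higher-formula}.

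\textbf{Main obstacle.} The genuinely delicate point is not the local index computation—which is a line-by-line transcription of \cite{cm} in the invariant $b$-setting, already packaged in Theorem~\ref{theo:short-time}—but rather controlling the $b$-trace defect terms uniformly in $t$ so that the transgression integral assembles into exactly the eta cocycle expression, and verifying that the boundary of the rescaling (the ``$b$-defect'' produced by Lemma~\ref{btrace}) matches $\sigma_\varphi$ applied to the boundary family. Concretely, one must show that the extra term in $b\tau^r_\varphi = \tau^r_{\delta\varphi} + I^\ast\sigma_\varphi$ (the Proposition preceding the Corollary in \S\ref{sect:relative-cocycles}) interacts correctly with the Duhamel/transgression expansion of $\tfrac{d}{dt}\langle\tau^r_\varphi, V(tD)\rangle_{\mathrm{CM}}$, and that the resulting boundary integral over $t\in(0,\infty)$ converges at $t=0$ as well (large-$t$ convergence having been handled above). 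This is where the extension estimates of Propositions~\ref{tau-r-extends} and \ref{sigma-extends}, and the $b$-trace commutator formula, do the real work; I expect this bookkeeping to occupy the bulk of the proof, with everything else being a citation to \cite{mp,GMPi} or to the closed-case results of Section~\ref{sect:closed-case}.
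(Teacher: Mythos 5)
Your plan has the correct overall architecture and agrees with the paper's proof in its essentials: pass to the relative pairing via excision, expand the relative pairing formula into an interior $b$-trace term plus a transgression integral, and apply Getzler rescaling to the interior term in the small-parameter limit. However, there is a genuine logical slip in the middle of your argument that, taken at face value, would produce the wrong formula.

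You write: ``Then I would run the Getzler rescaling argument on the $b$-term $\langle\tau^r_\varphi, V(tD)\rangle_{\mathrm{CM}}$ as $t\downarrow 0^+$. By homotopy invariance of the pairing this quantity is independent of $t$, so it equals its limit as $t\downarrow 0$.'' This is false: $\tau^r_\varphi$ is only a cyclic \emph{cochain} on ${}^b\mathcal{A}^\infty_G(M)$ (its coboundary is $I^*\sigma_\varphi$, not zero), so the scalar $\tau^r_\varphi(V(tD))$ is \emph{not} $t$-invariant. Indeed, if it were, the transgression integral would vanish by the usual Chern--Simons argument, and no eta term could appear. The paper avoids this by keeping $D$ fixed in the relative pairing identity, then replacing $D$ by $uD$ throughout: the relative pairing (a pairing of a genuine relative cocycle with a relative $K$-class) is $u$-independent, while the interior term becomes $\tau^r_\varphi(V(uD))$ and the transgression interval shifts to $[u,\infty)$. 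Taking $u\downarrow 0$ then computes the interior limit by rescaling, and the fact that the absolute index $\Ind_{{\rm APS},\varphi}(D)$ is a fixed number is what forces the transgression limit $\lim_{u\downarrow 0}\int_u^\infty$ to exist, which is simultaneously the definition and the well-definedness of $\eta_\varphi(D_\partial)$. Related to this, your claim that large-$t$ convergence ``simultaneously shows that $\eta_\varphi(D_\partial)$ is well defined'' is premature: well-definedness requires convergence of $\int_0^\infty$, and the behaviour near $t=0$ is precisely what the rescaling argument controls, a point you do acknowledge correctly in your final paragraph. So the right idea is present in your proposal, but the ``homotopy invariance'' sentence must be replaced by the $u$-rescaling of the entire relative pairing identity; otherwise the argument as written does not close.
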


\begin{proof}
We only sketch the main steps. First, using the fact that $\tau^r_\varphi |_{\mathcal{A}^\infty_G (M)} = \tau_\varphi$
and  that $\Ind_\infty (D)$ and $\Ind_\infty (D,D_\partial)$ correspond under the excision
isomorphism, one proves the following crucial equality

\begin{equation*}
  \langle \Ind_\infty (D), [\tau_{\varphi}] \rangle
=\langle [V(D), e_1, V(t D_{{\rm cyl}})], [(\tau^r_{\varphi},\sigma_{\varphi})] \rangle
\end{equation*}
On the left hand side we have the number we want to compute, that is $\Ind_{{\rm APS},\varphi} (D)$
up to the constant $(-1)^p \frac{2p !}{p!}$.
Writing down explicitly the relative pairing between relative K-theory and relative cyclic cohomology
we arrive at the equality
$$   
\left[ \sum_{i=0}^{2p}   \int_1^{\infty}\sigma_\varphi (p_t,\dots,[\dot{p}_t, p_t],\dots,p_t)dt
\right]
=  -  \langle \Ind_\infty (D), [\tau_{\varphi}] \rangle + 
\tau^r_\varphi (V(D))$$
with $p_t=V (t D_{{\rm cyl}})$ and 
with the convergence at infinity of the integral on the left hand side 
following from the
fact that the relative pairing is well defined or, alternatively, from the exponential decay
of the heat kernel of the $L^2$-invertible operator $D_{{\rm cyl}}$.
Replace now $D$ by $uD$ with $u>0$. Then after a simple change of variables we obtain:
$$ 
\left[ \sum_{i=0}^{2p}   \int_u^{\infty}\sigma_\varphi (p_t,\dots,[\dot{p}_t, p_t],\dots,p_t)dt
\right]
=  -  \langle \Ind_\infty (uD), [\tau_{\varphi}] \rangle + 
\tau^r_\varphi (V(uD))$$
which is of course equivalent to 
\begin{equation}\label{equality-key} c_p
\left[ \sum_{i=0}^{2p}   \int_u^{\infty}\sigma_\varphi (p_t,\dots,[\dot{p}_t, p_t],\dots,p_t)dt
\right]
=  -  \Ind_{{\rm APS},\varphi} (D) + 
 \,c_p \tau^r_\varphi (V(uD))\end{equation}
 with $c_p=(-1)^p \frac{2p !}{p!}$.
 We now take the limit as $u\downarrow 0$. By a well established general
 principle we can still apply Getzler rescaling to the $b$-supertrace of the heat kernel, see
 \cite[Chapter 8]{Melrose}, and,
 more generally, to the computation of the limit as $u\downarrow 0$ of $\tau^r_\varphi (V(uD))$.
 Thus, by the analogue of Theorem \ref{theo:short-time} in the b-context we have
 $$\lim_{u\downarrow 0} \tau^r_\varphi (V(uD))= c_p^{-1}\int_M \chi {\rm AS}(M) \wedge \omega_\varphi,$$
 where we have normalized the usual Atiyah--Singer integrand by multiplying it by $1/(2\pi i)^p$ in degree $\dim M-p$.
With this, the right hand side is in turn equal to
$$c_p^{-1}\int_{M_0} \chi {\rm AS}(M_0) \wedge \omega_\varphi$$
given that all of our geometric structures are of product type near the boundary. Here we have normalized the Atiyah-Singer
integrand by multiplying it by $1/(2\pi i)^p$ in degree $\dim M-p$.\\
As $\Ind_{{\rm APS},\varphi} (D)$ is a number, we conclude that
$$c_p  \lim_{u\downarrow 0} \left[ \sum_{i=0}^{2p}   \int_u^{\infty}\sigma_\varphi (p_t,\dots,[\dot{p}_t, p_t],\dots,p_t)dt
\right]$$
exists and is equal to $-  \Ind_{{\rm APS},\varphi} (D)+  \int_M \chi {\rm AS}(M) \wedge \omega_\varphi$.
Thus $\eta_\varphi (D_\partial)$ is well defined and 
$$\frac{1}{2}\eta(D_\varphi)=-  \Ind_{{\rm APS},\varphi} (D)+  \int_M \chi {\rm AS}(M) \wedge \omega_\varphi$$
 as required.
\end{proof}

%
%
%
 
We point out that thanks to the results of Part 1 this theorem applies, for example, to $G$-proper
manifolds with $G$ a connected  semisimple Lie group. More generally, we can assume that $G$ has finitely
many connected components, satisfies
property RD and is such that $G/K$ has non-positive sectional curvature.

\begin{remark}
The convergence of the higher eta invariant is established here for a boundary operator,
much as in the original work of Atiyah-Patodi-Singer.
If $Y$ is any closed $G$-proper manifold, not necessarily a boundary,
 then
it should be possible to prove, using Getzler rescaling, that the limit
$$\lim_{u\downarrow 0}
\left[ \sum_{i=0}^{2p}   \int_u^{\infty}\sigma_\varphi (p_t,\dots,[\dot{p}_t, p_t],\dots,p_t)dt
\right]
$$
exists. This would allow to define the higher eta invariant  $\eta_{\varphi} (D_{Y})$
in general, even for non-bounding
$G$-proper manifolds.

\end{remark}

\subsection{Pairing with 0-cocycles}
When we apply the above theorem to the 0-cocycle $\tau$ defined by the trivial 0-cocycle on $G$ we 
certainly obtain
a APS index formula. Using the fact that $$I(\exp (-t D^2),\lambda)=\exp (-t D^2_{\partial}) \exp (-t\lambda^2)$$
 and performing the integral in $\lambda$ in the definition of the eta
1-cocycle,   this formula reads:
$$\left<\Ind_{C^*} (D),\tau\right> = \int_M \chi {\rm AS}(M) -\frac{1}{2} \eta_G (D_\partial)$$
with 
$$\eta_G (D_\partial)= \frac{2}{\sqrt{\pi}}\int_0^\infty \tr_G D_\partial e^{-(tD_\partial)^2}dt\,.$$
See \cite{Melrose} and also \cite{mp}, for more on this particular case.\\
Notice however that  this particular result holds under much more general assumptions than the ones
in Theorem \ref{c*-aps} as we shall now briefly explain. 

\smallskip
\noindent
The pairing of the index class with $\tau$ is equal to the pairing of the Morita equivalent 
$C^*_r(G)$-index class $\Ind_{C^*_r(G)}(D)$ with the canonical trace $\tau_e$ on $C^*G$:
$$ \left<\Ind_{C^*} (D),\tau\right> = \left<\Ind_{C^*G}(D),\tau_e\right>.$$ 
Proceeding as  in \cite{Wang},
one  checks that $\left<\Ind_{C^*G}(D),\tau_e\right> $ is equal to 
the von Neumann G-index of $D^+$. A formula for this von Neumann index can be proved
in the von Neumann framework
by mimicking the proof of Vaillant for Galois coverings of manifolds with cylindrical
ends \cite{Vaillant-master-en} (in turn inspired by Melrose'  proof on manifolds with cylindrical ends). 
Thus, assuming only that $G$ is a Lie group
\footnote{presumably one can assume even less}
but keeping the $L^2$-invertibility of the boundary operator $D_\partial$, we obtain that 
$\left<\Ind_{C^*} (D),\tau\right>$ and $\left<\Ind_{C^*G}(D),\tau_e\right>$ are well defined and that
$$\left<\Ind_{C^*} (D),\tau\right> = \left<\Ind_{C^*G}(D),\tau_e\right>=\Ind_{{\rm vN}} (D^+)= \int_M \chi {\rm AS}(M) -\frac{1}{2} \eta_G (D_\partial)$$
with 
$\eta_G (D_\partial)= \frac{2}{\sqrt{\pi}}\int_0^\infty \tr_G D_\partial e^{-(tD_\partial)^2}dt\,.$\\
This formula is the same as the one appearing in \cite{HWW1}: indeed, the von Neumann analytic index 
$\Ind_{{\rm vN}} (D^+)$ appearing on the left hand side of our formula 
and the  analytic APS-index appearing on the left hand side of the index formula in \cite{HWW1} are equal 
(proof as in the classical 
case); moreover, as already remarked, $ \int_M \chi {\rm AS}(M)= \int_{M_0} \chi {\rm AS}(M_0)$
because of the product type assumption of all our geometric structures. Thus the two formulas are completely
equivalent.

Notice that on Galois coverings of manifolds with cylindrical ends
Vaillant proves  a formula for the $L^2$-von-Neumann index without
the  assumption of invertibility on the boundary operator; this formula is crucial
in establishing a signature formula on Galois coverings of manifolds with boundary. 
We expect Vaillant's $L^2$-von Neumannn index formula   to hold also in the $G$-proper context.

For more on these von Neumann index theorems on manifolds with cylindrical ends, also in the context of measured
foliations,  the reader is referred to  \cite{Vaillant-master-en} \cite{lueck-schick} \cite{antonini-bulletin} \cite{antonini-crelle}. 
Needless to say, the focus of this article is on $C^*$-higher APS index formulae and for these higher indices we 
do need the extra assumptions detailed in the statement of Theorem \ref{c*-aps}.

\subsection{Spectral sections}
In the work of Melrose and Piazza on  families of Dirac operators on manifolds with boundary,
the condition that the boundary family be invertible was lifted at the expense of considering {\it generalized} APS
boundary conditions in the incomplete case or, equivalently,  {\it perturbed} Dirac operators in the case of
manifolds with cylindrical ends.
Generalized APS boundary conditions were defined through the notion of {\it spectral section} for the
boundary family.
The notion of spectral section can in fact be given for any family of Dirac operators on closed compact manifolds 
parametrized by a space $B$ and it is proved
in [MP 1,2] that a spectral section exists for a family $(\eth_b)_{b\in B}$ if and only if the associated 
family index  in the K-theory of $B$ vanishes.
It is also proved in   [MP 1,2] that there is a one-to-one correspondence between spectral sections for a family of Dirac operators $(\eth_b)_{b\in B}$
and smoothing perturbations $(A_b)_{b\in B}$ of this family such that $(\eth_b+A_b)_{b\in B}$ 
is invertible; these special perturbations are called
{\it trivializing} perturbations (they trivialize the vanishing index class of the family). By fiber-cobordism invariance of the index we obtain the existence of
 spectral sections, and therefore
trivializing perturbations, for a boundary family. This then  allows for the definition of an index class for
a family $(D_b)_{b\in B}$ of Dirac operators on manifolds with boundary, either in the form of a smoothly varying
family of generalized APS boundary value problems or, equivalently, as the index class of a family
of perturbed Dirac operators on manifolds with cylindrical ends with invertible boundary family.
This index class
 does depend 
on the choice of the spectral section; a formula relating two different choices of spectral sections can be given.

The whole theory was extended to Galois coverings by Leichtnam and Piazza in \cite{LPGAFA} \cite{LP03}, see also  \cite[p. 73]{PSJNCG},
using the notion of noncommutative spectral section (introduced originally by Wu). 
Subsequent improvements of the theory are due to Wahl, see 
\cite{wahl-asian}. Interesting geometric applications are given, for example,
 in \cite{PSJNCG}
and \cite{PS-akt}

Now, in the case of $G$-proper manifolds we do have have a
cobordism invariance of the index. Since the notion of noncommutative spectral section is quite general,
in that it applies to Dirac operators on quite arbitrary Hilbert $C^*$-modules,
we expect that it should be possible to extend the theory of spectral sections to $G$-proper manifolds. As this paper is already quite long we
leave this generalization to future research.

\section{Higher genera on G-proper manifolds with boundary}\label{sect:genera}

We can finally discuss higher genera on $G$-proper manifolds with boundary. 
We assume that $G$ has finitely many connected components, satisfies RD and
is such that $G/K$ is of non-positive sectional curvature. We bound ourselves to the even dimensional
case. The odd dimensional case can be treated by suspension as in Part 1 (but we shall not treat the odd dimensional case
in detail).

\medskip
First we treat the 
higher genera associated to the spin-Dirac operator. We thus assume that $M$, our 
cocompact $G$-proper
manifold with boundary \footnote{We do not use
$M_0$ in this part of the paper}, admits a $G$-invariant spin structure. 
Let $g$ be a $G$-invariant metric 
on $M$ which is, as usual, of product type near the boundary, $g=dx^2 + g_{\partial}$. 
We obtain a well-defined
spin-Dirac operator $D_g$.
 We assume 
additionally that $g_{\partial}$ is of positive scalar curvature. It then follows from  Lichnerowicz '
formula that $D_\partial$ is $L^2$-invertible. All the hypothesis of our $C^*$-higher APS index theorem
are thus fulfilled and we define the higher $\widehat{A}$-genera associated to $M$ as the numbers
\begin{equation}\label{higher -aroof}
\left\{\left(\int_{M} \chi \widehat{A}(M) \wedge \omega_\varphi -\frac{1}{2} \widehat{\eta}_\varphi (D_\partial)\right),\;\;[\varphi]\in H^{2*}_{{\rm diff}} (G)\right\},
\end{equation}
where we have rescaled the eta cocycle as 
\begin{equation}
\label{resceta}
\widehat{\eta}_\varphi (D_\partial):= (2\pi i)^p \eta_\varphi (D_\partial),\quad [\varphi]\in H^{2p}_{{\rm diff}} (G).
\end{equation}
We set 
\begin{equation}\label{higher -aroof-bis}
\widehat{A}(M,\partial M;\varphi):=\int_{M} \chi \widehat{A}(M) \wedge \omega_\varphi -\frac{1}{2} \widehat{\eta}_\varphi (D_\partial)\,,\;\;[\varphi]\in H^{2*}_{{\rm diff}} (G).
\end{equation}

Using the fact that  $\widehat{\eta}_\varphi (D_\partial)$
changes sign if we change the orientation of the boundary, we obtain at once 
the following additivity result: assume that
$X$ is a co-compact $G$-proper manifold without boundary and that
\begin{equation}\label{union}X=M_+\cup_H M_-
\end{equation}
with 
$M_\pm$
cocompact $G$ manifolds with boundary and 
$$\partial M_+=H=-
\partial M_-\,.$$ Assume that an equivariant metric
metric has been fixed on $X$ and that this metric is of product-type near 
$H$, inducing equivariant metrics  
on $M_\pm$ that are of product-type near the boundary.
Then, if the induced metric on $H$ is of psc, we immediately obtain the following additivity
result for higher $\widehat{A}$-genera:
$$\widehat{A}(X;\varphi)=\widehat{A}(M_+,\partial M_+;\varphi)
+ \widehat{A}(M_-,\partial M_-;\varphi).$$
\begin{definition}\label{def:extandable}
We call {\bf extendable}  a $G$-invariant metric of psc on $\partial M$ that admits an extension to a $G$-invariant 
metric of psc on $M_0$
which is  of product type near the boundary.
\end{definition}

\noindent
We have:

\begin{theorem}\label{a-roof-vanishing}
Let $M$ and $g$ as above. 
If  $g_{\partial}$  is extendable then 
$$ \widehat{A}(M,\partial M;\varphi)
=0,\;\;\forall [\varphi]\in H^{2*}_{{\rm diff}} (G)\,.
$$
More generally, if 
 $g_{\partial}$ is isotopic to an extendable metric of psc then 
$$ \widehat{A}(M,\partial M;\varphi)=0,\;\;\forall [\varphi]\in H^{2*}_{{\rm diff}} (G)\,.
$$
\end{theorem}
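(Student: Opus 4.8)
The strategy is to reduce the statement to the higher APS index formula of Theorem \ref{c*-aps} together with the vanishing of the $C^*$-index class under a metric of positive scalar curvature. First I would treat the case where $g_\partial$ is itself extendable. By hypothesis there is a $G$-invariant metric $h$ on $M_0$ of positive scalar curvature, of product type near $\partial M_0$, with $h|_{\partial M_0}=g_\partial$ (up to the obvious identification; more precisely $h$ restricts to $g_\partial$ on the collar). Run the whole construction of the smooth APS index class with respect to $h$ instead of $g$: by Lichnerowicz's formula the spin-Dirac operator $D_h$ on the associated manifold with cylindrical ends is $L^2$-invertible on all of $M$ (not merely on the boundary), so its parametrix has \emph{residual} remainders and the Connes--Moscovici projector $V^b(D_h)$ is a trivial idempotent in the unitalization of $\mathcal{A}^\infty_G(M)$; hence $\Ind_\infty(D_h)=[V^b(D_h)]-[e_1]=0$ in $K_0(\mathcal{A}^\infty_G(M))$. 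Pairing with $[\tau_\varphi]$ gives $\Ind_{{\rm APS},\varphi}(D_h)=0$ for every $[\varphi]\in H^{2*}_{{\rm diff}}(G)$. Now apply the index formula \eqref{higher-formula} with $\hat A$ in place of the normalized Atiyah--Singer integrand (the spin case, so ${\rm AS}(M_0)=\hat A(M_0)$ and ${\rm Ch}'(E)=1$) and with the $(2\pi i)^p$-rescaling of the eta cocycle absorbed into $\widehat\eta_\varphi$: this yields exactly
\[
0=\Ind_{{\rm APS},\varphi}(D_h)=\int_{M_0}\chi\,\hat A(M_0)\wedge\omega_\varphi-\tfrac12\,\widehat\eta_\varphi(D_\partial),
\]
which is the assertion $\widehat A(M,\partial M;\varphi)=0$, provided I first check that the left-hand integral and the eta invariant $\widehat\eta_\varphi(D_\partial)$ are computed from the \emph{same} boundary geometry for both $D_g$ and $D_h$.

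That last point is precisely where the two metrics must be reconciled, and I expect it to be the main technical obstacle. The numbers $\int_{M_0}\chi\,\hat A(M_0)\wedge\omega_\varphi$ and $\widehat\eta_\varphi(D_\partial)$ appearing in \eqref{higher -aroof-bis} are defined using the original metric $g$, while the vanishing argument above produces them from $h$. The interior integral is unproblematic: $\hat A(M_0)$ is a closed form built from the curvature, and since $\hat A(M)$ differs between $g$ and $h$ by an exact form that is compactly supported in the interior (both metrics agree in a collar after isotopy, or at least can be joined through metrics of product type), Stokes' theorem and the closedness of $\omega_\varphi$ give equality of $\int_{M_0}\chi\hat A(M_0)\wedge\omega_\varphi$ for the two metrics. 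For the eta term one must know that $\widehat\eta_\varphi(D_\partial)$ depends only on the boundary metric $g_\partial$ (which is common to $g$ and $h$ by hypothesis) and not on the choice of extension into the interior; this is clear from Definition \ref{def:higher-eta}, since $\eta_\varphi(D_\partial)$ is built entirely from $p_t=V(tD_{{\rm cyl}})$, and $D_{{\rm cyl}}$ on $\RR\times\partial M_0$ is determined by $D_\partial$ alone.

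For the ``more generally'' clause, suppose $g_\partial$ is isotopic through $G$-invariant psc metrics to an extendable psc metric $g'_\partial$. The higher eta invariant is invariant under such an isotopy: the path $t\mapsto g^s_\partial$ of psc metrics gives a path of $L^2$-invertible boundary operators, hence a path of relative index classes all equal by homotopy invariance of K-theory, and pairing with the relative cyclic cocycle $(\tau^r_\varphi,\sigma_\varphi)$ shows $\widehat\eta_\varphi$ is constant along the path. (Alternatively, one invokes the variation formula for higher eta invariants under a path of invertible Dirac operators, which is the boundary-less analogue of the APS formula and can be proved by the same Getzler-rescaling argument as in \cite{mp,GMPi}; the variation is a local integral which vanishes here because the scalar curvature stays positive throughout.) Thus $\widehat\eta_\varphi(D_{\partial})$ for $g_\partial$ equals $\widehat\eta_\varphi(D'_{\partial})$ for $g'_\partial$, and $\widehat A(M,\partial M;\varphi)$ computed with $g_\partial$ equals the same quantity computed with $g'_\partial$ (the interior integral being unaffected), which vanishes by the extendable case already treated. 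This completes the argument; the only genuinely delicate steps are the metric-independence of the two boundary-type quantities and the isotopy invariance of $\widehat\eta_\varphi$, both of which reduce to standard Stokes/transgression arguments once the relative-pairing formalism of Sections \ref{sect:smooth-index-class}--\ref{sect:aps} is in place.
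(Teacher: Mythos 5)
Your argument for the case of an extendable boundary metric is in the right spirit, but the reduction you give for the isotopy case contains a genuine error, and the paper in fact takes a cleaner and more uniform route that avoids both of your delicate steps.

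The key gap is the claim that $\widehat\eta_\varphi(D_\partial)$ is constant along a path $g^s_\partial$ of $G$-invariant psc metrics. This is false. Along a path of $L^2$-invertible Dirac operators the higher eta invariant changes by a local transgression term (roughly, an $\widehat A$-type integral over the cylinder $[0,1]\times\partial M$ carrying the isotopy metric, against $\omega_\varphi$), and nothing about positivity of scalar curvature kills that local term. Your appeal to ``homotopy invariance of the relative index class'' does not help: the relative index class of the cylinder with two boundary components is indeed zero when the cylinder metric is psc, but pairing with $(\tau^r_\varphi,\sigma_\varphi)$ then expresses the \emph{difference} of the two eta invariants as minus twice a nonzero cylinder integral, not as zero. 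Compounding this, you assert that ``the interior integral is unaffected'' when passing from $g_\partial$ to $g'_\partial$; but that integral is computed with an interior extension of the boundary metric, so changing $g_\partial$ to $g'_\partial$ forces a new interior metric and hence a new integral. In fact these two changes exactly cancel, by the index formula itself — but that cancellation is precisely what needs proving, and your argument asserts both pieces vanish separately, which is wrong.

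The paper's proof bypasses both of your delicate steps. It does not compare interior integrals for two different metrics and does not invoke any eta-variation statement. It simply shows $\Ind(D_g)=0$ directly in $K_0(C^*_r G)$ and then feeds this into the APS formula, so that $\widehat A(M,\partial M;\varphi)=\mathrm{const}\cdot\Ind_\varphi(D_g)=0$ at once. To kill the index class it uses that the double of $M$ bounds (bordism invariance gives index zero for the double), glues $M$ to the cylinder $[0,1]\times\partial M$ equipped with the metric coming from the psc isotopy, and then to a second copy of $M$ carrying the extended psc metric $\tilde g$; Bunke's additivity splits the (vanishing) index of the double into the three summands $\Ind(D_g)+\Ind(D_{g^{\rm cyl}})+\Ind(D_{\tilde g})$, and the last two vanish by Lichnerowicz since those pieces are psc. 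This handles the extendable and isotopic cases in one stroke.

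Your treatment of the purely extendable case can be rescued: indeed the two operators $D_g$ and $D_h$ have the same boundary operator $D_\partial$ and are connected by a path of metrics of product type with the same boundary restriction, so the $C^*$-index class is the same for both and your Lichnerowicz argument already gives $\Ind(D_g)=\Ind(D_h)=0$; you then do not need the Stokes comparison of $\int_{M_0}\chi\widehat A\wedge\omega_\varphi$ for $g$ versus $h$ at all. But the eta-invariance under psc isotopy cannot be repaired in the form you state it, so the ``more generally'' clause of the theorem needs the bordism/additivity argument of the paper or an equivalent.
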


\begin{proof}
We prove directly the last, more general statement.
By our higher APS index formula it suffices to show that $\Ind (D_g)=0$ in $K_0 (C^*_r G)$. Recall that the double
of $M$ is a spin-boundary of a $G$-proper manifold with boundary.
Set $g_{\partial}:=g^0$. By hypothesis, there exists a path of metrics $g^t$ on $\partial M$ that are 
all $G$-invariant and of psc. Moreover $g^1$ extends to a metric $\tilde{g}$ on $M$ that is $G$-invariant, of product type
near the boundary and of psc. Consider  the cylinder $[0,1]\times \partial M$ with the metric $g^{{\rm cyl}}$ induced by
$\{g^t\}$. We can arrange that $g^{{\rm cyl}}$ is of product-type near the boundary.
We consider the union of $M$ with the cylinder $[0,1]\times \partial M$ and then we
glue an extra copy of $M$, obtaing a $G$-proper manifold diffeomorphic to the double of 
$M$.
By Bunke's additivity formula \cite{Bunke} (clearly true also in the $G$-proper context) we have that
$$0=\Ind (D_g)+\Ind (D_{g^{{\rm cyl}}}) + \Ind (D_{\tilde{g}})$$
with the first equality following from the bordism invariance of the index.
Since the last two summands on the right hand side are equal to 0 we conclude that also $\Ind (D_g)=0$, as required.
\end{proof}

\begin{corollary} Assume $g_{\partial}$ to be of psc. Then the higher genera 
$ \{\widehat{A}(M,\partial M;\varphi),\;\;[\varphi]\in H^{2*}_{{\rm diff}} (G)\}$
are obstructions to the existence of an isotopy from $g_\partial$ to an extendable metric.
\end{corollary}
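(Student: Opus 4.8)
The corollary is essentially a logical restatement of Theorem~\ref{a-roof-vanishing}, so the plan is to extract the contrapositive and package it as an obstruction statement. First I would fix the setup: we are given a $G$-invariant metric $g_\partial$ of positive scalar curvature on $\partial M$, and we wish to decide whether it is isotopic (through $G$-invariant psc metrics on $\partial M$) to an extendable one in the sense of Definition~\ref{def:extandable}. The quantities to watch are the higher $\widehat{A}$-genera $\widehat{A}(M,\partial M;\varphi)$ defined in \eqref{higher -aroof-bis}, which make sense precisely because $g_\partial$ being psc forces $D_\partial$ to be $L^2$-invertible by Lichnerowicz, so Assumption~\ref{assumption:invertibility} holds and Theorem~\ref{c*-aps} applies to the spin-Dirac operator $D_g$ for any $G$-invariant extension $g$ of $g_\partial$ that is of product type near the boundary.

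The key step is simply to invoke Theorem~\ref{a-roof-vanishing}: if $g_\partial$ is isotopic to an extendable metric of psc, then $\widehat{A}(M,\partial M;\varphi)=0$ for every $[\varphi]\in H^{2*}_{\mathrm{diff}}(G)$. Taking the contrapositive, if there exists some $[\varphi]\in H^{2*}_{\mathrm{diff}}(G)$ with $\widehat{A}(M,\partial M;\varphi)\neq 0$, then no such isotopy can exist. That is exactly the assertion that the collection $\{\widehat{A}(M,\partial M;\varphi)\}$ is an obstruction. One small point I would make explicit: the number $\widehat{A}(M,\partial M;\varphi)$ as a function of the geometric data depends, a priori, on the chosen extension $g$ of $g_\partial$; but by the higher APS index formula \eqref{higher-formula} it equals $(-1)^p\frac{2p!}{p!}\langle \Ind_\infty(D_g),[\tau_\varphi]\rangle$ up to the normalization $\widehat{\eta}$, and the $C^*$-index class $\Ind(D_g)\in K_0(C^*_rG)$ depends only on the boundary operator $D_\partial$ (hence only on $g_\partial$) through the APS construction of Section~\ref{sect:b-c*-index}, together with the fact that the interior integral $\int_M\chi\widehat A(M)\wedge\omega_\varphi$ only enters via the product structure near $\partial M$. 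So $\widehat{A}(M,\partial M;\varphi)$ is genuinely an invariant of $(M,\partial M,g_\partial)$ up to the relevant notion of equivalence, which is what makes the word ``obstruction'' meaningful.

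Concretely I would write: suppose, for contradiction, that $g_\partial$ is isotopic through $G$-invariant psc metrics to an extendable metric. Then Theorem~\ref{a-roof-vanishing} gives $\widehat A(M,\partial M;\varphi)=0$ for all $[\varphi]$. Hence if even one of these higher genera is nonzero, no such isotopy exists; equivalently, the nonvanishing of some $\widehat A(M,\partial M;\varphi)$ obstructs the existence of an isotopy from $g_\partial$ to an extendable metric. The main (and only) subtlety is the well-definedness remark above — making sure the reader sees that $\widehat A(M,\partial M;\varphi)$ is attached to $g_\partial$ and not to an arbitrary choice of filling metric — but this is immediate from the index-theoretic description and the bordism invariance already used in the proof of Theorem~\ref{a-roof-vanishing}. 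No new analysis is required; the corollary is a formal consequence of the theorem it follows.
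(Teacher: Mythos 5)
Your proposal is correct and takes the same route as the paper: the corollary is nothing but the contrapositive of Theorem \ref{a-roof-vanishing}, which is exactly what you state. The well-definedness aside is reasonable in spirit but slightly misphrased — the class $\Ind(D_g)$ depends on all of $(M,\text{spin structure},g_\partial)$, not on $g_\partial$ alone; the correct point is that it is independent of the choice of product-type interior extension of $g_\partial$ — but this does not affect the logical core, which is just Theorem \ref{a-roof-vanishing} read in contrapositive form.
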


\bigskip
Next we deal with higher signatures on $G$-proper manifolds with boundary. As for the case of higher
$\widehat{A}$-genera, we bound ourselves to the case of even dimensional manifolds.
In what follows, until the end of this subsection, we  denote by $D$ the signature operator associated
to a $G$-invariant riemannian metric $g$ on an orientable $G$-proper manifolds with boundary $M$.
As usual we assume $g$ to be of product type near the boundary and denote by $g_{\partial}$ the induced
boundary metric.
We denote by $D_\partial$ the associated boundary operator. Depending on conventions, this is by definition
the odd signature operator of the boundary (conventions differ slightly but in a inconsequential way).
Similarly to the case of Galois coverings of compact manifolds with boundary, the situation for the signature
operator is complicated by the fact that it is too restrictive to assume that the signature operator on the boundary  is
$L^2$-invertible. The case of Galois coverings was treated successfully in \cite{LLP}, see \cite{LPFOURIER} for a survey, with simplifications
in \cite{wahl-product} \cite{Wahl_higher_rho} (in turn based crucially on \cite{LPAGAG}).
We shall build on these results. In order to keep the paper to a  reasonable size, we have decided
to simply  sketch carefully a program of work, leaving the details to future work.

\medskip
\noindent
Let $\dim M=2k$. 
Consider the following definitions, adapted from \cite{LPAGAG} and  \cite{wahl-product}.  Assume that 
there exists an orthogonal  $G$-invariant decomposition 
$$\Omega_{L^2} (\partial M)= V_{\partial M} \oplus W_{\partial M}$$
with the property that $D_\partial$  and the Hodge involution $\tau_{\partial M}$ are diagonal with respect to this decomposition, $D_\partial$ restricted to $V_{\partial M}$ admits a bounded $G$-equivariant inverse, 
and there exists a bounded
$G$-equivariant operator 
$\mathcal{I}$ on $\Omega_{L^2} (\partial M)$ which vanishes on $V_{\partial M}$, is an involution on 
$W_{\partial M}$ and anticommutes there with (the restrictions of) $\tau_{\partial M}$ and $D_\partial$.\\
The operator $A^\mathcal{I}_{\partial}:=i \mathcal{I}\tau_{\partial}$ is a $G$ equivariant bounded operator with the property that 
$D_\partial +A^\mathcal{I}_{\partial}$ is invertible. We can lift this perturbation 
to a perturbation $A^\mathcal{I}$ of the signature operator on the manifold with cylindrical ends 
associated to $M$ which, by construction, has an invertible boundary operator
and it is therefore $C^*$-Fredholm.

\medskip
\noindent
We  choose $V_{\partial M}=\overline{d\Omega^{k-1}_{L^2} (\partial M)}\oplus \overline{d^*\Omega^{k+1}_{L^2} (\partial M)}$ where  $d$ and $d^*$ are really the closure of $d$ and $d^*$ respectively and they are of course defined on their
respective domains \footnote{Being in the complete case, these operators are essentially closed}. We choose $W_{\partial M} = (V_{\partial M})^\perp$ and define $\Omega^{<}$ and $\Omega^{>}$
as the
subspaces of $W_{\partial M} $ made of forms of degree $\leq k-1$ and $\geq k$ respectively.

\medskip
\noindent
We now make the following

\medskip
\noindent
\begin{assumption}\label{assume-middle} The differential form Laplacian of $(\partial M,g_{\partial})$ is $L^2$-invertible in degree $k$.
\end{assumption}


\medskip
\noindent
Under this assumption we will be in the position to proceed as follows:

\begin{itemize}
\item We define $\mathcal{I}$ as the operator equal to $-1$ on $\Omega^{<}$, equal to $1$ on $\Omega^{>}$
and equal to $0$ on $V_{\partial M}$
and prove, thanks to our assumption, that $\Omega_{L^2} (\partial M)= V_{\partial M} \oplus W_{\partial M}$, $D_{\partial}$, $\tau_{\partial M}$
and $\mathcal{I}$ 
satisfy the above requirements.
\item We consider the $C^*_rG$-index class $\Ind (D,\mathcal{I})\in K_0 ((C^*( M,\Lambda^* M)^G)\equiv 
K_0 (C^*_r G)$ associated to the perturbed operator 
$D+ A^\mathcal{I}$. 
\item We prove, following \cite{Wahl_higher_rho} and \cite{fukumoto2}, that  $\Ind (D,\mathcal{I})$ is a $G$-proper-homotopy invariant of the pair $(M,\partial M)$
\item We now assume that $G$ has finitely
many connected components, satisfies
property RD and is such that $G/K$ has non-positive sectional curvature; by applying (a  refinement of)  our higher  APS index formula, we prove that for a $2p$-cocycle $\varphi\in Z^{2p}_{{\rm diff}} (G)$ we have 
$$\Ind_\varphi (D,\mathcal{I})\equiv (-1)^p \frac{2p !}{p!} \left<\tau_\varphi, \Ind (D,\mathcal{I})\right>=  \frac{1}{(2\pi i)^p}\int_{M} \chi L (M) \wedge \omega_\varphi -\frac{1}{2} \eta_\varphi (D_\partial, \mathcal{I})$$
where
$$ \eta_\varphi (D_\partial, \mathcal{I}):=2c_p \left[ \sum_{i=0}^{2p}   \int_0^{\infty}\sigma_\varphi (p_t,\dots,[\dot{p}_t, p_t],\dots,p_t)dt
\right]$$
Here $p_t= V(\widehat{D}_{\cyl} (t))$, with $\widehat{D}_{\cyl} (t)= tD_{\cyl} + g(t)  A^\mathcal{I}_\partial$, $g$ a smooth
function on $\RR$ such that $g(t)=0$
for $t<1/2$, $g(t)=t$ for $t\geq 1$. (The function $g$ is needed so as to ensure the use of Getzler rescaling
and thus convergence for small $t$.) 
\item
Rescaling $\eta_\varphi (D_\partial, \mathcal{I})$ by $(2\pi i)^p$ as in \eqref{resceta}, we obtain $\widehat{\eta}_\varphi (D_\partial, \mathcal{I})$ and define $$\sigma (M,\partial M;\varphi ):= 
\int_{M} \chi L (M) \wedge \omega_\varphi -\frac{1}{2} \widehat{\eta}_\varphi (D_\partial, \mathcal{I})\,,
$$ call them {\it the higher 
signatures} of $(M,\partial M)$ and conclude from all of the above that under the stated hypothesis these are 
$G$-proper homotopy invariants of the pair $(M,\partial M)$.
\end{itemize}

\medskip

Let  $X=M_+\cup_H M_-$ as in \eqref{union} and assume that $H$ satisfies 
assumption \ref{assume-middle}; then we obtain immediately the
following additivity formula for higher signatures
$$\sigma (X;\varphi)=\sigma (M_+,\partial M_+;\varphi)
+ \sigma(M_-,\partial M_-;\varphi).$$
Finally, proceeding as in \cite[Corollary 0.4]{LLP} we can prove the following cut-and-paste 
invariance of the higher signatures of a $G$-proper manifold without boundary:\\
let, as before, $X=M_+\cup_{(H,{\rm id}_H)} M_-$ with $H$ satisfying assumption \ref{assume-middle};
if  $F:H\to H$ is a $\Gamma$-equivariant diffeomorphism, then
for $X_F:= M_+\cup_{(H,F)}  M_-$ we have
$$\sigma (X;\varphi)=\sigma (X_F;\varphi)\,.$$
Notice that already on Galois coverings the {\it higher} signatures are not, in general, cut-and-paste invariants.

\appendix

\section{On heat kernels and the Connes-Moscovici projector}

\subsection{Rapid exponential decay}
\label{red}
Let $M$  be a closed smooth manifold carrying a smooth proper action of a Lie group $G$ with finitely many connected components and with compact quotient. 
As in Subsection \ref{subsect:3algebras}  we choose an invariant complete Riemannian metric denoted by $h$, with associated distance function denoted by $d_{M}(x,y)$ for $x,y\in M$, and volume form $d{\rm vol}(x)$. 
In the following discussion the volume growth properties of the Lie group $G$ will be important: for any left-invariant (pseudo-)metric $d_{G}$ on $G$, denote by $B_{r}$ the closed ball 
of radius $r$ around the unit element. Let $\mu(g)$ be a left-invariant Haar measure on $G$, and write ${\rm Vol}_{G}(r)$ for the volume of $B_{r}$ with respect to this Haar measure. One says that $G$ has 
\begin{itemize}
\item[$i)$] {\em polynomial growth} if there are constants $C_{1}, C_{2}$ such that
${\rm Vol}_{G}(r)\leq C_{1}r^{C_{2}}$,
 
\item[$ii)$] 
{\em exponential growth} if there are constants $C_{1}, C_{2}$ such that
$
{\rm Vol}_{G}(r)\leq C_{1}e^{C_{2}r}.
$
\end{itemize}
It is known, see for example \cite{jenkins}, that Lie groups have at most exponential growth.
For the action of $G$ on $M$, choosing a basepoint $x_{0}\in M$, the function $d_{M}(g_{1}x_{0},g_{2}x_{0})$ on $G\times G$ defines a left invariant pseudo-metric on $G$. 
With this pseudometric, since we are assuming the quotient $M/G$ to be compact, it is easy to see that $G$ being of polynomial or exponential growth implies that $M$ is of polynomial resp. exponential growth.

Recall the following definitions
\begin{definition}
\label{def-alg-kern}
Consider $C^{\infty}(M\times M)^{G}$. For $K\in C^{\infty}(M\times M)^{G}$ we  say that
\begin{itemize}
\item[$i)$] $K$ is exponentially rapidly decreasing if 
\[
\forall q\in\mathbb{N}~\exists\, C_{q}>0~\mbox{\rm such that} \sup_{x,y\in M}\left| e^{qd(x,y)}\nabla^{m}_{x}\nabla^{n}_{y}k(x,y)\right| < C_{q}~\forall m,n\in\mathbb{N},
\]
\item[$ii)$] $K$ is polynomially rapidly decreasing if
\[
\forall q\in\mathbb{N}~\exists\, C_{q}>0~\mbox{\rm such that} \sup_{x,y\in M}\left| (1+d(x,y))^{q}\nabla^{m}_{x}\nabla^{n}_{y}k(x,y)\right| < C_{q}~\forall m,n\in\mathbb{N}.
\]

\end{itemize}
We write 
$\mathcal{A}^{\rm exp}_{G}(M)$ and $\mathcal{A}^{\rm pol}_{G}(M)$ for the set of elements in $C^{\infty}(M\times M)^{G}$ satisfying $i)$ and  $ii)$  above, and call them  ``rapidly exponentially" resp. ``rapidly polynomially''  decreasing  kernels. We shall also refer to elements of $\mathcal{A}^{\rm exp}_{G}(M)$ as ``kernels of rapid exponential decay''. 
\end{definition}

For integral operators with smooth kernels acting on the sections of a vector bundle $E$ we can similarly
define  $\mathcal{A}^{\rm exp}_{G}(M,E)$ and $\mathcal{A}^{\rm pol}_{G}(M,E)$.
We shall often omit the vector bundle $E$ from the notation.
 
The following Proposition is well-known but as we could not find an explicit reference for a proof we provide one for
the benefit of the reader:

\begin{proposition}\label{prop:exp-rap-decay-algebra}
Convolution of kernels give $\mathcal{A}_{G}^{\rm exp}(M)$  the structure of an algebra. If $G$ 
 has polynomial growth then $\mathcal{A}_{G}^{{\rm pol}}(M)$ is an algebra.
\end{proposition}

\begin{proof} 
Let us start by showing that the composition 
\[
(k_{1}*k_{2})(x,y)=\int_{M}k_{1}(x,z)k_{2}(z,y) d {\rm vol}(z)
\]
of two elements $k_{1}$ and $k_{2}$ in $\mathcal{A}^{\rm exp}_{G}$ is defined, i.e., that the integral converges.
This follows from the following
\begin{lemma}
Suppose that $f\in C^{\infty}(M)$ is such that 
\[
\forall q\in\mathbb{N}~\exists~ C_{q}>0~\mbox{such that}~ \left|\sup_{x\in M}f(x)e^{qd_M(x_{0},x)}\right|<C_{q},
\]
for some fixed $x_{0}\in M$. Then $f$ is integrable.
\end{lemma}
\begin{proof}
We start by writing
\[
\int_{M}|f(x)|d{\rm vol}(x)=\lim_{r\to\infty}\int_{B_{r}(x_{0})}|f(x)|d{\rm vol}(x),
\]
where $B_{r}(x_{0})$ denotes the geodesic ball of radius $r>0$ around $x_{0}$. It therefore suffices to  show that the sequence
\[
\alpha(n):=\int_{B_{n}(x_{0})}|f(x)|d{\rm vol}(x)
\]
is Cauchy. For this we use the fact that $M$ has at most exponential growth. When $n>m$, we have
\begin{align*}
|\alpha(n)-\alpha(m)|&=\int_{m\leq d(x,x_{0})\leq n}|f(x)|d{\rm vol}(x)\\
&\leq C \int_{m\leq d(x,x_{0})\leq n}e^{-qd(x,x_{0})}d{\rm vol}(x)\quad\mbox{for all}~q\in\mathbb{N},\\
&\leq C' e^{-qm+np},
\end{align*}
where $C$ and $C'$ are positive constants, and for some $p$ bounding the growth of $M$. But since this inequality holds true for any $q$, we can surely get the right hand side as small
as we want. It follows that the sequence $\{\alpha(n)\}$ is Cauchy and therefore $f$ is integrable.
\end{proof}
This Lemma shows that composition of elements $k_{1}$ and $k_{2}$ in $\mathcal{A}_{G}^{\rm exp}(M)$ produces a new kernel $k_{1}*k_{2}$ which is obviously $G$-invariant. To show that the composition
in fact lies in $\mathcal{A}_{G}^{\rm exp}(M)$, we have to show that it satisfies the exponential estimates defining $\mathcal{A}_{G}^{\rm exp}(M)$. We start by the trivial observation that if $k_{1}*k_{2}$ satisfies the exponential 
estimate for some fixed $Q>0$, it automatically satisfies the exponential estimates for $q<Q$. It suffices therefore to prove  the estimate for large enough $q\in\mathbb{N}$.

Because the $G$-action on $M$ is proper, we can choose a cut-off function: this is a $c\in C_{c}^{\infty}(M,\mathbb{R}_{+})$ satisfying
\begin{equation}
\label{prop-cut-off}
\int_{G}c(g^{-1}x)d\mu(g)=1,\quad\mbox{for all}~x\in M,
\end{equation}
where $\mu$ denotes the Haar measure on $G$. Inserting $1$ in this way in the equation defining the composition $k_{1}*k_{2}$, the Lemma shows us that we can interchange the 
two integrations and obtain
\[
(k_{1}*k_{2})(x,y):=\int_{G}\int_{M}k_{1}(gx,z)c(z)k_{2}(z,gy)d{\rm vol}(z)d\mu(g).
\]
From this we derive
\begin{align*}
\left|e^{qd(x,y)}(k_{1}*k_{2})(x,y)\right|&\leq \int_{G}\int_{M}\left|e^{qd(gx,z)}k_{1}(gx,z)c(z)k_{2}(z,gy)e^{qd(z,gy)}\right|d{\rm vol}(z)d\mu(g)\\
&\leq C\int_{G}e^{-qd(gx,x_{0})}e^{-qd(x_{0},gy)}d\mu(g),\quad \mbox{for some}~x_{0}\in{\rm supp}(c).
\end{align*}
For fixed $x$ and $y$, the functions $d(gx,x_{0})$ and $d(x_{0,}gy)$ define equivalent left-invariant pseudo-metrics on $G$ and since Lie groups have at most exponential growth, we see that
the integral converges for large $q$. This shows that the composition $k_{1}*k_{2}$ satisfies the exponential inequality for $q$ large enough. By the remark above, this implies the 
estimates for smaller, and therefore for all $q\in\mathbb{N}$. Including the derivatives $\nabla_{x}^{m}\nabla_{y}^{n}$ in the estimates, the argument proving these is the same as above.

\end{proof}

\noindent
The following result is also  well-known, but once again, we could not find a detailed proof in the
literature. We give one in the proof of  Proposition  \ref{prop:CM-rapid-appendix} below.

\begin{proposition}
\label{hk}
The heat kernel of a $G$-invariant Dirac  Laplacian, $\exp (- t D^2)$, is an element in  $\mathcal{A}_{G}^{{\rm exp}}(M)$.\\
\end{proposition}

%

%
%


\subsection{The Connes-Moscovici projector}
Recall the Connes-Moscovici parametrix 
\begin{equation}
 Q
:= \frac{I-\exp(-\frac{1}{2} D^- D^+)}{D^- D^+} D^+
\end{equation}
with
$I-Q D^+ = \exp(-\frac{1}{2} D^- D^+)$, $I-D^+ Q_{V} =  \exp(-\frac{1}{2} D^+ D^-)$
and  the
idempotent
\begin{equation}
\label{cm-idempotent}
V=\left( \begin{array}{cc} e^{-D^- D^+} & e^{-\frac{1}{2}D^- D^+}
\left( \frac{I- e^{-D^- D^+}}{D^- D^+} \right) D^-\\
e^{-\frac{1}{2}D^+ D^-}D^+& I- e^{-D^+ D^-}
\end{array} \right)
\end{equation}
The following Proposition, well known to the experts, clarifies in which algebra
this idempotent lives. As we could not find a detailed proof, we supply one below.

\begin{proposition}\label{prop:CM-rapid-appendix}
The Connes-Moscovici idempotent  is an element in  $M_{2\times 2} (\mathcal{A}_G^{{\rm exp}} (M))$
(with the identity adjoined).

\end{proposition}

\begin{proof}
The most problematic entry  in the idempotent \eqref{cm-idempotent} is the one in the right upper corner. This entry has the form $f(D)D^-$, where
\begin{equation}
\label{function}
f(z):=e^{-z^2/2}\frac{(1-e^{-z^2})}{z^2}.
\end{equation}
Using finite propagation speed methods, we can get estimates for the kernel $k_{f(D)}(x,y)$ of the operator $f(D)$. Following \cite[\S 4.1]{taylor}, we define the following class of functions for $W>0$:
\[
\mathcal{S}^{m}_W:=\{f\in C^\infty(\bar{\Omega}_W)~\mbox{even and holomorphic on} ~\Omega_W,~|f^{(k)}(z)|\leq C_k(1+|z|)^{m-k},~\forall k.\},
\]
where $\Omega_W$ is the infinite strip $|{\rm Im}(z)|< W$ in the complex plane.
The main result in {\em loc. cit.} gives that
\[
f\in \mathcal{S}^{m}_W\Longrightarrow f(D)=f(D)^\#+f(D)^b,
\]
with $f(D)^\#$ an order $m$ Pseudodifferential operator whose Schwartz kernel $k^\#_{f(D)}\in\mathcal{D}'(M\times M)$ is supported in 
\begin{equation}
\label{supp}
\{(x,y)\in M\times M,~d(x,y)<1\},
\end{equation}
and with singular support on the diagonal. The operator $f(D)^b$ is smoothing with kernel satisfying
\[
|\nabla_x^l\nabla_y^nk^b_{f(D)}(x,y)|\leq C_j(1+d(x,y))^{-j}e^{-Wd(x,y)},\quad\mbox{for all}~j,l,n.
\]
To prove the Lemma, it therefore suffices to show that $f\in \mathcal{S}^{-\infty}_W:=\bigcap_m\mathcal{S}^{m}_W$ for all $W>0$: if this is the case the 
kernel $k^b_{f(D)}(x,y)$ obviously satisfies the required estimates of Definition \ref{def-alg-kern} $ii)$ to be in $\mathcal{A}_G^{{\rm exp}} (M)$. On the other hand,
since $f\in \mathcal{S}^{-\infty}_W$, the operator $f(D)^\#$ is smoothing as well, so its kernel is a $G$-invariant smooth function on $M\times M$ with support in \eqref{supp}.
Passing to the quotient space $(M\times M)\slash G$, it defines a smooth function with support in the compact subset $d^{-1}([0,1])$ which is therefore bounded. We can therefore 
easily obtain the estimates of Definition \ref{def-alg-kern} $ii)$ for $k^\#_{f(D)}(x,y)$. The estimates for the derivatives of $k^\#_{f(D)}(x,y)$ follow by the same argument.

We are therefore left to show that the complex function \eqref{function} belongs to the class $\mathcal{S}^{-\infty}_W$ defined above. For this we write $f=f_1f_2$ with 
$f_1:=e^{-z^2/2}$ and $f_2(z):=(1-e^{-z^2})/ z^2$. For the function $f_1(z)$ on the strip $\Omega_W$ we clearly have the estimates
\[
|f_1(z)|=|e^{-z^2}|\leq e^{W^2}e^{-{\rm Re}(z)^2}\leq C^{f_1}_m(1+|z|)^m,\quad \mbox{for all}~m\in\mathbb{Z}.
\]
The $k$-the derivative of $f_1(z)$ has the form $P_k(z)e^{-z^2/2}$, where $P_k(z)$ is a polynomial of degree $k$. In a similar way we then get the estimates
\[
|f_1^{(k)}(z)|\leq |P_k(z)|e^{W^2}e^{-{\rm Re}(z)^2}\leq C^{f_1}_{m,k}(1+|z|)^{m-k},\quad \mbox{for all}~m\in\mathbb{Z},
\]
because of the dominating term $e^{-{\rm Re}(z)^2}$. This shows that $f_1\in \mathcal{S}^{-\infty}_W$ for all $W>0$. Remark that this argument shows that the heat kernel
$e^{-tD^2}$ is rapidly exponentially decaying, i.e., belongs to $\mathcal{A}_G^{{\rm exp}} (M)$.

For the function $f=f_1f_2$, with $f_1$ being an element of $\mathcal{S}^{-\infty}_W$ for all $W>0$, it suffices to show that $f_2(z)$, together with all its derivatives is bounded
on $\Omega_W$ for all $W>0$. Indeed, in that case when $|f_2(z)|\leq M_k^{f_2}$, we have
\begin{align*}
|f^{(k)}(z)|&=\left|\sum_{i=0}^k \binom{k}{i}f_1^{(i)}(z)f_2^{(k-i)}(z)\right|\\
&\leq \sum_{i=0}^k \binom{k}{i}|f_1^{(i)}(z)||f_2^{(k-i)}(z)|\\
&\leq \sum_{i=0}^k \binom{k}{i} C^{f_1}_{m-k+i,i}(1+|z|)^{m-k}M^{f_2}_{k-i}=:C'_{m,k}(1+|z|)^{m-k},\quad \mbox{for all}~m\in\mathbb{Z},
\end{align*}
proving the desired inequalities. 

Since $f_2(z)$ and all its derivatives are holomorphic on $\Omega_W$, we only have to check the behaviour as $z$ approaches $\pm\infty$.
For the specific $f_2(z)$ we have above, one easily proves by induction that its $k$'th derivative has the general form
\[
f^{(k)}_2(z)=\frac{a_0-e^{-z^2}(a_0+a_2z^2+\ldots+a_{2k}z^{2k})}{z^{k+2}},
\]
with $a_i\in\mathbb{Z}$. With this formula it is not difficult to see that $\lim_{z\to\pm\infty}f_2^{(k)}(z)=0$, so $f^{(k)}$ is indeed bounded for each $k$.
This completes the proof.
\end{proof}

{\small \bibliographystyle{plain}
\bibliography{rel-coverings}

\begin{thebibliography}{10}

\bibitem{abels}
Herbert Abels.
\newblock Parallelizability of proper actions, global {$K$}-slices and maximal
  compact subgroups.
\newblock {\em Math. Ann.}, 212:1--19, 1974/75.

\bibitem{antonini-bulletin}
Paolo Antonini.
\newblock The {A}tiyah-{P}atodi-{S}inger index formula for measured foliations.
\newblock {\em Bull. Sci. Math.}, 137(2):140--176, 2013.

\bibitem{antonini-crelle}
Paolo Antonini.
\newblock The {A}tiyah-{P}atodi-{S}inger signature formula for measured
  foliations.
\newblock {\em J. Reine Angew. Math.}, 695:217--242, 2014.

\bibitem{APS1}
M.~F. Atiyah, V.~K. Patodi, and I.~M. Singer.
\newblock Spectral asymmetry and {R}iemannian geometry. {I}.
\newblock {\em Math. Proc. Cambridge Philos. Soc.}, 77:43--69, 1975.

\bibitem{Bla}
Bruce Blackadar.
\newblock {\em {$K$}-theory for operator algebras}, volume~5 of {\em
  Mathematical Sciences Research Institute Publications}.
\newblock Cambridge University Press, Cambridge, second edition, 1998.

\bibitem{BNPW}
Jacek Brodzki, Graham~A. Niblo, Roger Plymen, and Nick Wright.
\newblock The local spectrum of the {D}irac operator for the universal cover of
  {${\rm SL}_2(\Bbb{R})$}.
\newblock {\em J. Funct. Anal.}, 270(3):957--975, 2016.

\bibitem{Bunke}
Ulrich Bunke.
\newblock A {$K$}-theoretic relative index theorem and {C}allias-type {D}irac
  operators.
\newblock {\em Math. Ann.}, 303(2):241--279, 1995.

\bibitem{cm-homogeneous}
Alain Connes and Henri Moscovici.
\newblock The {$L^{2}$}-index theorem for homogeneous spaces of {L}ie groups.
\newblock {\em Ann. of Math. (2)}, 115(2):291--330, 1982.

\bibitem{cm}
Alain Connes and Henri Moscovici.
\newblock Cyclic cohomology, the {N}ovikov conjecture and hyperbolic groups.
\newblock {\em Topology}, 29(3):345--388, 1990.

\bibitem{fukumoto2}
Y.~Fukumoto.
\newblock G -homotopy invariance of the analytic signature of proper co-compact
  g-manifolds and equivariant novikov conjecture.
\newblock Preprint {\tt arXiv.1709.05884 }.

\bibitem{Getzler}
Ezra Getzler.
\newblock Pseudodifferential operators on supermanifolds and the
  {A}tiyah-{S}inger index theorem.
\newblock {\em Comm. Math. Phys.}, 92(2):163--178, 1983.

\bibitem{GMPi}
Alexander Gorokhovsky, Hitoshi Moriyoshi, and Paolo Piazza.
\newblock A note on the higher {A}tiyah-{P}atodi-{S}inger index theorem on
  {G}alois coverings.
\newblock {\em J. Noncommut. Geom.}, 10(1):265--306, 2016.

\bibitem{grieser-survey}
Daniel Grieser.
\newblock Basics of the {$b$}-calculus.
\newblock In {\em Approaches to singular analysis ({B}erlin, 1999)}, volume 125
  of {\em Oper. Theory Adv. Appl.}, pages 30--84. Birkh\"auser, Basel, 2001.

\bibitem{guo-mathai-wang-psc}
Hao Guo, Varghese Mathai, and Hang Wang.
\newblock Positive scalar curvature and {P}oincar\'{e} duality for proper
  actions.
\newblock {\em J. Noncommut. Geom.}, 13(4):1381--1433, 2019.

\bibitem{hanke-symmetry}
Bernhard Hanke.
\newblock Positive scalar curvature with symmetry.
\newblock {\em J. Reine Angew. Math.}, 614:73--115, 2008.

\bibitem{hr-book}
Nigel Higson and John Roe.
\newblock {\em Analytic {$K$}-homology}.
\newblock Oxford Mathematical Monographs. Oxford University Press, Oxford,
  2000.
\newblock Oxford Science Publications.

\bibitem{HM}
Peter Hochs and Varghese Mathai.
\newblock Spin-structures and proper group actions.
\newblock {\em Adv. Math.}, 292:1--10, 2016.

\bibitem{HWW1}
Peter Hochs, Bai-Ling Wang, and Hang. Wang.
\newblock An equivariant {A}tiyah-{P}atodi-{S}inger index theorem for proper
  actions {I}: the index formula.
\newblock Preprint {\tt arXiv:1904.11146 }.

\bibitem{HWW2}
Peter. Hochs, Bai-Ling Wang, and Hang. Wang.
\newblock An equivariant {A}tiyah-{P}atodi-{S}inger index theorem for proper
  actions {II}: the {K}-theoretic index.
\newblock Preprint {\tt arXiv:2006.08086 }.

\bibitem{Hochs-Wang-HC}
Peter Hochs and Hang Wang.
\newblock A fixed point formula and {H}arish-{C}handra's character formula.
\newblock {\em Proc. Lond. Math. Soc. (3)}, 116(1):1--32, 2018.

\bibitem{HSTang}
Peter Hochs, Song Yanli, and Xiang. Tang.
\newblock An index theorem for higher orbital integrals.
\newblock Preprint {\tt arXiv:2005.06119 }.

\bibitem{jenkins}
J.~W. Jenkins.
\newblock Growth of connected locally compact groups.
\newblock {\em J. Functional Analysis}, 12:113--127, 1973.

\bibitem{LMN-olo}
Robert Lauter, Bertrand Monthubert, and Victor Nistor.
\newblock Spectral invariance for certain algebras of pseudodifferential
  operators.
\newblock {\em J. Inst. Math. Jussieu}, 4(3):405--442, 2005.

\bibitem{lawson-yau-psc}
H.~Blaine Lawson, Jr. and Shing~Tung Yau.
\newblock Scalar curvature, non-abelian group actions, and the degree of
  symmetry of exotic spheres.
\newblock {\em Comment. Math. Helv.}, 49:232--244, 1974.

\bibitem{LPGAFA}
E.~Leichtnam and P.~Piazza.
\newblock Spectral sections and higher {A}tiyah-{P}atodi-{S}inger index theory
  on {G}alois coverings.
\newblock {\em Geom. Funct. Anal.}, 8(1):17--58, 1998.

\bibitem{LLP}
Eric Leichtnam, John Lott, and Paolo Piazza.
\newblock On the homotopy invariance of higher signatures for manifolds with
  boundary.
\newblock {\em J. Differential Geom.}, 54(3):561--633, 2000.

\bibitem{LPMEMOIRS}
Eric Leichtnam and Paolo Piazza.
\newblock The {$b$}-pseudodifferential calculus on {G}alois coverings and a
  higher {A}tiyah-{P}atodi-{S}inger index theorem.
\newblock {\em M\'em. Soc. Math. Fr. (N.S.)}, (68):iv+121, 1997.

\bibitem{LPAGAG}
Eric Leichtnam and Paolo Piazza.
\newblock A higher {A}tiyah-{P}atodi-{S}inger index theorem for the signature
  operator on {G}alois coverings.
\newblock {\em Ann. Global Anal. Geom.}, 18(2):171--189, 2000.

\bibitem{LP03}
Eric Leichtnam and Paolo Piazza.
\newblock Dirac index classes and the noncommutative spectral flow.
\newblock {\em J. Funct. Anal.}, 200(2):348--400, 2003.

\bibitem{LPFOURIER}
Eric Leichtnam and Paolo Piazza.
\newblock Elliptic operators and higher signatures.
\newblock {\em Ann. Inst. Fourier (Grenoble)}, 54(5):1197--1277, xiii, xix,
  2004.

\bibitem{LPETALE}
Eric Leichtnam and Paolo Piazza.
\newblock \'{E}tale groupoids, eta invariants and index theory.
\newblock {\em J. Reine Angew. Math.}, 587:169--233, 2005.

\bibitem{LMP2}
Matthias Lesch, Henri Moscovici, and Markus~J. Pflaum.
\newblock Connes-{C}hern character for manifolds with boundary and eta
  cochains.
\newblock {\em Mem. Amer. Math. Soc.}, 220(1036):viii+92, 2012.

\bibitem{loya-survey}
Paul Loya.
\newblock Dirac operators, boundary value problems, and the {$b$}-calculus.
\newblock In {\em Spectral geometry of manifolds with boundary and
  decomposition of manifolds}, volume 366 of {\em Contemp. Math.}, pages
  241--280. Amer. Math. Soc., Providence, RI, 2005.

\bibitem{lueck-schick}
Wolfgang L\"{u}ck and Thomas Schick.
\newblock Various {$L^2$}-signatures and a topological {$L^2$}-signature
  theorem.
\newblock In {\em High-dimensional manifold topology}, pages 362--399. World
  Sci. Publ., River Edge, NJ, 2003.

\bibitem{Mazzeo-Piazza-II}
R.~Mazzeo and P.~Piazza.
\newblock Dirac operators, heat kernels and microlocal analysis. {II}.
  {A}nalytic surgery.
\newblock {\em Rend. Mat. Appl. (7)}, 18(2):221--288, 1998.

\bibitem{Melrose}
Richard~B. Melrose.
\newblock {\em The {A}tiyah-{P}atodi-{S}inger index theorem}, volume~4 of {\em
  Research Notes in Mathematics}.
\newblock A K Peters Ltd., Wellesley, MA, 1993.

\bibitem{MPI}
Richard~B. Melrose and Paolo Piazza.
\newblock Families of {D}irac operators, boundaries and the {$b$}-calculus.
\newblock {\em J. Differential Geom.}, 46(1):99--180, 1997.

\bibitem{mp}
Hitoshi Moriyoshi and Paolo Piazza.
\newblock Eta cocycles, relative pairings and the {G}odbillon-{V}ey index
  theorem.
\newblock {\em Geom. Funct. Anal.}, 22(6):1708--1813, 2012.

\bibitem{NT-algebraic}
Ryszard Nest and Boris Tsygan.
\newblock Algebraic index theorem.
\newblock {\em Comm. Math. Phys.}, 172(2):223--262, 1995.

\bibitem{ppt1}
M.~J. Pflaum, H.~Posthuma, and X.~Tang.
\newblock The localized longitudinal index theorem for {L}ie groupoids and the
  van {E}st map.
\newblock {\em Adv. Math.}, 270:223--262, 2015.

\bibitem{PPT}
Markus~J. Pflaum, Hessel Posthuma, and Xiang Tang.
\newblock The transverse index theorem for proper cocompact actions of {L}ie
  groupoids.
\newblock {\em J. Differential Geom.}, 99(3):443--472, 2015.

\bibitem{piazza-jfa}
Paolo Piazza.
\newblock On the index of elliptic operators on manifolds with boundary.
\newblock {\em J. Funct. Anal.}, 117(2):308--359, 1993.

\bibitem{PP-akt}
Paolo Piazza and Hessel~B. Posthuma.
\newblock Higher genera for proper actions of {L}ie groups.
\newblock {\em Ann. K-Theory}, 4(3):473--504, 2019.

\bibitem{PSJNCG}
Paolo Piazza and Thomas Schick.
\newblock Bordism, rho-invariants and the {B}aum-{C}onnes conjecture.
\newblock {\em J. Noncommut. Geom.}, 1(1):27--111, 2007.

\bibitem{PS-Stolz}
Paolo Piazza and Thomas Schick.
\newblock Rho-classes, index theory and {S}tolz' positive scalar curvature
  sequence.
\newblock {\em J. Topol.}, 7(4):965--1004, 2014.

\bibitem{PS-akt}
Paolo Piazza and Thomas Schick.
\newblock The surgery exact sequence, {K}-theory and the signature operator.
\newblock {\em Ann. K-Theory}, 1(2):109--154, 2016.

\bibitem{shubin92}
M.~A. Shubin.
\newblock Spectral theory of elliptic operators on noncompact manifolds.
\newblock {\em Ast\'erisque}, (207):5, 35--108, 1992.
\newblock M\'ethodes semi-classiques, Vol.\ 1 (Nantes, 1991).

\bibitem{shubin-book}
M.~A. Shubin.
\newblock {\em Pseudodifferential operators and spectral theory}.
\newblock Springer-Verlag, Berlin, second edition, 2001.
\newblock Translated from the 1978 Russian original by Stig I. Andersson.

\bibitem{taylor}
Michael~E. Taylor.
\newblock {\em Pseudodifferential operators}, volume~34 of {\em Princeton
  Mathematical Series}.
\newblock Princeton University Press, Princeton, N.J., 1981.

\bibitem{Vaillant-master-en}
Boris Vaillant.
\newblock Index theory for coverings.
\newblock Preprint {\tt arXiv:0806.4043}.

\bibitem{wahl-product}
Charlotte Wahl.
\newblock Product formula for {A}tiyah-{P}atodi-{S}inger index classes and
  higher signatures.
\newblock {\em J. K-Theory}, 6(2):285--337, 2010.

\bibitem{wahl-asian}
Charlotte Wahl.
\newblock The {A}tiyah-{P}atodi-{S}inger index theorem for {D}irac operators
  over {$C^\ast$}-algebras.
\newblock {\em Asian J. Math.}, 17(2):265--319, 2013.

\bibitem{Wahl_higher_rho}
Charlotte Wahl.
\newblock Higher {$\rho$}-invariants and the surgery structure set.
\newblock {\em J. Topol.}, 6(1):154--192, 2013.

\bibitem{Wang}
Hang Wang.
\newblock {$L^2$}-index formula for proper cocompact group actions.
\newblock {\em J. Noncommut. Geom.}, 8(2):393--432, 2014.

\bibitem{wiemeler-TAMS}
Michael Wiemeler.
\newblock Circle actions and scalar curvature.
\newblock {\em Trans. Amer. Math. Soc.}, 368(4):2939--2966, 2016.

\end{thebibliography}
}

\end{document}